\newcommand{\wh}[1]{\widehat{#1}}
\newcommand{\wt}[1]{\widetilde{#1}}
\newcommand{\mb}[1]{\mathbb{#1}}
\newcommand{\ove}[1]{\overline{#1}}
\newcommand{\mtc}[1]{\mathcal{#1}}
\newcommand{\mtf}[1]{\mathfrak{#1}}
\newcommand{\mts}[1]{\mathscr{#1}}
\newcommand{\cQ}{\mathcal{Q}}
\DeclareMathOperator{\coeff}{coeff}
\DeclareMathOperator{\Fut}{Fut}
\DeclareMathOperator{\PGL}{PGL}
\DeclareMathOperator{\SO}{SO}
\DeclareMathOperator{\Spec}{Spec}
\DeclareMathOperator{\Proj}{Proj}
\DeclareMathOperator{\mult}{mult}
\DeclareMathOperator{\Pic}{Pic}
\DeclareMathOperator{\Hom}{Hom}
\DeclareMathOperator{\ord}{ord}
\DeclareMathOperator{\Ext}{Ext}
\DeclareMathOperator{\codim}{codim}
\DeclareMathOperator{\edim}{edim}
\DeclareMathOperator{\rk}{rank}
\DeclareMathOperator{\Bs}{Bs}
\DeclareMathOperator{\sst}{ss}
\DeclareMathOperator{\Id}{Id}
\DeclareMathOperator{\NS}{NS}
\DeclareMathOperator{\che}{ch}
\DeclareMathOperator{\GIT}{GIT}
\DeclareMathOperator{\nodal}{n}
\DeclareMathOperator{\Aut}{Aut}
\DeclareMathOperator{\vol}{vol}
\DeclareMathOperator{\CM}{CM}
\DeclareMathOperator{\Bl}{Bl}
\DeclareMathOperator{\Hodge}{Hodge}
\newcommand{\sslash}{\mathbin{\mkern-3mu/\mkern-6mu/\mkern-3mu}}
\newcommand{\sheafHom}{\mathscr{H}\text{\kern -3pt {\calligra\large om}}\,}
\DeclareMathOperator{\Hilb}{\mathrm{Hilb}}
\newcommand{\bA}{{\mathbb A}}
\newcommand{\bZ}{{\mathbb Z}}
\newcommand{\bN}{{\mathbb N}}
\newcommand{\bC}{{\mathbb C}}
\newcommand{\bQ}{{\mathbb Q}}
\newcommand{\bP}{{\mathbb P}}
\newcommand{\bR}{{\mathbb R}}
\newcommand{\bV}{{\mathbb V}}
\newtheorem{theorem}{Theorem}[section]
\newtheorem{theodef}[theorem]{Theorem-Definition}
\newtheorem{lemma}[theorem]{Lemma}
\newtheorem{corollary}[theorem]{Corollary}
\newtheorem{prop}[theorem]{Proposition}
\newtheorem{remark}[theorem]{Remark}
\theoremstyle{definition}
\newtheorem{defn}[theorem]{Definition}
\theoremstyle{remark}
\title{K-moduli of Fano threefolds and genus four curves}
\date{\today}
\author{Yuchen Liu}
\address{Department of Mathematics, Northwestern University, 2033 Sheridan Rd, Evanston, Illinois 60208, USA}
\email{yuchenl@northwestern.edu}
\author{Junyan Zhao}
\address{Department of Mathematics, Statistics, and Computer Science, University of Illinois Chicago, 851 S Morgan St, Chicago, Illinois 60607, USA}
\email{jzhao81@umd.edu}
\begin{document}

\maketitle

\begin{abstract}
In this article, we study the K-moduli space of Fano threefolds obtained by blowing up $\mb{P}^3$ along $(2,3)$-complete intersection curves. This K-moduli space is a two-step birational modification of the GIT moduli space of bidegree $(3,3)$-curves on $\bP^1\times \bP^1$. As an application, we show that our K-moduli space appears as one model of the Hassett--Keel program for $\overline{M}_4$. In particular, we classify all K-(semi/poly)stable members in this deformation family of Fano varieties. We follow the moduli continuity method with moduli of lattice-polarized K3 surfaces, general elephants and Sarkisov links as new ingredients.



\end{abstract}

\tableofcontents

\section{Introduction}

K-stability is an algebro-geometric condition introduced by differential geometers \cite{Tia97, Don02} to characterize the existence of K\"{a}hler--Einstein metrics on Fano varieties. Recently, the algebraic theory of K-stability has seen spectacular development. One important outcome of this study is the construction of the \emph{K-moduli stack}, parametrizing the K-semistable Fano varieties of given dimension and volume. Moreover, the stack admits a projective good moduli space, known as the \emph{K-moduli space}, that parameterizes K-polystable Fano varieties. See \cite{Xu21, Xu24} for recent surveys on this topic. 

Despite the general theory being completed, it remains a challenge to find explicit descriptions of K-moduli spaces for smooth Fano varieties and their limits. A notable approach to address this challenge, often called the \emph{moduli continuity method}, can be summarized as follows. It is usually easy to find one  K-stable member in a given family of Fano varieties using equivariant K-stability \cite{Zhu21}. This implies that a general member of this family is K-stable by openness of K-(semi)stability \cite{BL18, BLX19, Xu20}.  Then by the ampleness of the CM line bundle \cite{CP21, XZ20} one identifies a candidate of the K-moduli space from GIT stability for the CM line bundle on a suitable parameter space. Then one needs to understand the geometry of K-semistable limits to show that it appears in the GIT parameter space, where a crucial ingredient is an a priori estimate for singularities from \cite{Liu18} (after \cite{Fuj18}).  Finally, one concludes the isomorphism between the K-moduli space and GIT moduli space using separatedness and properness of K-moduli spaces \cite{BX19, ABHLX20, LXZ22}. This method was successful in dimension two \cite{MM93, OSS16}, and for several families in higher dimensions: \cite{LX19} on cubic threefolds, \cite{SS17} on quartic del Pezzo manifolds, \cite{ADL22} on quartic double solids, and \cite{Liu22} on cubic fourfolds.


In this paper, we study the K-moduli space of Fano threefolds of the family \textnumero 2.15 from the Iskovskikh--Mori--Mukai classification \cite{IP99, MM03}, following the moduli continuity method. A general member in this family is the blow-up of $\bP^3$ along a smooth $(2,3)$-complete intersection curve, which is shown to be K-stable (cf. \cite[Proposition 4.33]{CA21}). This K-moduli space is fascinating as it provides a birational model of the moduli space $\ove{M}_4$ of genus four curves. 

Our first main result shows that every member in this K-moduli space is indeed isomorphic to the blow-up of $\mb{P}^3$ along a $(2,3)$-complete intersection curve. Moreover, we identify the K-moduli space of Fano threefolds of family \textnumero 2.15 with a model in the variation of GIT moduli spaces $\ove{M}^{\GIT}(t)$ (cf. Definition \ref{gitdefn} and Remark \ref{vgitremark}(2)) of $(2,3)$-complete intersection curves studied by Casalaina-Martin--Jensen--Laza (cf. \cite{CMJL14}). In particular, we show that our K-moduli space is a two-step birational modification (a blow-up followed by a flip) of the GIT moduli space of $(3,3)$-curves on $\bP^1\times \bP^1$.

\begin{theorem}\label{Main1}
 Let $\mts{M}^K_{\textup{№2.15}}$ be the K-moduli stack of the family №2.15, and $\ove{M}^K_{\textup{№2.15}}$ be its good moduli space. Then the followings hold.
\begin{enumerate}
    \item Every K-semistable Fano variety in $\mts{M}^K_{\textup{№2.15}}$ is isomorphic to $\Bl_C\mb{P}^3$ for some $(2,3)$-complete intersection $C$.
    \item The K-moduli stack $\mts{M}^K_{\textup{№2.15}}$ is a smooth connected component of $\mts{M}^K_{3,22}$, and the K-moduli space $\ove{M}^K_{\textup{№2.15}}$ is a  normal projective variety.
    \item Moreover,  there is a natural isomorphism between $\ove{M}^K_{\textup{№2.15}}$ and the VGIT moduli space $\ove{M}^{\GIT}(\frac{22}{51})=\ove{M}^{\GIT}(\frac{2}{5},\frac{1}{2})$, fitting into the following diagram
    \begin{equation}\nonumber
\xymatrix @R=.07in @C=.07in{
 &   & & \ove{M}^K_{\textup{№2.15}}  \ar[rdd]_{\iota} \ar@{-->}[rr]& & \ove{M}^{\GIT} ( \frac{2}{9} , \frac{2}{5} ) \ar[ldd]^{\psi} \ar[rdd]^{\rho} & \\
&&&&&& \\
 & &  & & \ove{M}^{\GIT} (\frac{2}{5} ) & & \ove{M}^{\GIT} ( 0 , \frac{2}{9} ) & \simeq \ |\mtc{O}_{\mb{P}^1\times\mb{P}^1}(3,3)|\sslash \Aut(\mb{P}^1\times\mb{P}^1),  \\
}
 \end{equation}
 where
 \begin{itemize}
     \item $\rho$ is a divisorial contraction of a birational transform of the Gieseker–Petri divisor to the point parametrizing the triple conic;
     \item $\psi$ is a flip of the locus parametrizing non-reduced curves; and
     \item $\iota$ is a flipping contraction of the locus parametrizing nodal curves whose normalization is hyperelliptic.
 \end{itemize}
\end{enumerate}
\end{theorem}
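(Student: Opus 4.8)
The plan is to run the moduli continuity method, taking Part~(3) as the organizing target. Since a general $\Bl_C\bP^3$ is K-stable by \cite[Proposition~4.33]{CA21}, openness of K-(semi)stability \cite{BL18, BLX19, Xu20} gives a nonempty open K-moduli stack $\mts{M}^K_{\textup{№2.15}}$, and properness \cite{LXZ22} together with ampleness of the CM line bundle \cite{XZ20} yields a projective good moduli space $\ove{M}^K_{\textup{№2.15}}$ polarized by the descended CM line bundle. The essential content is the classification of degenerations in Part~(1) and its comparison with the variation of GIT in Part~(3).

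For Part~(1), let $X_0$ be a K-semistable degeneration of $\Bl_C\bP^3$. First I would invoke the a priori volume and singularity estimate of \cite{Liu18} (after \cite{Fuj18}) to force $X_0$ to have Gorenstein klt singularities of the expected type. The decisive new ingredient is the general elephant: a general $S_0\in|-K_{X_0}|$ should be a K3 surface carrying the lattice polarization inherited from the embedding geometry of $\Bl_C\bP^3$. Using the moduli of lattice-polarized K3 surfaces and the Torelli theorem I would place $S_0$ inside a controlled degenerating family, and then reconstruct the threefold by propagating to the limit the Sarkisov link of $\Bl_C\bP^3$ --- whose two extremal contractions are the blow-down to $\bP^3$ and the structure induced by the quadric $Q\supseteq C$ --- thereby exhibiting $X_0\cong\Bl_{C_0}\bP^3$ for a $(2,3)$-complete intersection $C_0$. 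I expect this reconstruction to be the main obstacle, since one must keep track of the link and of the polarized K3 surface precisely when $Q$ degenerates (to a quadric cone or a union of planes) and when $S_0$ acquires worse-than-ADE singularities.

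Granting Part~(1), every K-semistable member is some $\Bl_{C_0}\bP^3$, and for Part~(2) I would check that all such blow-ups have unobstructed deformations (e.g.\ that the relevant $H^2$ of the tangent complex vanishes), so that $\mts{M}^K_{\textup{№2.15}}$ is smooth; it is connected because \textnumero 2.15 is a single deformation family, and normality of $\ove{M}^K_{\textup{№2.15}}$ follows formally since the good moduli space of a smooth Artin stack is normal. For Part~(3), the classification identifies $\ove{M}^K_{\textup{№2.15}}$ as a good moduli space for a family parametrized by the very space of $(2,3)$-complete intersections underlying the VGIT of \cite{CMJL14}. I would compute the descended CM line bundle on this parameter space in terms of the two natural polarizations pulled back from the quadric and the cubic, solve for the linearization it represents, and match it with the VGIT chamber containing $t=\tfrac{22}{51}$, namely $(\tfrac{2}{5},\tfrac{1}{2})$; the numerology of $22$ and $51$ tracks the anticanonical volume $22$ of $\mts{M}^K_{3,22}$.

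Matching K-semistability with GIT-semistability for this linearization --- via the Hilbert--Mumford/CM comparison together with the classification already proven --- then gives the isomorphism $\ove{M}^K_{\textup{№2.15}}\simeq\ove{M}^{\GIT}(\tfrac{22}{51})$ of normal projective varieties. Finally, to obtain the diagram I would transport the wall-crossing analysis of \cite{CMJL14} from $t=\tfrac{22}{51}$ down to $t=0$, where $\ove{M}^{\GIT}(0,\tfrac{2}{9})\simeq|\mtc{O}_{\bP^1\times\bP^1}(3,3)|\sslash\Aut(\bP^1\times\bP^1)$. Identifying the walls at $t=\tfrac{2}{9}$ and $t=\tfrac{2}{5}$ and analyzing which orbits change stability across each, I would read off $\rho$ as the divisorial contraction of the birational transform of the Gieseker--Petri divisor to the triple-conic point, $\psi$ as the flip supported on the non-reduced locus, and $\iota$ as the flipping contraction of the locus of nodal curves with hyperelliptic normalization, realizing $\ove{M}^K_{\textup{№2.15}}$ as the blow-up-then-flip of the $(3,3)$-curve GIT quotient.
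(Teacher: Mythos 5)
Your overall architecture is the paper's: moduli continuity method, volume estimates from \cite{Liu18} plus general elephants for Gorenstein canonicity, unobstructedness of deformations for smoothness of the stack, the CM line bundle computation $22\xi+51\eta$ identifying the slope $\tfrac{22}{51}$, and the wall-crossing of \cite{CMJL14} for the diagram. But the two places where you wave your hands are exactly where the paper's real work lies, and in both cases the missing idea is concrete enough that I should flag it as a gap rather than a stylistic difference.

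First, in Part (1) you say you would ``propagate the Sarkisov link to the limit'' using lattice-polarized K3 surfaces and Torelli, and you correctly anticipate that this is the main obstacle --- but you do not supply the idea that overcomes it. The obstruction is that when you take Zariski closures of the divisors $\mtc{L}$ and $\mts{Q}$ (the pullback of $\mtc{O}_{V}(1)$ and the exceptional divisor of the second contraction) in a one-parameter K-semistable degeneration, the limiting Weil divisors need not be $\bQ$-Cartier, so you cannot even speak of their nefness or take ample models on the original family. The paper's fix is to pass to a small modification $\wt{\mts{X}}\to\mts{X}$ obtained by a $\bQ$-factorialization followed by an MMP making $\wt{\mts{Q}}$ relatively ample, at the price of the central fiber being only weak Fano. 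The nefness of $\wt{\mtc{L}}$ is then \emph{not} a Torelli argument: it comes from showing that the two polarizations $h_1=L-\epsilon Q$ and $h_2=2L-(1-\epsilon)Q$ on the rank-two lattice $\Lambda_0$ lie in the same small cone, so the two moduli stacks of lattice-polarized K3 surfaces have isomorphic universal families (Lemma \ref{samefamily}), together with excluding unigonal degree-$22$ limits (Lemma \ref{notunigonal}); sections are then lifted from the anticanonical K3 to the threefold. Finally the reconstruction goes through the cubic-threefold side of the link --- one shows $-K_V\sim 2L_V$ with $L_V$ Cartier of degree $3$, invokes \cite{Fuj90} to see $V\subset\bP^4$ is a cubic, checks $P$ is a double point, and proves $\Bl_P\mts{V}\simeq\mts{X}$ --- rather than degenerating the blow-down to $\bP^3$ directly. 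None of this is recoverable from ``keep track of the link when $Q$ degenerates.''

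Second, in the K-to-GIT comparison you invoke the Hilbert--Mumford/CM proportionality as if the Paul--Tian argument applied verbatim. It does not, because the parameter space $U$ of genuine $(2,3)$-complete intersections is not proper: the limit of a destabilizing one-parameter subgroup may leave $U$ (the quadric and cubic may acquire a common factor), in which case the 1-PS does not induce a test configuration of Fano varieties and the Futaki invariant cannot be read off from the GIT weight. The paper circumvents this by a stratified argument: curves on a smooth quadric are handled by restricting the CM line bundle to the fiber $\bP H^0(Q,\mtc{O}_Q(3))$ and comparing with the $(3,3)$-GIT; curves on a quadric cone with bad singularities at the vertex are destabilized by an explicit 1-PS; and the remaining cases are reduced to the first VGIT wall $T<t_0$ at which the curve becomes semistable, where the limit is guaranteed to stay in $U$. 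Without some version of this stratification your ``matching'' step is not a proof.
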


Our second result gives a complete description of the K-(semi/poly)stable members in this family, including their $\bQ$-Gorenstein degenerations. 

\begin{theorem}\label{Main2}
    Let $C$ be a $(2,3)$-complete intersection curve in $\mb{P}^3$. Let $Q$ be the unique quadric surface containing $C$, and $X:=\Bl_C\mb{P}^3$ be the blow-up of $\mb{P}^3$ along $C$. Then $X$ is 
    \begin{enumerate}
        \item K-semistable if and only if one of the followings holds:
        \begin{itemize}
            \item $Q$ is a smooth quadric, and the curve $C$ has at worst $A_n$- or $D_4$-singularities;
            \item $Q$ is a quadric cone, and $C$ has at worst $A_n$- or $D_4$-singularities in the smooth locus of $Q$, and has at worst an $A_1$-singularity at the cone point. 
        \end{itemize}
        \item K-stable if and only if one of the followings holds:
         \begin{itemize}
            \item $Q$ is a smooth quadric, $C$ has at worst $A_n$-singularities, and $C$ contains no line component $L$ meeting the residual curve $C'$ in exactly one point;
            \item $Q$ is a quadric cone, $C$ has at worst $A_n$-singularities in the smooth locus of $Q$, and has at worst an $A_1$-singularity at the cone point. 
        \end{itemize}
        \item K-polystable but not K-stable if and only if one of the followings hold:
            \begin{itemize}
            \item $Q$ is normal and the curve $C$ is a union of three conics meeting in two $D_4$-singularities;
             \item $Q$ is a smooth quadric, and $C=C_{2A_5}=\mb{V}(x_0x_3-x_1x_2,x_0x_2^2+x^2_1x_3)$ is the (unique) maximally degenerate curve with two $A_5$-singularities. 
        \end{itemize}
    \end{enumerate}
    In particular, every smooth member in this deformation family of Fano threefolds is K-stable.
\end{theorem}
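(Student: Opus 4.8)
The plan is to reduce the statement to a variation-of-GIT (VGIT) stability computation and then read off the singularity types from the Hilbert--Mumford criterion. By Theorem \ref{Main1}(1) every K-semistable member of the family is of the form $X=\Bl_C\mathbb{P}^3$ for a $(2,3)$-complete intersection $C$, so I may encode such a member by a pair $(Q,F)$, where $Q$ is the defining quadric and $F$ a cubic cutting out $C$ on $Q$. By Theorem \ref{Main1}(3) the good moduli space $\overline{M}^K_{\textup{№2.15}}$ is isomorphic to the VGIT quotient $\overline{M}^{\GIT}(\tfrac{22}{51})$ of such pairs under $\SL_4$, and this isomorphism is induced by matching the CM polarization with the VGIT linearization at $t=\tfrac{22}{51}$. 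Consequently K-semistability (resp. K-stability, K-polystability) of $X$ corresponds to GIT-semistability (resp. stability, polystability) of $(Q,F)$ at this linearization, and the whole theorem becomes the problem of describing the GIT (semi/poly)stable pairs in the VGIT of $(2,3)$-complete intersections of \cite{CMJL14}.

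The substance is then the numerical criterion. For a one-parameter subgroup $\lambda=\diag(s^{r_0},\dots,s^{r_3})$ with $\sum_i r_i=0$ I would compute the Mumford weight $\mu_t\big((Q,F),\lambda\big)$ at $t=\tfrac{22}{51}$; after diagonalizing in suitable coordinates it suffices to test finitely many such $\lambda$. The outcome should be a local dictionary between the sign of $\mu_t$ and the geometry of $(Q,F)$: the rank of $Q$ separates the smooth-quadric stratum from the quadric-cone stratum, and the walls flanking $t=\tfrac{22}{51}$ pin down the admissible singularities of $C$. Concretely, $\mu_t\le 0$ for all $\lambda$ should force $C$ to have at worst $A_n$- or $D_4$-singularities (and, when $Q$ is a cone, at worst an $A_1$ at the vertex), while the strict inequality $\mu_t<0$ additionally rules out a $D_4$-singularity and a line component $L\subset C$ meeting the residual curve $C'$ in a single point, because each of these configurations is preserved by a one-parameter subgroup realizing $\mu_t=0$.

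For the polystable classification I would isolate the semistable pairs whose $\SL_4$-orbit is closed in the semistable locus, equivalently those with positive-dimensional stabilizer. The two listed families --- three conics meeting in two $D_4$-points on a normal quadric, and the maximally degenerate curve $C_{2A_5}=\mathbb{V}(x_0x_3-x_1x_2,\,x_0x_2^2+x_1^2x_3)$ on a smooth quadric --- are exactly the boundary pairs fixed up to scaling by a torus, so I would check directly that each is invariant under the destabilizing $\lambda$ found above, conclude its orbit is closed, and verify that every other semistable pair carrying a $D_4$-singularity or two $A_5$-singularities has $\lim_{s\to0}\lambda(s)\cdot(Q,F)$ equal to one of them. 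Finally the last assertion is immediate from part (2): a smooth member $X$ is the blow-up along a smooth curve $C$, which has no singularities and, on a smooth quadric, is an irreducible canonical genus-four curve with no line component, so $X$ is K-stable.

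The main obstacle I expect is twofold. First, because $t=\tfrac{22}{51}$ sits on (or immediately beside) a VGIT wall, separating the strictly stable from the strictly semistable locus demands an exact evaluation of $\mu_t$ rather than an asymptotic one, and the finitely many critical one-parameter subgroups together with their limit points must be handled carefully to certify that $A_n$ is the precise stable threshold and $D_4$ the precise semistable threshold. Second, showing that the two families above are the only closed strictly-semistable orbits, and that every other boundary pair degenerates to one of them, is an explicit orbit-closure computation in coordinates that is the most delicate bookkeeping in the argument.
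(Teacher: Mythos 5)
Your reduction is exactly the paper's: by Theorem \ref{mainiso} (i.e.\ Theorem \ref{Main1}(3)), $X=\Bl_C\mb{P}^3$ is K-(semi/poly)stable if and only if $[C]$ is GIT-(semi/poly)stable at slope $t_0=\tfrac{22}{51}$, after which the theorem is a statement about the VGIT of $(2,3)$-complete intersections. The only difference is that where you propose to rerun the Hilbert--Mumford analysis and the orbit-closure bookkeeping yourself, the paper simply imports the already-completed classification from \cite{CMJL14}, \cite[Section 2]{Fed12} and \cite[Lemma 5.17]{OSS16}; so your plan is a (correct but redundant) re-derivation of cited results rather than a new route.

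Two small corrections: $t_0=\tfrac{22}{51}$ lies in the \emph{interior} of the chamber $(\tfrac{2}{5},\tfrac{1}{2})$, not on or immediately beside a wall, so the delicacy you anticipate there is not present (strictly semistable points exist in the interior of a chamber, but the stable/semistable dichotomy is the generic one for that chamber). And your final assertion only treats smooth $C$ on a smooth quadric; a smooth canonical genus-four curve with a unique $g^1_3$ lies on a quadric cone (necessarily missing the vertex, since a Cartier divisor through the $A_1$-point of the cone is singular there), and that case is covered by the second bullet of part (2).
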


Note that the last statement of Theorem \ref{Main2} was proved independently in \cite[Main Theorem]{GDGV23} using a completely different method. As a side remark, the approach in \cite{GDGV23} is computational, which applies the Abban--Zhuang method to the family \textnumero 2.15.

As a consequence of the above result and \cite[Theorem 7.1]{CMJL14}, we find that the K-moduli space coincides with one model of the Hassett--Keel program (cf. Subsection \ref{HKP}) for $\overline{M}_4$. 

\begin{theorem}\label{hkp}
    The K-moduli space $\ove{M}^K_{\textup{№2.15}}$ is isomorphic to the model $\ove{M}_4(\frac{1}{2},\frac{23}{44})$ for the Hassett--Keel program of genus four curves.
\end{theorem}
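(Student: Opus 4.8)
The plan is to deduce the statement by chaining the VGIT identification established in Theorem~\ref{Main1}(3) with the dictionary between variation-of-GIT quotients and log canonical models of $\ove{M}_4$ proved in \cite{CMJL14}. First I would recall that, by Theorem~\ref{Main1}(3), there is a natural isomorphism
\[
\ove{M}^K_{\textup{№2.15}} \;\cong\; \ove{M}^{\GIT}\!\left(\tfrac{2}{5},\tfrac{1}{2}\right) \;=\; \ove{M}^{\GIT}\!\left(\tfrac{22}{51}\right),
\]
so the content of the theorem is entirely about re-expressing this particular VGIT model in the language of the Hassett--Keel program. Since the spaces $\ove{M}^{\GIT}(t)$ parametrize exactly the $(2,3)$-complete intersection curves studied in \cite{CMJL14}, our model sits inside their wall-and-chamber picture, and it remains only to locate the slope $t=\tfrac{22}{51}$ in their decomposition.

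Next I would invoke \cite[Theorem~7.1]{CMJL14}, which identifies each VGIT chamber with a log canonical model $\ove{M}_4(\alpha)$ and matches the VGIT walls with the critical values of the Hassett--Keel parameter at which the birational type of the model changes. Under their explicit correspondence between the VGIT slope and the log-canonical coefficient, one reads off that the chamber containing $t = \tfrac{22}{51}$ (equivalently, the normalized pair $(\tfrac{2}{5},\tfrac{1}{2})$) is precisely the one assigned to the model $\ove{M}_4(\tfrac{1}{2},\tfrac{23}{44})$. Substituting this into the isomorphism above yields
\[
\ove{M}^K_{\textup{№2.15}} \;\cong\; \ove{M}_4\!\left(\tfrac{1}{2},\tfrac{23}{44}\right),
\]
as claimed.

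The argument carries no new geometric input beyond the two cited results, so the only genuine obstacle is one of careful translation: the parametrizations of the VGIT quotients and of the Hassett--Keel models each come with their own normalization conventions, and one must match the two-parameter VGIT weight $(\tfrac{2}{5},\tfrac{1}{2})$ against the two-parameter Hassett--Keel label $(\tfrac{1}{2},\tfrac{23}{44})$ exactly as in \cite{CMJL14}. A discrepancy in these conventions would shift the identified model to an adjacent chamber, so verifying that the slope $\tfrac{22}{51}$ lands strictly inside the intended chamber --- rather than on one of the bounding walls --- is the step deserving the most care.
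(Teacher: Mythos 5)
Your proposal matches the paper's proof exactly: the paper derives Theorem~\ref{hkp} as an immediate consequence of Theorem~\ref{mainiso} (the source of Theorem~\ref{Main1}(3), giving $\ove{M}^K_{\textup{№2.15}}\simeq \ove{M}^{\GIT}(\frac{22}{51})$) combined with \cite[Theorem 7.1]{CMJL14}, whose relation $t=\frac{34\alpha-16}{33\alpha-14}$ places $t_0=\frac{22}{51}$ at $\alpha=\frac{127}{252}$, strictly inside the chamber $(\frac{1}{2},\frac{23}{44})$. Your closing remark about checking that the slope lands in the interior of the chamber rather than on a wall is exactly the one computation the paper leaves implicit, and it checks out.
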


We note that some recent works on K-moduli spaces for various families of  Fano threefolds follow a different approach, namely the \emph{Abban--Zhuang method} \cite{AZ22}, based on estimates for the stability threshold using inversion of adjunction and multi-graded linear systems. See e.g. \cite{Pap22, ACD+23, CT23} when the dimension of the moduli is small, and \cite{ACKLP, CDG+, CFFK, DJKQ} for additional families.

\subsection*{Sketch of Proof}

Our main approach is the \emph{moduli continuity method}, which relies the properness of the K-moduli spaces.

The most technical aspect of proving Theorem \ref{Main1} lies in identifying every (singular) K-semistable $\bQ$-Fano variety which admits a $\bQ$-Gorenstein smoothing to smooth Fano №2.15 as the blow-up of $\mathbb{P}^3$ along a $(2,3)$-complete intersection curve. This is the content of Section \ref{K3SURFACES} and Section \ref{KSSLIMITS}. For any K-semistable member $X$ in the family, by the properness of the K-moduli space, we can choose a one-parameter family $f:\mts{X}\rightarrow T$ over a smooth pointed curve $0\in T$ such that $\mts{X}_0\simeq X$, with other fibers being K-stable and isomorphic to the blow-ups of $\mathbb{P}^3$ along smooth $(2,3)$-complete intersection curves. By applying local-to-global volume comparison (cf. \cite{Liu18, LX19}) and general elephant techniques, we establish that the anti-canonical divisor is Cartier and its linear series is base-point-free. In fact, we establish a more general result applicable to many families of Fano threefolds (cf. Theorem \ref{nonvanishing}), anticipating that our method can be extended to study the K-moduli of other families as well.

Next, we show that $X$ is also the blow-up of $\bP^3$ along a $(2,3)$-complete intersection curves (cf. Theorem \ref{23complete}). Since a general fiber $\mts{X}_t$ has Picard rank $2$, it admits a Sarkisov link structure 
$$\xymatrix{
& & \mts{X}_t \ar[dl]_{\pi_t} \ar[dr]^{\phi_t} &\\
 & \mb{P}^3\ar@{-->}[rr]   &  &  \mts{V}_t\subseteq \mb{P}^4,
 }$$
where $\pi_t$ is a blow-up along a $(2,3)$-complete intersection curve, and $\phi_t$ is a blow-up of a singular cubic threefold $\mts{V}_t$ at a double point. To prove Theorem \ref{23complete}, we show that the above Sarkisov link structures on general fibers degenerate to a Sarkisov link structure of the same form on the special fiber $\mts{X}_0$. This is a rather strong statement, as Sarkisov link structures on Fano varieties could easily break under degenerations, e.g. the isotrivial degeneration of $\bP^1\times \bP^1$ to $\bP(1,1,2)$ loses both conic bundle structures on the special fiber (see also \cite{DJKQ} for an example of K-semistable degenerations). Nevertheless, working with the Sarkisov link structure enables us to switch between different models. As it turns out, the second model of blow-up of cubic threefolds is better suited for the proof.

In order to show that Sarkisov link structures degenerate, we focus on the model of blow-ups of cubic threefolds. Denote by $\mtc{L}_t := \phi_t^* \mtc{O}_{\mts{V}_t}(1)$ and $\mts{Q}_t$ the exceptional divisor of $\phi_t$. We degenerate the divisors $\mtc{L}_t$ and $\mts{Q}_t$ on general fibers to Weil divisors $\mtc{L}_0$ and $\mts{Q}_0$  on the central fiber, and examine the associated linear series.
However, we encounter a main obstacle that distinguish our case from previous examples (cf. \cite{LX19,SS17}), that is,  the degeneration divisors might not be $\mathbb{Q}$-Cartier. To address this, we perform a $\mathbb{Q}$-factorialization followed by running a minimal model program to obtain a small modification $\wt{\mts{X}}\to \mts{X}$ maintaining the general fibers unchanged, such that both $\wt{\mtc{L}}$ and $\wt{\mts{Q}}$ are $\bQ$-Cartier, and the latter is  ample over $\mts{X}$. While the central fiber of the new family $\wt{\mts{X}}$ is only weak Fano, we achieve the desired $\mathbb{Q}$-Cartier properties.  
Next, we show that $\wt{\mtc{L}}$ is a big and nef Cartier divisor over $T$. This is based on a delicate study of  the moduli spaces of certain lattice-polarized K3 surfaces. In fact, the above construction shows that $-K_{\wt{\mts{X}}} + \epsilon \wt{\mts{Q}}\sim_{\bR, T} 2\wt{\mtc{L}} - (1-\epsilon) \mts{Q}$ is ample over $T$ for $0<\epsilon\ll 1$. By restricting to a general K3 surface $\wt{S}\in |-K_{\wt{\mts{X}}_0}|$ and changing polarization for such lattice-polarized K3 surfaces $(\wt{S}, \Lambda)$ where $\Lambda$ is generated by the restrictions of $\wt{\mtc{L}}$ and $\wt{\mts{Q}}$ (cf. Proposition \ref{isomo} and Lemma \ref{samefamily}), we show that ampleness of $(2\wt{\mtc{L}} - (1-\epsilon) \mts{Q})|_{\wt{S}}$ implies nefness of $\wt{\mtc{L}}|_{\wt{S}}$, which then implies nefness of $\wt{\mtc{L}}$ by lifting sections from the K3 surface to the weak Fano threefold. Thus by the base-point-free theorem, we have a birational morphism $\wt{\mts{X}}\to \mts{V}$ over $T$ by taking the ample model of $\wt{\mtc{L}}$. Then we show that $\wt{\mtc{L}}_0$ is the pull-back of a Cartier divisor $L_V$ on $V=\mts{V}_0$ such that $-K_V \sim 2L_V$, which implies that $V$ is also a cubic threefold by \cite{Fuj90}. Finally, we show that both families $\mts{X}$ and $\wt{\mts{X}}$ coincide with the blow-up of $\mts{V}$ along a section corresponding to a double point in each fiber. 

In Section \ref{VGITHK}, we construct a universal family of blow-ups of $\mathbb{P}^3$ over the base $U$, which parametrizes all $(2,3)$-complete intersection curves, and compute the class of the CM line bundle over $U$. By identifying the CM line bundle with a chosen polarization for GIT quotient, we establish isomorphisms between the K-moduli space $\overline{M}^K_{\text{№2.15}}$ and the VGIT moduli space $\ove{M}^{\GIT}(\frac{2}{5},\frac{1}{2})$, as well as the Hassett-Keel model $\overline{M}_4(\frac{1}{2},\frac{23}{44})$. However, since $U$ is not compact, we may encounter non-complete intersection subschemes when taking the limit along an orbit of some one-parameter subgroup. To address this issue, we introduce a stratification of the VGIT moduli space and discuss on each stratum.

\subsection*{Acknowledgments}

We would like to thank Kenneth Ascher, Dori Bejleri, Harold Blum, Izzet Co\c{s}kun, Kristin DeVleming, Tiago Duarte Guerreiro, Lawrence Ein, Philip Engel, Luca Giovenzana,  Lena Ji, Ananth Shankar, Nivedita Viswanathan, Xiaowei Wang, and Zhiwei Zheng for helpful discussions. We also thank the anonymous referee for the revision suggestions. Research of YL was supported in part by the NSF CAREER Grant DMS-2237139 and the Alfred P. Sloan Foundation.

\section{Preliminaries}

We work over the field $\bC$ of complex numbers.

\subsection{K3 surfaces} We begin by reviewing some basics in the geometry of (quasi-)polarized K3 surfaces, which arise naturally in the study of Fano threefolds.

\begin{defn}
  A \emph{K3 surface} is a normal projective surface $S$ with at worst ADE singularities satisfying $\omega_S\simeq \mathcal{O}_S$ and $H^1(S, \mathcal{O}_S) =0$. A polarization (resp. quasi-polarization) on a K3 surface $S$ is an ample (resp. big and nef) line bundle $L$ on $S$. We call the pair $(S,L)$ a \emph{polarized} (resp. \emph{quasi-polarized}) \emph{K3 surface of degree $d$}, where $d=(L^2)$. Since $d$ is always an even integer, we sometimes write $d = 2k$.
\end{defn}

Let $(S,L)$ be a polarized K3 surface. Then there are three cases based on the behavior of the linear system $|L|$.

\begin{theorem}[cf. \cite{May72, SD}]\label{Mayer}
Let $(S,L)$ be a polarized K3 surface of degree $2k$. Then one of the followings holds.
\begin{enumerate}
    \item \textup{(Generic case)} The linear series $|L|$ is very ample, and the embedding $\phi_{|L|}:S\hookrightarrow |L|^{\vee}$ realizes $S$ as a degree $2k$ surface in $\mb{P}^{k+1}$. In this case, a general member of $|L|$ is a smooth non-hyperelliptic curve.
    \item \textup{(Hyperelliptic case)} The linear series $|L|$ is base-point-free, and the induced morphism $\phi_{|L|}$ realizes $S$ as a double cover of a normal surface of degree $k$ in $\mb{P}^{k+1}$. In this case, a general member of $|L|$ is a smooth hyperelliptic curve, and $|2L|$ is very ample.
    \item \textup{(Unigonal case)}  The linear series $|L|$ has a base component $E$, which is a smooth rational curve. The linear series $|L-E|$ defines a morphism $S\rightarrow \mb{P}^{k+1}$ whose image is a rational normal curve in $\mb{P}^{k+1}$. In this case, a general member of $|L-E|$ is a union of disjoint elliptic curves, and $|2L|$ is base-point-free.
\end{enumerate}
\end{theorem}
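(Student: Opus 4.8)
The plan is to analyze the (rational) map $\phi_{|L|}$ by restricting it to a general member of $|L|$ and reducing the three alternatives to the classical geometry of canonical curves. First I would record the numerics. Since $L$ is ample and $K_S = \mathcal{O}_S$, Kodaira vanishing gives $H^i(S, L) = 0$ for $i > 0$, so Riemann--Roch yields $h^0(S, L) = \chi(\mathcal{O}_S) + \tfrac12 (L^2) = k + 2$; hence $|L|$ maps $S$ to $\mathbb{P}^{k+1}$, explaining the target. For a smooth $C \in |L|$, adjunction gives $2g(C) - 2 = (L^2) = 2k$ and, crucially, $K_C = (K_S + C)|_C = L|_C$, so $g(C) = k+1$ and $L$ restricts to the canonical bundle on $C$. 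Moreover, the restriction sequence $0 \to \mathcal{O}_S \to \mathcal{O}_S(L) \to \mathcal{O}_C(K_C) \to 0$ together with $H^1(S, \mathcal{O}_S) = 0$ shows $H^0(S, L) \twoheadrightarrow H^0(C, K_C)$, so that $\phi_{|L|}$ restricts on $C$ to the canonical map of $C$. This identification is the engine of the argument.

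Next I would settle the base-locus trichotomy. If $|L|$ has a fixed component, a bookkeeping argument with the Hodge index theorem and $(-2)$-curves forces the unigonal configuration: there is a genus-one pencil $|F|$ (an irreducible $F$ with $F^2 = 0$ defining an elliptic fibration $S \to \mathbb{P}^1$) and a smooth rational $(-2)$-curve $E$ with $F \cdot E = 1$ such that $L \sim (k+1)F + E$, with $E$ the unique fixed component. Then $|L - E| = |(k+1)F|$ is base-point-free and factors through the elliptic fibration followed by the degree-$(k+1)$ embedding $\mathbb{P}^1 \hookrightarrow \mathbb{P}^{k+1}$, whose image is the rational normal curve; a general member of $|L-E|$ is a disjoint union of $k+1$ fibers, i.e. of elliptic curves, and base-point-freeness of $|2L|$ follows by a direct section count. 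If instead $|L|$ has no fixed component, I would invoke the standard fact that a nef and big complete linear system on a K3 with no fixed part has no base points (the only possible obstruction being exactly the excluded elliptic pencil meeting $L$ in one point). This establishes case (3) and reduces the rest to the base-point-free situation.

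With $|L|$ base-point-free, a general $C \in |L|$ is smooth of genus $k+1$ by Bertini, and the split between cases (1) and (2) is governed by whether $C$ is hyperelliptic. By the identification above, $\phi_{|L|}|_C$ is the canonical map of $C$: for non-hyperelliptic $C$ it is a closed embedding, while for hyperelliptic $C$ it is a two-to-one map onto a rational normal curve of degree $k$ spanning the hyperplane $\mathbb{P}^{k} \subset \mathbb{P}^{k+1}$ cut out by $C$. I would then propagate this from the general hyperplane section to the whole surface. In the hyperelliptic case the fiberwise double-cover structure detected on each $C$ assembles into a degree-two morphism of $S$ onto a surface of minimal degree $k$ in $\mathbb{P}^{k+1}$, and a separate computation shows $|2L|$ is very ample. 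In the non-hyperelliptic case $\phi_{|L|}$ is birational onto a normal surface of degree $2k$, and to upgrade this to very ampleness I would verify that $\phi_{|L|}$ separates points and tangent vectors, which reduces through the exact sequences above to projective normality of $S$.

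The hard part will be exactly this last propagation step: transferring the embedding (resp. double-cover) property of the canonical map of a single smooth hyperplane section to a global statement about $\phi_{|L|}$ on all of $S$. The technical core is controlling the multiplication maps $\operatorname{Sym}^m H^0(S, L) \to H^0(S, mL)$, and in particular $H^0(S,L) \otimes H^0(S,L) \to H^0(S, 2L)$, to establish projective normality of the image; here one feeds the surjectivity of restriction to $C$ into the corresponding Noether/Petri statement for canonical curves. By contrast, I expect the base-locus trichotomy and the numerical bookkeeping to be essentially routine once the Hodge-index and $(-2)$-curve analysis is in place, so the genuine work lies in the curve-to-surface lifting and the very-ampleness argument in the generic case.
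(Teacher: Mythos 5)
The paper gives no proof of this statement: it is quoted verbatim as a classical theorem of Mayer and Saint-Donat, with the citation doing all the work. Your outline reproduces the standard Saint-Donat strategy --- identify $\phi_{|L|}|_C$ with the canonical map via $K_C = L|_C$ and the surjection $H^0(S,L)\twoheadrightarrow H^0(C,K_C)$, settle the fixed-component case by the decomposition $L\sim (k+1)F+E$ with $F$ an elliptic pencil and $E$ a $(-2)$-curve, and then split the base-point-free case along the hyperelliptic dichotomy, with the multiplication maps $\Sym^m H^0(S,L)\to H^0(S,mL)$ as the technical core --- so it is the same route as the cited source and correctly flags where the real work lies. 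The only point you leave unaddressed is that the paper's K3 surfaces are allowed to have ADE singularities, so one must first pass to the minimal resolution (where $L$ pulls back to a big and nef class) and check that the trichotomy descends; this is routine but worth a sentence.
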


\subsection{Geometry of Fano threefolds}\label{sec:2.15}

In this section, let us briefly review the geometry of a smooth member in the deformation family №2.15 of Fano threefolds.

\begin{lemma}
    Any non-hyperelliptic smooth canonical curve of genus four is a smooth complete intersection of a unique quadric surface, which is normal, and a cubic surface in $\mb{P}^3$. The converse also holds true.
\end{lemma}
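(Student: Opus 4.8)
The plan is to use the canonical embedding together with Riemann--Roch dimension counts for the spaces of quadrics and cubics vanishing on $C$. Since $C$ is non-hyperelliptic of genus $4$, the canonical system $|K_C|$ is very ample and embeds $C$ as a non-degenerate curve of degree $\deg K_C = 2g-2 = 6$ in $\mathbb{P}^3 = \mathbb{P}(H^0(C,K_C)^\vee)$. Canonical curves are projectively normal, so the restriction maps
$$r_m : H^0(\mathbb{P}^3, \mathcal{O}_{\mathbb{P}^3}(m)) \longrightarrow H^0(C, \mathcal{O}_C(m)) = H^0(C, mK_C)$$
are surjective for all $m \geq 1$; this surjectivity is what will turn the inequalities from naive dimension counts into equalities.

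Next I would count quadrics and cubics. For $m = 2$ one has $h^0(\mathbb{P}^3, \mathcal{O}(2)) = \binom{5}{3} = 10$, while, since $\deg(2K_C) = 12 > 2g-2$, Riemann--Roch gives $h^0(C, 2K_C) = 12 - 4 + 1 = 9$. By surjectivity of $r_2$ the space of quadrics through $C$ is exactly $(10 - 9)$-dimensional, so there is a \emph{unique} quadric $Q$ containing $C$. For $m = 3$ one has $h^0(\mathbb{P}^3, \mathcal{O}(3)) = \binom{6}{3} = 20$ and $h^0(C, 3K_C) = 18 - 4 + 1 = 15$, so cubics through $C$ form a $5$-dimensional space; the cubics of the form $Q \cdot \ell$ with $\ell$ linear span only a $4$-dimensional subspace, so there exists a cubic $F$ through $C$ that does not contain $Q$ as a component. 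Then $Q \cap F$ is a complete intersection of pure dimension $1$ and degree $2 \cdot 3 = 6$, containing the smooth irreducible degree-$6$ curve $C$; comparing degrees forces $C = Q \cap F$ scheme-theoretically, and in particular the complete intersection is smooth. To see $Q$ is normal, I use non-degeneracy: if $Q$ had rank $\leq 2$ it would be a union of two planes or a double plane, whose support is a union of planes, and irreducibility of $C$ would place $C$ in a single plane, contradicting that $C$ spans $\mathbb{P}^3$. Hence $Q$ is a quadric cone or a smooth quadric, both normal.

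For the converse, let $C = Q \cap F$ be a smooth complete intersection of a normal quadric $Q$ and a cubic $F$. Adjunction for the complete intersection gives
$$K_C = \left(K_{\mathbb{P}^3} + Q + F\right)\big|_C = \mathcal{O}_C(-4+2+3) = \mathcal{O}_C(1),$$
so $C$ is embedded by its hyperplane system, which is a linear subsystem of $|K_C|$. Since $\deg C = 6 = 2g-2$ we read off $g = 4$, and $h^0(K_C) = g = 4 = h^0(\mathcal{O}_{\mathbb{P}^3}(1))$ shows that this hyperplane system is the full canonical system, so the given inclusion is the canonical embedding. As the canonical map is a closed embedding rather than a $2:1$ cover of a rational normal curve, $C$ is non-hyperelliptic.

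The main obstacle is the \emph{uniqueness} and \emph{normality} of the quadric rather than the existence of $Q$ and $F$: existence is immediate from the dimension counts, but pinning down that the space of quadrics is exactly one-dimensional relies on the projective normality of the canonical curve (surjectivity of $r_2$), and the normality of $Q$ relies on the non-degeneracy argument above. Everything else reduces to routine applications of Riemann--Roch and adjunction.
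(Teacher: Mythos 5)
Your proof is correct and is the standard classical argument (canonical embedding, Max Noether projective normality, Riemann--Roch dimension counts for quadrics and cubics, and adjunction for the converse); the paper states this lemma without proof, treating it as classical, so there is nothing to diverge from. The only remark worth adding is that the uniqueness of $Q$ can be obtained even more cheaply without invoking projective normality: two distinct quadrics through $C$ would cut out a curve of degree at most $4$, which cannot contain the degree-$6$ irreducible non-degenerate curve $C$ — but your route via surjectivity of $r_2$ is equally valid and gives the exact dimension of the space of cubics as a bonus.
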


Let $C\subseteq \mb{P}^3$ be the smooth complete intersection of a normal quadric surface $Q$ and a cubic surface $W$. Consider the linear series $|\mtc{I}_{C}(4)|$, a general member $W$ of which is a quartic K3 surface containing $C$. Then the residue curve of $C$ with respect to $W$ and the quadric $Q$ is a conic curve $\Gamma$. Let $\pi: X\rightarrow \mb{P}^3$ the blow-up of $\mb{P}^3$ along $C$ with exceptional divisor $E$, and $\wt{Q}$ be the strict transform of $Q$. The linear system of cubic surfaces vanishing along C induces a rational map $\mb{P}^3\dashrightarrow \mb{P}^4$, which is resolved by the blow-up $\pi$: $$\xymatrix{
& & X \ar[dl]_{\pi} \ar[dr]^{\phi} &\\
 & \mb{P}^3 \ar@{-->}[rr]^{|I_C(3)|}  &  &  V\subseteq \mb{P}^4.
 }$$ The morphism $\phi$ contracts $\wt{Q}$, whose image is an $A_1$-singularity of $V$, and is an isomorphism elsewhere, and $V$ is a singular cubic hypersurface. Let $H=\pi^{*}\mtc{O}_{\mb{P}^3}(1)$ and $L=\phi^{*}(\mtc{O}_{\mb{P}^4}(1)|_V)$ be two big and nef divisors. Then we have the linear equivalence relations $$-K_X\sim 4H-E,\quad L\sim 3H-E,\quad \wt{Q}\sim 2H-E.$$
 Let $S\in|-K_X|$ be a smooth member. Then $\wt{C}:=S\cap E$ is mapped by $\pi$ to $C$ isomorphically, and $\Gamma:=S\cap \wt{Q}$ is contracted by $\phi$ to an $A_1$-singularity of $\ove{S}:=\phi(S)$. We have that $(S,-K_X|_S)$ is a polarized K3 surface of degree $22$, and $(S,L|_S)$ is a quasi-polarized degree $6$ K3 surface, whose ample model is $(\ove{S},\mtc{O}_{\mb{P}^4}(1)|_{\ove{S}})$.

\begin{lemma}\label{isomofL}
    Let $S\in|-K_X|$ be a general member. Then
    \begin{enumerate}
        \item we have a natural isomorphism $$H^0(X,L)\stackrel{\simeq}{\longrightarrow} H^0(S,L|_S)$$ of $5$-dimensional vector spaces; and
        \item the group $\NS(X)$ is free of rank $2$, generated by $[H|_S]$ and $[E|_S]$, and contains a polarization $(4H-E)|_S$ of degree $22$.
    \end{enumerate}
\end{lemma}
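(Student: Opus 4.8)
The plan is to prove the two assertions separately, reducing each to cohomology on $\bP^3$ together with a Lefschetz-type comparison of Picard groups.

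For part \textup{(1)} I would first compute the source $H^0(X,L)$. Since $L\sim 3H-E$ and $\pi_*\mathcal{O}_X(-E)=\mtc{I}_C$, the projection formula gives $\pi_*\mathcal{O}_X(L)=\mtc{I}_C(3)$, so $H^0(X,L)\cong H^0(\bP^3,\mtc{I}_C(3))$ is the space of cubics through $C$. Here I would invoke the Koszul resolution of the complete intersection $C=Q\cap W$:
$$0\to \mathcal{O}_{\bP^3}(-5)\to \mathcal{O}_{\bP^3}(-2)\oplus\mathcal{O}_{\bP^3}(-3)\to \mtc{I}_C\to 0.$$
Twisting by $\mathcal{O}(3)$ and taking cohomology gives $h^0(\mtc{I}_C(3))=h^0(\mathcal{O}(1))+h^0(\mathcal{O})=4+1=5$. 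To compare with $H^0(S,L|_S)$ I would use the restriction sequence
$$0\to \mathcal{O}_X(L-S)\to \mathcal{O}_X(L)\to \mathcal{O}_S(L|_S)\to 0,$$
noting that $L-S\sim L+K_X\sim(3H-E)-(4H-E)=-H$. Since $\mathcal{O}_X(-H)=\pi^*\mathcal{O}_{\bP^3}(-1)$ and $R^i\pi_*\mathcal{O}_X=0$ for $i>0$, the Leray spectral sequence identifies $H^i(X,-H)$ with $H^i(\bP^3,\mathcal{O}(-1))$, which vanishes for $i=0,1$. Hence the restriction map $H^0(X,L)\to H^0(S,L|_S)$ is an isomorphism of $5$-dimensional spaces, which is exactly the natural map in the statement; note that this step needs only $S\sim -K_X$, not genericity.

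For part \textup{(2)} the rank and generators of $\NS(X)$ itself are standard: blowing up the smooth curve $C\subset\bP^3$ gives $\Pic(X)=\pi^*\Pic(\bP^3)\oplus\bZ[E]=\bZ H\oplus\bZ E$, and $\NS(X)=\Pic(X)$ since $X$ is Fano (so $H^1(X,\mathcal{O}_X)=0$). The real content is identifying these classes with their restrictions to $S$: because $S\in|-K_X|$ is a smooth \emph{ample} divisor in the smooth threefold $X$, the Grothendieck--Lefschetz theorem for Picard groups makes the restriction $\Pic(X)\to\Pic(S)$ injective. Thus $\NS(X)$ maps isomorphically onto the rank-$2$ sublattice $\langle[H|_S],[E|_S]\rangle\subseteq\NS(S)$, which contains $(4H-E)|_S=-K_X|_S$. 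The latter is ample, being the restriction of the ample class $-K_X$, and its degree on $S$ is $(-K_X|_S)^2=(-K_X)^3=22$; one confirms $(4H-E)^3=22$ directly from the blow-up intersection numbers $H^3=1$, $H^2E=0$, $HE^2=-\deg C=-6$, and $E^3=-\deg N_{C/\bP^3}=-(2g-2+4\deg C)=-30$.

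I expect the only non-formal step to be the Grothendieck--Lefschetz injectivity in part \textup{(2)}: it is what guarantees that $[H|_S]$ and $[E|_S]$ stay independent after restriction and that no new relation or divisibility is created. It is also the natural place to invoke, for a \emph{general} $S$, the Noether--Lefschetz statement that $\NS(S)$ is \emph{exactly} this rank-$2$ lattice, which is the form in which the result later feeds the lattice-polarized K3 arguments. All remaining steps are direct cohomology or intersection-number computations.
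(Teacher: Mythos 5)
Your proof is correct and follows essentially the same route as the paper: part (1) is proved there by the identical restriction sequence with $L-S\sim -H$ and Kawamata--Viehweg vanishing (the dimension being computed by Riemann--Roch on the K3 surface, $\tfrac12(L|_S)^2+2=5$, rather than by your Koszul resolution, which gives the same answer), and part (2) is disposed of there in one line by identifying a general $S\in|-K_X|$ with a general quartic K3 surface containing a smooth conic, i.e.\ by exactly the Noether--Lefschetz input you flag at the end of your argument. Note that the ``$\NS(X)$'' in the statement is evidently a typo for $\NS(S)$, so the Noether--Lefschetz assertion you relegate to a closing remark is in fact the whole content of part (2); your Grothendieck--Lefschetz injectivity step is correct but only yields the rank-$2$ sublattice $\langle [H|_S],[E|_S]\rangle\subseteq\NS(S)$, and the genericity of $S$ is still needed to conclude that this sublattice is all of $\NS(S)$.
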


\begin{proof}
    Notice that there exists an exact sequence $$0\longrightarrow \mtc{O}_X(L-S)\longrightarrow \mtc{O}_X(L)\longrightarrow \mtc{O}_S(L)\longrightarrow 0,$$ where $L-S\sim -H$ is anti-effective. By Kawamata-Viehweg vanishing theorem, we have that $$h^0(X,\mtc{O}_X(-H))=h^1(X,\mtc{O}_X(-H))=0.$$ Then the desired isomprhism follows immediately. The dimension of the vector spaces is $\frac{1}{2}(L|_S)^2+2=5$. To prove the second statement, it suffices to observe that a general element $S\in|-K_X|$ is identified with a general quartic K3 surface in $\mb{P}^3$ containing a smooth conic curve $\Gamma$.
    
\end{proof}

\subsection{K-stability and K-moduli spaces}

\begin{defn}
    A \emph{$\mb{Q}$-Fano variety} (resp. \emph{weak $\mb{Q}$-Fano variety}) is a  normal projective variety $X$ such that the anti-canonical divisor $-K_X$ is an ample (resp. a big and nef) $\mb{Q}$-Cartier divisor, and $X$ has klt singularities.

    By \cite{BCHM}, for a weak $\bQ$-Fano variety $X$, the anti-canonical divisor $-K_X$ is always big and semiample whose ample model gives a $\bQ$-Fano variety $\overline{X} :=\Proj R(-K_X)$. We call $\overline{X}$ the \emph{anti-canonical model} of $X$.
\end{defn}

\begin{defn}
    A $\mb{Q}$-Fano variety $X$ (resp. weak $\bQ$-Fano variety) is called \emph{$\mb{Q}$-Gorenstein smoothable} if there exists a projective flat morphism $\pi:\mts{X}\rightarrow T$ over a pointed smooth curve $(0\in T)$ such that the following conditions hold:
    \begin{itemize}
        \item $-K_{\mts{X}/T}$ is $\mb{Q}$-Cartier and $\pi$-ample (resp. $\pi$-big and $\pi$-nef);
        \item $\pi$ is a smooth morphism over $T^\circ:=T\setminus \{0\}$; and
        \item $\mts{X}_0\simeq X$.
    \end{itemize}
\end{defn}

\begin{defn}
Let $X$ be an $n$-dimensional $\mb{Q}$-Fano variety, and $E$ a prime divisor on a normal projective variety $Y$, where $\pi:Y\rightarrow X$ is a birational morphism. Then the \emph{log discrepancy} of $X$ with respect to $E$ is $$A_{X}(E):=1+\coeff_{E}(K_Y-\pi^{*}K_X).$$ We define the \emph{S-invariant} of $X$ with respect to $E$ to be $$S_{X}(E):=\frac{1}{(-K_X)^n}\int_{0}^{\infty}\vol_Y(-\pi^{*}K_X-tE)dt,$$ and the \emph{$\beta$-invariant} of $X$ with respect to $E$ to be $$\beta_{X}(E):=A_{X}(E)-S_{X}(E).$$
\end{defn}

\begin{theodef} \textup{(cf. \cite{Fuj19,Li17,BX19, LWX21})} A $\mb{Q}$-Fano variety $X$ is 
\begin{enumerate}
    \item K-semistable if and only if $\beta_{X}(E)\geq 0$ for any prime divisor $E$ over $X$;
    \item K-stable if and only if $\beta_{X}(E)>0$ for any prime divisor $E$ over $X$;
    \item K-polystable if and only if it is K-semistable and any $\mb{G}_m$-equivariant K-semistable degeneration of $X$ is isomorphic to itself.
\end{enumerate}
A weak $\mb{Q}$-Fano variety $X$ is \textup{K-(semi/poly)stable} if its anti-canonical model $\ove{X}:=\Proj R(-K_X)$ is K-(semi/poly)stable.

\end{theodef}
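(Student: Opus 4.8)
This Theorem--Definition records the valuative criterion for K-stability, so my plan is to recover each equivalence from the original definition in terms of test configurations and the (normalized) Donaldson--Futaki invariant $\mathrm{DF}$. Recall that in that formulation $X$ is K-semistable (resp.\ K-stable) precisely when $\mathrm{DF}(\mts{X},\mts{L})\geq 0$ (resp.\ $>0$) for every nontrivial normal test configuration $(\mts{X},\mts{L})$ of $(X,-K_X)$, and K-polystable when in addition $\mathrm{DF}=0$ forces $(\mts{X},\mts{L})$ to be a product. The first step is to pass from arbitrary test configurations to \emph{special} ones, whose central fiber $\mts{X}_0$ is again a $\bQ$-Fano variety: running a relative MMP on a semistable reduction converts any test configuration into a special one whose normalized Donaldson--Futaki invariant does not increase. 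Hence it suffices to test each stability notion against special test configurations.

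The second step identifies special test configurations with valuations. A special test configuration determines, through the order of vanishing along its central fiber restricted to the function field of $X$, a \emph{dreamy} divisorial valuation $\mathrm{ord}_E$ (one whose associated graded ring is finitely generated); conversely every dreamy prime divisor $E$ over $X$ arises this way. The key computation of Fujita and Li is that the normalized Donaldson--Futaki invariant of the special test configuration attached to $E$ equals $A_X(E)-S_X(E)=\beta_X(E)$, with $S_X(E)$ the very volume integral in our definition. Combining the two steps shows that K-semistability is equivalent to $\beta_X(E)\geq 0$, and K-stability to $\beta_X(E)>0$, for every dreamy divisor $E$. To let the quantifier range over all prime divisors over $X$ one uses the continuity of $A_X$ and of the volume function on the space of valuations together with the Blum--Jonsson description of $\delta(X)=\inf_E A_X(E)/S_X(E)$ as an infimum computed over quasi-monomial valuations; since $\delta(X)\geq 1$ is equivalent both to K-semistability and to $\beta_X(E)\geq 0$ for all $E$, this yields (1), and a parallel strict-inequality argument yields (2). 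For the latter it is instructive that a K-semistable $X$ which is not K-stable admits a nontrivial special test configuration with $\mathrm{DF}=0$, hence a divisor with $\beta_X(E)=0$---already visible for a hyperplane on $\mb{P}^n$.

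For (3) I would invoke the uniqueness of K-polystable degenerations of Blum--Xu \cite{BX19}, together with \cite{LWX21}: within the $S$-equivalence class of a K-semistable variety there is a unique K-polystable representative, so $X$ is itself K-polystable exactly when it has no nontrivial degeneration to a non-isomorphic K-semistable Fano variety, i.e.\ when every $\mb{G}_m$-equivariant K-semistable degeneration of $X$---necessarily arising from a special test configuration with $\mathrm{DF}=0$---is isomorphic to $X$. The closing clause on weak $\bQ$-Fano varieties is essentially definitional: by \cite{BCHM} the anti-canonical model $\ove{X}=\Proj R(-K_X)$ is a genuine $\bQ$-Fano variety, and transporting the stability notions along $X\mapsto\ove{X}$ is consistent because passing to the ample model leaves the section ring $R(-K_X)$, and therefore all of $A_X$, $S_X$ and $\beta_X$, unchanged.

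I expect the main obstacle to be the first step: turning an arbitrary test configuration into a special one without increasing the normalized Donaldson--Futaki invariant is exactly where the relative MMP enters, and it is the conceptual core of the criterion, with Fujita's intersection-theoretic identity $\mathrm{DF}=\beta_X(E)$ as the indispensable companion. The polystable statement relies on a second deep input---the uniqueness of K-polystable degenerations, i.e.\ the separatedness of the K-moduli stack \cite{BX19, LWX21}---rather than on any routine verification.
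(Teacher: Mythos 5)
The paper does not prove this statement: it is recorded as a Theorem--Definition quoted from the literature (\cite{Fuj19, Li17, BX19, LWX21}) and used as a black box, so there is no internal proof to compare against. Your outline is a faithful reconstruction of the route actually taken in those references: reduction to special test configurations via the relative MMP (Li--Xu), the identification of special test configurations with dreamy divisorial valuations together with the Fujita--Li identity $\mathrm{DF}=\beta_X(E)$, the approximation argument extending the criterion from dreamy to arbitrary prime divisors, and the Blum--Xu/Liu--Wang--Xu uniqueness of K-polystable degenerations for part (3). The closing remark on weak $\bQ$-Fano varieties is indeed just the definition adopted in the paper, consistent because $R(-K_X)=R(-K_{\ove{X}})$.

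One point in your sketch is glossed over too quickly. For part (2), passing from ``$\beta_X(E)>0$ for all dreamy $E$'' to ``$\beta_X(E)>0$ for \emph{all} prime divisors $E$ over $X$'' is not a ``parallel strict-inequality argument'' to the semistable case: the continuity/approximation argument you invoke (via $\delta(X)$ and quasi-monomial valuations) only preserves non-strict inequalities in the limit, so it gives $\beta_X(E)\geq 0$, not $\beta_X(E)>0$, for a non-dreamy divisor. Ruling out $\beta_X(E)=0$ for some non-dreamy $E$ on a K-stable $X$ is a genuinely separate argument (one produces from such an $E$ a nontrivial K-semistable degeneration, contradicting K-stability); this is exactly the content supplied by \cite{BX19}, which is why that reference appears in the citation list alongside \cite{Fuj19} and \cite{Li17}. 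With that caveat your proposal is the standard and correct proof strategy.
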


\begin{defn}
Let $x\in X$ be an $n$-dimensional klt singularity. Let $\pi:Y\rightarrow X$ be a birational morphism such that $E\subseteq Y$ is an exceptional divisor whose center on $X$ is $\{x\}$. Then the \emph{volume} of $(x\in X)$ with respect to $E$ is $$\vol_{x,X}(E):=\lim_{m\to \infty}\frac{\dim\mtc{O}_{X,x}/\{f:\ord_E(f)\geq m\}}{m^n/n!},$$ and the \emph{normalized volume} of $(x\in X)$ with respect to $E$ is $$\wh{\vol}_{x,X}(E):=A_{X}(E)^n\cdot\vol_{x,X}(E).$$ We define the \emph{local volume} of $x\in X$ to be $$\wh{\vol}(x,X):=\ \inf_{E}\ \wh{\vol}_{x,X}(E),$$ where $E$ runs through all the prime divisors over $X$ whose center on $X$ is $\{x\}$.
\end{defn}

\begin{theorem}[cf. \cite{Liu18}]\label{volume}
Let $X$ be an $n$-dimensional K-semistable (weak) $\mb{Q}$-Fano variety, and $x\in X$ be a point. Then we have the inequality $$(-K_X)^n \ \leq\  \left(\frac{n+1}{n}\right)^n\cdot \wh{\vol}(x,X).$$
\end{theorem}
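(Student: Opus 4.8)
The plan is to establish the inequality one divisorial valuation at a time and then take an infimum. Since, by definition, $\wh{\vol}(x,X)=\inf_E\wh{\vol}_{x,X}(E)$ with $E$ ranging over all prime divisors over $X$ whose center is $x$, it suffices to prove
$$\wh{\vol}_{x,X}(E)=A_X(E)^n\cdot\vol_{x,X}(E)\ \geq\ \left(\tfrac{n}{n+1}\right)^n(-K_X)^n$$
for each such $E$; taking the infimum and rearranging then yields the theorem. I would first reduce the weak $\bQ$-Fano case to the $\bQ$-Fano case by replacing $X$ with its anti-canonical model $\ove{X}$: the crepant contraction $X\to\ove{X}$ preserves $(-K_X)^n$ and does not decrease the local volume (every valuation centered at $x\in X$ is centered at its image point on $\ove{X}$), so the inequality for $\ove{X}$ implies the one for $X$. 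Throughout I write $V:=(-K_X)^n$, $A:=A_X(E)$, and $c:=\vol_{x,X}(E)$.

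Next I would package the global volume into the profile of a filtered linear series. Fixing $E$ with extraction $\pi\colon Y\to X$, set
$$G(t):=\vol_Y(-\pi^*K_X-tE)=\lim_{m\to\infty}\frac{\dim\{s\in H^0(X,-mK_X):\ord_E(s)\geq mt\}}{m^n/n!}.$$
Then $G(0)=V$, and by the very definition of the $S$-invariant, $S_X(E)=\tfrac1V\int_0^\infty G(t)\,dt$. The hypothesis that $X$ is K-semistable, in the form $\beta_X(E)=A_X(E)-S_X(E)\geq0$ recorded above, translates into the single scalar bound $\int_0^\infty G(t)\,dt\leq A\,V$.

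The crux of the argument is a matching lower bound for $G(t)$ in terms of the local volume, and this is the step I expect to be the main obstacle. Trivializing $-mK_X$ near $x$ and passing to germs gives an injection $H^0(X,-mK_X)\big/\{s:\ord_E(s)\geq mt\}\hookrightarrow \mathcal{O}_{X,x}/\mathfrak{a}_{\lceil mt\rceil}$, where $\mathfrak{a}_\lambda:=\{f:\ord_E(f)\geq\lambda\}$. Comparing dimensions, dividing by $m^n/n!$, and invoking asymptotic Riemann–Roch together with the definition of $\vol_{x,X}(E)$, I would deduce
$$G(t)\ \geq\ V-c\,t^n\qquad\text{for all }t\geq0.$$
The delicate points here are the uniform control of the lower-order terms in the two Hilbert-type expansions and the justification of the germ comparison at the (klt, possibly non-Cartier) point $x$; these require choosing $m$ divisible by the Gorenstein index and a careful estimate of the colength $\length(\mathcal{O}_{X,x}/\mathfrak{a}_{\lceil mt\rceil})$.

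Finally I would combine the two estimates and optimize. Using $G(t)\geq0$ as well, the upper and lower bounds give
$$A\,V\ \geq\ \int_0^\infty G(t)\,dt\ \geq\ \int_0^\infty\max\{V-c\,t^n,\,0\}\,dt.$$
The remaining integral is elementary: it is supported on $[0,t_*]$ with $t_*=(V/c)^{1/n}$ and evaluates to $\frac{n}{n+1}\,V\,(V/c)^{1/n}$. Hence $A\geq\frac{n}{n+1}(V/c)^{1/n}$; raising to the $n$-th power and multiplying by $c$ yields $A^n c\geq\left(\frac{n}{n+1}\right)^n V$, which is the desired pointwise inequality. Taking the infimum over $E$ and rearranging produces $(-K_X)^n\leq\left(\frac{n+1}{n}\right)^n\wh{\vol}(x,X)$.
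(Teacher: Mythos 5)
Your argument is correct, but it proves more than the paper does: the paper's own proof of this statement is essentially a citation. For the $\bQ$-Fano case the authors simply invoke \cite{Liu18}, and for the weak $\bQ$-Fano case they pass to the anti-canonical model $\ove{X}$ exactly as you do, quoting \cite[Lemma 2.9(2)]{LX19} for the inequality $\wh{\vol}(\phi(x),\ove{X})\leq \wh{\vol}(x,X)$ that you justify by hand (your justification is the right one: $\phi$ is crepant, $\phi_*\mathcal{O}_X=\mathcal{O}_{\ove{X}}$ gives $\vol_{\phi(x),\ove{X}}(E)\leq\vol_{x,X}(E)$ for each divisor $E$ centered at $x$, and the infimum defining $\wh{\vol}(\phi(x),\ove{X})$ runs over the larger set of divisors centered at $\phi(x)$). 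What you supply beyond the paper is a proof of Liu's inequality itself, and the route you take --- the valuative criterion $A_X(E)\geq S_X(E)$ from Theorem-Definition 2.7, the pointwise estimate $\vol_Y(-\pi^*K_X-tE)\geq (-K_X)^n-\vol_{x,X}(E)\,t^n$ obtained from the germ injection, and the elementary optimization giving $A_X(E)\geq \frac{n}{n+1}\bigl((-K_X)^n/\vol_{x,X}(E)\bigr)^{1/n}$ --- is precisely the standard proof of that theorem, so you have reproved the cited result rather than found an alternative to it. The two delicate points you flag are genuine but routine: take $m$ divisible by the Cartier index of $K_X$ so that $-mK_X$ trivializes near $x$ and $\ord_E$ of a section equals $\ord_E$ of its local germ, and use the existence of the limit defining $\vol_{x,X}(E)$ (standard for graded sequences of $\mathfrak{m}$-primary ideals) to convert colengths at level $\lceil mt\rceil$ into $t^n\vol_{x,X}(E)$. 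One further remark: your reduction to divisorial valuations is immediate only because this paper defines $\wh{\vol}(x,X)$ as an infimum over prime divisors; with the more common definition as an infimum over all real valuations one would additionally need the known fact that the two infima coincide.
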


\begin{proof}
If $X$ is a $\bQ$-Fano variety, the result follows from \cite{Liu18}. If $X$ is a weak $\bQ$-Fano variety, let $\phi: X\to \overline{X}$ be its anti-canonical model. Then we have $\phi^*(-K_{\overline{X}}) = -K_X$ and hence 
\[
(-K_X)^n \ =\  (-K_{\overline{X}})^n\ \leq\  \left(\frac{n+1}{n}\right)^n\cdot \wh{\vol}(\phi(x),\overline{X})\ \leq\  \wh{\vol}(x,X),
\]
where the last inequality follows from \cite[Lemma 2.9(2)]{LX19}.

\end{proof}


Now we introduce the CM line bundle of a flat family of $\mb{Q}$-Fano varieties, which is a functorial line bundle over the base (cf. \cite{PT06,PT09,Tia97}). 

Let $\pi:\mts{X}\rightarrow S$ be a proper flat morphism of connected schemes with $S_2$ fibers of pure dimension $n$, and $\mtc{L}$ be an $\pi$-ample line bundle on $\mts{X}$. By \cite{KM76}, there are line bundles $\lambda_i=\lambda_i(\mts{X},\mtc{L})$ on $S$ such that $$\det(\pi_{!}(\mtc{L}^k))=\lambda_{n+1}^{\otimes\binom{k}{n+1}}\otimes \lambda_n^{\otimes\binom{k}{n}}\otimes \cdots\otimes\lambda_0^{\otimes\binom{k}{0}}$$ for any $k\gg0$. Write the Hilbert polynomial for each fiber $\mts{X}_s$ as $$\chi(\mts{X}_s,\mtc{L}^k_{s})=b_0k^n+b_1k^{n-1}+O(k^{n-2}).$$
 
\begin{defn}
    The \emph{CM $\mb{Q}$-line bundle} of the polarized family $(\pi:\mts{X}\rightarrow S,\mtc{L})$ are $$\lambda_{\CM,\pi,\mtc{L}}:=\lambda_{n+1}^{n(n+1)+\frac{2b_1}{b_0}}\otimes \lambda_n^{-2(n+1)}.$$
    If, in addition, both $\mts{X}$ and $S$ are normal, and $-K_{\mts{X}/S}$ is a $\pi$-ample $\bQ$-Cartier divisor, then we write
    $\lambda_{\CM,\pi} = l^{-n}\lambda_{\CM, \pi, \mtc{L}}$ where  $\mtc{L} = -l K_{\mts{X}/S}$ is a $\pi$-ample line bundle for some $l\in \bZ_{>0}$.
\end{defn}

\begin{theorem}\label{KimpliesGIT} \textup{(cf. \cite[Thm.2.22]{ADL19})}
Let $f:\mts{X}\rightarrow S$ be a $\mb{Q}$-Gorenstein family of $\bQ$-Fano varieties over a normal projective base $S$. Let $G$ be a reductive group acting on $\mts{X}$ and $S$ such that $f$ is $G$-equivariant. Moreover, assume the following conditions are satisfied:
\begin{enumerate}[(i)]
\item for any $s\in S$, if $\Aut(\mts{X}_s)$ is finite, then the stabilizer $G_s$ is also finite;
\item if we have $\mts{X}_s\simeq \mts{X}_{s'}$ for $s,s'\in S$, then $s'\in G\cdot s$;
\item $\lambda_{\CM,f}$ is an ample $\mb{Q}$-line bundle on $S$.
\end{enumerate}
Then $s\in S$ is a GIT-(poly/semi)stable point with respect to the $G$-linearized $\mb{Q}$-line bundle $\lambda_{\CM,f}$ if $\mts{X}_s$ is a K-(poly/semi)stable $\mb{Q}$-Fano variety.
\end{theorem}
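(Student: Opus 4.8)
The plan is to verify the Hilbert--Mumford numerical criterion against the $G$-linearized ample $\bQ$-line bundle $\lambda_{\CM,f}$, whose ampleness is exactly hypothesis (iii), and to identify the resulting Mumford weights with Donaldson--Futaki invariants of the test configurations cut out by one-parameter subgroups. Fix $s\in S$ and a one-parameter subgroup $\lambda\colon \bG_m\to G$, and set $s_0:=\lim_{t\to 0}\lambda(t)\cdot s$. Restricting $f$ to the orbit closure $\ove{\lambda\cdot s}\simeq \bA^1$ and polarizing by (a multiple of) $-K_{\mts{X}/S}$ produces a test configuration $(\mts{X}_\lambda,\mtc{L}_\lambda)$ of the $\bQ$-Fano variety $\mts{X}_s$ with central fibre $\mts{X}_{s_0}$, on which $\lambda$ acts. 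The central ingredient I would invoke is that, by the very construction of the CM $\bQ$-line bundle through the Knudsen--Mumford expansion, the Mumford weight $\mu^{\lambda_{\CM,f}}(s,\lambda)$ equals a fixed positive multiple of the Donaldson--Futaki invariant $\Fut(\mts{X}_\lambda,\mtc{L}_\lambda)$ (cf. \cite{PT06,PT09,Tia97} and the normalization fixed in \cite{ADL19}); some care is needed here because the test configuration arising from $\lambda$ need not be normal, but the Futaki invariant only decreases under normalization, so the inequalities we need survive.

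With this dictionary in place I would treat the cases in turn. For the \emph{semistable} case: if $\mts{X}_s$ is K-semistable then $\Fut\geq 0$ for every test configuration, in particular for every $(\mts{X}_\lambda,\mtc{L}_\lambda)$, so $\mu^{\lambda_{\CM,f}}(s,\lambda)\geq 0$ for all $\lambda$, and the Hilbert--Mumford criterion gives GIT-semistability of $s$. As an immediate strengthening, in the \emph{stable} case a K-stable $\bQ$-Fano variety has finite automorphism group, so by hypothesis (i) the stabilizer $G_s$ is finite; hence every nontrivial $\lambda$ moves $s$ and produces a genuinely nontrivial (not product-type) test configuration, forcing $\Fut>0$ and thus $\mu^{\lambda_{\CM,f}}(s,\lambda)>0$. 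Finiteness of $G_s$ together with strict positivity of all such weights yields GIT-stability.

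The \emph{polystable} case is where the hypotheses (i)--(ii) do the real work. Here I would start from the GIT-semistability already established and show that the orbit $G\cdot s$ is closed in the semistable locus. Suppose $\lambda$ is a one-parameter subgroup with $\mu^{\lambda_{\CM,f}}(s,\lambda)=0$, equivalently $\Fut(\mts{X}_\lambda,\mtc{L}_\lambda)=0$. Since $\mts{X}_s$ is K-semistable and the Futaki invariant of this test configuration vanishes, its central fibre $\mts{X}_{s_0}$ is again K-semistable; being a $\bG_m$-equivariant K-semistable degeneration of the K-polystable variety $\mts{X}_s$, it must be isomorphic to $\mts{X}_s$. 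By hypothesis (ii) this forces $s_0\in G\cdot s$. As every limit arising from a weight-zero one-parameter subgroup lands back in $G\cdot s$, the orbit is closed among semistable points, so $s$ is GIT-polystable.

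The main obstacle, I expect, is the precise weight-equals-Futaki identity together with the control of non-normal test configurations underlying every case, and, for the polystable direction specifically, the implication that a degeneration of a K-semistable Fano with vanishing Donaldson--Futaki invariant remains K-semistable. This last point is what allows hypothesis (ii) to be applied, and it is the step that genuinely uses the developed theory of K-stability rather than formal GIT manipulations.
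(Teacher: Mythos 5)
The paper does not prove this statement at all: it is quoted verbatim from \cite[Thm.~2.22]{ADL19}, so there is no internal proof to compare against. Your outline is, in substance, the standard Paul--Tian-style argument that \cite{ADL19} gives, and it is correct: the weight-equals-Futaki identity for the CM line bundle, the Hilbert--Mumford criterion for the semistable case, and for the polystable case the chain ``$\mu=0\Rightarrow\Fut=0\Rightarrow$ central fiber K-semistable (by \cite[Lemma 3.1]{LWX21}) $\Rightarrow$ central fiber $\simeq\mts{X}_s$ by K-polystability $\Rightarrow s_0\in G\cdot s$ by (ii) $\Rightarrow$ closed orbit.'' Two small remarks. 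First, your worry about non-normal test configurations is moot here: the restriction of $f$ to $\ove{\lambda\cdot s}\simeq\bA^1$ is a flat family of normal ($\bQ$-Fano) fibers over a smooth curve, so the total space is automatically normal. Second, the theorem only asserts GIT-(poly/semi)stability, so your ``stable'' paragraph is not needed; as written it is also the shakiest part, since a nontrivial $\lambda$ moving $s$ could still induce a test configuration with isomorphic central fiber (hence $\Fut=0$), and ruling this out requires hypothesis (ii) together with a separatedness-type argument rather than the bare assertion that the test configuration is ``genuinely nontrivial.'' For the polystable step, the precise justification that the orbit is closed is the standard fact that a non-closed semistable orbit admits a one-parameter subgroup degenerating $s$ to a semistable point outside $G\cdot s$, and any such subgroup has $\mu=0$; you state this a bit loosely but the logic is sound.
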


The following theorem is usually called the \emph{K-moduli Theorem}, which is attributed to many people (cf. \cite{ABHLX20,BHLLX21,BLX19,BX19,CP21,Jia20,LWX21,LXZ22,Xu20,XZ20,XZ21}).

\begin{theorem}[K-moduli Theorem]\label{kmoduli}
Fix two numerical invariants $n\in \bN$ and $V\in \bQ_{>0}$. Consider the moduli functor $\mts{M}^K_{n,V}$ sending a  base scheme $S$ to

\[
\left\{\mts{X}/S\left| \begin{array}{l} \mts{X}\to S\textrm{ is a proper flat morphism, each geometric fiber}\\ \textrm{$\mts{X}_{\bar{s}}$ is an $n$-dimensional K-semistable $\bQ$-Fano variety of}\\ \textrm{volume $V$, and $\mts{X}\to S$ satisfies Koll\'ar's condition}\end{array}\right.\right\}.
\]
Then there is an Artin stack, still denoted by $\mts{M}^K_{n,V}$, of finite type over $\mb{C}$ with affine diagonal which represents the moduli functor. The $\mb{C}$-points of $\mts{M}^K_{n,V}$ parameterize K-semistable  $\mb{Q}$-Fano varieties $X$ of dimension $n$ and volume $V$. Moreover, the Artin stack $\mts{M}^K_{n,V}$ admits a good moduli space $\ove{M}^K_{n,V}$, which is a projective scheme, whose $\mb{C}$-points parameterize K-polystable $\mb{Q}$-Fano varieties. The CM $\mb{Q}$-line bundle $\lambda_{\CM}$ on $\mts{M}^K_{n,V}$ descends to an ample $\mb{Q}$-line bundle $\Lambda_{\CM}$ on $\ove{M}^K_{n,V}$.
\end{theorem}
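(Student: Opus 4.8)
The plan is to assemble the statement from the body of work to which it is attributed, organized so that the stack is constructed first, its good moduli space second, and projectivity last. \textbf{Step 1 (Boundedness).} First I would invoke the boundedness of $n$-dimensional K-semistable $\bQ$-Fano varieties of volume $V$ from \cite{Jia20} (relying on Birkar's solution of the BAB conjecture). This produces a uniform integer $r=r(n,V)$ such that $-rK_X$ is very ample with all higher cohomology vanishing for every such $X$, so that $|-rK_X|$ embeds the entire family into a single $\bP^N$ with a fixed Hilbert polynomial, realizing the members as points of one Hilbert scheme $\Hilb$. \textbf{Step 2 (The stack).} Next I would carve out the locally closed subscheme $Z\subseteq\Hilb$ parametrizing embedded normal $\bQ$-Fano varieties satisfying Koll\'ar's condition, and pass to the locus $Z^{\mathrm{ss}}$ whose fibers are K-semistable; openness of K-semistability in $\bQ$-Gorenstein families \cite{BLX19,Xu20} makes $Z^{\mathrm{ss}}$ open in $Z$. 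The natural $\PGL_{N+1}$-action then yields $\mts{M}^K_{n,V}=[Z^{\mathrm{ss}}/\PGL_{N+1}]$, an Artin stack of finite type with affine diagonal representing the moduli functor, whose $\bC$-points are exactly the K-semistable members once the embedding data are absorbed by the quotient.

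I would then produce the good moduli space via the intrinsic existence criterion of \cite{ABHLX20}. The key is that the stack $\mts{M}^K_{n,V}$ is $\Theta$-reductive and $S$-complete; these are reformulations of how test configurations and their central fibers behave under degeneration, and they guarantee that $\mts{M}^K_{n,V}$ admits a separated good moduli space $\ove{M}^K_{n,V}$ as an algebraic space, whose closed points correspond to K-polystable varieties.

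Finally I would establish properness and projectivity. Properness follows from separatedness \cite{BX19} together with the existence of K-polystable degenerations, the latter being the deepest input: it is supplied by the higher-rank finite-generation theorem of \cite{LXZ22}, which constructs a canonical K-polystable degeneration of any K-semistable Fano and thereby verifies the valuative criterion. To conclude, I would descend the CM $\bQ$-line bundle $\lambda_{\CM}$ on the stack to a $\bQ$-line bundle $\Lambda_{\CM}$ on $\ove{M}^K_{n,V}$ and prove its ampleness using positivity of the CM line bundle \cite{CP21,XZ20}, which upgrades the proper algebraic space to a projective scheme. The main obstacle is precisely this last step: properness rests on the finite-generation result of \cite{LXZ22}, by far the hardest single ingredient, while ampleness of $\Lambda_{\CM}$ depends on the delicate CM-positivity arguments of \cite{XZ20}; both are external deep theorems rather than facts one reproves here, so the honest content of the proof is the careful assembly of these inputs in the order above.
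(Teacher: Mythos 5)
The paper gives no proof of this statement: it is quoted as the known ``K-moduli Theorem'' and attributed to exactly the references \cite{Jia20,BLX19,Xu20,ABHLX20,BX19,BHLLX21,LXZ22,CP21,XZ20,XZ21} that you assemble. Your outline is a correct and standard reconstruction of how those inputs combine (boundedness, openness, quotient-stack presentation, good moduli space via S-completeness and $\Theta$-reductivity, properness via higher-rank finite generation, projectivity via CM positivity), so it matches the paper's treatment of the statement as an external deep theorem rather than something reproved here.
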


\begin{theorem}[cf. {\cite[Proposition 4.33]{CA21}}]\label{thm:general-Kstable}
A general smooth member of the family \textnumero 2.15 of Fano threefolds is K-stable.
\end{theorem}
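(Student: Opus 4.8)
The plan is to establish the stronger statement that the stability threshold satisfies $\delta(X)>1$ for a general member $X=\Bl_C\mathbb{P}^3$, from which K-stability is immediate. By openness of K-(semi)stability in families \cite{BL18,BLX19,Xu20} it suffices to verify this for a single sufficiently general $C$; equivalently, one may fix a \emph{symmetric} $(2,3)$-complete intersection $C$ with reductive $G\subseteq\Aut(X)$ and run the argument $G$-equivariantly in the sense of \cite{Zhu21}, so that only $G$-invariant divisorial valuations need to be tested. I would carry this out with the Abban--Zhuang method \cite{AZ22}, exploiting the Picard rank two geometry recorded in Section \ref{sec:2.15}.

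Concretely, $X$ has $-K_X=4H-E$ and $(-K_X)^3=22$; from $H^3=1$, $H\cdot E^2=-\deg C=-6$ and $E^3=-\deg N_{C/\mathbb{P}^3}=-30$ one reads off $\overline{\eff}(X)=\langle E,\wt Q\rangle$ with $\wt Q=2H-E$, and $\nef(X)=\langle H,L\rangle$ with $L=3H-E$. The two extremal rays $H$ (the contraction $\pi$) and $L$ (the contraction $\phi$) cut the big cone into two Mori chambers on each of which the Zariski decomposition of $-K_X-tD$ is explicit: the negative part is carried by $E$ once one crosses the $H$-ray, and by $\wt Q$ once one crosses the $L$-ray (one checks $L^3=3$, recording that $V$ is a cubic). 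Splitting the defining integrals across these chambers, a chamber-by-chamber computation yields $S_X(\wt Q)=\tfrac{37}{44}$ and $S_X(E)=\tfrac{35}{132}$, so $A_X(\wt Q)/S_X(\wt Q)=\tfrac{44}{37}$ and $A_X(E)/S_X(E)=\tfrac{264}{35}$, both strictly larger than $1$. Thus neither divisor destabilizes, and the smaller ratio $\tfrac{44}{37}$ signals that $\wt Q$ carries the binding estimate.

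I would then choose a flag $X\supset\wt Q\supset\ell\supset\{p\}$, with $\wt Q\cong\mathbb{P}^1\times\mathbb{P}^1$ and $\ell$ a general ruling, and bound the local threshold below by
\[
\delta_p(X)\ \ge\ \min\!\left\{\frac{A_X(\wt Q)}{S_X(\wt Q)},\ \delta_p\!\left(\wt Q;\,W^{\wt Q}_{\bullet,\bullet}\right)\right\},
\]
where $W^{\wt Q}_{\bullet,\bullet}$ is the refinement obtained by restricting the multigraded section ring to $\wt Q$; a parallel flag through the ruled surface $E\to C$ controls the valuations centered over the curve. The \emph{main obstacle} is precisely this recursion: one must compute the refined volumes on $\wt Q$ and on $E$ across both Mori chambers and verify that the resulting two-dimensional $\delta$ stays uniformly above $1$ as $p$ varies. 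The delicate inputs are (i) the behaviour over $C$, where generality of the $(2,3)$-complete intersection is needed to exclude exactly the line-configurations that become genuinely destabilizing in the singular members (cf. Theorem \ref{Main2}), and (ii) the bookkeeping of the non-nef part of $-K_X-tD$ inside the refined series, so that each surface computation remains controlled. Feeding these estimates back through the Abban--Zhuang inequality gives $\delta(X)>1$, and hence K-stability of the general member.
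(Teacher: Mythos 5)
The paper offers no proof of Theorem \ref{thm:general-Kstable}: the statement is imported verbatim from \cite[Proposition 4.33]{CA21}, and the stronger claim that \emph{every} smooth member is K-stable is attributed to \cite{GDGV23}, whose method is precisely the Abban--Zhuang scheme you outline. So your route coincides with the cited literature rather than with anything carried out in this paper. Your first-order numerics are essentially correct: the chamber structure $\nef(X)=\langle H,L\rangle$, $\ove{\eff}(X)=\langle E,\wt{Q}\rangle$ is right, and the chamber-by-chamber Zariski decompositions do give $S_X(E)=\frac{35}{132}$ and $S_X(\wt{Q})=\frac{37}{44}$. One slip: $E$ and $\wt{Q}$ are prime divisors \emph{on} $X$, so $A_X(E)=1$ (not $2$) and the relevant ratio is $\frac{132}{35}$ rather than $\frac{264}{35}$; the conclusion $\beta_X(E)>0$ is unaffected.

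The genuine gap is that the proposal stops exactly where the proof begins. Testing the two extremal divisors does not bound $\delta(X)$ from below; the Abban--Zhuang inequality you invoke requires computing $\delta_p\big(\wt{Q};W^{\wt{Q}}_{\bullet,\bullet}\big)$, the analogous refinement on $E$, and flags through points lying on neither surface, for \emph{every} point $p$, with the refined linear series assembled from the two Mori chambers. You correctly identify this as ``the main obstacle'' and then assert its outcome without performing any of it --- and this two-dimensional computation is where essentially all of the work in \cite{GDGV23} lies, including the delicate analysis over special positions of $C$. Two further cautions: (i) the equivariant reduction of \cite{Zhu21} is formulated for K-semistability and K-polystability, so to reach K-stability of the \emph{general} member you should establish $\delta>1$ (equivalently K-stability, by \cite{LXZ22}) for one explicit member and conclude by openness of the K-stable locus \cite{BLX19,Xu20}; passing from a $G$-equivariant estimate to the strict inequality $\delta>1$ needs an extra argument, e.g.\ finiteness of $\Aut(X)$ for the chosen member; (ii) restricting attention to flags inside $\wt{Q}$ and $E$ is not sufficient, since a priori a destabilizing valuation could be centered elsewhere. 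As written, this is a sound plan with correct preliminary computations, not a proof.
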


\begin{defn}
Let $\mts{M}^K_{\textup{№2.15}}$ (resp. $\ove{M}^K_{\textup{№2.15}}$) be the irreducible component of $\mts{M}^K_{3,22}$ (resp. $\ove{M}^K_{3,22}$) with reduced stack (resp. scheme) structure whose general point parameterizes a K-stable blow-up of $\mb{P}^3$ along a smooth $(2,3)$-complete intersection curve, which is necessarily non-empty by Theorem \ref{thm:general-Kstable}. We call $\mts{M}^K_{\textup{№2.15}}$ (resp. $\ove{M}^K_{\textup{№2.15}}$) the K-moduli stack (resp. the K-moduli space) of the family \textnumero 2.15 of Fano threefolds.
\end{defn}

\begin{remark}\textup{
   By construction, a K-moduli stack $\mts{M}^K_{n,V}$ (resp. a K-moduli space $\ove{M}^K_{n,V}$) could be non-reduced, reducible or disconnected. See \cite{KP21, Pet21, Pet22}  for examples along this direction. In practice, an explicit K-moduli spaces we consider is often an irreducible component of $\ove{M}^K_{n,V}$ for some $n,V$, whose general point parametrizes a K-polystable (usually smooth) Fano variety  that we are interested in. As we shall see later in Corollary \ref{cor:stack-smooth}, the connected component of $\mts{M}^K_{3,22}$ containing the locus of K-stable smooth Fano threefolds of family \textnumero 2.15 is indeed a smooth irreducible stack. Thus a posteriori we could also define $\mts{M}^K_{\textup{№2.15}}$ as this connected component since smoothness holds.}
\end{remark}

\subsection{Variation of GIT for $(2,3)$-complete intersection curves}\label{VGIT}

In this part, we briefly introduce the setup in \cite{CMJL14} of Variation of GIT (abbv. VGIT) for $(2,3)$-complete intersection curves.

Let $\mb{P}^9=\mb{P}H^0(\mb{P}^3,\mtc{O}_{\mb{P}^3}(2))$ be the parameter space of quadric surfaces in $\mb{P}^3$, and $\cQ\subseteq \mb{P}^3\times \mb{P}^9$ the universal family of quadric surfaces. Let $\pi_1:\mb{P}^3\times\mb{P}^9\rightarrow\mb{P}^3$ and $\pi_2:\mb{P}^3\times\mb{P}^9\rightarrow\mb{P}^9$ be the two projections. Consider the locally free sheaf $\mtc{E}:=\pi_{2*}(\mtc{O}_{\cQ}\otimes\pi_{1}^{*}\mtc{O}_{\mb{P}^3}(3))$ of rank 16 on $\mb{P}^9$ and set $\pi:\mb{P}\mtc{E}\rightarrow\mb{P}^9$ to be the corresponding projective bundle. Then one has $$\Pic(\mb{P}\mtc{E})\ \simeq\  \mb{Z}\cdot \eta\ \oplus\  \mb{Z}\cdot\xi,$$ where $\eta=\pi^{*}\mtc{O}_{\mb{P}^9}(1)$ and $\xi=\mtc{O}_{\mb{P}\mtc{E}}(1)$. We denote the class of a hyperplane in $\mb{P}^3$ by $h$.

We first compute the Chern character of $\mtc{E}$. Notice that $\mtc{E}$ fits into a short exact sequence \begin{equation}\label{*}
    0\longrightarrow \pi_{2*}(\mtc{I}_{\cQ}\otimes\pi_{1}^{*}\mtc{O}_{\mb{P}^3}(3))\longrightarrow \pi_{2*}(\pi_{1}^{*}\mtc{O}_{\mb{P}^3}(3))\longrightarrow \mtc{E}=\pi_{2*}(\mtc{O}_{\cQ}\otimes\pi_{1}^{*}\mtc{O}_{\mb{P}^3}(3))\longrightarrow 0,
\end{equation} and that 
\begin{align*}
    \pi_{2*}(\mtc{I}_{\cQ}\otimes\pi_{1}^{*}\mtc{O}_{\mb{P}^3}(3)) & =H^0(\mb{P}^3,\mtc{O}_{\mb{P}^3}(1))\otimes \mtc{O}_{\mb{P}^9}(-1),\\\pi_{2*}(\pi_{1}^{*}\mtc{O}_{\mb{P}^3}(3))& =H^0(\mb{P}^3,\mtc{O}_{\mb{P}^3}(3))\otimes \mtc{O}_{\mb{P}^9}.
\end{align*}
It follows that $$\che(\mtc{E})=20-4\cdot\che(\mtc{O}_{\mb{P}^9}(-1))=20-4\cdot\sum_{i=0}^{\infty}\frac{(-1)^i\eta^i}{i!}.$$ In particular, we have $c_1(\mtc{E})=4\eta$ and hence $$\omega_{\mb{P}\mtc{E}}=\pi^{*}(\omega_{\mb{P}^9}\otimes \det(\mtc{E}^{*}))\otimes \mtc{O}_{\mb{P}\mtc{E}}(-\rk\mtc{E})=-14\eta-16\xi.$$

Consider the natural $\PGL(4)$-action on $\mb{P}\mtc{E}$ induced from the action on $\mb{P}^3$. For any rational number $t>0$, we take a $\mb{Q}$-line bundle of the class $\eta+t\xi$. It follows from \cite[Theorem 2.7]{Ben14} that the line bundle is ample if and only if $t<\frac{1}{2}$. 

\begin{defn}\label{gitdefn}
    For any $t\in(0,\frac{1}{2})$, we define the quotient stack $$\mts{M}^{\GIT}(t)\ := \ \left[\mb{P}\mtc{E}^{\sst}(t)/ \PGL(4)\right],$$ which is an Artin stack. Moreover, $\mts{M}^{\GIT}(t)$ admits a good moduli space $$\ove{M}^{\GIT}(t)\ := \ \mb{P}\mtc{E}\sslash_t \PGL(4),$$ which is the (projective) VGIT moduli space parametrizing $(2,3)$-complete intersection curves.
\end{defn}

\begin{remark}\label{vgitremark}\textup{Here we collect results from \cite{CMJL14} on the VGIT moduli spaces.
  \begin{enumerate}
      \item Although $\eta+t\xi$ is not ample when $t\geq \frac{1}{2}$, one can still define the VGIT moduli spaces $\ove{M}^{\GIT}(t)$ for $\frac{1}{2}\leq t\leq \frac{2}{3}$ (cf. \cite[Section 6]{CMJL14}).
      \item There are five VGIT walls in total $$T_0=0, \ \  T_1=\frac{2}{9},\ \ T_2=\frac{2}{5},\ \  T_3=\frac{1}{2},\ \ T_4=\frac{2}{3}.$$  By structure theorem of VGIT, if $t\in (T_i,T_{i+1})$, then $\ove{M}^{\GIT}(t)$ is independent of the choice of $t$. Therefore, we write $\ove{M}^{\GIT}(T_i,T_{i+1})$ to denote $\ove{M}^{\GIT}(t)$ for any $t\in (T_i,T_{i+1})$.
      \item The GIT quotient space $\ove{M}^{\GIT}(\frac{2}{3})$ is isomorphic to the moduli space of Chow-polystable curves of genus four and degree $6$ in $\bP^3$; while the moduli $\ove{M}^{\GIT}(0,\frac{2}{9})$ is isomorphic to the GIT quotient $|\mtc{O}_{\mb{P}^1\times\mb{P}^1}(3,3)|\sslash \SO(4)$.
  \end{enumerate}  
}\end{remark}

\subsection{Moduli of curves and Hassett--Keel program}\label{HKP}

\begin{theorem}
    Let $g\geq 2$ be an integer, and $\ove{\mts{M}}_g$ be the  moduli stack of Deligne-Mumford stable (abbv. DM-stable) curves of genus $g$. The following properties hold.
\begin{enumerate}
    \item  $\ove{\mts{M}}_g$ is an irreducible smooth proper Deligne-Mumford stack. It admits a coarse moduli space $\ove{M}_g$ as a projective variety of dimension $3g-3$ with quotient singularities.
    \item The rational Picard group $\Pic_{\mb{Q}}(\ove{\mts{M}}_g)$ is generated by the Hodge line bundle $\lambda_{\Hodge}$, the class of closure of the locus parametrizing irreducible singular curves $\delta_0$, and $\delta_i$, and the class of closure of the locus parametrizing reducible singular curves with a component of genus $i$, where $1\leq i\leq \lfloor g/2\rfloor$.
    \item the divisor class $K_{\ove{\mts{M}}_g}+\delta$ descends to an ample divisor on $\ove{M}_g$, where $\delta=\sum_{i=0}^{\lfloor g/2\rfloor} \delta_i$.
\end{enumerate}
\end{theorem}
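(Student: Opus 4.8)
These three assertions are foundational facts about the moduli of stable curves, and the plan is to assemble them from the classical literature, reserving the genuine difficulty for part (3). For part (1) I would invoke Deligne--Mumford: properness is the stable reduction theorem, verified through the valuative criterion, since a family of smooth curves over a punctured disk admits, after finite base change, semistable reduction, and contraction of the unstable rational components, a unique stable limit. Smoothness is deformation-theoretic: for a stable curve $C$ the obstruction group $\Ext^2(\Omega_C,\mtc{O}_C)$ vanishes because $C$ is a one-dimensional local complete intersection, so the deformation functor is unobstructed and the versal base is smooth of dimension $\dim\Ext^1(\Omega_C,\mtc{O}_C)=3g-3$. Finiteness of automorphisms makes the stack Deligne--Mumford; the coarse space $\ove{M}_g$ exists by Keel--Mori, and since the stack is smooth with finite stabilizers the coarse space is \'etale-locally a quotient of a smooth variety by a finite group, whence the quotient singularities. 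Irreducibility reduces to that of the open locus $M_g$, classical by degeneration to the boundary.

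For part (2) I would use the excision sequence
$$\bigoplus_{i=0}^{\lfloor g/2\rfloor}\bZ\cdot\delta_i \longrightarrow \Pic_{\bQ}(\ove{\mts{M}}_g)\longrightarrow \Pic_{\bQ}(\mts{M}_g)\longrightarrow 0,$$
together with Harer's computation that $\Pic_{\bQ}(\mts{M}_g)$ is freely generated by $\lambda_{\Hodge}$ for $g\geq 3$. This shows the listed classes span; their linear independence is checked by pairing against families of test curves sweeping out each boundary divisor, so that they form a $\bQ$-basis.

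Part (3) is the substantive point. I would first compute the canonical class of the stack by Mumford's Grothendieck--Riemann--Roch calculation applied to the universal curve, obtaining $K_{\ove{\mts{M}}_g}=13\lambda_{\Hodge}-2\delta$; hence $K_{\ove{\mts{M}}_g}+\delta=13\lambda_{\Hodge}-\delta$, a class of slope $13$. Ampleness on $\ove{M}_g$ then follows from the Cornalba--Harris and Mumford positivity criterion, which asserts that $a\lambda_{\Hodge}-b\delta$ descends to an ample class exactly when the slope $a/b$ exceeds the critical value $11$ (with $b>0$). The hard part is precisely this last input: the criterion rests on showing that $11\lambda_{\Hodge}-\delta$ is nef, equivalently a delicate semistability estimate for the determinant of the Hodge bundle along arbitrary families of stable curves, and I would cite this rather than reprove it.
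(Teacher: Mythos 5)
The paper states this theorem without proof, as a recollection of classical results (Deligne--Mumford for (1), Harer and Arbarello--Cornalba for (2), Mumford's computation of $K_{\ove{\mts{M}}_g}=13\lambda_{\Hodge}-2\delta$ together with the Cornalba--Harris ampleness criterion for (3)); your assembly is exactly this standard argument and is correct. The only caveat is your closing claim in part (2) that the listed classes form a $\bQ$-basis: this fails for $g=2$, where $10\lambda_{\Hodge}=\delta_0+2\delta_1$, but since the theorem asserts only that these classes generate $\Pic_{\bQ}(\ove{\mts{M}}_g)$, this does not affect the validity of the proof.
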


The Hassett--Keel program was initiated by \cite{Has05}, aiming to give modular interpretations of certain log canonical models of the moduli space $\ove{M}_g$ of stable curves of fixed genus $g$. For any rational number $\alpha\in[0,1]$ such that $K_{\ove{\mts{M}}_g}+\alpha \delta$ is pseudo-effective, we can define $$\ove{M}_g(\alpha):=\Proj \bigoplus_{n\geq0}H^0(\ove{\mts{M}}_g, \lfloor n(K_{\ove{\mts{M}}_g}+\alpha \delta)\rfloor).$$ For large genus, the first three models are understood (cf. \cite{HH09,HH13,AFS17,AFS172,AFS173}), but completing the program entirely still seems out of reach. On the other hand, some cases of low genus curves were more or less clear in the past decade (cf. \cite{HL10,CMJL14,Fed12,Fed13,Mul14,zha23b}).

The authors to \cite{CMJL14} studied Hassett-Keel program for genus four curves via the following essential observation. 

\begin{theorem}\textup{(cf. \cite[Theorem 7.1]{CMJL14})}
    Let $\ove{M}^{\GIT}(t)$ be the GIT moduli space of $(2,3)$-complete intersection curves (cf. Definition \ref{gitdefn}). Then each of the log minimal models $\ove{M}_4(\alpha)$ for $\frac{8}{17}\leq \alpha\leq \frac{5}{9}$ is isomorphic to one of the GIT quotients $\ove{M}^{\GIT}(t)$ via the relation $t=\frac{34\alpha-16}{33\alpha-14}$.
\end{theorem}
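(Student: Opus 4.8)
The plan is to transfer the question from the side of curves to the side of $(2,3)$-complete intersections, where the VGIT apparatus of Definition \ref{gitdefn} applies. First I would recall that the canonical model of a non-hyperelliptic genus four curve is precisely a $(2,3)$-complete intersection in $\bP^3$, the quadric being the unique one through the canonical curve, so that $\ove{M}_4$ and the GIT quotients $\ove{M}^{\GIT}(t)$ are birational, with the hyperelliptic locus being the locus of indeterminacy. The crux is therefore to compare the natural polarizations. I would express the descended $\bQ$-ample class coming from $\eta+t\xi$ on each $\ove{M}^{\GIT}(t)$ in terms of the standard generators $\lambda_{\Hodge}$ and $\delta=\sum_i\delta_i$ of $\Pic_{\bQ}(\ove{\mts{M}}_4)$, and then match it against the log canonical class, writing $K_{\ove{\mts{M}}_4}+\alpha\delta = 13\lambda_{\Hodge}+(\alpha-2)\delta$ via the standard expression $K_{\ove{\mts{M}}_4}\equiv 13\lambda_{\Hodge}-2\delta$.

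The central computation is the determination of this descended class. I would work on the universal family $\cQ\to\bP^9$ of quadrics carrying the relative cubic data, form the universal $(2,3)$-curve over $\bP\mtc{E}$, and apply Grothendieck--Riemann--Roch to push forward powers of the relative dualizing sheaf and of $\xi$. This yields $\eta$ and $\xi$ (equivalently, a Hodge-type class and a discriminant-type class) as explicit combinations of $\lambda_{\Hodge}, \delta_0, \delta_1, \delta_2$ after descent along $\PGL(4)$. Solving the resulting linear system for the ratio of the $\lambda_{\Hodge}$- and $\delta$-coefficients as a function of $t$, and imposing proportionality to $13\lambda_{\Hodge}+(\alpha-2)\delta$, I expect to recover exactly the stated relation $t=\frac{34\alpha-16}{33\alpha-14}$. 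As a consistency check, the endpoints $\alpha=\frac{8}{17}$ and $\alpha=\frac{5}{9}$ map to $t=0$ and $t=\frac{2}{3}$, i.e. the full VGIT range, so the interval $[\frac{8}{17},\frac{5}{9}]$ is precisely covered.

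Having matched polarizations, I would upgrade the birational correspondence to honest isomorphisms of projective models. Here the VGIT structure theorem (Remark \ref{vgitremark}) is the organizing principle: as $t$ crosses the walls $T_0<T_1<\cdots<T_4$ the spaces $\ove{M}^{\GIT}(t)$ undergo flips and divisorial contractions, and I would check stage by stage that the loci being modified (non-reduced curves, nodal curves with hyperelliptic normalization, the triple conic, and so on) are exactly the loci whose modular meaning changes in the Hassett--Keel program at the corresponding $\alpha$. Concretely this requires a GIT (semi)stability analysis of $(2,3)$-curves in each chamber, translating the allowed singularities of the curve into the allowed singularities of the limits on the $\ove{M}_4(\alpha)$ side. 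Finiteness of automorphisms on the relevant loci, together with ampleness of the descended class, then lets me identify $\ove{M}_4(\alpha)\simeq\ove{M}^{\GIT}(t(\alpha))$ as the common $\Proj$ of proportional section rings.

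The main obstacle I anticipate is not the polarization bookkeeping but the verification that the GIT quotients literally \emph{are} the log canonical models, rather than merely carrying proportional ample classes. This amounts to proving that the rational map $\ove{M}_4(\alpha)\dashrightarrow\ove{M}^{\GIT}(t(\alpha))$ extends to an isomorphism, which forces one to control every exceptional and flipped locus and to show that the section ring $\bigoplus_n H^0(\ove{\mts{M}}_4,\lfloor n(K_{\ove{\mts{M}}_4}+\alpha\delta)\rfloor)$ coincides with the homogeneous coordinate ring cut out by the GIT polarization. The delicate endpoints $t=\frac{1}{2}$ and $t=\frac{2}{3}$, where the class ceases to be ample in the naive sense and one must extend the VGIT (cf. Remark \ref{vgitremark}(1)), together with the wall at $T_1=\frac{2}{9}$, where non-reduced and hyperelliptic-type degenerations interact, are where the singularity analysis is most intricate.
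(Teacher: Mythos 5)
This theorem is quoted in the paper from \cite[Theorem 7.1]{CMJL14} without proof, so there is no internal argument to compare against. Your outline --- identifying canonical genus-four curves with $(2,3)$-complete intersections, computing the descended polarization in terms of $\lambda_{\Hodge}$ and $\delta$, matching slopes against $K_{\ove{\mts{M}}_4}+\alpha\delta=13\lambda_{\Hodge}+(\alpha-2)\delta$, and then verifying chamber by chamber that the GIT quotients are genuinely the log canonical models rather than merely proportionally polarized birational models --- is precisely the strategy carried out in the cited reference, and your endpoint consistency check ($\alpha=\tfrac{8}{17}\mapsto t=0$, $\alpha=\tfrac{5}{9}\mapsto t=\tfrac{2}{3}$) is correct.
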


The next theorem describes part of the Hassett--Keel program for genus four curves. 

\begin{theorem}\label{HassettK}\textup{(cf. \cite[Main Theorem]{CMJL14})}
Let $0<\alpha \leq \frac{5}{9}$ be a rational number. Then the log minimal models $\overline{M}_4(\alpha)$ arise as GIT quotients of the parameter space $\bP E$.  Moreover, the VGIT problem gives us the following diagram:

\begin{equation}
\xymatrix @R=.07in @C=.07in{
 & \overline{M}_4 ( \frac{5}{9} , \frac{23}{44} ) \ar@{-->}[rr]\ar[ldd] \ar[rdd] & & \overline{M}_4 ( \frac{23}{44} , \frac{1}{2} ) \ar[ldd] \ar[rdd] \ar@{-->}[rr]& & \overline{M}_4 ( \frac{1}{2} , \frac{29}{60} ) \ar[ldd] \ar[rdd] & \\
&&&&&& \\
\overline{M}_4 ( \frac{5}{9} ) & & \overline{M}_4 ( \frac{23}{44} ) & & \overline{M}_4 ( \frac{1}{2} ) & & \overline{M}_4 [\frac{29}{60},\frac{8}{17}) \ar[dd] \\
&&&&&&\\
 & & & & & & \overline{M}_4 (\frac{8}{17} )   = \{*\}
 }
 \end{equation}

 \begin{itemize}
 \item[i)] the end point $\overline{M}_4 ( \frac{8}{17}+\epsilon)$ for $0<\epsilon\ll 1$ is obtained via GIT for $(3,3)$ curves on $\bP^1 \times \bP^1$;
 \item[ii)] the other end point $\overline{M}_4 ( \frac{5}{9} )$ is obtained via GIT for the Chow variety of genus 4 canonical curves;
 \item[iii)] the map to $\overline{M}_4 ( \frac{29}{60} )$ contracts the Gieseker--Petri divisor to a point, corresponding to a triple conic;
 \item[iv)] the flip at $\alpha=\frac{1}{2}$ removes the locus of nodal curves whose normalization is hyperelliptic, replacing them with the union of a conic and a double conic in a quadric surface.
 \end{itemize}
\end{theorem}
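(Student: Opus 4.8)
The plan is to present the whole one-parameter family of log canonical models $\{\ove{M}_4(\alpha)\}_{0 < \alpha \le 5/9}$ as the wall-and-chamber decomposition of a single variation-of-GIT problem on $\mb{P}\mtc{E}$, and then to read off each birational modification from the corresponding VGIT wall-crossing.

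First I would fix the dictionary between the two polarization parameters. Since a general canonical genus four curve is precisely a $(2,3)$-complete intersection, the classifying rational map $\mb{P}\mtc{E} \dashrightarrow \ove{M}_4$ is birational, and the heart of this step is to express the descended log canonical class $K_{\ove{\mts{M}}_4} + \alpha\delta$ --- which by the Harris--Mumford formula is a $\bQ$-combination of the Hodge class $\lambda_{\Hodge}$ and the total boundary $\delta$ --- in terms of the two generators $\eta,\xi$ of $\Pic(\mb{P}\mtc{E})$. This is a Chern-class computation in the spirit of the calculation of $\che(\mtc{E})$ above: writing $\lambda_{\Hodge}$ via $\pi_!$ of the universal sheaf and $\delta$ via the resultant locus produces a linear relation identifying $\eta + t\xi$ with a positive multiple of the log canonical polarization under the change of variables $t = \tfrac{34\alpha-16}{33\alpha-14}$ recorded in \cite[Theorem 7.1]{CMJL14}. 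Granting a chamber-by-chamber match of (semi)stability, this gives $\ove{M}_4(\alpha) \simeq \ove{M}^{\GIT}(t)$; for $\alpha \le \tfrac{8}{17}$ the log canonical model degenerates to the point at the bottom of the diagram, which is checked directly.

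The technical core is the GIT stability analysis for the $\PGL(4)$-action with respect to $\eta + t\xi$, via the Hilbert--Mumford criterion using one-parameter subgroups of $\SL(4)$. For a pair $(Q,W)$ of a quadric and its residual cubic, diagonalizing a one-parameter subgroup in suitable coordinates splits the numerical weight into an $\eta$-contribution from $Q$ and a $\xi$-contribution from $W$, so the destabilizing function is \emph{affine-linear in $t$}; this linearity forces the finite wall structure with walls $T_0 = 0, T_1 = \tfrac29, T_2 = \tfrac25, T_3 = \tfrac12, T_4 = \tfrac23$. On each open chamber I would classify the stable orbits and, on each wall, identify the strictly polystable representatives as the curves fixed by a weight-zero one-parameter subgroup (the triple conic, unions of conics, and the non-reduced conic-plus-double-conic configurations).

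Finally I would translate the wall-crossings into birational geometry using the structure theorem for variation of GIT (Thaddeus, Dolgachev--Hu): crossing a wall replaces one quotient by another through a common roof, the modified locus being exactly the orbits that change semistability there. Matching this against the stratification of $\ove{M}_4$ yields items (i)--(iv): the chamber $t \to 0^+$ is $|\mtc{O}_{\bP^1\times\bP^1}(3,3)| \sslash \SO(4)$ and the endpoint $t = T_4 = \tfrac23$ is the Chow quotient of canonical curves, both via Remark \ref{vgitremark}(3); the divisorial contraction at $T_1 = \tfrac29$ ($\alpha = \tfrac{29}{60}$) collapses the birational transform of the Gieseker--Petri divisor to the single orbit of the triple conic; and the flip at $T_2 = \tfrac25$ ($\alpha = \tfrac12$) exchanges the locus of nodal curves with hyperelliptic normalization for the conic-plus-double-conic locus. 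The main obstacle is the stability bookkeeping at the walls $T_2$ and $T_3$: one must pin down the finitely many strictly semistable orbits sharply enough to identify the flipped and contracted loci with the intrinsic Gieseker--Petri and hyperelliptic divisors on $\ove{M}_4$, and since $\eta + t\xi$ fails to be ample for $t \ge \tfrac12$ (Remark \ref{vgitremark}(1)) the regime past $T_3$ requires the separate construction of the extended quotient rather than a naive Hilbert--Mumford argument.
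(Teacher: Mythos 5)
The paper does not prove this theorem: it is imported verbatim from \cite[Main Theorem]{CMJL14} as background, so there is no internal proof to compare against. Your sketch is, in effect, a reconstruction of the strategy of that reference, and as an outline it is faithful and numerically consistent: the conversion $t=\tfrac{34\alpha-16}{33\alpha-14}$ correctly sends $\alpha=\tfrac{5}{9},\tfrac{1}{2},\tfrac{29}{60},\tfrac{8}{17}$ to $t=\tfrac{2}{3},\tfrac{2}{5},\tfrac{2}{9},0$, matching the walls of Remark \ref{vgitremark}(2), and your identifications of the two endpoints agree with Remark \ref{vgitremark}(3). Two caveats on where the sketch is thinner than the actual argument in \cite{CMJL14}. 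First, identifying $\ove{M}^{\GIT}(t)$ with the log canonical model $\ove{M}_4(\alpha)$ is not just a matter of matching line bundle classes on $\bP\mtc{E}$ with $K_{\ove{\mts{M}}_4}+\alpha\delta$ under the birational map; one must also control the exceptional loci in both directions (the GIT quotient must not extract divisors, and the divisors contracted by $\ove{M}_4\dashrightarrow\ove{M}^{\GIT}(t)$ must be exactly those on which the log canonical class is non-positive) so that the section rings agree --- your phrase ``granting a chamber-by-chamber match of (semi)stability'' quietly absorbs this step, which is a genuine piece of work involving the explicit list of semistable replacement of non-complete-intersection and badly singular curves. Second, as you note, for $t\geq\tfrac{1}{2}$ the class $\eta+t\xi$ leaves the ample cone, so the quotients $\ove{M}^{\GIT}(t)$ for $t\in[\tfrac12,\tfrac23]$ require the separate construction of \cite[Section 6]{CMJL14} (in the paper's notation this is exactly why Remark \ref{vgitremark}(1) is stated); a naive Hilbert--Mumford analysis with respect to a non-ample linearization would not suffice there. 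With those two points supplied, your route is the same as the cited one.
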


\section{Moduli spaces of K3 surfaces}\label{K3SURFACES}

In this section, we prove some results on the geometry of moduli spaces of (lattice-) polarized K3 surfaces. This is useful in understanding the anti-canonical linear series of a (weak) Fano variety.

\subsection{Geometry and moduli of K3 surfaces}

\begin{defn}
    Let $L_{K3}:=\mb{U}^{\oplus3}\oplus \mb{E}_8^{\oplus2}$ be a fixed (unique) even unimodular lattice of signature $(3,19)$.
\end{defn}

Let $\Lambda$ be a rank $r$ primitive sublattice of $L_{K3}$ with signature $(1,r-1)$. 
A vector $h\in \Lambda\otimes \mb{R}$ is called \emph{very irrational} if $h\notin \Lambda'\otimes \mb{R}$ for any primitive proper sublattice $\Lambda'\subsetneq \Lambda$. Fix a very irrational vector $h$ with $(h^2)>0$.

\begin{defn}
    A \emph{$\Lambda$-polarized K3 surface} (resp. a \emph{$\Lambda$-quasi-polarized K3 surface}) $(X,j)$ is a K3 surface $X$ with ADE singularities (resp. a smooth projective surface $X$) together with a primitive lattice embedding $j:\Lambda\hookrightarrow\Pic(X)$ such that $j(h)\in \Pic(X)_{\mb{R}}$ is ample (resp. big and nef).
    \begin{enumerate}
        \item Two such pairs $(X_1,j_1)$ and $(X_2,j_2)$ are called \emph{isomorphic} if there is an isomorphism $f:X_1\stackrel{\sim}{\rightarrow} X_2$ of K3 surfaces such that $j_1=f^{*}\circ j_2$.
        \item The \emph{$\Lambda$-(quasi-) polarized period domain} is $$\mb{D}_{\Lambda}:=\mb{P}\{w\in {\Lambda}^{\perp}\otimes\mb{C}:(w^2)=0,\ (w.\ove{w})>0\}.$$
    \end{enumerate}
    When $r=1$, i.e. $\Lambda$ is of rank one, it is convenient to choose $h$ to be the effective generator $L$ of $\Lambda$. We denote by $d$ the self-intersection of $L$, and we call $(X,L)$ a (quasi-)polarized K3 surface of degree $d$.
\end{defn}



\begin{defn}
   For a fixed lattice $\Lambda$ with a very irrational vector $h$, one define the \emph{moduli functor $\mts{F}_{\Lambda}$ of $\Lambda$-polarized K3 surfaces} to send a base scheme $T$ to 
\[
\left\{(f:\mts{X}\rightarrow T;\varphi)\left| \begin{array}{l} \mts{X}\to T\textrm{ is a proper flat morphism, each geometric fiber}\\ \textrm{$\mts{X}_{\bar{t}}$ is an ADE K3 surface, and $\varphi:\Lambda\longrightarrow\Pic_{\mts{X}/T}(T)$ is }\\ \textrm{a group homomorphism such that the induced map }\\ \textrm{$\varphi_{\bar{t}}:\Lambda\rightarrow \Pic(\mts{X}_{\bar{t}})$ is an isometric primitive embedding of}\\ \textrm{lattices and that $\varphi_{\bar{t}}(h)\in \Pic(\mts{X}_{\bar{t}})_{\mb{R}}$ is an ample class.} \end{array}\right.\right\}.
\]

\end{defn}


\begin{theorem}[cf. \cite{Dol96, AE23}]\label{isommoduli}
    The moduli functor of $\Lambda$-polarized K3 surfaces is represented by a smooth separated Deligne-Mumford (DM) stack $\mts{F}_{\Lambda}$. 
    Moreover, $\mts{F}_{\Lambda}$ admits a normal coarse moduli space $F_{\Lambda}$, whose analytification is isomorphic to $\mb{D}_{\Lambda}/\Gamma$, where $\Gamma:=\{\gamma\in \mathrm{O}(L_{K3}):\gamma|_{\Lambda}=\Id_{\Lambda} \}$.
\end{theorem}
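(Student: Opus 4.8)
The plan is to split the statement into two essentially independent parts: first, that $\mts{F}_\Lambda$ is a smooth separated Deligne--Mumford stack, which is purely deformation-theoretic; and second, the identification of the coarse space $F_\Lambda$ with the arithmetic quotient $\mb{D}_\Lambda/\Gamma$, which rests on the Torelli theorem for K3 surfaces. For representability I would realize $\mts{F}_\Lambda$ étale-locally as a quotient: fixing $m\gg 0$ so that $j(mh)$ is very ample on every geometric fiber, each $\Lambda$-polarized K3 embeds into a fixed projective space, the surfaces so embedded form a locally closed $\mathrm{PGL}$-invariant subscheme $\mts{H}$ of a suitable Hilbert scheme, and adding the primitive lattice marking is a finite étale cover; $\mts{F}_\Lambda$ is then the associated quotient stack. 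The stack is Deligne--Mumford because $\Aut(X,j)$ is finite: any automorphism preserving the ample class $j(h)$ acts on $H^2(X,\mb{Z})$ as a Hodge isometry fixing an ample class, and such a group is finite, while one acting trivially on $H^2$ must be the identity.

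Smoothness is the assertion that the deformation theory of an ADE $\Lambda$-polarized K3 is unobstructed, and here I would pass to the minimal resolution $\wt{X}$, viewed as a smooth $\Lambda$-quasi-polarized K3 (the ADE-polarized and smooth quasi-polarized moduli problems being equivalent via contraction and resolution of the $(-2)$-curves orthogonal to $j(h)$). The obstruction space for deformations of $\wt{X}$ is $H^2(\wt{X},T_{\wt{X}})$, which by Serre duality and the triviality of $\omega_{\wt{X}}$ is dual to $H^0(\wt{X},\Omega^1_{\wt{X}})=0$; hence deformations are unobstructed and the local deformation space is smooth of dimension $h^1(\wt{X},T_{\wt{X}})=20$. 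Imposing that the $r=\rk\Lambda$ classes of $j(\Lambda)$ stay algebraic cuts out a smooth locus of dimension $20-r$, matching $\dim\mb{D}_\Lambda$. Separatedness I would verify through the valuative criterion: given two families over a DVR with isomorphic generic fibers, the special fibers must agree because the relative ample (equivalently canonical) model of a quasi-polarized K3 is unique, so the marked isomorphism extends across the closed point. Existence of the normal coarse space $F_\Lambda$ as an algebraic space then follows from Keel--Mori.

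The heart of the matter is the second part. I would construct the period map $\mts{F}_\Lambda^{\mathrm{an}}\to \mb{D}_\Lambda/\Gamma$ sending $(X,j)$ to the Hodge line $H^{2,0}(\wt{X})\subset \Lambda^\perp\otimes\mb{C}$ read off through the marking; this is holomorphic, and by the local Torelli theorem its differential is everywhere an isomorphism. Global Torelli (Piatetski-Shapiro--Shafarevich, Burns--Rapoport) together with surjectivity of the period map (Kulikov, Persson--Pinkham, Todorov) then shows the map is bijective on points, hence an isomorphism of normal analytic spaces; normality of $F_\Lambda$ is inherited since $\mb{D}_\Lambda/\Gamma$ is the quotient of a smooth manifold by an arithmetic group acting properly discontinuously, and its algebraicity is furnished by Baily--Borel.

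I expect the main obstacle to be precisely the interface between the \emph{ample} condition defining $\mts{F}_\Lambda$ and the \emph{big and nef} condition visible in Hodge theory: the period domain cannot distinguish a smooth quasi-polarized K3 from the ADE-singular surface obtained by contracting the $(-2)$-curves orthogonal to $j(h)$, so one must show this contracted model always exists, is unique, and has $j(h)$ in its ample cone. This is exactly where the \emph{very irrational} hypothesis on $h$ enters: since $h$ lies in no proper primitive sublattice of $\Lambda$, the wall-and-chamber structure inside $\Lambda^\perp$ can never push $j(h)$ onto the boundary of the nef cone, which guarantees that the period map surjects onto the \emph{entire} quotient $\mb{D}_\Lambda/\Gamma$ rather than onto the complement of a discriminant divisor, and thus that the isomorphism $F_\Lambda\simeq \mb{D}_\Lambda/\Gamma$ holds cleanly.
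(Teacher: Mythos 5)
The paper does not actually prove Theorem \ref{isommoduli}; it is quoted from \cite{Dol96, AE23}, so there is no internal argument to compare against. Your outline follows the same standard route as those references (bounded embedding into a Hilbert scheme, finiteness of automorphisms fixing an ample class, unobstructedness of deformations, uniqueness of ample models for separatedness, Keel--Mori, and local/global Torelli plus surjectivity of the period map and Baily--Borel for the identification of $F_\Lambda$ with $\mb{D}_\Lambda/\Gamma$), and most of it is correct as stated.

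The one step that does not hold as written is the claim that ``the ADE-polarized and smooth quasi-polarized moduli problems [are] equivalent via contraction and resolution.'' These two stacks have the same $\bC$-points and the same coarse space, but they are not isomorphic: going from the ADE family back to a smooth quasi-polarized family requires a simultaneous resolution, which by Brieskorn--Tyurina exists only after a finite base change and, when it exists over a DVR, need not be unique (distinct small resolutions differ by flops). This is precisely why the quasi-polarized moduli problem is non-separated while the ADE-polarized one --- the one the paper actually uses --- is separated; your own separatedness argument via uniqueness of relative ample models is the correct one for $\mts{F}_\Lambda$, but it is incompatible with the asserted equivalence. Consequently, smoothness cannot simply be imported from $H^2(\wt{X},T_{\wt{X}})=0$ on the resolution. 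One must either compute directly on the singular surface --- e.g.\ $\Ext^2(\Omega^1_X,\mtc{O}_X)\simeq \Hom(\mtc{O}_X,\Omega^1_X)^{*}=H^0(X,\Omega^1_X)^{*}=0$, using $\omega_X\simeq\mtc{O}_X$ and the fact that $\Omega^1_X$ is torsion-free for isolated ADE surface singularities so that $H^0(X,\Omega^1_X)$ injects into $H^0(\wt{X},\Omega^1_{\wt{X}})=0$ --- or deduce smoothness from the period map being \'etale. Your final paragraph on the role of the very irrational hypothesis (that $h$ is orthogonal to no nonzero class of $\Lambda$, so the only $(-2)$-walls through $j(h)$ come from classes in $j(\Lambda)^\perp\cap\Pic(X)$, which are exactly the curves contracted to ADE points) is the right explanation and is consistent with how \cite{AE23} sets things up.
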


\begin{remark}
    \textup{When $\Lambda$ is of rank one, we denote by $d$ the self-intersection of a generator of $\Lambda$, and by $\mts{F}_d$ (resp. $F_d$) the corresponding moduli stack (resp. coarse moduli space).}
\end{remark}

\subsection{K3 surfaces in anticanonical linear series}\label{k33}

Let $\mathscr{H}_{6,\nodal}\subseteq \mts{F}_6$ be the divisor in the moduli stack of polarized K3 surfaces of degree $6$ parametrizing singular K3 surfaces, and $\mathscr{H}_{6,\nodal}^{\nu}\rightarrow \mathscr{H}_{6,\nodal}$ be the normalization. 
Let $\Lambda_0$ be a rank $2$ hyperbolic sublattice with generator $L,Q$ satisfying that $$(L^2)=6,\quad (L.Q)=0,\quad \textup{and}\quad (Q^2)=-2,$$ and signature $(1,1)$. 
Let $J:\Lambda_0\hookrightarrow L_{K3}$ be a primitive embedding. Let $\mts{F}_{\Lambda_1}$ (resp. $\mts{F}_{\Lambda_2}$) be the moduli space of $\Lambda_0$-polarized K3 surface with respect to the very irrational vector $h=h_1:=L-\epsilon Q$ (resp. $h=h_2:=2L-(1-\epsilon) Q$) where $0<\epsilon \ll 1$ is an irrational number. By \cite[Section 2 of arXiv version 1]{AE23}, for each $i\in \{1,2\}$ the universal K3 surface over the moduli stack $\mts{F}_{\Lambda_i}$ is independent of the choice of $\epsilon$. As we shall see later,  these two universal families are indeed isomorphic by Lemma \ref{samefamily}.

\begin{prop}\label{isomo}
    There is an isomorphism of moduli stacks $$\mathscr{H}_{6,\nodal}^{\nu}\stackrel{\simeq}{\longrightarrow} \mts{F}_{\Lambda_1}.$$ Moreover, the total space of the universal family over $\mts{F}_{\Lambda_1}$ is the blow-up of the universal family over $\mts{H}_{6,\nodal}^{\nu}$ along a section in the singular locus. 
\end{prop}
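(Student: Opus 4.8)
The plan is to establish the isomorphism $\mathscr{H}_{6,\nodal}^{\nu}\xrightarrow{\simeq}\mts{F}_{\Lambda_1}$ by constructing mutually inverse functors at the level of families and then bootstrapping to the claim about the universal families via a relative blow-up. The starting observation is that a degree-$6$ polarized K3 surface $(S,L)$ with $(L^2)=6$ falls, by Theorem~\ref{Mayer}, into one of three cases; a \emph{singular} (nodal) such surface parametrized by $\mathscr{H}_{6,\nodal}$ acquires an extra $(-2)$-curve, and it is exactly this extra class that should provide the second lattice generator $Q$ with $(Q^2)=-2$, $(L.Q)=0$. First I would make this precise: given a family $(f:\mts{S}\to T;\varphi)$ in $\mathscr{H}_{6,\nodal}^{\nu}$, the normalization base forces a consistent choice of exceptional/nodal $(-2)$-class $Q$ in each fiber, producing a second section of the relative Picard sheaf; together with $L$ this yields a homomorphism $\Lambda_0\to\Pic_{\mts{S}/T}(T)$, and one checks fiberwise that the embedding is primitive and isometric with the prescribed Gram matrix. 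The key point is that $h_1=L-\epsilon Q$ is big and nef (indeed ample after contracting the $(-2)$-curve) on each fiber: $L$ is the ample polarization and subtracting a small multiple of the contracted class keeps it in the ample cone of the \emph{minimal} (ADE) model, so $(S,j)$ is genuinely $\Lambda_0$-polarized with respect to $h_1$. This defines the forward functor $\mathscr{H}_{6,\nodal}^{\nu}\to\mts{F}_{\Lambda_1}$.

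For the inverse direction I would start from a $\Lambda_0$-polarized family $(\mts{X}\to T;\varphi)$ with $\varphi(h_1)$ ample, where the fibers are ADE K3 surfaces. The divisor class $L=\varphi(L)$ is big and nef with $(L^2)=6$, so its ample (anticanonical) model $\ove{\mts{X}}$ over $T$ contracts precisely the curves orthogonal to $L$—these are exactly the $(-2)$-curves in the span of $Q$—producing a family of degree-$6$ polarized K3 surfaces $(\ove{\mts{S}},\ove{L})$. Because $h_1$ (not $L$) is the ample class on $\mts{X}$ while $L$ alone is only big and nef, the contraction is nontrivial exactly along $Q$, and the resulting polarized K3 is nodal, hence lands in $\mathscr{H}_{6,\nodal}$; the retained datum of $Q$ lifts it to the normalization $\mathscr{H}_{6,\nodal}^{\nu}$. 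One then verifies these two constructions are mutually inverse using the universal property of ample models together with the fact that both stacks are smooth separated DM stacks (Theorem~\ref{isommoduli} and Theorem~\ref{isomofL}), so it suffices to check the bijection on objects and tangent spaces, or simply that the period maps agree: both stacks have the same period domain $\mb{D}_{\Lambda_0}$ and monodromy group $\Gamma$, and the constructed functors are compatible with the period maps.

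For the second assertion, the relation between the two universal families is precisely the contraction just described, read in reverse. Over $\mts{F}_{\Lambda_1}$ the universal surface $\mts{X}$ carries the big-and-nef $L$ whose relative ample model is the universal surface over $\mathscr{H}_{6,\nodal}^{\nu}$; the contracted locus is a section lying in the fiberwise singular locus (the node of each nodal degree-$6$ K3), and $\mts{X}$ is recovered as the blow-up of this section. I would verify that this contraction is a \emph{simultaneous resolution} of the nodes along the section: since the family of nodal K3 surfaces is equisingular along the chosen section (the $Q$-datum pins down a single $A_1$-singularity varying in a section), the blow-up of the reduced section in the singular locus produces a smooth total space whose fibers are the minimal resolutions, matching $\mts{X}$. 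This identifies the universal family over $\mts{F}_{\Lambda_1}$ with $\Bl_{\text{sec}}$ of the universal family over $\mathscr{H}_{6,\nodal}^{\nu}$.

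The main obstacle I anticipate is showing that the $(-2)$-class $Q$ can be chosen \emph{globally} and \emph{consistently} in families—i.e. that the nodal section and its exceptional class form an honest section of the relative Néron--Severi sheaf, rather than being merely fiberwise data subject to monodromy. This is exactly why one passes to the \emph{normalization} $\mathscr{H}_{6,\nodal}^{\nu}$: the normalization trivializes the monodromy on the $(-2)$-class, allowing $Q$ to be single-valued, and it is where the argument must be handled carefully (for instance, ruling out the hyperelliptic or unigonal degenerations of Theorem~\ref{Mayer} that would alter the shape of $|L|$, and confirming that the generic nodal member has a single node so that the section is well-defined and the blow-up is a simultaneous resolution rather than a partial one).
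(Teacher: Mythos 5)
Your overall strategy (relate the two stacks by contracting/blowing up the $(-2)$-class $Q$) is the right one, and your ``inverse direction'' is essentially the map the paper actually constructs: starting from a $\Lambda_1$-polarized family, take the relative ample model of $\mtc{L}$ to land in $\mathscr{H}_{6,\nodal}$ and lift to the normalization. But your proposal has a genuine gap in the \emph{forward} direction, which is exactly the step you flag as the main obstacle and then dismiss too quickly. The assertion that ``the normalization trivializes the monodromy on the $(-2)$-class, allowing $Q$ to be single-valued'' is not a proof and is not automatic: over the open locus where the fiber has a single $A_1$-singularity the class $Q$ is canonically the effective exceptional class and there is no monodromy to kill, while over the deeper strata (several nodes, or worse ADE points) the ambiguity is in \emph{which} singular point to distinguish, and passing to the normalization of the divisor does not by itself produce a section of the relative Picard sheaf or a section of the singular locus. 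The paper avoids constructing this functor altogether: it only builds $\phi:\mts{F}_{\Lambda_1}\to\mathscr{H}_{6,\nodal}^{\nu}$ and then proves $\phi$ is representable, quasi-finite, birational, and \emph{proper} between normal DM stacks, concluding by Zariski's main theorem. The properness step is a genuine valuative-criterion argument: given a family over a punctured curve, one takes the closure of the section of distinguished nodes, blows it up, and must then check that $\mu^*L-\epsilon Q$ is still ample on the possibly worse central fiber. This is the content of Lemma \ref{ampleness} (nefness of $2\mu^*L-Q$ on the blow-up of any singular point of a degree-$6$ polarized K3, including the unigonal case), and nothing in your proposal substitutes for it.

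Two further points. First, your fallback of checking that ``the period maps agree'' with the same period domain and group $\Gamma$ is circular: $\mathscr{H}_{6,\nodal}^{\nu}$ is defined as the normalization of a divisor in $\mts{F}_6$, and its identification with an arithmetic quotient of $\mb{D}_{\Lambda_0}$ is a consequence of the proposition, not an input. Second, your description of the second assertion as a ``simultaneous resolution of the nodes'' is imprecise: the universal family over $\mts{F}_{\Lambda_1}$ is the blow-up of the universal family over $\mathscr{H}_{6,\nodal}^{\nu}$ along the distinguished section only; fibers may retain other ADE singularities away from the section, so it is not the fiberwise minimal resolution, and invoking simultaneous resolution (which in general requires a base change) is the wrong framework here.
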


\begin{proof}
    Let $f:(\mts{S},\mtc{L}-\epsilon \mts{Q})\rightarrow \mts{F}_{\Lambda_1}$ be the universal family of K3 surfaces. Here $\mtc{L}\in \Pic_{\mts{S}/\mts{F}_{\Lambda_1}}(\mts{F}_{\Lambda_1})$ is the descent of a line bundle on the total space after base change to an \'etale cover modulo pull-back of line bundles on the base (see e.g. \cite[Section 5.1]{Huy16}).  Notice that $\mtc{L}$ is $f$-nef and $f$-big as $0<\epsilon \ll 1$. Taking the $f$-ample model with respect to $\mtc{L}$, one obtains a family of polarized K3 surfaces of degree $6$ $$\xymatrix{
 & (\mts{S},\mtc{L}) \ar[rr] \ar[dr]_{{f}}  &  &  (\ove{\mts{S}},\ove{\mtc{L}}) \ar[dl]^{\ove{f}}\\
 & & \mts{F}_{\Lambda_1}  &\\
 }$$ Observe that for a general $\ove{f}$-fiber, there is an $A_1$-singularity (i.e. an ordinary double point). Then by the universal property of $\mts{F}_6$, there exists a natural morphism $\mts{F}_{\Lambda_1}\rightarrow \mts{F}_6$, whose image lies in $\mathscr{H}_{6,\nodal}$. As $\mts{F}_{\Lambda_1}$ is normal by Theorem \ref{isommoduli}, then the morphism factors through a morphism $\phi:\mts{F}_{\Lambda_1}\rightarrow\mathscr{H}_{6,\nodal}^{\nu}$, which descends to a morphism $\varphi: F_{\Lambda_1}\rightarrow H^{\nu}_{6,n}$. We shall show that $\phi$ is a representable finite birational morphism between normal DM stacks, which would imply that it is an isomorphism by Zariski's main theorem.

We start with the quasi-finiteness of the morphism $\phi:\mts{F}_{\Lambda_1}\rightarrow \mts{F}_6$. For every degree $6$ polarized (singular) K3 surface $(\ove{S},\ove{L})$, there are finitely many singularities. Denote by $\beta:\wt{S}\rightarrow\ove{S}$ the minimal resolution of $\ove{S}$, which is still a K3 surface, and $\wt{L}:=\beta^*\ove{L}$. Notice that every preimage $(S,\Lambda)$, where $\Lambda$ is generated by $L$ and $Q$, of $(\ove{S},\ove{L})$ under $\phi$ satisfies that $\beta$ factor as $\wt{S}\rightarrow S\rightarrow \ove{S}$, and there are finitely many such $S$. Moreover, for each such an $S$, since $Q\in L^{\perp}$ is an integral class and by Hodge index theorem $L^{\perp}$ is negative definite, there are finitely many choices for $Q$. This proves the quasi-finiteness. Moreover, $\phi$ is birational as a general K3 surface in $\mts{H}_{6, \nodal}$ has only one $A_1$-singularity.

 To prove that $\phi$ is representable, it suffices to check that the morphism $\mts{F}_{\Lambda_1}\to \mts{F}_6$ induces an injection of stabilizers, which would imply that $\phi$ induces an injection of stabilizers. As any stabilizer $\sigma$ of $(S,L)\in \mts{F}_{\Lambda_1}$ preserves the lattice as well as the very irrational vector, then it sends $L$ to $L$. Therefore, $\sigma$ induces an isomorphism on the ample model $\ove{S}$ of $S$ with respect to $L$.
 
 Now let us prove that $\phi$ is proper, and hence finite. Suppose that $f:\mts{S}\rightarrow (0\in B)$ is a 1-dimensional family of degree $6$ polarized K3 surfaces such that there is a commutative diagram $$\xymatrix{
 & \wt{\mts{S}}^{\circ} \ar[rr]^{g} \ar[dr]_{\wt{f}^{\circ}}  &  &  \mts{S}^{\circ} \ar[dl]^{f^{\circ}}\\
 & & B^{\circ}  &\\
 },$$ where $B^{\circ}=B-\{0\}$, and $g$ is obtained by taking ample model over $B^{\circ}$ with respect to $\mtc{L}$. We may also assume that $g_b:\wt{\mts{S}}_b\rightarrow \mts{S}_b$ contracts a unique conic to an $A_1$-singularity $p_b$ for any $b\in B^{\circ}$. Taking the closure of $\{p_b\}_{b\in B^{\circ}}$ in $\mts{S}$, one obtains a closed subscheme $\mts{P}$. Then $\wt{f}:\Bl_{\mts{P}}\mts{S}\rightarrow B$ coincide with $\wt{f}^{\circ}$ over $B^{\circ}$. Moreover, by Lemma \ref{ampleness}, the family $\wt{f}$ yields a filling of $B^{\circ}\rightarrow \mts{F}_{\Lambda_1}$. If there is another filling, corresponding to a $\Lambda_1$-polarized K3 surface $S$, then by the universal property of blow-up, there is a natural birational morphism $S\rightarrow \mts{\wt{S}}_0$. On the other hand, both $S$ and $\mts{\wt{S}}_0$ are the ample model of $S$ with respect to $L-\epsilon Q$, and hence they are isomorphic. 
 
 Therefore, by Zariski's Main Theorem for DM stacks, we conclude that $\phi$ is an isomorphism.

\end{proof}

\begin{lemma}\label{ampleness}
    Let $(S,L)$ be a polarized K3 surface of degree $6$ with a singularity $p\in S$. Let $\mu:\wt{S}:=\Bl_pS \rightarrow S$ be the blow-up at $p$ with exceptional divisor $Q$. Then $2\mu^{*}L-Q$ is nef. In particular, for any $0<\epsilon\ll1$, the $\mb{R}$-Cartier $\mb{R}$-divisor $2\mu^{*}L-(1-\epsilon)Q$ is ample.
\end{lemma}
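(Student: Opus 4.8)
The plan is to verify the nefness of $D:=2\mu^{*}L-Q$ by intersecting it with every irreducible curve on $\wt{S}$, and then to upgrade this to ampleness of $2\mu^{*}L-(1-\epsilon)Q$ via the Nakai--Moishezon criterion. Since the relevant singularity $p$ is an ordinary double point, the blow-up $\mu$ is the minimal resolution at $p$, so $\wt{S}$ is again a smooth K3 surface, $Q$ is a smooth rational $(-2)$-curve, and $(\mu^{*}L\cdot Q)=0$, $(Q^{2})=-2$. As a first sanity check one computes $D\cdot Q=-(Q^{2})=2>0$, while $D^{2}=4(L^{2})+(Q^{2})=22>0$ and $D\cdot\mu^{*}L=2(L^{2})=12>0$, so $D$ already sits in the positive cone.

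Now I would fix an irreducible curve $C\neq Q$ and set $\bar{C}:=\mu(C)$. Writing $\mu^{*}\bar{C}=C+aQ$, the condition $(\mu^{*}\bar{C}\cdot Q)=0$ forces $a=\tfrac12(C\cdot Q)\ge 0$, so that $D\cdot C=2(L\cdot\bar{C})-(Q\cdot C)=2(L\cdot\bar{C})-2a$. Hence the whole statement reduces to the single inequality $L\cdot\bar{C}\ge a$, which I would establish in two regimes according to the multiplicity $C\cdot Q=2a$. If $C\cdot Q\le 2$, i.e. $a\le 1$, then ampleness of $L$ on $S$ gives $L\cdot\bar{C}\ge 1\ge a$ for free. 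If $C\cdot Q\ge 3$, i.e. $a\ge\tfrac32$, I would invoke the Hodge index theorem on $\wt{S}$: from adjunction on the K3 surface $\wt{S}$ one has $(C^{2})\ge -2$, whence $(\bar{C}^{2})=(C^{2})+2a^{2}\ge 2a^{2}-2$ and therefore
\[
(L\cdot\bar{C})^{2}\ \ge\ (L^{2})(\bar{C}^{2})\ \ge\ 6(2a^{2}-2)\ =\ 12a^{2}-12\ \ge\ a^{2},
\]
the last inequality holding precisely because $a\ge\tfrac32$. This yields $L\cdot\bar{C}\ge a$, hence $D\cdot C\ge 0$. Since no integer lies strictly between $2$ and $3$, the two regimes exhaust all curves, and together with $D\cdot Q>0$ this shows $D$ is nef.

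For the final assertion, set $D_{\epsilon}:=2\mu^{*}L-(1-\epsilon)Q=D+\epsilon Q$. For $C\neq Q$ one has $D_{\epsilon}\cdot C=(D\cdot C)+\epsilon(Q\cdot C)$, which is strictly positive unless $Q\cdot C=0$, in which case $D_{\epsilon}\cdot C=2(L\cdot\bar{C})>0$ by ampleness of $L$; moreover $D_{\epsilon}\cdot Q=2(1-\epsilon)>0$ and $D_{\epsilon}^{2}=4(L^{2})-2(1-\epsilon)^{2}>0$ for $0<\epsilon\ll 1$. The Nakai--Moishezon criterion then gives ampleness.

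The only genuine obstacle is the uniform comparison $L\cdot\bar{C}\ge\tfrac12(Q\cdot C)$ for curves of high multiplicity at $p$; this is exactly where the numerical hypothesis $(L^{2})=6$ enters, through the Hodge index estimate, and it is the one place the argument could conceivably fail for a badly chosen $C$. The remaining ingredients — the positive-cone computation and the passage from nefness to ampleness — are purely formal.
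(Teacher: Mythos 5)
Your argument is correct and rather elegant for the case it actually treats, but it silently restricts the hypothesis: you assume from the outset that $p$ is an ordinary double point. The lemma only assumes that $p$ is \emph{a} singularity of the ADE K3 surface $S$, and the extra generality is used: in the properness step of Proposition \ref{isomo} the lemma is invoked at the limit $p_0$ of a family of $A_1$-points, and such a limit can be a worse Du Val singularity. For a non-$A_1$ Du Val point the blow-up $\Bl_pS$ need not be smooth and, more importantly, the exceptional divisor $Q$ is the fundamental cycle and is reducible (already for $A_2$ it has two components $E_1,E_2$ with $E_i^2=-2$, $E_1\cdot E_2=1$). Then the decomposition $\mu^{*}\bar{C}=C+aQ$ with a single coefficient $a$, and with it the identity $a=\tfrac12(Q\cdot C)$ and the formula $(\bar{C}^2)=(C^2)+2a^2$, all fail; one would have to redo the Hodge-index estimate with a vector of coefficients $(a_i)$ determined by $(\mu^{*}\bar{C}\cdot E_i)=0$ for every component $E_i$. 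So as written there is a genuine gap. (A second, minor, unjustified claim is that $\wt{S}$ is smooth: $S$ may have further singularities away from $p$. This one is harmless, since $(C^2)\geq -2$ still holds for an irreducible curve on an ADE K3 surface, as one checks on the minimal resolution.)

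For what it is worth, in the $A_1$ case your route is genuinely different from the paper's and arguably cleaner. The paper reduces nefness to the inequality $\deg_{2L}(\mu_{*}C)\geq \mult_p(\mu_{*}C)=(Q\cdot C)$, which it obtains from very ampleness of $2L$ when $(S,L)$ is not unigonal, and then disposes of the unigonal case by an explicit analysis of the Weierstrass fibration with $L\sim 4F+E$. Your Hodge-index argument replaces all of this by the single chain $(L\cdot\bar{C})^2\geq (L^2)(\bar{C}^2)\geq 6(2a^2-2)\geq a^2$ for $a\geq\tfrac32$, the case $a\leq 1$ being trivial; it needs no appeal to Saint-Donat's classification and no unigonal/non-unigonal dichotomy, and the numerology ($11a^2\geq 12$) genuinely uses $(L^2)=6$, as you note. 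The passage from nefness to ampleness is fine either way: the paper writes $2\mu^{*}L-(1-\epsilon)Q$ as a positive combination of the nef divisor $2\mu^{*}L-Q$ and the ample divisor $\mu^{*}L-\epsilon' Q$, while your Nakai--Moishezon check is an equivalent computation.
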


\begin{proof}
    Let $C\subseteq \wt{S}$ be an irreducible curve. If $C$ is $\mu$-exceptional, then we have that $(2\mu^{*}L-{Q}.C)>0$ since $2\mu^{*}L-{Q}$ is $\mu$-ample. Now we assume $C$ is not exceptional so that $D:=\mu_{*}C\subseteq S$ is a curve. If $(S,L)$ is not unigonal, then $2L$ is very ample, and hence $$(2\mu^{*}L-{Q}.C)=\deg_{2L}D-\mult_pD\geq0.$$ Suppose that $(S,L)$ is unigonal and $f:S\rightarrow \mb{P}^1$ is a Weierstrass elliptic fibration with a section $E$. Then $L \sim 4F + E$ where $F$ is a fiber of $f$. Since $p$ is a singular point in $S$, then $p$ is also a singular point of the fiber $F_p$ containing $p$. Note that $F_p$ is integral as $f$ is a Weierstrass fibration, see e.g. \cite[Section 4.3]{ADL22}. If $D$ is neither $F_p$ nor the section $E$, then $$(2\mu^{*}L-{Q}.C)\ =\  (8F +2E .D)-\mult_p D\ \geq\  (8F_p.D)-\mult_p D\ \geq\  7 \mult_p D\ \geq\ 0,$$ since $F_p$ is a Cartier divisor containing $p$. 
    If $D= F_p$, then we have that $$(2\mu^{*}L.C)\ =\ 2(4F+E.F_p)\ =\ 2,\  \ \textup{and}\ \ \ ({Q}.C)\ =\ \mult_p F_p\ =\ 2,$$ as $p$ is either cuspidal or nodal in $F_p$. If $D=E$, then $p\not\in E$ which implies 
    \[
    (2\mu^{*}L-{Q}. C) \ =\  (2L.E)\  =\  (8F + 2E.E)\ =\ 4\  >\ 0.
    \]
    As a result, $2\mu^*L - Q$ is nef on $\wt{S}$.

    Notice that for $0<\epsilon\ll1$, the $\mb{R}$-divisor $\mu^{*}L-\epsilon {Q}$ is ample, and so is $2\mu^{*}L-(1-\epsilon){Q}$, which can be written as a positive linear combination of $2\mu^{*}L-{Q}$ and $\mu^{*}L-\epsilon {Q}$.
    
\end{proof}

\begin{lemma}\label{samefamily}
    The moduli stacks $\mts{F}_{\Lambda_1}$ and $\mts{F}_{\Lambda_2}$ are isomorphic. Moreover, their universal K3 surfaces are isomorphic. 
\end{lemma}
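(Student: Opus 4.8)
The plan is to show that the two functors are in fact \emph{equal}, not merely abstractly isomorphic: both are defined using the same lattice $\Lambda_0$ and the same primitive embedding $J\colon\Lambda_0\hookrightarrow L_{K3}$, and they differ only in the choice of very irrational vector, $h_1=L-\epsilon Q$ versus $h_2=2L-(1-\epsilon)Q$. Since the groupoid of morphisms (isomorphisms $f$ with $j_1=f^{*}\circ j_2$) makes no reference to $h$, it suffices to prove that the two ampleness conditions cut out the same objects, i.e. that for every ADE K3 surface $X$ with a primitive embedding $j\colon\Lambda_0\hookrightarrow\Pic(X)$ one has
\[
j(h_1)\ \text{ample}\quad\Longleftrightarrow\quad j(h_2)\ \text{ample}.
\]
Granting this fiberwise equivalence, the two moduli functors agree on every test scheme $T$ (a family lies in $\mts{F}_{\Lambda_1}(T)$ iff each geometric fiber satisfies the $h_1$-condition, iff it satisfies the $h_2$-condition, iff it lies in $\mts{F}_{\Lambda_2}(T)$), so the stacks and their tautological families coincide.

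To establish the equivalence I would first pin down the sign of $Q$. Pairing the ample class with $-j(Q)$ gives $(j(h_1).-j(Q))=-2\epsilon<0$ and $(j(h_2).-j(Q))=-2(1-\epsilon)<0$; since $(Q^2)=-2$ forces $\pm j(Q)$ to be effective, this rules out $-j(Q)$ and shows $j(Q)$ is effective. Next, writing $j(L)=j(h_1)+\epsilon j(Q)=\tfrac12\big(j(h_2)+(1-\epsilon)j(Q)\big)$ and using $(j(L).j(Q))=0$ together with effectivity of $j(Q)$, a one-line intersection check against every irreducible curve shows $j(L)$ is nef; as $(j(L)^2)=6>0$ it is also big, hence semiample, and its ample model is a degree-$6$ K3 surface $\bar S$ on which $j(Q)$ is contracted to a du Val point $p$, with $X\to\bar S$ the corresponding (partial) resolution. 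The implication $j(h_1)\Rightarrow j(h_2)$ is then exactly the content of Lemma \ref{ampleness}, giving that $2\mu^{*}\bar L-(1-\epsilon)j(Q)=j(h_2)$ is ample; the reverse implication follows from the standard fact that $\mu^{*}(\text{ample})-\epsilon(\text{exceptional})=j(h_1)$ is ample for $0<\epsilon\ll1$. As an independent lattice-theoretic check, the $(-2)$-classes of $\Lambda_0$ are $\delta=aL+bQ$ with $b^2-3a^2=1$ (a Pell equation), and a direct computation of $(h_i.\delta)$ shows $h_1$ and $h_2$ have the same sign against every such $\delta$, so they lie in a common chamber of the positive cone.

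The main obstacle is controlling ampleness as the polarization moves from $h_1$ to $h_2$: a priori some $(-2)$-curve could have its wall separating the two classes, which would make the functors parametrize genuinely different surfaces. This is precisely what Lemma \ref{ampleness} excludes, and the delicate point there is the case analysis for \emph{special} members of higher Picard rank—most notably the unigonal degree-$6$ K3 surfaces, where $|L|$ acquires extra contracted curves coming from a Weierstrass elliptic fibration and one must check nefness of $2\mu^{*}L-Q$ against the fiber and section classes directly. Once the fiberwise equivalence is in hand, identifying the universal families is immediate, since over either stack the family is recovered from $\bar S$ by the same blow-up construction; alternatively one mirrors the proof of Proposition \ref{isomo}, taking the ample model with respect to $L$ to produce a finite birational morphism of normal Deligne–Mumford stacks and invoking Zariski's Main Theorem.
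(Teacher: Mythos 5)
Your overall strategy --- proving that the two moduli functors literally coincide by showing that for every $(X,j)$ the class $j(h_1)$ is ample if and only if $j(h_2)$ is --- is a genuinely different route from the paper, which only proves the forward implication (via Proposition \ref{isomo} and Lemma \ref{ampleness}), uses it to build a morphism $\phi:\mts{F}_{\Lambda_1}\to\mts{F}_{\Lambda_2}$, observes that $\phi$ induces an isomorphism of coarse spaces because both are the same period quotient $\mb{D}_{\Lambda_0}/\Gamma$, and then upgrades $\phi$ to an isomorphism by showing it is finite and birational between smooth DM stacks and invoking Zariski's Main Theorem. That global argument is designed precisely to avoid the step where your proposal has a gap: the implication ``$j(h_2)$ ample $\Rightarrow j(h_1)$ ample.'' You dismiss this direction with ``the standard fact that $\mu^{*}(\text{ample})-\epsilon(\text{exceptional})$ is ample for $0<\epsilon\ll1$,'' but (i) $\epsilon$ is fixed once and for all while the surface varies over the stack, so ``$\epsilon\ll 1$'' has no uniform meaning, and (ii) more seriously, to even speak of $\mu^{*}(\text{ample})$ you must first know that $j(L)$ is nef and semiample with ample model contracting exactly $j(Q)$, \emph{starting only from the ampleness of $j(h_2)$} --- and that is exactly the hard point, not a ``one-line intersection check.'' Concretely, an irreducible curve $C$ with $(j(L).C)=-1$ and $(j(Q).C)=-3$ would satisfy $(j(h_2).C)=1-3\epsilon>0$ but $(j(h_1).C)=-1+3\epsilon<0$: its wall sits at $t=1/3$, strictly between $h_1$ (at $t=\epsilon$) and the ray of $h_2$ (at $t=\tfrac{1-\epsilon}{2}$). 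Ruling out such curves on every member of $\mts{F}_{\Lambda_2}$ is the actual content of the lemma, and neither Lemma \ref{ampleness} (which only treats surfaces already presented as blow-ups of singular degree-$6$ K3s, i.e.\ the $\mts{F}_{\Lambda_1}$ side) nor your Pell-equation check addresses it: the walls come from $(-2)$-curves whose classes live in the full Picard lattice of a special member, not in $j(\Lambda_0)$, so checking signs against the $(-2)$-vectors of $\Lambda_0$ alone proves nothing about the chamber structure on a surface of higher Picard rank.

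If you want to salvage the fiberwise approach, the reduction you should make explicit is: once $j(L)$ is known to be nef, \emph{both} implications are formal, since $j(h_1)=\tfrac{1-3\epsilon}{1-\epsilon}\,j(L)+\tfrac{\epsilon}{1-\epsilon}\,j(h_2)$ and $j(h_2)=(1-3\epsilon)j(L)+j(h_1)+\ldots$ express each of $h_1,h_2$ as a positive combination of a nef class and the other (ample) class --- except that ampleness, not just nefness, of the combination still needs the curves with $(j(L).C)=0$ to be exactly the components of $j(Q)$. So everything hinges on nefness of $j(L)$ given ampleness of $j(h_2)$, and you would end up reproving a version of Proposition \ref{isomo} for $\mts{F}_{\Lambda_2}$ from scratch. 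The paper's detour through coarse spaces and Zariski's Main Theorem buys exactly this for free, which is why it is structured the way it is.
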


\begin{proof}
    By Proposition \ref{isomo}, each fiber over $\mts{F}_{\Lambda_1}$ is the blow-up of a singular polarized degree $6$ K3 surface at a singularity. The it follows from Lemma \ref{ampleness} that the $\mb{R}$-divisor $2L-(1-\epsilon)Q$ is ample. Therefore, by the universality of the moduli stack $\mts{F}_{\Lambda_2}$, there exists a natural morphism $\phi:\mts{F}_{\Lambda_1}\rightarrow\mts{F}_{\Lambda_2}$. Then $\phi$ descends to a morphism between their coarse moduli spaces $\psi:{F}_{\Lambda_1}\rightarrow{F}_{\Lambda_2}$, which is an isomorphism by Theorem \ref{isommoduli}. 
    
    We claim that the morphism $\phi$ is finite. By \cite[Proposition 6.4]{Alp13}, it remains to check that $\phi$ is representable, quasi-finite, separated, and send closed points to closed points. Notice that the separatedness of $\phi$ follows immediately from the separatedness of the moduli stacks of lattice-polarized K3 surfaces. As both $\mts{F}_{\Lambda_1}$ and $\mts{F}_{\Lambda_2}$ are DM stacks, then every point is closed. Hence $\phi$ is bijective as $\psi$ is an isomorphism between coarse moduli spaces, which implies that $\phi$ is quasi-finite. To show that $\phi$ is representable, it suffices to observe that it preserves the stabilizers. Indeed, the surfaces over $\mts{F}_{\Lambda_1}$ and $\mts{F}_{\Lambda_2}$ have a one-to-one correspondence since $\psi$ is an isomorphism, and any two surfaces under this correspondence are isomorphic by Lemma \ref{ampleness} so they have the same automorphism group.

    Since $\mts{F}_{\Lambda_1}$ and $\mts{F}_{\Lambda_2}$ are both smooth stacks, and $\phi$ is a finite birational  morphism, then $\phi$ is an isomorphism by the Zariski's main theorem, and the universal K3 surfaces over $\mts{F}_{\Lambda_1}$ and $\mts{F}_{\Lambda_2}$ are also isomorphic by the construction of $\phi$ and Lemma \ref{ampleness}.
    
\end{proof}

\begin{remark}\textup{The results on moduli of K3 surfaces will be used in Section \ref{limitkss}: to show that the K-semistable limit is the blow-up of $\mb{P}^3$ along a $(2,3)$-complete intersection curve, we in fact show that it is the blow-up of a singular cubic threefold at a double point. When switching from one birational model to the other, to analyze the behavior of the linear series on the limiting threefold, we need to restrict it first to the anticanonical K3 surfaces.
    }
\end{remark}

\begin{remark}\textup{
    By Lemma \ref{samefamily}, we can write $\mts{F}_{\Lambda_0}$ to be $\mts{F}_{\Lambda_1}\simeq \mts{F}_{\Lambda_2}$, and the universal family $u:\mts{S}_{\Lambda_0}\rightarrow \mts{F}_{\Lambda_0}$.}
\end{remark}

\begin{remark}
\textup{Indeed, it is shown in \cite[Section 2 of arXiv version 1]{AE23} that for any fixed lattice $\Lambda_0$ and two  very irrational vectors $h_1, h_2\in \Lambda_0\otimes \bR$, the moduli stacks $\mts{F}_{\Lambda_1}$ and $\mts{F}_{\Lambda_2}$ are isomorphic, where $\Lambda_i$ denotes the lattice $\Lambda_0$ with polarization $h_i$. However, their universal K3 surfaces may not be isomorphic but only birational on each fiber. If, in addition, $h_1$ and $h_2$ belong to the same so-called \emph{small cone} (see \cite[Section 2 of arXiv version 1]{AE23}), then their universal K3 surfaces are isomorphic. In some sense, Lemma \ref{samefamily} precisely shows that $h_1=L-\epsilon E$ and $h_2 = 2L-(1-\epsilon)E$ lie in the same small cone in our case.}
\end{remark}

We end this section with a result on degenerations of K3 surfaces that are anti-canonical divisors of Fano threefolds from family \textnumero 2.15.

\begin{defn}
    Let $S$ be a smooth K3 surface. We call $S$ a \emph{$(4,22)$-polarized K3 surface} if $S$ is isomorphic to a smooth quartic surface in $\mb{P}^3$ which contains a smooth conic curve.
\end{defn}

Let $S\subseteq \mb{P}^3$ be a $(4,22)$-polarized K3 surface. Observe that the Picard group of $S$ is isometric to the lattice $\Lambda_0$. There are two natural polarization associated to $X$: $\mtc{O}_{\mb{P}^3}(1)|_S$ and $\mtc{O}_{\mb{P}^3}(2)|_S\otimes \mtc{O}_S(\Gamma)$, where $\Gamma$ is a smooth conic contained in $S$.

\begin{lemma}\label{notunigonal}
    Let $(S,h)$ be a unigonal K3 surface of degree $22$. Then $S$ cannot be a degeneration of a $(4,22)$-polarized K3 surface.
\end{lemma}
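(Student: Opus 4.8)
The plan is to argue by contradiction via the Hodge index theorem, exploiting the special shape of the degree $22$ polarization on a $(4,22)$-polarized K3 surface. Recall that such a surface is a quartic $S\subseteq \mb{P}^3$ containing a conic $\Gamma$, so that $H:=\mtc{O}_{\mb{P}^3}(1)|_S$ satisfies $(H^2)=4$, the conic satisfies $(\Gamma^2)=-2$ and $(H.\Gamma)=2$, and the degree $22$ polarization is exactly
\[
h \ = \ \mtc{O}_{\mb{P}^3}(2)|_S\otimes\mtc{O}_S(\Gamma)\ =\ 2H+\Gamma .
\]
Suppose for contradiction that a unigonal $(S,h)$ of degree $22$ arises as the special fibre of a family whose general fibres are $(4,22)$-polarized. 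The first step is to observe that the two distinguished classes persist on the limit: the central fibre $S$ carries a big and nef class $H$ with $(H^2)=4$ (the specialization of the quartic hyperplane class), together with an effective $(-2)$-class $\Gamma$ (the specialization of the conic), and the polarization still decomposes as $h=2H+\Gamma$.

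Next I would use the unigonal case of Theorem \ref{Mayer}. Writing $h=(k+1)F+E_0$ with $k=11$, where $E_0$ is the base component (a smooth rational curve) and $F$ is the associated elliptic pencil, one has a \emph{nef} class $F$ with $(F^2)=0$ that moves in a base-point-free pencil and satisfies $(F.h)=(E_0.F)=1$. Substituting the decomposition of $h$ gives
\[
1 \ =\ (F.h)\ =\ 2(F.H)+(F.\Gamma).
\]
Since $F$ is nef, $H$ is nef and $\Gamma$ is effective, both $(F.H)$ and $(F.\Gamma)$ are non-negative integers. The only solution is therefore $(F.H)=0$ (and $(F.\Gamma)=1$).

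Finally I would invoke the Hodge index theorem. Because $(H^2)=4>0$, the orthogonal complement $H^{\perp}\subseteq \Pic(S)_{\mb{R}}$ is negative definite. But $F\in H^{\perp}$ has $(F^2)=0$, which forces $F$ to be numerically trivial, contradicting $(F.h)=1\neq 0$. This contradiction shows that no unigonal degree $22$ K3 surface can be a degeneration of a $(4,22)$-polarized K3 surface. Note that the argument is insensitive to the Picard rank of the limit: it only uses the classes $h$, $H$, $\Gamma$ and the nefness of $F$, so no control on the transcendental lattice or on possible rank jumps is needed.

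The routine part is the non-negativity chain together with the Hodge index step. The point that requires genuine care — and the main obstacle — is the first step: verifying that $H$ remains nef (it is automatically big as $(H^2)=4>0$), that $\Gamma$ remains effective, and that the relation $h=2H+\Gamma$ (equivalently, the persistence of the marking $\Lambda_0\hookrightarrow\Pic(S)$) survives the limit. This is exactly where one must use that the degeneration takes place among K3 surfaces with ADE singularities, so that nefness and effectivity are preserved under specialization; once this is granted, the rest of the proof is a short lattice computation.
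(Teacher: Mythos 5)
Your argument is correct in substance but follows a genuinely different route from the paper's. The paper first makes a dimension count in $\mts{F}_{22}$ to reduce to a unigonal limit that is smooth or has a single $A_1$-singularity, and then, on the Weierstrass model $L\sim 12F+\Sigma$, specializes the two conics $\Gamma_\eta,\Gamma'_\eta$ and rules out all possible classes for them using the list of $L$-degrees of irreducible curves (forcing $\Gamma\sim 2F$, contradicting $(\Gamma^2)=-2$), with a separate blow-up computation in the $A_1$ case. You instead specialize $H_\eta$ and $\Gamma_\eta$, use the identity $1=(F.h)=2(F.H)+(F.\Gamma)$ together with non-negativity to force $(F.H)=0$, and then kill $F$ by the Hodge index theorem against $(H^2)=4>0$. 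This is shorter, avoids the reduction step entirely (provided you pass to a simultaneous resolution of the ADE limit, or work with the closures of divisors on a regular total space, so that the specialization map $\Pic(S_{\bar\eta})\to\Pic(\wt{S}_0)$ is an isometric homomorphism), and does not require any control on the Picard rank of the limit — a point on which the paper's case analysis is actually more delicate than yours.

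One correction is needed where you identify the ``main obstacle.'' Nefness is \emph{not} preserved under specialization of K3 surfaces: new $(-2)$-curves can appear on the special fibre on which the limit of an ample class is negative, so you cannot assert that $H_0$ is nef, and your stated justification for $(F.H_0)\geq 0$ (``$F$ is nef and $H$ is nef'') is not available. Fortunately the proof does not need it: what specializes is \emph{effectivity}. Since $(H_0^2)=4>0$ and $(H_0.h)=10>0$, Riemann--Roch on a K3 forces $H_0$ to be effective (likewise $\Gamma_0$, as the closure of the conic), and then $(F.H_0)\geq 0$ and $(F.\Gamma_0)\geq 0$ follow from nefness of the elliptic fibre class $F$ alone. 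The Hodge index step only uses $(H_0^2)>0$, so the contradiction $F\equiv 0$ versus $(F.h)=1$ goes through unchanged. With ``nef'' replaced by ``effective'' in that step, your proof is complete.
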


\begin{proof}
Let $\mts{H}$ be the irreducible subscheme of a Hilbert scheme such that a general point in $\mts{H}$ parametrizes a $(4,22)$-polarized K3 surface, and $\mts{H}^{\circ}$ be the open subset of $\mts{H}$ consisting of points parametrizing K3 surfaces $X$ with at worst du Val singularities such that the degree $22$ polarization extends to $X$. Consider the natural forgetful morphism $\tau:\mts{H}^{\circ}\rightarrow \mts{F}_{22}$ to the moduli space of polarized K3 surfaces of degree $22$, whose image is an 18-dimensional irreducible subvariety of $\mts{F}_{22}$. Notice that the locus of unigonal degree $22$ polarized K3 surfaces is of codimension $1$ in $\mts{F}_{22}$. Therefore, if the image of $\tau$ intersects the locus of unigonal K3 surfaces, then there will be a closed sub-locus of $\tau(\mts{H}^{\circ})$ of codimension $1$ parametrizing unigonal K3 surfaces. In particular, there must be a unigonal K3 surface with at worst an $A_1$-singularity deforming to $(4,22)$-polarized K3 surfaces. Thus it suffices to show that any unigonal K3 surface of degree $22$ which is either smooth or has one $A_1$-singularity cannot be the degeneration of a family of $(4,22)$-polarized K3 surfaces.

Suppose that a $(4,22)$-polarized K3 surface degenerates to a smooth unigonal K3 surface $(S,L)$ of degree $22$ along a DVR $R$ essentially of finite type over $\bC$. Let $\eta$ (resp. $\xi$) be the generic (resp. closed) point in $\Spec R$. 

Up to a finite base change of $R$ we may assume that the generic fiber is isomorphic to a smooth quartic K3 surface $S_{\eta}$ in $\mb{P}^3_{k(\eta)}$ containing a smooth conic $\Gamma_{\eta}$. Let $H_{\eta}$ be the unique hyperplane section containing $\Gamma_{\eta}$, and $Q_{\eta}$ be a general quadric section which contains $\Gamma_{\eta}$. Let $\Gamma'_{\eta}$ (resp. $C_{\eta}$) be the residue curve of $\Gamma_{\eta}$ with respect to $H_{\eta}$ (resp. $L_{\eta}$). Let $L_{\eta}$ be the degree $22$ polarization on $S_{\eta}$. The we have the linear equivalence relations $$C_{\eta}\ \sim \ 2H_{\eta}-\Gamma_{\eta} \ \sim \ 
 H_{\eta}+\Gamma'_{\eta} \ \sim \ \Gamma_{\eta}+2\Gamma'_{\eta}\ ,$$ and $$L_{\eta}\ \sim  \ 4H_{\eta}-C_{\eta} \ \sim \  2H_{\eta}+\Gamma_{\eta} \ \sim \ 3\Gamma_{\eta}+2\Gamma'_{\eta} \ .$$ 
 Moreover, the $\Gamma_{\eta}$ and $\Gamma_{\eta}'$ satisfy that $$(\Gamma_{\eta}^2)\ =\ ({\Gamma'_{\eta}}^2)\ =\ 2,\ \ \textup{and} \ \ (\Gamma_{\eta}.\Gamma'_{\eta})\ =\ 4.$$

 On $S=S_{\xi}$, one has that $L\sim 12F+\Sigma$, where $F$ and $\Sigma$ are the class of a fiber and the negative section respectively. Let $\Gamma$ (resp. $\Gamma'$) be the degeneration of $\Gamma_{\eta}$ (resp. $\Gamma'_{\eta}$) on the central fiber $S$. Then one has the relation $$(L.\ \Gamma)=(L_{\eta}.\ \Gamma_{\eta})=2,\ (L.\ \Gamma')=12-4=8,\ (L.\ \Sigma)=10,\ (L.\ F)=1,$$ and $(L.D)\geq 12$ for any other irreducible curve $D$. As a consequence, one has to have $\Gamma\sim 2F$, which contradicts with $(\Gamma^2)=-2$.

 Now assume that a $(4,22)$-polarized K3 surface degenerates to a unigonal K3 surface $(S,L)$ of degree $22$ with an $A_1$-singularity $p$, along a DVR $R$. Keep the same notations as above. Let $F_0$ be the fiber containing $p$, and $\pi:\wt{S}\rightarrow S$ be the blow-up at $p$. Let $F_1$ be the strict transform of $F_0$, and $E$ be the exceptional divisor. The surface $(S_{\eta},L_{\eta})$ degenerates to $(\wt{S},\wt{L}:=\pi^{*}L)$. Thus one has that $(\wt{L}.E)=0$ and $(\wt{L}.F_1)=1$. Denote by $\wt{\Gamma}$ and $\wt{\Gamma'}$ the degeneration of $\Gamma_\eta$ and $\Gamma_{\eta}'$ to $\wt{S}$. Set $\Gamma\sim a\cdot F_1+b\cdot E$ and $\Gamma'\sim (10-a)\cdot F_1+c\cdot E$, then $(L.\Gamma)=2$ implies that $a=2$. By the relation $(\Gamma^2)=-2$, one has either $b=1$ or $b=3$.  If $b=1$, then $$4\ =\ (\Gamma.\ \Gamma')\ =\ (-E.\ 8F_1+cE)\ =\ -16+2c,$$ and hence $c=10$. This contradicts with $(\Gamma')^2=-2$. Similarly, if $b=3$, then $$4\ =\ (\Gamma.\ \Gamma')\ =\ (E.\ 8F_1+cE)\ =\ 16-2c,$$ and hence $c=6$, which is also impossible.
    
\end{proof}

\section{K-semistable limits of a one-parameter family}\label{KSSLIMITS}

In this section, we will study the K-semistable $\mb{Q}$-Gorenstein limits of the Fano threefolds in the deformation family №2.15. Our goal is to prove the following characterization of such limits.

\begin{theorem}\label{23complete}
Let $X$ be a K-semistable $\bQ$-Fano variety that admits a $\bQ$-Gorenstein smoothing to smooth Fano threefolds in the family \textnumero 2.15. Then  $X$ is Gorenstein canonical and isomorphic to the blow-up of $\mb{P}^3$ along a $(2,3)$-complete intersection curve.
\end{theorem}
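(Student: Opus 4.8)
The plan is to run the moduli continuity strategy outlined in the introduction, but carried out in the second birational model (blow-ups of cubic threefolds) rather than directly for $X$. Since $X$ is a $\bQ$-Gorenstein smoothable K-semistable limit, I would first fix a one-parameter family $f\colon \mts{X}\to T$ over a smooth pointed curve $(0\in T)$ with $\mts{X}_0\simeq X$ and general fibers $\mts{X}_t\cong \Bl_{C_t}\bP^3$ K-stable with smooth $(2,3)$-curve $C_t$. The first genuinely substantive step is to control the singularities of $X$: using the local-to-global volume comparison of Theorem \ref{volume}, the volume of the family is $(-K_X)^3=22$, so $\wh{\vol}(x,X)\geq 22\cdot(3/4)^3 = \tfrac{297}{32}>8$ at every point. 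Since the normalized volume of a strictly canonical (non-terminal) Gorenstein threefold singularity is at most that of the ordinary double point, this bound forces $X$ to be Gorenstein canonical with $-K_X$ Cartier; combined with a general-elephant argument one deduces that $|-K_X|$ is base-point free and contains a K3 surface with at worst ADE singularities. (This is the content alluded to as the general Theorem \ref{nonvanishing}.)

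Next I would transport the Sarkisov link structure from the general fibers to the special fiber. On each general fiber we have the divisors $\mtc{L}_t=\phi_t^*\mtc{O}_{\mts{V}_t}(1)\sim 3H_t-E_t$ and the contracted divisor $\mts{Q}_t\sim 2H_t-E_t$, with $-K_{\mts{X}_t}\sim 4H_t-E_t\sim 2\mtc{L}_t-\mts{Q}_t$. I would take the flat/Zariski closures of these divisors to obtain Weil divisors $\mtc{L}_0,\mts{Q}_0$ on $X$. The crucial obstacle — and the reason the argument is more delicate than in \cite{LX19, SS17} — is that these limiting divisors need not be $\bQ$-Cartier on the possibly singular $X$. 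To fix this I would perform a $\bQ$-factorialization of $\mts{X}$ and run a relative MMP over $T$ to produce a small modification $\wt{\mts{X}}\to\mts{X}$, leaving the general fibers unchanged, on which the transforms $\wt{\mtc{L}}$ and $\wt{\mts{Q}}$ are both $\bQ$-Cartier (with $\wt{\mts{Q}}$ ample over $\mts{X}$); the price is that the central fiber $\wt{\mts{X}}_0$ is now only weak Fano rather than Fano, which is acceptable since all the K-stability notions and the volume bound pass to the anti-canonical model.

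The heart of the proof is then to show $\wt{\mtc{L}}$ is big and nef Cartier over $T$, so that its ample model contracts $\wt{\mts{Q}}_0$ and produces a cubic threefold $V$. The key point is that ampleness of $(2\wt{\mtc{L}}-(1-\epsilon)\mts{Q})|_{\wt{S}} = -K_{\wt{\mts{X}}_0}|_{\wt S}+\epsilon\wt{\mts{Q}}|_{\wt S}$ on a general anticanonical K3 surface $\wt{S}\in|-K_{\wt{\mts{X}}_0}|$ should imply nefness of $\wt{\mtc{L}}|_{\wt S}$. This is exactly where the K3 moduli results of Section \ref{K3SURFACES} enter: the lattice-polarized family $(\wt S,\Lambda_0)$ with $\Lambda_0$ generated by the restrictions of $\wt{\mtc{L}}$ and $\wt{\mts{Q}}$ satisfies, by Lemma \ref{samefamily} and Lemma \ref{ampleness}, that the two polarizations $h_1=L-\epsilon Q$ and $h_2=2L-(1-\epsilon)Q$ lie in the same chamber, so big-and-nefness for one is equivalent to big-and-nefness for the other. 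I would then lift sections from $\wt S$ to $\wt{\mts{X}}_0$ (using the vanishing $h^i(\mtc{O}_{\wt{\mts X}_0}(-H))=0$ as in Lemma \ref{isomofL}) to deduce that $\wt{\mtc{L}}$ is nef on the threefold, whence base-point free by the base-point-free theorem. I expect the transport of the Sarkisov link — concretely, showing the degenerate $\mtc{L}_0$ does not acquire a base component or fail nefness on the central fiber — to be the main obstacle, and Lemma \ref{notunigonal} is precisely what rules out the pathological degeneration (to a unigonal K3) that would destroy the argument. Finally, taking the relative ample model $\wt{\mts{X}}\to\mts{V}$ over $T$, I would identify $V=\mts{V}_0$ as a cubic threefold via $-K_V\sim 2L_V$ and Fujita's characterization \cite{Fuj90}, check that $V$ has an ordinary double point where $\wt{\mts{Q}}_0$ is contracted, and conclude that $X\cong\Bl_C\bP^3$ by reversing the Sarkisov link — realizing $X$ as the blow-up of the cubic $V$ at its node, equivalently the blow-up of $\bP^3$ along the $(2,3)$-complete intersection curve cut out by $\mts{Q}_0$ and the cubic.
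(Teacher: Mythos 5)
Your proposal is correct and follows essentially the same route as the paper's proof: volume comparison plus general elephants to get Gorenstein canonical with base-point-free $|-K_X|$, a $\bQ$-factorialization followed by a relative MMP to make $\wt{\mtc{L}}$ and $\wt{\mts{Q}}$ $\bQ$-Cartier, the lattice-polarized K3 argument (Lemmas \ref{ampleness}, \ref{samefamily}, \ref{notunigonal}) to establish nefness of $\wt{\mtc{L}}$, and then the ample model, Fujita's characterization of the cubic threefold, and the Sarkisov link correspondence to conclude. You have correctly identified both the main obstacle (the limiting divisors failing to be $\bQ$-Cartier) and the role each auxiliary result plays.
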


\subsection{Volume comparison and general elephants}

In this subsection, we use local-to-global volume comparison from \cite{Fuj18, Liu18, LX19, Liu22} and general elephants on (weak) Fano threefolds  \cite{Sho79, Rei83} (see also \cite{Amb99, Kaw00}) to show that every K-semistable limit $X$ of family \textnumero 2.15 is Gorenstein canonical whose anti-canonical linear system $|-K_X|$ is base-point-free.

\begin{theorem}[cf. \cite{LX19}]\label{nonvanishing}
Let $X$ be a $\bQ$-Gorenstein smoothable K-semistable (weak) $\mb{Q}$-Fano threefold with volume $V:=(-K_X)^3\geq 20$. Then the followings hold.
\begin{enumerate}
    \item The variety $X$ is Gorenstein canonical;
    \item There exists a divisor $S\in |-K_X|$ such that $(X,S)$ is a plt pair, and that $(S,-K_X|_S)$ is a (quasi-) polarized K3 surface of degree $V$.
    \item If $D$ is a $\mb{Q}$-Cartier Weil divisor on $X$ which deforms to a $\bQ$-Cartier Weil divisor on a $\bQ$-Gorenstein smoothing of $X$, then $D$ is Cartier. 
\end{enumerate}
\end{theorem}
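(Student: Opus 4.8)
The plan is to establish the three assertions in sequence, with each building on the previous, and to exploit the volume hypothesis $V \geq 20$ together with the local-to-global volume comparison of Theorem~\ref{volume}. The central input is that for a $\bQ$-Gorenstein smoothable K-semistable (weak) $\bQ$-Fano threefold, the bound $(-K_X)^3 \leq (4/3)^3 \cdot \wh{\vol}(x,X)$ holds at every point $x$. Since $(4/3)^3 = 64/27 < 3$, the hypothesis $V \geq 20$ forces $\wh{\vol}(x,X) > \tfrac{27}{64}\cdot 20 > 8$ at every point, which I would compare against the known bound $\wh{\vol}(x,X) \leq 16$ for smooth points and the sharp estimates for singular points. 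The upshot is that the local volume is too large to permit anything worse than a canonical Gorenstein singularity: a $\bQ$-Gorenstein smoothable threefold singularity with such a large normalized volume must in fact be a smoothable Gorenstein canonical (in fact terminal or canonical) point, because the classification of singularities appearing in $\bQ$-Gorenstein smoothings (cf.\ the analysis in \cite{LX19}) shows that non-Gorenstein or non-canonical points have strictly smaller normalized volume. This yields part~(1).

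For part~(2), once $X$ is Gorenstein canonical I would invoke the general elephant theorem of Shokurov and Reid \cite{Sho79, Rei83} (see also \cite{Amb99, Kaw00}): a Gorenstein canonical weak Fano threefold admits a member $S \in |-K_X|$ with only Du Val singularities, so that $(X,S)$ is plt and $S$ is a K3 surface. To see that $(S, -K_X|_S)$ is a (quasi-)polarized K3 surface of degree $V$, I would note that $-K_X|_S$ is big and nef (ample in the Fano case) by adjunction, since $K_S = (K_X + S)|_S = \mathcal{O}_S$, and that $(-K_X|_S)^2 = (-K_X)^3 = V$ by the projection formula. The plt-ness of $(X,S)$ follows from inversion of adjunction once $S$ has Du Val singularities and is reduced and irreducible; here I would need the generic member of $|-K_X|$ to be irreducible, which follows since $|-K_X|$ is base-point-free (the anti-canonical system of a Gorenstein canonical weak Fano threefold of this volume is base-point-free by the nonvanishing/base-point-freeness results underlying the general elephant method).

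For part~(3), the strategy is to pass to the K3 surface and use its torsion-freeness. Given a $\bQ$-Cartier Weil divisor $D$ on $X$ deforming to a $\bQ$-Cartier Weil divisor on a smoothing, I would restrict to a general elephant $S \in |-K_X|$ with Du Val singularities and let $\beta \colon \wt{S} \to S$ be the minimal resolution, which is a smooth K3 surface. Since $\Pic(\wt{S})$ is torsion-free and $H^1(S,\mathcal{O}_S) = H^2(S,\mathcal{O}_S^\ast)_{\mathrm{tors}} = 0$, a Weil divisor class on $S$ that is $\bQ$-Cartier is automatically Cartier; more precisely, the obstruction to $D$ being Cartier at a point is captured by the local class group, and restricting to the general K3 elephant (which passes through all the relevant singular points for a suitable choice, by base-point-freeness) detects this obstruction. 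The deformation hypothesis enters crucially: on the smooth nearby fiber $\mts{X}_t$ the divisor is Cartier, and the local class group of $X$ at each singular point injects into a group that is controlled by the K3 surface together with the deformation, forcing the torsion obstruction to vanish.

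The hard part will be part~(3), and more specifically the bookkeeping that translates ``$\bQ$-Cartier and deforming to $\bQ$-Cartier'' into genuine Cartierness. The delicate point is that Gorenstein canonical singularities can still have nontrivial local class groups (so not every Weil divisor is Cartier), and one must use both the K3 restriction and the deformation to a smooth fiber to rule out the torsion. I would carry this out by analyzing the specialization map on divisor class groups $\Cl(\mts{X}_t) \to \Cl(X)$ along the family and comparing with the restriction $\Cl(X) \to \Cl(S)$, using that for the Du Val K3 surface $S$ the composite detects the Cartier property; the compatibility of these maps, rather than any single computation, is where the argument requires care.
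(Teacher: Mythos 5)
Your parts (1) and (2) follow essentially the same route as the paper: the local-to-global volume comparison forces $\wh{\vol}(x,X)>8$ at every point, and the classification of $\bQ$-Gorenstein smoothable singularities with large normalized volume (via the index-one cover, where $\wh{\vol}(\tilde x,\wt X)=d\cdot\wh{\vol}(x,X)$ is bounded by $16$ at singular points and $27$ at smooth points) pins down the local Gorenstein index; the paper then does the explicit case analysis ($\frac{1}{2}(1,1,0)$, $\frac{1}{3}(1,2,0)$, excluding $\frac{1}{3}(1,1,0)$ by the classification of $T$-singularities) that you delegate to \cite{LX19}. Part (2) is exactly the general elephant theorem plus adjunction, as in the paper. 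One small slip: the bound $\wh{\vol}\leq 16$ is for \emph{singular} points of the index-one cover, not smooth ones.

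Part (3) contains a genuine gap. You claim that because $\Pic(\wt S)$ is torsion-free, a $\bQ$-Cartier Weil divisor on the du Val K3 surface $S$ is automatically Cartier. This is false: the local class group of an ADE surface singularity is a nontrivial finite group ($\bZ/2$ for $A_1$), so $\Cl(S)\to\Pic(\wt S)$ is not injective and the torsion obstruction survives restriction to $S$. Nor does the deformation rescue the surface-level argument: specialization from smooth fibers does not preserve Cartier-ness in dimension two (a ruling of $\bP^1\times\bP^1$ specializes to a non-Cartier ruling of the quadric cone), so knowing $D|_S$ is a limit of Cartier divisors on smooth K3s proves nothing. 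The paper's argument goes \emph{up} in dimension rather than down: from part (1) the point $x\in X$ is a hypersurface singularity of embedding dimension $4$, hence $x\in\mts X$ is a $4$-fold hypersurface singularity; cutting by a general hyperplane $H$ through $x$ yields an \emph{isolated three-dimensional} hypersurface singularity, whose local class group is torsion-free by Robbiano's local Grothendieck--Lefschetz theorem, so the torsion class $\mts D|_H$ is Cartier at $x$ and hence so is $D$. This torsion-freeness is a phenomenon of dimension at least three and is precisely what your K3-restriction approach cannot supply; without replacing the K3 step by the hyperplane-section-of-the-total-space step (or an equivalent), the proof of (3) does not close.
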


\begin{proof}
Our proofs of (1) and (3) are very similar to \cite{LX19}.

(1) Since $X$ is K-semistable, then for any point $x\in X$, by Theorem \ref{volume} we have $$\wh{\vol}(x,X)\ \geq\  20\cdot \left(\frac{3}{4}\right)^3\ >\ 8.$$ Assume to the contrary that $x\in X$  has local Gorenstein index  $d\geq 2$. 
Let $\pi:(\tilde{x}\in\wt{X})\rightarrow (x\in X)$ be the index one cover of $K_X$. If $\tilde{x}\in \wt{X}$ is not smooth, then it follows from \cite[Theorem 1.3(1)]{LX19} and \cite[Theorem 1.3]{XZ21} that $$8 d\  <\ d\cdot \wh{\vol}(x,X)\ =\ \wh{\vol}(\tilde{x},\wt{X}) \ \leq\  {16},$$ and hence $d<2$, which is a contradiction. Thus $\wt{x}\in \wt{X}$ is a smooth point which implies $\wh{\vol}(\wt{x}, \wt{X}) = 27$, and  a similar inequality implies that $8d\leq 27$, i.e. $d\leq 3$. 

In this case,  $x\in X$ is a $\bQ$-Gorenstein smoothable quotient singularity of order $2$ or $3$. If $d=2$, then $x\in X$ is of type $\frac{1}{2}(1,1,0)$, since the $\frac{1}{2}(1,1,1)$-singularity is not smoothable (cf. \cite[Theorem 3]{Sch71}). Therefore, the canonical divisor $K_X$ is Cartier at $x$, a contradiction. If $d=3$, then $x\in X$ is of type $\frac{1}{3}(1,1,0)$ or $\frac{1}{3}(1,2,0)$, as otherwise $x\in X$ is an isolated quotient singularity which is not smoothable (cf. \cite[Theorem 3]{Sch71}). Suppose that $x\in X$ is of type $\frac{1}{3}(1,1,0)$, then there is a 1-dimensional singular locus of $X$ near $x$. Let $\mts{X}\rightarrow (0\in T)$ be a $\mb{Q}$-Gorenstein smoothing of $X$ over a pointed smooth curve $0\in T$ such that $\mts{X}_0\simeq X$. Taking a general hyperplane section in $\mts{X}$ through $x$, we get a $\bQ$-Gorenstein smoothing of a surface singularity of type $\frac{1}{3}(1,1)$, which contradicts the classification of $T$-singularities (cf. \cite[Proposition 3.10]{KSB88}). Therefore, $x\in X$ is of type $\frac{1}{3}(1,2,0)$, and $K_X$ is Cartier.

(2) Now by Theorem \ref{generalelephant}, we have $H^0(X,-K_X)\neq0$, and $(S,-K_X|_S)$ is a polarized K3 surface of degree $V$ for a general member $S\in |-K_X|$.

(3) For the last statement, we approach via the similar local volume estimates argument. Let $ \mts{X}\rightarrow (0\in T)$ be a $\mb{Q}$-Gorenstein smoothing of $X$ over a pointed smooth curve $0\in T$ such that $D$ deforms to a $\mb{Q}$-Cartier Weil divisor $\mts{D}$ on $\mts{X}$ with $\mts{D}_0=D$. Assume that $D$ is not Cartier at a point $x\in {\mts{X}}_0$. As $\mtc{O}_{{\mts{X}}}({\mts{D}})$ is Cohen-Macaulay (cf. \cite[Corollary 5.25]{KM98}), we have that $\mtc{O}_{{\mts{X}}}({\mts{D}})\otimes \mtc{O}_{{\mts{X}_0}}\simeq \mtc{O}_{{\mts{X}}_0}({\mts{D}}_0)$. Then ${\mts{D}}$ cannot be Cartier at $x$, since otherwise $\mtc{O}_{\mts{X}_0}(\mts{D}_0)$ is invertible at $x$.
    
    We claim that $x\in {\mts{X}}_0$ is a hypersurface singularity. In fact, the same argument in the proof of (1) shows that $x$ is a singularity of type either $\frac{1}{2}(1,1,0)$ or $\frac{1}{3}(1,2,0)$, i.e. locally analytically defined by $(z_1^2+z_2^2+z_3^2=0)\subseteq (0\in\mb{C}^4)$ or $(z_1^2+z_2^2+z_3^3=0)\subseteq (0\in\mb{C}^4)$ .

    As $x\in {\mts{X}}_0$ is a hypersurface singularity, the embedding dimension $\edim({\mts{X}}_0,x):= \dim_{\bC}\mtf{m}_{{\mts{X}}_0,x}/\mtf{m}^2_{\mts{X}_0,x}=4$. Since $\mts{X}_0$ is a Cartier divisor in $\mts{X}$, one has $\edim({\mts{X}}_0,x)\leq 5$, and hence $x\in {\mts{X}}$ is a hypersurface singularity as well. Let $H$ be a general hyperplane section of ${\mts{X}}$ through $x$. Then $x\in H$ is a normal isolated hypersurface singularity, and there is a well-defined $\mb{Q}$-Cartier Weil divisor ${\mts{D}}|_{H}$ on $H$ which satisfies that $$\mtc{O}_{{\mts{X}}}({\mts{D}})\otimes \mtc{O}_{H}\simeq \mtc{O}_{H}({\mts{D}}|_{H}).$$ By the local Grothendieck–Lefschetz theorem (cf. \cite[Main Theorem ii)]{Rob76}), the local class group $\Pic(x\in H)$ is torsion-free, which implies that is Cartier at $x$.
    
\end{proof}

\begin{theorem}\label{generalelephant}\textup{(General elephants, cf. \cite{Rei83,Sho79})}
    Let $X$ be a Gorenstein canonical weak Fano threefold. Then $|-K_X|\neq \emptyset$, and a general element $S\in|-K_X|$ is a K3 surface with at worst du Val singularities.
\end{theorem}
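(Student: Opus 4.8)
The plan is to handle non-emptiness, triviality of the canonical class, and du Val-ness of the general member in that order; only the last step carries real content.

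\emph{Non-emptiness.} First I would exploit that a Gorenstein canonical variety has rational singularities and that $-K_X$ is Cartier. Since $-K_X$ is big and nef, Kawamata--Viehweg vanishing applied to $-K_X = K_X + 2(-K_X)$ gives $H^i(X,\mathcal{O}_X(-K_X)) = 0$ for $i>0$, and the same argument with $\mathcal{O}_X = \mathcal{O}_X(K_X + (-K_X))$ gives $H^i(X,\mathcal{O}_X)=0$ for $i>0$, so $\chi(\mathcal{O}_X)=1$. Hence $h^0(-K_X)=\chi(-K_X)$, and by the Riemann--Roch (genus) formula for Gorenstein canonical Fano threefolds,
\[
\chi(-K_X) = \tfrac{1}{2}(-K_X)^3 + 3\chi(\mathcal{O}_X) = \tfrac{1}{2}(-K_X)^3 + 3 > 0,
\]
using $(-K_X)^3>0$. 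Thus $|-K_X|\neq\emptyset$, and in fact $h^0(-K_X)\geq 4$, so the system is big and its general member moves.

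\emph{Triviality of $K_S$ and $h^1(\mathcal{O}_S)=0$.} Fix $S\in|-K_X|$; since $S\sim -K_X$ we have $\mathcal{O}_X(-S)=\mathcal{O}_X(K_X)$, and the structure sequence $0\to\mathcal{O}_X(K_X)\to\mathcal{O}_X\to\mathcal{O}_S\to 0$ together with $H^1(\mathcal{O}_X)=0$ and $H^2(\mathcal{O}_X(K_X))\cong H^1(\mathcal{O}_X)^\vee=0$ (Serre duality on the CM variety $X$) yields $h^0(\mathcal{O}_S)=1$ and $H^1(\mathcal{O}_S)=0$; in particular $S$ is connected. By adjunction for the Cartier divisor $S$, $\omega_S\simeq\mathcal{O}_X(K_X+S)|_S=\mathcal{O}_S$. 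Therefore, once I know the general $S$ is normal with at worst du Val (hence canonical, Gorenstein, rational) singularities, the combination $\omega_S\simeq\mathcal{O}_S$ and $h^1(\mathcal{O}_S)=0$ forces $S$ to be a K3 surface with du Val singularities.

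\emph{The general member is du Val.} This is the crux. By Bertini the general $S$ is smooth on $X\setminus(\Sing X\cup\Bs|-K_X|)$, and it is reduced with no fixed component once $\dim\Bs|-K_X|\le 0$. Along $\Sing X$, every Gorenstein canonical threefold point is a compound du Val (cDV) singularity by Reid's classification, so a sufficiently general Cartier member through such a point induces a du Val surface singularity. The delicate remaining issue is the finite base locus: following Shokurov I would first bound $\dim\Bs|-K_X|\le 0$, then analyze the general member at each base point by passing to a log resolution $f\colon Y\to X$ of $(X,|-K_X|)$ and comparing $K_{\tilde S}=(K_Y+\tilde S)|_{\tilde S}$ with $\omega_S\simeq\mathcal{O}_S$ to check that the induced morphism $\tilde S\to S$ is crepant, whence $S$ is canonical and therefore du Val.

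\emph{Main obstacle.} The genuinely hard part is precisely this third step: controlling the general member over the base locus and over the canonical singular points of $X$, where the Gorenstein canonical hypothesis is essential. This is exactly the content of the Reid--Shokurov general elephant theorem, and rather than reproving the base-point analysis from scratch I would import it from \cite{Rei83, Sho79}.
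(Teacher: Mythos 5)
The paper does not actually prove this statement: it is quoted as the classical Reid--Shokurov general elephant theorem with a citation to \cite{Rei83,Sho79}, and your proposal likewise imports the genuinely hard step from those references. Your treatment of the routine parts is correct: Kawamata--Viehweg gives $h^0(-K_X)=\chi(-K_X)=\tfrac12(-K_X)^3+3\ge 4$ (the Riemann--Roch formula is valid on a Gorenstein canonical threefold), and the structure sequence plus adjunction give $\omega_S\simeq\mathcal{O}_S$ and $h^1(\mathcal{O}_S)=0$, so everything indeed reduces to showing the general member is du Val. In substance you and the paper take the same route.

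One assertion in your third step is, however, false and should be repaired: it is \emph{not} true that every Gorenstein canonical threefold point is cDV. A point is cDV precisely when its general hyperplane section is du Val, and Gorenstein canonical singularities can fail this: the cone over an anticanonically embedded del Pezzo surface (e.g.\ $x^3+y^3+z^3+w^3=0$) is canonical with discrepancy $0$ but its general hyperplane section through the vertex is a simple elliptic, not du Val, singularity. The correct statement (Reid) is that a Gorenstein canonical threefold is cDV outside a finite set of points; the general member of the mobile part of $|-K_X|$ avoids any finite set disjoint from $\Bs|-K_X|$, and the serious work of \cite{Rei83,Sho79} is exactly at the base points and along the one-dimensional cDV locus, where one must also justify that the general member is a sufficiently generic hypersurface section for the du Val conclusion. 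Since you ultimately defer to those references, the overall argument survives, but as written the cDV claim is a genuine misstatement rather than a shortcut.
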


\begin{lemma}\label{deform}
    Let $X$ be a Gorenstein canonical (weak) Fano threefold and $\pi:\mts{X}\rightarrow T$ be a $\mb{Q}$-Gorenstein smoothing of $X=\mts{X}_0$. Then for any K3 surface $S\in|-K_{X}|$ \textup{(}e.g. $S$ is a general member in $|-K_{X}|$\textup{)}, after possibly shrinking $T$, there exists a family of K3 surface $\mts{S}\subseteq \mts{X}$ over $T$ such that $\mts{S}_t\in|-K_{\mts{X}_t}|$ and $\mts{S}_0=S$. In particular, $\mts{S}$ is a Cartier divisor in $\mts{X}$.
\end{lemma}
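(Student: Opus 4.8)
The plan is to realize $\mts{S}$ as the zero divisor of a global section of $-K_{\mts{X}/T}$ that restricts on the central fiber to the section cutting out $S$. The first point to settle is that $-K_{\mts{X}/T}$ is genuinely a line bundle—not merely $\mb{Q}$-Cartier—in a neighborhood of $\mts{X}_0$. Since $X=\mts{X}_0$ is Gorenstein by hypothesis and every fiber $\mts{X}_t$ with $t\neq 0$ is smooth, hence Gorenstein, all fibers of $\pi$ near $0$ are Gorenstein; as $T$ is a smooth curve and $\pi$ is flat, the total space $\mts{X}$ is therefore Gorenstein along $\mts{X}_0$. Consequently $\omega_{\mts{X}}$, and hence $\omega_{\mts{X}/T}=\omega_{\mts{X}}\otimes \pi^*\omega_T^{-1}$, is invertible after shrinking $T$, and $\omega_{\mts{X}/T}|_{\mts{X}_t}\simeq \omega_{\mts{X}_t}$ for every $t$ by adjunction along the fiber.

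Next I would lift the section. Let $s_0\in H^0(X,-K_X)$ satisfy $\divisor(s_0)=S$. On each fiber $-K_{\mts{X}_t}$ is big and nef (weak Fano) and $\mts{X}_t$ has at worst canonical singularities, so writing $-K_{\mts{X}_t}=K_{\mts{X}_t}+(-2K_{\mts{X}_t})$ and applying Kawamata--Viehweg vanishing gives $H^i(\mts{X}_t,-K_{\mts{X}_t})=0$ for all $i>0$ and all $t$ near $0$. By cohomology and base change, $\pi_*\mathcal{O}_{\mts{X}}(-K_{\mts{X}/T})$ is locally free after shrinking $T$ and its formation commutes with base change; in particular the restriction map $H^0(\mts{X},-K_{\mts{X}/T})\to H^0(X,-K_X)$ is surjective. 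Hence $s_0$ extends to a section $s\in H^0(\mts{X},-K_{\mts{X}/T})$ with $s|_{\mts{X}_0}=s_0$.

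I would then set $\mts{S}:=\divisor(s)\subseteq \mts{X}$. Because $-K_{\mts{X}/T}$ is Cartier near $\mts{X}_0$, the subscheme $\mts{S}$ is a Cartier divisor, which is the final assertion. By construction $\mts{S}_0=S$ and $\mts{S}_t\in|-K_{\mts{X}_t}|$; since $s|_{\mts{X}_0}=s_0\neq 0$ no fiber is contained in $\mts{S}$, so $\mts{S}\to T$ is flat with two-dimensional fibers. Finally, each fiber $\mts{S}_t$ has trivial dualizing sheaf by adjunction, $K_{\mts{S}_t}=(K_{\mts{X}_t}+\mts{S}_t)|_{\mts{S}_t}\simeq \mathcal{O}_{\mts{S}_t}$, and $H^1(\mathcal{O}_{\mts{S}_t})=0$ from the structure sequence together with the vanishing above; since $\mts{S}_0=S$ has at worst du Val singularities and this is an open condition, after one further shrinking of $T$ every $\mts{S}_t$ is a K3 surface, as desired.

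The main obstacle is the first step: upgrading the a priori only $\mb{Q}$-Cartier relative anticanonical divisor to an honest line bundle in a neighborhood of the central fiber. This is precisely what the Gorenstein hypothesis (provided in the applications by Theorem \ref{nonvanishing}(1)) buys us, and without it the zero locus of a lifted section would only be a Weil divisor, so the ``in particular'' conclusion would fail. Once the Cartier property is in place, the surjectivity of the restriction map is a formal consequence of Kawamata--Viehweg vanishing and base change, and the remaining verifications (flatness of $\mts{S}\to T$ and the K3 property of the fibers) are routine openness arguments.
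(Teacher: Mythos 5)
Your proof is correct and follows essentially the same route as the paper's: Kawamata--Viehweg vanishing yields surjectivity of the restriction map $H^0(\mts{X},-K_{\mts{X}})\to H^0(X,-K_X)$, and one lifts the section cutting out $S$. You additionally make explicit two points the paper leaves implicit (that $\mts{X}$ is Gorenstein along the Cartier divisor $\mts{X}_0$, so $-K_{\mts{X}/T}$ is genuinely a line bundle there, and that the nearby fibers $\mts{S}_t$ remain K3 surfaces by openness of du Val singularities), which is a worthwhile clarification but not a different argument.
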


\begin{proof}
    Consider the exact sequence $$0\longrightarrow \mtc{O}_{\mts{X}}(-K_{\mts{X}}-\mts{X}_0)\longrightarrow \mtc{O}_{\mts{X}}(-K_{\mts{X}})\longrightarrow \mtc{O}_{\mts{X}_0}(-K_{\mts{X}_0})\longrightarrow 0.$$ Applying $\pi_{*}$, one obtains by the Kawamata-Viehweg vanishing theorem that the restriction $$H^0(\mts{X},\mtc{O}_{\mts{X}}(-K_{\mts{X}}))\longrightarrow H^0(\mts{X}_0,\mtc{O}_{\mts{X}_0}(-K_{\mts{X}_0}))$$ is surjective. Moreover, if $S$ is a general member in $|-K_{\mts{X}_0}|$, then one can choose $\mts{S}$ to be general, and hence $\mts{S}_t\in |-K_{\mts{X}_t}|$ is general for any general point $t\in T$.
    
\end{proof}

\subsection{Modifications on the family and degree 6 K3 surfaces}\label{limitkss}

From now on, let $X$ be a K-semistable $\bQ$-Fano variety that admits a $\bQ$-Gorenstein smoothing $\pi:\mts{X}\rightarrow T$  over a smooth pointed curve $0\in T$ such that $\mts{X}_0\simeq X$ and every fiber $\mts{X}_t$ over $t\in T\setminus \{0\}$ is a smooth Fano threefold in the family №2.15. Up to a finite base change, we may assume that the restricted family $\mts{X}^{\circ}\rightarrow T^{\circ}:=T\setminus \{0\}$ is isomorphic to $\Bl_{\mts{C}^{\circ}}(\mb{P}^3\times T^{\circ})$, where $\mts{C}^{\circ}\rightarrow T^{\circ}$ is a family of smooth $(2,3)$-complete intersection curves in $\mb{P}^3$. Let $\mts{E}^{\circ}\subseteq \mts{X}^{\circ}$ be the Cartier divisor corresponding to the exceptional divisor of the blow-up, and $\wt{\mts{Q}}^{\circ}$ be the strict transform of the family of quadric surfaces $\mts{Q}^{\circ}$ containing $\mts{C}^{\circ}$. Let $\mtc{H}^{\circ}$ be the line bundle on $\mts{X}^{\circ}$ which is the pull-back of $\mtc{O}_{\mb{P}^3}(1)$, and $\mtc{L}^{\circ}:=3\mtc{H}^{\circ}-\mts{E}^{\circ}$. Then $\mts{E}^{\circ}$ (resp. $\mtc{H}^{\circ}$, $\mtc{L}^{\circ}$, $\mts{Q}^{\circ}$) extends to $\mts{E}$ (resp. $\mtc{H}$, $\mtc{L}$, $\mts{Q}$) on $\mts{X}$ as a Weil divisor on $\mts{X}$ by taking Zariski closure. We denote by $L$ (resp. $H$, $E$, $Q$) the restriction of $\mtc{L}$ (resp. $\mtc{H}$, $\mts{E}$, $\mts{Q}$) on the central fiber $\mts{X}_0\simeq X$.

\begin{prop}\label{bpf}
    The complete linear system $|-K_{X}|$ is base-point-free. 
\end{prop}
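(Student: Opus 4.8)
The plan is to reduce the base-point-freeness of $|-K_X|$ to that of its restriction to a general anti-canonical K3 surface, and then to apply the classification of anti-canonical linear systems on K3 surfaces, the crux being the exclusion of the unigonal case. By Theorem \ref{nonvanishing}(1) the variety $X$ is Gorenstein canonical, so $-K_X$ is Cartier and $X$ has rational singularities; in particular $H^i(X,\mtc{O}_X)=0$ for all $i>0$. By the general elephant theorem (Theorem \ref{generalelephant}, cf.\ also Theorem \ref{nonvanishing}(2)) a general member $S\in|-K_X|$ is a K3 surface with at worst du Val singularities, and $(S,-K_X|_S)$ is a polarized K3 surface of degree $V=(-K_X)^3=22$. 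Since $S\in|-K_X|$, we have $-K_X-S\sim 0$, and twisting the ideal sheaf sequence of $S$ by $\mtc{O}_X(-K_X)$ gives
$$0\longrightarrow \mtc{O}_X\longrightarrow \mtc{O}_X(-K_X)\longrightarrow \mtc{O}_S(-K_X|_S)\longrightarrow 0.$$
The vanishing $H^1(X,\mtc{O}_X)=0$ then shows that the restriction map
$$H^0(X,-K_X)\longrightarrow H^0(S,-K_X|_S)$$
is surjective. As $\Bs|-K_X|$ is contained in the member $S$, surjectivity together with base-point-freeness of $\bigl|-K_X|_S\bigr|$ on $S$ would force $\Bs|-K_X|\cap S=\emptyset$, hence $\Bs|-K_X|=\emptyset$. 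Thus it suffices to show that the degree $22$ linear system $\bigl|-K_X|_S\bigr|$ is base-point-free.

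For this I would apply Mayer's trichotomy (Theorem \ref{Mayer}) to $(S,-K_X|_S)$. In both the generic and the hyperelliptic cases $\bigl|-K_X|_S\bigr|$ is base-point-free, so the only remaining possibility to rule out is the unigonal case, and this is the heart of the proof. The exclusion uses the global family: by Lemma \ref{deform}, after shrinking $T$ we may spread $S$ out to a family $\mts{S}\subseteq \mts{X}$ of anti-canonical K3 surfaces with $\mts{S}_0=S$ and $\mts{S}_t\in|-K_{\mts{X}_t}|$ for $t\neq 0$. For $t\neq 0$ the threefold $\mts{X}_t$ is a smooth member of family \textnumero 2.15, so a general $\mts{S}_t$ is a smooth quartic in $\mb{P}^3$ containing a smooth conic, i.e.\ a $(4,22)$-polarized K3 surface (cf.\ Lemma \ref{isomofL}). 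Hence $S$ is a degeneration of $(4,22)$-polarized K3 surfaces, and Lemma \ref{notunigonal} shows that $(S,-K_X|_S)$ cannot be unigonal.

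Putting the two steps together, $(S,-K_X|_S)$ lies in the generic or hyperelliptic case of Theorem \ref{Mayer}, so $\bigl|-K_X|_S\bigr|$ is base-point-free, and the restriction argument of the first paragraph then yields that $|-K_X|$ is base-point-free on $X$. The main obstacle is precisely the exclusion of the unigonal case: this cannot be detected from the abstract K3 surface $S$ alone, but requires the lattice-theoretic degeneration analysis recorded in Lemma \ref{notunigonal}. The remaining ingredients—the reduction to a general elephant and the surjectivity of the anti-canonical restriction via Kawamata--Viehweg vanishing—are routine.
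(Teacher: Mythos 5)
Your proposal is correct and follows essentially the same route as the paper: reduce to a general anti-canonical K3 surface $S$ via the surjection $H^0(X,-K_X)\twoheadrightarrow H^0(S,-K_X|_S)$ coming from Kawamata--Viehweg vanishing, and then conclude base-point-freeness of $|-K_X|_S|$ by excluding the unigonal case through Lemma \ref{notunigonal}. You spell out two steps the paper leaves implicit — the appeal to Mayer's trichotomy and the use of Lemma \ref{deform} to realize $S$ as a degeneration of $(4,22)$-polarized K3 surfaces so that Lemma \ref{notunigonal} applies — but the logical skeleton is identical.
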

    
\begin{proof}
Since $(-K_X)^3 = (-K_{\mts{X}_t})^3 = 22\geq 20$, we can apply Theorem \ref{nonvanishing}(1) and get that  $-K_X$ is an ample Cartier divisor.
Let $S\in|-K_{X}|$ be a general member. Then by Theorem \ref{nonvanishing}(2), $(S,-K_{X}|_S)$ is a polarized (possibly singular) K3 surface of degree $22$. Consider the short exact sequence $$0\longrightarrow \mtc{O}_{X}\longrightarrow\mtc{O}_{X}(-K_{X})\longrightarrow \mtc{O}_{S}(-K_{X}|_{S})\longrightarrow 0.$$ By Kawamata-Viehweg vanishing theorem, we have that $H^1(X,\mtc{O}_{X})=0$, and hence the induced long exact sequence yields a surjection $$H^0(X,-K_{X})\twoheadrightarrow H^0(S,-K_{X}|_S).$$ It follows that $|-K_{X}|_S|$ is base-point-free as by Lemma \ref{notunigonal} $S$ is not unigonal, and hence so is $|-K_{X}|$.

\end{proof}

Let $\theta:\mts{Y}\rightarrow \mts{X}$ be a small $\mb{Q}$-factorialization of $\mts{X}$. Since $\mts{X}$ is klt and $K_{\mts{Y}}=\theta^* K_{\mts{X}}$, we know that $\mts{Y}$ is $\bQ$-factorial and klt. Thus $-K_{\mts{Y}}=\theta^* (-K_{\mts{X}})$ is trivial over $\mts{X}$ and hence nef and big over $\mts{X}$ which implies that $\mts{Y}$ is weak Fano over $\mts{X}$. Therefore, $\mts{Y}$ is of Fano type over $\mts{X}$.
By \cite{BCHM} we can run a minimal model program (MMP) for $\theta^{-1}_{*}\mts{Q}$ on $\mts{Y}$ over $\mts{X}$. As a result, we obtain a log canonical model $\mts{Y} \dashrightarrow \wt{\mts{X}}$ (i.e. an ample model of $\theta^{-1}_{*}\mts{Q}$ over $\mts{X}$) that fits into a commutative diagram
$$\xymatrix{
 & \wt{\mts{X}} \ar[rr]^{f} \ar[dr]_{\wt{\pi}}  &  &  \mts{X} \ar[dl]^{\pi}\\
 & & T  &\\
 }$$ satisfying the following conditions:
\begin{enumerate}
    \item $f$ is a small contraction, and is an isomorphism over $T^{\circ}$;
    \item $-K_{\wt{\mts{X}}}=f^{*}(-K_{\mts{X}})$ is a  $\wt{\pi}$-big and $\wt{\pi}$-nef Cartier divisor; 
    \item $\wt{\mts{X}}_0$ is a $\bQ$-Gorenstein smoothable Gorenstein canonical weak Fano variety whose anti-canonical model is isomorphic to $\mts{X}_0$;
    \item $\wt{\mts{Q}}:=f^{-1}_* \mts{Q}$ is an $f$-ample Cartier Weil divisor (cf. Theorem \ref{nonvanishing}(3)); and
    
    \item $\wt{\mtc{L}}:=f^{-1}_*\mtc{L}\sim_{\bQ, T}\frac{1}{2}(-K_{\wt{\mtc{X}}} + \wt{\mts{Q}})$ is a Cartier divisor (cf. Theorem \ref{nonvanishing}(3)), and $2\wt{\mtc{L}}-(1-\epsilon)\wt{\mts{Q}}\sim_{\bR, T} -K_{\wt{\mtc{X}}} + \epsilon \wt{\mts{Q}}$ is $\wt{\pi}$-ample, for any real number $0<\epsilon\ll1$.
\end{enumerate}

To ease our notation, we denote by $$\wt{X}:=\wt{\mts{X}}_0,\ \ \  g = f|_{\wt{X}}: \wt{X} \to X,\ \ \ \wt{\mtc{H}}:=f_{*}^{-1} \mtc{H},\ \ \ \textup{and}\ \ \ \wt{\mts{E}}:=f_{*}^{-1} \mts{E}.$$ By Theorem \ref{nonvanishing}(3) and linear equivalences $$\wt{\mtc{H}}\sim_{T} \wt{\mtc{L}} - \wt{\mts{Q}}\ \ \ \textup{and} \ \ \ \wt{\mts{E}}\sim_{T} 2\wt{\mtc{L}} - 3\wt{\mts{Q}},$$ we know that both $\wt{\mtc{H}}$ and $\wt{\mts{E}}$ are Cartier. We also denote by $\wt{L}$ (resp. $\wt{H}$, $\wt{E}$, $\wt{Q}$) the restriction of $\wt{\mtc{L}}$ (resp. $\wt{\mtc{H}}$, $\wt{\mts{E}}$, $\wt{\mts{Q}}$) to $\wt{\mts{X}}_0 = \wt{X}$. 

In the remaining of this subsection, our goal is to show that $\wt{\mtc{L}}$ is relatively big and semiample over $T$ (see Proposition \ref{nefness}). A key ingredient is our previous study on the moduli of certain lattice-polarized K3 surfaces (see Lemma \ref{samefamily}).

\begin{lemma}\label{quasipolarized}
The linear series $|-K_{\wt{X}}|$ is base point free. Moreover, if $\wt{S}\in|-K_{\wt{X}}|$ is a K3 surface (e.g. when $S$ is a general member), then $(\wt{S},\wt{L}|_{\wt{S}})$ is a quasi-polarized degree $6$ K3 surface.
\end{lemma}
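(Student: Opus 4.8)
The plan is to treat the two assertions in turn, reducing everything about $\wt L$ to the degree $6$ K3 moduli studied in Section \ref{K3SURFACES}.

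First I would prove base-point-freeness by comparing $|-K_{\wt X}|$ with $|-K_X|$. Restricting the Cartier divisor identity $-K_{\wt{\mts X}}=f^{*}(-K_{\mts X})$ (condition (2)) to the central fibre and applying adjunction on the Cartier fibres $\wt X\subseteq\wt{\mts X}$ and $X\subseteq\mts X$ gives $-K_{\wt X}=\big(f^{*}(-K_{\mts X})\big)|_{\wt X}=g^{*}(-K_X)$. Since $g\colon\wt X\to X$ is a birational morphism of normal varieties we have $g_{*}\mtc O_{\wt X}=\mtc O_X$, so the projection formula yields $H^{0}(\wt X,-K_{\wt X})=H^{0}(X,-K_X)$ and hence $|-K_{\wt X}|=g^{*}|-K_X|$. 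As $|-K_X|$ is base-point-free by Proposition \ref{bpf}, its pullback has empty base locus, which settles the first claim.

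Next I would pin down the polarization lattice. Fix a member $\wt S\in|-K_{\wt X}|$ that is a K3 surface (by Theorem \ref{generalelephant} the general such member is a K3 with du Val singularities) and set $L_{S}:=\wt L|_{\wt S}$ and $Q_{S}:=\wt Q|_{\wt S}$. Because $\wt S\sim -K_{\wt X}$, the numbers $(L_S^{2})$, $(L_S\cdot Q_S)$, $(Q_S^{2})$ equal the triple products $\wt{\mtc L}^{2}\cdot(-K_{\wt X})$, $\wt{\mtc L}\cdot\wt{\mts Q}\cdot(-K_{\wt X})$, $\wt{\mts Q}^{2}\cdot(-K_{\wt X})$, which are constant along the flat family $\wt{\mts X}\to T$ since all divisors are Cartier. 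Evaluating them on a general fibre $\wt{\mts X}_t=\mts X_t$, where $\wt L=L\sim 3H-E$, $\wt Q\sim 2H-E$ and $-K\sim 4H-E$ on $\Bl_C\mb P^3$, a short computation gives $(L_S^{2},\,L_S\cdot Q_S,\,Q_S^{2})=(6,0,-2)$; in particular the degree of $(\wt S,L_S)$ is $6$. Thus $L_S,Q_S$ span a copy of $\Lambda_0$, and the embedding is primitive because, by Lemma \ref{isomofL}(2), $\{L,Q\}=\{3H-E,2H-E\}$ is a $\bZ$-basis of $\NS(S_t)$ on the general fibre.

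Finally I would deduce nefness from the K3 moduli. By condition (5) the divisor $2\wt{\mtc L}-(1-\epsilon)\wt{\mts Q}$ is $\wt\pi$-ample, so its restriction $h_2:=2L_S-(1-\epsilon)Q_S$ is ample on $\wt S$; hence $(\wt S,L_S,Q_S,h_2)$ is a $\Lambda_2$-polarized K3 surface, i.e. a point of $\mts F_{\Lambda_2}$, and via Lemma \ref{deform} the family $\wt{\mts S}\subseteq\wt{\mts X}$ defines a morphism $T\to\mts F_{\Lambda_2}$ extending the general-fibre data. By Lemma \ref{samefamily} I may identify $\mts F_{\Lambda_2}$ with $\mts F_{\Lambda_1}$ together with its universal surface, and by Proposition \ref{isomo} the latter is the blow-up of the universal family over $\mathscr{H}^{\nu}_{6,\nodal}$ along a section in the singular locus. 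Consequently $\wt S$ is the blow-up $\nu\colon\wt S\to\ove S$ of a degree $6$ polarized K3 surface $(\ove S,\ove L)$ at a du Val singularity, with $L_S=\nu^{*}\ove L$. Therefore $\wt L|_{\wt S}=L_S$ is the pullback of an ample divisor under a birational morphism, hence big and nef, and $(\wt S,\wt L|_{\wt S})$ is a quasi-polarized K3 surface of degree $6$; alternatively, once $\nu$ is identified, Lemma \ref{ampleness} delivers the nefness directly.

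The main obstacle is precisely this last step: guaranteeing that the central fibre $\wt S$ remains a genuine $\Lambda_0$-polarized K3 surface, namely du Val (by the general-elephant theorem) and with the limiting polarization $h_2$ still ample (by the relative ampleness of $2\wt{\mtc L}-(1-\epsilon)\wt{\mts Q}$ built into the MMP construction). This is exactly what forbids a pathological degeneration—e.g.\ a unigonal one, in the spirit of Lemma \ref{notunigonal}—that would destroy nefness of $\wt L|_{\wt S}$, and it is why the family $\wt{\mts X}$ was arranged so that $2\wt{\mtc L}-(1-\epsilon)\wt{\mts Q}$ is relatively \emph{ample} rather than merely relatively nef.
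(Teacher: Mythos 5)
Your proof is correct and follows essentially the same route as the paper: base-point-freeness via $K_{\wt X}=g^*K_X$ and Proposition \ref{bpf}, then identifying $(\wt S,\wt L|_{\wt S},\wt Q|_{\wt S})$ as a point of $\mts F_{\Lambda_2}$ using the relative ampleness of $2\wt{\mtc L}-(1-\epsilon)\wt{\mts Q}$ and deformation to the general fibre, and finally invoking Lemma \ref{samefamily} to transfer to $\mts F_{\Lambda_1}$ and conclude nefness of $\wt L|_{\wt S}$. The only cosmetic differences are that you spell out the intersection-number computation $(6,0,-2)$ and route the last step through Proposition \ref{isomo} (realizing $\wt S$ as a blow-up of a degree $6$ polarized K3), whereas the paper just reads off ampleness of $(\wt L-\epsilon\wt Q)|_{\wt S}$ from membership in $\mts F_{\Lambda_1}$; both rest on the same lemmas.
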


\begin{proof}

The first statement follows from Proposition \ref{bpf} and the fact that $K_{\wt{X}}=g^{*}K_{X}$. For any $0<\epsilon \ll 1$, since $\wt{\mts{Q}}$ is $\wt{f}$-ample, then we know that $2\wt{\mtc{L}}-(1-\epsilon)\wt{\mts{Q}}$ is $\wt{\pi}$-ample, and hence $(2\wt{L}-(1-\epsilon)\wt{Q})|_{\wt{S}}$ is ample on the K3 surface $\wt{S}$. 

We claim that $\wt{S}$ is represented by a point in $\mts{F}_{\Lambda_2}$ (cf. Section \ref{k33}). Indeed, by deforming to a family of K3 surface in $|-K_{\mts{X}_t}|$ (cf. Lemma \ref{deform}), one sees that the divisors $\wt{L}|_{\wt{S}}$ and $\wt{Q}|_{\wt{S}}$ 
generate a primitive sublattice of $\Pic(\wt{S})$ which is isometric to $\Lambda_2$ where $(2\wt{L}-(1-\epsilon)\wt{Q})|_{\wt{S}}$ corresponds to the vector $h_2$.

By Lemma \ref{samefamily}, we know that $\wt{S}$ is represented by a point in $\mts{F}_{\Lambda_1}$ as well. In particular, $(\wt{L}-\epsilon\wt{Q})|_{\wt{S}}$ is also ample for any $0<\epsilon \ll 1$, and hence $\wt{L}|_{\wt{S}}$ is nef. The degree of $\wt{L}|_{\wt{S}}$ is invariant under deformation, as $\wt{\mtc{L}}$ is a Cartier divisor on $\wt{\mts{X}}$. 
    
\end{proof}


    

\begin{lemma}\label{isomorphismonsection}
    For any K3 surface $\wt{S}\in |-K_{\wt{X}}|$, The restriction map $$H^0(\wt{X},\mtc{O}_{\wt{X}}({\wt{L}}))\longrightarrow H^0(\wt{S},\mtc{O}_{\wt{S}}({\wt{L}|_{\wt{S}}}))$$ is an isomorphism. In particular, we have that $h^0(\wt{X},\mtc{O}_{\wt{X}}({\wt{L}}))=5$. 
\end{lemma}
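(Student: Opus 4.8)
The plan is to establish the isomorphism via the standard exact sequence associated to the Cartier divisor $\wt{S}\in|-K_{\wt{X}}|$, and then to compute the target dimension on the K3 surface $\wt{S}$. First I would write down the short exact sequence
\begin{equation}\nonumber
0\longrightarrow \mtc{O}_{\wt{X}}(\wt{L}-\wt{S})\longrightarrow \mtc{O}_{\wt{X}}(\wt{L})\longrightarrow \mtc{O}_{\wt{S}}(\wt{L}|_{\wt{S}})\longrightarrow 0,
\end{equation}
using that $\wt{S}\sim -K_{\wt{X}}$ so that $\wt{L}-\wt{S}\sim \wt{L}+K_{\wt{X}}$. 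Recalling the linear equivalence $\wt{\mtc{H}}\sim_T \wt{\mtc{L}}-\wt{\mts{Q}}$ (so $\wt{L}\sim \wt{H}+\wt{Q}$) together with $-K_{\wt{X}}=g^*(-K_X)$ and $\wt{L}\sim_{\bQ}\frac{1}{2}(-K_{\wt{X}}+\wt{Q})$, I would identify $\wt{L}+K_{\wt{X}}\sim \wt{L}-(-K_{\wt{X}})$ with an anti-effective class (the analogue of $L-S\sim -H$ in Lemma \ref{isomofL}). Concretely, since $-K_{\wt{X}}\sim 2\wt{L}-\wt{Q}$ one gets $\wt{L}+K_{\wt{X}}\sim -(\wt{L}-\wt{Q})\sim -\wt{H}$, which is anti-effective.

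The key vanishing step is to show $H^0(\wt{X},\mtc{O}_{\wt{X}}(\wt{L}+K_{\wt{X}}))=H^1(\wt{X},\mtc{O}_{\wt{X}}(\wt{L}+K_{\wt{X}}))=0$. The first vanishing is immediate since $\wt{L}+K_{\wt{X}}\sim -\wt{H}$ is (strictly) anti-effective and $\wt{X}$ is a projective variety. For the $H^1$ vanishing I would invoke Kawamata--Viehweg on the Gorenstein canonical weak Fano $\wt{X}$: writing the class as $K_{\wt{X}}+\wt{L}$, it suffices that $\wt{L}$ be big and nef. Nefness of $\wt{L}$ is not yet available at this point in the argument (that is the content of Proposition \ref{nefness}), so the honest route is to prove the vanishing on the K3 surface side instead, exactly as in Lemma \ref{isomofL}: restricting the sequence and using that $\wt{S}$ is a K3 surface with $\omega_{\wt{S}}\simeq\mtc{O}_{\wt{S}}$ and $H^1(\wt{S},\mtc{O}_{\wt{S}})=0$. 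Thus the surjectivity of $H^0(\wt{X},\wt{L})\to H^0(\wt{S},\wt{L}|_{\wt{S}})$ follows once $H^1(\wt{X},\mtc{O}_{\wt{X}}(-\wt{H}))=0$, which I would deduce by relating $\wt{X}$ to $X$ via $g$ and applying the vanishing already used in Proposition \ref{bpf}, or directly from Kawamata--Viehweg since $\wt{H}$ is nef and big (it is the pullback of the ample $\mtc{O}_{\mb{P}^3}(1)$-class).

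Once surjectivity (and injectivity, from the $H^0$-vanishing of the kernel) is in hand, it remains to compute $h^0(\wt{S},\mtc{O}_{\wt{S}}(\wt{L}|_{\wt{S}}))$. By Lemma \ref{quasipolarized}, $(\wt{S},\wt{L}|_{\wt{S}})$ is a quasi-polarized degree $6$ K3 surface, so Riemann--Roch on the K3 surface gives
\begin{equation}\nonumber
\chi(\wt{S},\mtc{O}_{\wt{S}}(\wt{L}|_{\wt{S}}))=\frac{1}{2}(\wt{L}|_{\wt{S}})^2+2=\frac{6}{2}+2=5,
\end{equation}
and the higher cohomology vanishes because $\wt{L}|_{\wt{S}}$ is big and nef on a K3 surface, forcing $h^0=\chi=5$. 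This yields $h^0(\wt{X},\mtc{O}_{\wt{X}}(\wt{L}))=5$ as claimed. The main obstacle I anticipate is the $H^1$-vanishing on the threefold: one must be careful not to circularly invoke nefness of $\wt{L}$, so I expect the cleanest argument routes the vanishing through the anti-effective divisor $-\wt{H}$ (whose nefness and bigness come from pulling back $\mtc{O}_{\mb{P}^3}(1)$ rather than from $\wt{L}$), exactly mirroring the proof of Lemma \ref{isomofL}.
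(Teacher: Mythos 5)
Your setup (the exact sequence for $\wt{S}\sim -K_{\wt{X}}$, the identification $\wt{L}-\wt{S}\sim -\wt{H}$, and the count $h^0(\wt{S},\wt{L}|_{\wt{S}})=\tfrac{1}{2}(\wt{L}|_{\wt{S}})^2+2=5$) matches the paper, and you correctly flag that invoking nefness of $\wt{L}$ for the $H^1$-vanishing would be circular. But your proposed escape routes for proving $H^1(\wt{X},\mtc{O}_{\wt{X}}(-\wt{H}))=0$ do not work, and this is a genuine gap. The claim that $\wt{H}$ is nef and big because ``it is the pullback of the ample $\mtc{O}_{\mb{P}^3}(1)$-class'' is exactly what is not available at this stage: on the central fiber $\wt{X}$ there is no morphism to $\mb{P}^3$ yet --- producing one is the goal of the entire section. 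The divisor $\wt{H}$ is merely the restriction to $\wt{X}$ of the Zariski closure (then strict transform) of $\mtc{H}^{\circ}$, and is only known to be Cartier by Theorem \ref{nonvanishing}(3); no positivity on the central fiber is known. Likewise, ``the vanishing already used in Proposition \ref{bpf}'' is $H^1(X,\mtc{O}_X)=0$, which says nothing about $H^1(\wt{X},-\wt{H})$; and restricting to the K3 surface $\wt{S}$ cannot produce a vanishing theorem on the threefold. Since $-\wt{H}\sim K_{\wt{X}}+\wt{L}$, any Kawamata--Viehweg argument here ultimately requires the nefness of $\wt{L}$, i.e.\ Proposition \ref{nefness}, whose proof comes later and itself rests on these restriction lemmas.

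The paper sidesteps surjectivity entirely, and you should too. From the long exact sequence one only extracts \emph{injectivity} of the restriction map (using that $-\wt{H}$ is not effective, since $\wt{H}$ is a nonzero effective limit of hyperplane classes), hence $h^0(\wt{X},\mtc{O}_{\wt{X}}(\wt{L}))\leq 5$. The matching lower bound $h^0(\wt{X},\mtc{O}_{\wt{X}}(\wt{L}))\geq h^0(\wt{\mts{X}}_t,\wt{\mtc{L}}_t)=5$ comes from upper semi-continuity of cohomology in the family $\wt{\mts{X}}\to T$, using that $\wt{\mtc{L}}$ is Cartier and that the general fiber is the known Fano threefold where $h^0=5$ by Lemma \ref{isomofL}. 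Equality of dimensions then forces the injection to be an isomorphism. So the logical order is reversed relative to your proposal: the dimension is computed first (via the degeneration), and the isomorphism is deduced from it, rather than the other way around.
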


\begin{proof}
    Let $\wt{S}\in|-K_{\wt{X}}|$ be a K3 surface. By Lemma \ref{quasipolarized} we see that $(\wt{S},\wt{L}|_{\wt{S}})$ is a quasi-polarized degree $6$ K3 surface, and hence $$h^0(\wt{S},\wt{L}|_{\wt{S}})\ =\ \frac{1}{2}({\wt{L}}|_{\wt{S}})^2+2\ =\ 5.$$
    Since $\wt{S}\sim -K_{\wt{X}}$ is Cartier, we have a short exact sequence 
    \[
    0 \to \mtc{O}_{\wt{X}}({\wt{L}}- \wt{S}) \to \mtc{O}_{\wt{X}}({\wt{L}}) \to \mtc{O}_{\wt{S}}({\wt{L}}|_{\wt{S}})\to 0.
    \]
    Notice that ${\wt{L}}-\wt{S}\sim \wt{L}+K_{\wt{X}}\sim -\wt{H}$ is not effective. Hence by taking the long exact sequence we see that 
    $H^0(\wt{X},\mtc{O}_{\wt{X}}({\wt{L}}))\hookrightarrow H^0(\wt{S},{\wt{L}}|_{\wt{S}})$ is injective, and thus $h^0(\wt{X},\mtc{O}_{\wt{X}}({\wt{L}}))\leq 5$. On the other hand, by upper semi-continuity, we have that $$h^0(\wt{X},\mtc{O}_{\wt{X}}({\wt{L}}))\ \geq\  h^0(\wt{\mts{X}}_t,\mtc{O}_{\wt{\mts{X}}_t}({\wt{\mtc{L}}_t}))\ =\ 5$$ for a general $t\in T$. Therefore, one has $h^0(\wt{X},\mtc{O}_{\wt{X}}({\wt{L}}))=5$, and the restriction map is an isomorphism.
    
\end{proof}

\begin{lemma}\label{except}
    We have that $h^0\big(\wt{X},\mtc{O}_{\wt{X}}(\wt{Q})\big)=1$.
\end{lemma}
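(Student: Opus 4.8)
The plan is to restrict $\wt{Q}$ to a general anticanonical K3 surface $\wt{S}\in|-K_{\wt{X}}|$ and to exploit the lattice-theoretic description of $\wt{S}$ from Section \ref{k33}. On such a surface the class $\wt{Q}|_{\wt{S}}$ is a $(-2)$-class whose sections are easy to count; lifting this computation back to $\wt{X}$ will give the upper bound $h^0(\wt{X},\mtc{O}_{\wt{X}}(\wt{Q}))\le 1$, while the effectivity of $\wt{Q}$ supplies the matching lower bound. The reason to pass to the K3 surface is that $\Pic(\wt{S})$ is the explicit lattice $\Lambda_0$, so numerical information about $\wt{Q}|_{\wt{S}}$ is completely controlled.

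Concretely, I would first choose a general member $\wt{S}\in|-K_{\wt{X}}|$, which is a K3 surface because the linear series is base-point-free by Lemma \ref{quasipolarized}. Since $\wt{X}$ is Gorenstein, $\wt{S}\sim -K_{\wt{X}}$ is Cartier, and $\wt{Q}$ is Cartier by property (4) preceding the statement, so restriction gives a short exact sequence of line bundles
$$0\longrightarrow \mtc{O}_{\wt{X}}(\wt{Q}-\wt{S})\longrightarrow \mtc{O}_{\wt{X}}(\wt{Q})\longrightarrow \mtc{O}_{\wt{S}}(\wt{Q}|_{\wt{S}})\longrightarrow 0.$$
Using the relations $-K_{\wt{X}}\sim 2\wt{L}-\wt{Q}$ and $\wt{H}\sim\wt{L}-\wt{Q}$ coming from properties (4)–(5), the leftmost term satisfies $\wt{Q}-\wt{S}\sim\wt{Q}+K_{\wt{X}}\sim -2\wt{H}$. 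As $\wt{H}$ is a nonzero effective divisor, $-2\wt{H}$ is not effective, so $H^0(\wt{X},\mtc{O}_{\wt{X}}(\wt{Q}-\wt{S}))=0$ and the restriction map $H^0(\wt{X},\mtc{O}_{\wt{X}}(\wt{Q}))\hookrightarrow H^0(\wt{S},\mtc{O}_{\wt{S}}(\wt{Q}|_{\wt{S}}))$ is injective.

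It then remains to verify $h^0(\wt{S},\mtc{O}_{\wt{S}}(\wt{Q}|_{\wt{S}}))=1$. By Lemma \ref{quasipolarized} the pair $(\wt{S},\wt{L}|_{\wt{S}})$ is a point of $\mts{F}_{\Lambda_0}$, and by Proposition \ref{isomo} (together with Lemma \ref{samefamily}) a general such $\wt{S}$ is the blow-up of a nodal degree-$6$ K3 surface at its node, with $\wt{Q}|_{\wt{S}}$ the exceptional $(-2)$-curve. An irreducible $(-2)$-curve $\Gamma$ on a K3 surface is rigid: any effective divisor linearly equivalent to $\Gamma$ must contain $\Gamma$, since $(\Gamma^2)=-2<0$ forces $\Gamma$ to be a fixed component, and the residual divisor is linearly trivial, hence zero. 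Thus $h^0(\wt{S},\mtc{O}_{\wt{S}}(\wt{Q}|_{\wt{S}}))=1$. Combined with the injection above and the fact that $\wt{Q}$ is effective (so $h^0(\wt{X},\mtc{O}_{\wt{X}}(\wt{Q}))\ge 1$), this yields the claim.

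The only delicate point, which is where the work of Section \ref{k33} is genuinely used, is the identification of $\wt{Q}|_{\wt{S}}$ with the irreducible exceptional $(-2)$-curve for a \emph{general} member $\wt{S}$; everything else is a routine Riemann--Roch computation on a K3 surface. Because the injectivity bound holds for \emph{every} choice of $\wt{S}$, it suffices to exhibit one general member realizing $h^0=1$, so the passage to a general $\wt{S}$ introduces no difficulty.
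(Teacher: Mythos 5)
Your proof is correct and follows essentially the same route as the paper: both obtain the injection $H^0(\wt{X},\mtc{O}_{\wt{X}}(\wt{Q}))\hookrightarrow H^0(\wt{S},\mtc{O}_{\wt{S}}(\wt{Q}|_{\wt{S}}))$ from the non-effectivity of $\wt{Q}-\wt{S}\sim -2\wt{H}$ and then compute $h^0$ on a general anticanonical K3 surface. The only (immaterial) difference is in that last step: the paper notes that $|2\wt{L}|_{\wt{S}}|$ contracts $\wt{Q}|_{\wt{S}}$ because $(\wt{L}|_{\wt{S}}.\wt{Q}|_{\wt{S}})=0$, whereas you identify $\wt{Q}|_{\wt{S}}$ as the irreducible exceptional $(-2)$-curve via Proposition \ref{isomo} and invoke its rigidity — two phrasings of the same numerical fact.
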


\begin{proof}
    It suffices to show that $h^0(\wt{X},\mtc{O}_{\wt{X}}(\wt{Q}))\leq1$ as $\wt{Q}$ is effective. For a general K3 surface $\wt{S}\in |-K_{\wt{X}}|$, we have that $|2\wt{L}|_{\wt{S}}|$ is base-point-free and induces a birational morphism contracting $\wt{Q}|_{\wt{S}}$, since the intersection number $(\wt{L}|_{\wt{S}}.\wt{Q}|_{\wt{S}})=0$. Thus $h^0(\wt{S},\mtc{O}_{\wt{S}}(\wt{Q}))=1$. As $\wt{Q}-\wt{S}\sim -2\wt{H}$ is not effective,  one has an injection  $H^0(\wt{X},\mtc{O}_{\wt{X}}(\wt{Q}))\hookrightarrow H^0(\wt{S},\mtc{O}_{\wt{S}}(\wt{Q}))$, which proves the statement.

\end{proof}

\begin{lemma}\label{isomorphismonsection2}
    For any K3 surface $\wt{S}\in |-K_{\wt{X}}|$, The restriction map $$H^0(\wt{X},\mtc{O}_{\wt{X}}(2{\wt{L}}))\longrightarrow H^0(\wt{S},\mtc{O}_{\wt{S}}(2{\wt{L}}|_{\wt{S}}))$$ is surjective. In particular, we have that $h^0(\wt{X},\mtc{O}_{\wt{X}}(2{\wt{L}}))=15$. 
\end{lemma}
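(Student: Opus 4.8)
The plan is to run the same restriction-sequence argument as in Lemmas \ref{isomorphismonsection} and \ref{except}, except that here the twisting divisor is \emph{effective} rather than anti-effective, so surjectivity cannot be read off from injectivity of a restriction map; instead I would pin down $h^0(\wt{X},2\wt{L})$ by squeezing it between an upper bound coming from the sequence and a lower bound coming from semicontinuity. First I would record the linear equivalence $2\wt{L}-\wt{S}\sim\wt{Q}$: since $\wt{S}\sim -K_{\wt{X}}$ and, by the relative equivalence $2\wt{\mtc{L}}\sim_{\bQ,T}-K_{\wt{\mts{X}}}+\wt{\mts{Q}}$ of condition (5), restricting the Cartier divisors to the fiber $\wt{X}=\wt{\mts{X}}_0$ (whose normal bundle is trivial) gives $2\wt{L}\sim -K_{\wt{X}}+\wt{Q}$. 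As $\wt{S}$ is Cartier this yields the short exact sequence
$$0 \longrightarrow \mtc{O}_{\wt{X}}(\wt{Q}) \longrightarrow \mtc{O}_{\wt{X}}(2\wt{L}) \longrightarrow \mtc{O}_{\wt{S}}(2\wt{L}|_{\wt{S}}) \longrightarrow 0.$$

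Next I would compute the two outer terms. By Lemma \ref{quasipolarized} the pair $(\wt{S},\wt{L}|_{\wt{S}})$ is a quasi-polarized degree $6$ K3 surface, so $2\wt{L}|_{\wt{S}}$ is big and nef with $(2\wt{L}|_{\wt{S}})^2=24$; passing to the minimal resolution to reduce to the smooth case (du Val singularities being rational and crepant), Kawamata--Viehweg vanishing together with Riemann--Roch give $h^0(\wt{S},2\wt{L}|_{\wt{S}})=2+\frac{24}{2}=14$. Combined with $h^0(\wt{X},\wt{Q})=1$ from Lemma \ref{except}, the left-exactness of the sequence above forces $h^0(\wt{X},2\wt{L})\leq 1+14=15$. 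For the reverse inequality I would invoke upper semicontinuity of cohomology in the family $\wt{\mts{X}}\to T$: on a general fiber $\wt{\mts{X}}_t\cong\mts{X}_t$ one has $2\wt{\mtc{L}}_t=\phi_t^{*}\mtc{O}_{V_t}(2)$, and since $V_t\subseteq\mb{P}^4$ is a cubic threefold with $\phi_{t*}\mtc{O}_{\mts{X}_t}=\mtc{O}_{V_t}$, the projection formula gives $h^0(\wt{\mts{X}}_t,2\wt{\mtc{L}}_t)=h^0(V_t,\mtc{O}_{V_t}(2))=15$. Hence $h^0(\wt{X},2\wt{L})\geq 15$.

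Combining the two bounds gives $h^0(\wt{X},2\wt{L})=15$, so the image of the restriction map has dimension $15-h^0(\wt{X},\wt{Q})=14=h^0(\wt{S},2\wt{L}|_{\wt{S}})$, and the map is therefore surjective, as claimed. The only genuinely delicate point is the first step: one must check that the relative $\bQ$-linear equivalence of condition (5) descends to an honest identification of line bundles $\mtc{O}_{\wt{X}}(2\wt{L}-\wt{S})\cong\mtc{O}_{\wt{X}}(\wt{Q})$ on the central fiber, which is where the hypothesis that $\wt{\mtc{L}}$, $\wt{\mts{Q}}$ and $K_{\wt{\mts{X}}}$ are all Cartier is essential (the $\wt{\pi}$-pullback from $T$ restricting trivially to a fiber, so that no torsion ambiguity survives). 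Everything after that is a two-sided dimension count, with the effectivity of $\wt{Q}$ being precisely the reason the semicontinuity input is needed here in place of the bare injectivity used in the previous two lemmas.
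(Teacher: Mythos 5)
Your proposal is correct and follows essentially the same route as the paper's proof: the restriction sequence twisted by $\wt{S}\sim -K_{\wt{X}}$, the identification $2\wt{L}-\wt{S}\sim\wt{Q}$ together with $h^0(\wt{X},\wt{Q})=1$ from Lemma \ref{except} to get the upper bound $15$, and upper semicontinuity against the general fiber for the lower bound, after which surjectivity is the same dimension count. The only difference is that you spell out the value $h^0(\wt{\mts{X}}_t,2\wt{\mtc{L}}_t)=15$ via the cubic threefold model, which the paper leaves implicit.
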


\begin{proof}
    Let $\wt{S}\in|-K_{\wt{X}}|$ be a K3 surface. By Lemma \ref{quasipolarized} we see that $(\wt{S},{\wt{L}}|_{\wt{S}})$ is a  quasi-polarized K3 surface of degree $6$, and hence $$h^0(\wt{S},2{\wt{L}}|_{\wt{S}})\ =\ \frac{1}{2}(2{\wt{L}}|_{\wt{S}})^2+2\ =\ 14.$$
    By similar arguments to the proof of Lemma \ref{isomorphismonsection}, we have a long exact sequence
    \[
    0 \to H^0(\wt{X}, \mtc{O}_{\wt{X}}(2{\wt{L}} - \wt{S})) \to H^0(\wt{X}, \mtc{O}_{\wt{X}}(2{\wt{L}})) \to H^0(\wt{S}, \mtc{O}_{\wt{S}}(2{\wt{L}|_{\wt{S}}})).
    \]
    Notice that $2{\wt{L}}-\wt{S}\sim 2{\wt{L}}+K_{\wt{X}}\sim \wt{Q}$ and that $h^0(\wt{X},\mtc{O}_{\wt{X}}(\wt{Q}))=1$ (cf. Lemma \ref{except}). Hence the above long exact sequence implies that $$h^0(\wt{X},\mtc{O}_{\wt{X}}(2{\wt{L}}))\ \leq\  h^0(\wt{S},2{\wt{L}}|_{\wt{S}})+1\ =\ 15.$$ On the other hand, by upper semi-continuity, we have that $$h^0(\wt{X},\mtc{O}_{\wt{X}}(2{\wt{L}}))\ \geq\  h^0(\wt{\mts{X}}_t,\mtc{O}_{\wt{\mts{X}}_t}(2{\wt{\mtc{L}}_t}))\ =\ 15$$ for any general $t\in T$. Thus the proof is finished.
    
\end{proof}

\begin{prop}\label{nefness} The Cartier divisor $\wt{\mtc{L}}$ is $\wt{\pi}$-semiample and  $\wt{\pi}$-big.

\end{prop}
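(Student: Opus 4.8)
The plan is to prove the stronger statement that $2\wt{\mtc{L}}$ is $\wt{\pi}$-base-point-free, from which $\wt{\pi}$-semiampleness of $\wt{\mtc{L}}$ is immediate; the $\wt{\pi}$-bigness will then follow from a self-intersection computation. Over the punctured base $T^{\circ}$ there is nothing to prove, since there $\wt{\mtc{L}}_t=\mtc{L}_t$ agrees with the pullback $\phi_t^{*}\mtc{O}_{\mb{P}^4}(1)$ under the birational morphism to the cubic threefold $\mts{V}_t$, which is big and base-point-free. So the whole content is to control the central fiber $\wt{X}$ and then glue over $T$.

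For the central fiber I would first check that $2\wt{L}$ is base-point-free on $\wt{X}$ by lifting sections from a general anticanonical K3 surface. Fix a general $\wt{S}\in|-K_{\wt{X}}|$; by Lemma \ref{quasipolarized} the pair $(\wt{S},\wt{L}|_{\wt{S}})$ is a quasi-polarized K3 surface of degree $6$, so $\wt{L}|_{\wt{S}}$ is big and nef with $(\wt{L}|_{\wt{S}})^2=6>0$. For such a big and nef class on a K3 surface the linear system $|2\wt{L}|_{\wt{S}}|$ is always base-point-free; this is the classical Saint-Donat--Mayer analysis recalled in Theorem \ref{Mayer}, where even in the unigonal case $|2L|$ is base-point-free. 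By Lemma \ref{isomorphismonsection2} the restriction $H^0(\wt{X},\mtc{O}_{\wt{X}}(2\wt{L}))\to H^0(\wt{S},\mtc{O}_{\wt{S}}(2\wt{L}|_{\wt{S}}))$ is surjective, so any point of $\wt{S}$ can be separated from the base locus by a section lifted from $\wt{X}$; hence $\Bs|2\wt{L}|\cap \wt{S}=\emptyset$. Since $|-K_{\wt{X}}|$ is base-point-free (Lemma \ref{quasipolarized}), its general members sweep out all of $\wt{X}$, and therefore $\Bs|2\wt{L}|=\emptyset$.

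To upgrade this to a statement over $T$, I would invoke cohomology and base change. By Lemma \ref{isomorphismonsection2} we have $h^0(\wt{X},\mtc{O}_{\wt{X}}(2\wt{L}))=15=h^0(\wt{\mts{X}}_t,\mtc{O}_{\wt{\mts{X}}_t}(2\wt{\mtc{L}}_t))$, so $t\mapsto h^0(\wt{\mts{X}}_t,2\wt{\mtc{L}}_t)$ is constant; as $\wt{\pi}$ is flat and proper and $\wt{\mtc{L}}$ is Cartier, Grauert's theorem gives that $\wt{\pi}_{*}\mtc{O}_{\wt{\mts{X}}}(2\wt{\mtc{L}})$ is locally free of rank $15$ and commutes with base change. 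The fiber at any $t$ of the evaluation map $\wt{\pi}^{*}\wt{\pi}_{*}\mtc{O}_{\wt{\mts{X}}}(2\wt{\mtc{L}})\to \mtc{O}_{\wt{\mts{X}}}(2\wt{\mtc{L}})$ is then the evaluation map for $(\wt{\mts{X}}_t,2\wt{\mtc{L}}_t)$, which is surjective for every $t$: for $t\neq 0$ because $\wt{\mts{X}}_t$ is a smooth member of the family with $\wt{\mtc{L}}_t=\phi_t^{*}\mtc{O}_{\mb{P}^4}(1)$ already base-point-free, and for $t=0$ by the previous paragraph. By Nakayama's lemma the evaluation map is surjective, i.e. $2\wt{\mtc{L}}$ is $\wt{\pi}$-base-point-free, which proves $\wt{\pi}$-semiampleness.

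Finally, for $\wt{\pi}$-bigness it suffices to check bigness on a general fiber. Since $\wt{\mtc{L}}$ is Cartier and $\wt{\pi}$ is flat, the triple self-intersection $(\wt{\mtc{L}}_t)^3$ is constant in $t$, and on a general fiber $(\wt{\mtc{L}}_t)^3=(\phi_t^{*}\mtc{O}_{\mb{P}^4}(1))^3=\deg \mts{V}_t=3>0$; in particular $\wt{L}^3=3>0$, so the nef (indeed semiample) divisor $\wt{L}$ is big on $\wt{X}$ as well. I expect the main obstacle to be precisely the fiberwise base-point-freeness on the singular central fiber $\wt{X}$: a priori one only controls $\wt{L}$ as a nef class, and its positivity is genuinely dictated by the anticanonical K3 surface geometry. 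The crux is therefore the interplay between the K3 moduli input (Lemma \ref{samefamily} and Lemma \ref{quasipolarized}, which force $\wt{L}|_{\wt{S}}$ to be nef of degree $6$) and the surjectivity of restriction in Lemma \ref{isomorphismonsection2}, which together transport base-point-freeness from the K3 surface up to the weak Fano threefold.
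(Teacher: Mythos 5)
There is a genuine gap in your central-fiber argument. From ``$\Bs|2\wt{L}|\cap\wt{S}=\emptyset$ for a general K3 member $\wt{S}$'' you conclude ``general members sweep out $\wt{X}$, therefore $\Bs|2\wt{L}|=\emptyset$.'' This is a non sequitur: for a \emph{fixed} point $x\in\Bs|2\wt{L}|$, the members of $|-K_{\wt{X}}|$ through $x$ form a proper linear subsystem, so they are not general, and you cannot guarantee that such a member is a K3 surface with ADE singularities (which is what Lemma \ref{quasipolarized} and Lemma \ref{isomorphismonsection2} require). What your sweeping argument actually proves is only that $\Bs|2\wt{L}|$ contains no curve $\wt{C}$ with $(-K_{\wt{X}}\cdot\wt{C})>0$: a general $\wt{S}$ misses any fixed finite set of points and is disjoint from any curve $C$ with $(\wt{S}\cdot C)=(-K_{\wt{X}}\cdot C)=0$, i.e.\ from the $g$-exceptional curves of the small contraction $g:\wt{X}\to X$. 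So isolated base points and $g$-exceptional base curves are not excluded, and in particular your argument never rules out a $g$-exceptional curve on which $\wt{L}$ is negative. The paper handles exactly this residual locus by a separate positivity input you do not use: $2\wt{L}\sim -K_{\wt{X}}+\wt{Q}$ with $\wt{Q}$ $g$-ample forces $(\wt{L}\cdot C)>0$ on every $g$-exceptional curve.

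Consequently the paper proves only the weaker statement that $\wt{L}$ is \emph{nef} on the central fiber (base locus of $|2\wt{L}|$ is isolated points or $g$-exceptional, plus the positivity on $g$-exceptional curves above), deduces that $\wt{\mtc{L}}$ is $\wt{\pi}$-nef, and then obtains $\wt{\pi}$-semiampleness from the Kawamata--Shokurov base-point-free theorem using that $\wt{\mts{X}}$ is of Fano type over $T$. Your route aims at the stronger claim that the specific multiple $2\wt{\mtc{L}}$ is relatively base-point-free, which is not established by your argument and is not needed: once fiberwise nefness is known, the base-point-free theorem does the rest. Your cohomology-and-base-change step and the bigness computation $(\wt{L}^3)=3>0$ are fine, but they sit downstream of the unproven fiberwise base-point-freeness; I would restructure the proof to target nefness of $\wt{L}$ and invoke the relative base-point-free theorem as the paper does.
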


\begin{proof}
    We first prove that $\wt{L}$ is a nef divisor. Recall that we have $2\wt{L}\sim -K_{\wt{X}}+\wt{Q}$. As $-K_{\wt{X}}$ is big and nef, and $\wt{Q}$ is $g$-ample, then we have that $(\wt{L}.C)>0$ for any $g$-exceptional curve $C\subseteq \wt{X}$.

    We claim that the base locus of the linear series $|2\wt{L}|$ is either some isolated points, or is contained in the $g$-exceptional locus, so $\wt{L}$ is nef. By Lemma \ref{quasipolarized}, we know that $|2\wt{L}|_{\wt{S}}|$ is base-point-free, where $\wt{S}\in|-K_{\wt{X}}|$ is a general member. Suppose that $\wt{C}\subseteq \Bs|2\wt{L}|$ is a curve which is not contracted by $g$. Then the intersection $\wt{C}\cap \wt{S}$ is non-empty and consists of finitely many points, which are all base points of $|2\wt{L}|_{\wt{S}}|$ (cf. Lemma \ref{isomorphismonsection2}). This leads to a contradiction.

     Since $\wt{L}= \wt{\mtc{L}}|_{\wt{\mts{X}}_0}$ is nef, and $\wt{\mtc{L}}|_{\wt{\mts{X}}_t}$ is nef for any $t\in T\setminus\{0\}$ as $\wt{\mts{X}}_t\simeq \mts{X}_t$ is a smooth Fano threefold in the family \textnumero 2.15, we conclude that $\wt{\mtc{L}}$ is $\wt{\pi}$-nef. This implies the $\wt{\pi}$-semiampleness of $\wt{\mtc{L}}$ by Kawamata--Shokurov base-point-free theorem, as $\wt{\mts{X}}$ is of Fano type over $T$. Since $\wt{\mtc{L}}|_{\wt{\mts{X}}_t}$ is big for a general $t$, we get the $\wt{\pi}$-bigness of $\wt{\mtc{L}}$.
    
\end{proof}

\subsection{Birational model as a singular cubic threefold}

From the last subsection we know that $\wt{\mtc{L}}$ is a  $\wt{\pi}$-semiample and  $\wt{\pi}$-big Cartier divisor on $\wt{\mts{X}}$
(cf. Proposition \ref{nefness}). Thus taking its ample model over $T$ yields a birational morphism $\phi:\wt{\mts{X}}\to \mts{V}$ that fits into a commutative diagram $$\xymatrix{
 & \wt{\mts{X}} \ar[rr]^{\phi \quad} \ar[dr]_{\wt{\pi}}  &  &  \mts{V}  
 \ar[dl]^{\pi_{\mts{V}}} \\
 & & T&\\
 }$$ 
 Here we have $$\mts{V} \ :=\ \Proj_{T}\bigg(\bigoplus_{m\in \bN} \wt{\pi}_* \left(\wt{\mtc{L}}^{\otimes m}\right)\bigg).$$
 For any $0\neq t\in T$, the morphism $\wt{\mts{X}}_t\rightarrow \mts{V}_t$ contracts precisely the smooth quadric surface $\wt{\mts{Q}}_t$ to an $A_1$-singularity of $\mts{V}_t$, which can be embedded into $\mb{P}^4$ as a singular cubic threefold by the line bundle $(\phi_{*}\mtc{L})|_{\mts{V}_t}$.

Now let us consider the restriction of the morphism $\phi$ to the central fiber $$\phi_0:\wt{\mts{X}}_0=\wt{X}\longrightarrow V:=\mts{V}_0.$$ Let $L_V:=(\phi_{0})_*\wt{L}$ be the $\mb{Q}$-Cartier Weil divisor on $V$. Our main goal in this subsection is to show that $V$ is also a singular cubic threefold, and $\wt{X}\simeq X$ is the blow-up of $V$ at a singular point. These results will help us prove Theorem \ref{23complete}.

\begin{lemma}\label{lem:contract-Q}
    The central fiber $V$ of $\pi_{\mts{V}}$ is a normal projective variety. Moreover, 
    the morphism $\phi_0: \wt{X}\to V$ is birational, contracts $\wt{Q}$ to a singular point $P$ of $V$, and is an isomorphism on $\wt{X}\setminus \wt{Q}$.
\end{lemma}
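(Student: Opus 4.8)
The plan is to identify $V=\mts{V}_0$ with the ample model of $\wt{L}$ on the central threefold $\wt{X}$, and then to read off every stated property of $\phi_0$ from the intersection theory of $\wt{L}$ and $\wt{Q}$ on $\wt{X}$, finally propagating the singularity of the general fibre to the special one. The main obstacle will be the very last point, showing that the image point $P$ is singular.

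\textbf{Base change, normality, birationality.} First I would verify that the formation of $\mts{V}=\Proj_T\bigoplus_m \wt{\pi}_*(\wt{\mtc{L}}^{\otimes m})$ commutes with restriction to $0\in T$. Since $\wt{L}$ is nef (Proposition \ref{nefness}) and $m\wt{L}-K_{\wt{X}}=m\wt{L}+(-K_{\wt{X}})$ is nef and big (recall $-K_{\wt{X}}=g^*(-K_X)$ is nef and big), Kawamata--Viehweg vanishing gives $H^i(\wt{X},m\wt{L})=0$ for $i>0$ and $m\geq 1$, and the same holds on every fibre $\wt{\mts{X}}_t$. Hence each $\wt{\pi}_*(\wt{\mtc{L}}^{\otimes m})$ is locally free and compatible with base change, so $V=\mts{V}_0\simeq \Proj\bigoplus_m H^0(\wt{X},m\wt{L})$ is exactly the ample model of $\wt{L}$ on $\wt{X}$ and $\phi_0$ is the associated contraction. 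Then $V$ is projective as a fibre of a projective morphism, and normal because $\wt{X}$ is normal and $(\phi_0)_*\mathcal{O}_{\wt{X}}=\mathcal{O}_V$ (again by base change). Since $(\wt{L}^3)_{\wt{X}}=(\wt{\mtc{L}}_t^3)>0$ by deformation invariance while $\wt{L}$ is nef, $\wt{L}$ is big, so $\phi_0$ is birational.

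\textbf{Contracted locus and contraction to a point.} Next I would determine the curves contracted by $\phi_0$. If $C$ is an irreducible curve with $(\wt{L}\cdot C)=0$, then from $2\wt{L}\sim -K_{\wt{X}}+\wt{Q}$ and the nefness of $-K_{\wt{X}}$ we get $(\wt{Q}\cdot C)\leq 0$; if moreover $C\not\subseteq \wt{Q}$ then $(\wt{Q}\cdot C)\geq 0$, forcing $(-K_{\wt{X}}\cdot C)=(\wt{Q}\cdot C)=0$, so that $C$ is $g$-exceptional --- contradicting the inequality $(\wt{L}\cdot C)>0$ for $g$-exceptional curves established in the proof of Proposition \ref{nefness}. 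Thus every contracted curve lies in $\wt{Q}$, and $\wt{Q}$ is the only $\phi_0$-exceptional divisor. To see that $\wt{Q}$ is sent to a single point, I would pass to the total space: $\Sigma:=\phi(\wt{\mts{Q}})$ is irreducible, and since each general fibre $\wt{\mts{Q}}_t\simeq \bP^1\times\bP^1$ is contracted to the point $p_t$, $\Sigma$ is a curve that is finite and birational over $T$. Its normalization is therefore $T$, so $\Sigma\cap V$ is a single point $P$, whence $\phi_0(\wt{Q})\subseteq \Sigma\cap V=\{P\}$. Combining this with the description of the contracted locus, $\phi_0$ is quasi-finite and birational over the normal variety $V\setminus\{P\}$, hence an isomorphism there by Zariski's main theorem; connectedness of the fibres (from $(\phi_0)_*\mathcal{O}_{\wt{X}}=\mathcal{O}_V$) excludes isolated preimages of $P$ off $\wt{Q}$, giving $\phi_0^{-1}(P)=\wt{Q}$ and the isomorphism on $\wt{X}\setminus \wt{Q}$.

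\textbf{Singularity of $P$ (the hard step).} The subtle point is that $P\in V$ is singular. Note that the $A_1$-singularity of the degree-$6$ K3 surface $\ove{\wt{S}}=\phi_0(\wt{S})$ (Lemma \ref{quasipolarized}, Theorem \ref{Mayer}) is perfectly compatible with a smooth ambient point, so it does \emph{not} by itself force $V$ to be singular; this is what makes the step delicate. Instead I would use the family. For $t\neq 0$ the fibre $\mts{V}_t$ carries an ordinary double point at $p_t$, and since this $A_1$ is a rigid hypersurface singularity, its persistence for all $t\in T^\circ$ forces the total space $\mts{V}$ to be singular along $\Sigma\setminus\{P\}$: if $\mts{V}$ were smooth at some $p_{t_0}$, then applying the Morse lemma to $\mts{V}\to T$ at the nondegenerate critical point $p_{t_0}$ would make all nearby fibres smooth, contradicting the persistence of the node. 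As the singular locus is closed and $P\in \Sigma=\overline{\Sigma\setminus\{P\}}$, we conclude that $\mts{V}$ is singular at $P$; since $V=\mts{V}_0$ is a Cartier divisor through $P$, a smooth point of $V$ would force a smooth point of $\mts{V}$, so $V$ is singular at $P$. (As corroboration, $(\wt{Q}^3)_{\wt{X}}=(\wt{\mts{Q}}_t^3)=2\neq 1$ already distinguishes $\phi_0$ from the blow-up of a smooth point.) This deformation-theoretic propagation of the node from the general to the special fibre is the main obstacle, and everything else follows from the vanishing and intersection-theoretic inputs above.
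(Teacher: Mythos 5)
Your proof is correct, and for the bulk of the statement (base change via Kawamata--Viehweg vanishing to identify $V$ with the ample model of $\wt{L}$ on $\wt{X}$, normality and birationality from there, the contracted curves lying in $\wt{Q}$ because $2\wt{\mtc{L}}-(1-\epsilon)\wt{\mts{Q}}$ is relatively ample, and the contraction of $\wt{Q}$ to a point by looking at the curve $\phi(\wt{\mts{Q}})$ over $T$) it is essentially the argument the paper gives. The one genuine divergence is the singularity of $P$: the paper does not prove this inside Lemma \ref{lem:contract-Q} at all, but defers it to the proof of Lemma \ref{lem:doublept}, where it follows in one line from the fact that the non-smooth locus of the morphism $\pi_{\mts{V}}$ is closed and contains the nodes $p_t$ for $t\neq 0$, hence contains $P=\lim_{t\to 0}p_t$. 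Your route — using rigidity of the $A_1$-singularity and the holomorphic Morse lemma to show the total space $\mts{V}$ is singular along $\Sigma\setminus\{P\}$, then closedness of $\Sing(\mts{V})$ plus the fact that $V$ is a Cartier divisor through $P$ — is valid, and you are right to flag that the $A_1$ on the K3 section $\phi_0(\wt{S})$ alone does not force $V$ to be singular; but the detour through the singularity of the ambient total space is heavier than needed, since the relative (fiberwise) singular locus is already closed and gives the conclusion directly on $V=\mts{V}_0$. Your parenthetical $(\wt{Q}^3)=2$ is a nice independent sanity check but, as you say, not part of the argument.
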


\begin{proof}
    We first show that $V$ is normal and  $\phi_0$ is birational. Since both $\wt{\mtc{L}}$ and $-K_{\wt{\mts{X}}}$ are nef and big over $T$ and $\wt{\mts{X}}$ is klt, then Kawamata--Viehweg vanishing theorem implies that $$R^i \wt{\pi}_* \wt{\mtc{L}}^{\otimes m} \ =\  0$$ for any $i> 0$ and $m\in \bN$. Thus by cohomology and base change, the sheaf $\wt{\pi}_* \wt{\mtc{L}}^{\otimes m}$ is locally free and satisfies that $$\left(\wt{\pi}_* \wt{\mtc{L}}^{\otimes m}\right)\otimes k(0)\ \simeq\ H^0(\wt{X}, \wt{L}^{\otimes m}).$$ As a result, one has $$V\ =\ \mts{V}_0\ \simeq\ \Proj \bigg(\bigoplus_{m\in \bN} H^0(\wt{X}, \wt{L}^{\otimes m})\bigg)$$ is the ample model of $\wt{L}$ on $\wt{X}$, which implies normality of $V$ and birationality of $\phi_0$. 
    
    Consider the restriction $\phi|_{\wt{\mts{Q}}}:\wt{\mts{Q}}\rightarrow \mts{P}$ of $\phi$ to $\wt{\mts{Q}}$. As $\mts{P}$ is an irreducible scheme over $T$ and the general fiber of $\mts{P}\rightarrow T$ is a point, then $\mts{P}$ is a curve, and $\mts{P}_0$ is a single point. On the other hand, if $C\subseteq\wt{X}$ is a curve such that $(C.\wt{L})=0$, then $C\subseteq \wt{Q}$ because $2\wt{L}-(1-\epsilon)\wt{Q}$ is ample for $0<\epsilon \ll 1$. Thus the last statement is proved.
    
\end{proof}

\begin{prop}\label{Gorensteincan}
    The variety $V$ is a Gorenstein canonical Fano variety. Moreover, the $\bQ$-Cartier Weil divisor $L_V$ is Cartier on $V$, and $-K_V\sim 2L_V$.
\end{prop}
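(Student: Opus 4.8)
The plan is to obtain the linear equivalence $-K_V\sim 2L_V$ by pushing forward along $\phi_0$, to upgrade $L_V$ from $\bQ$-Cartier to Cartier by \emph{descending a line bundle} along the contraction $\phi_0$, and finally to read off the singularity type of $V$ from the discrepancy of $\wt Q$. Throughout I use the construction on $\wt{\mts X}$, where the Cartier divisors satisfy $-K_{\wt X}\sim 2\wt L-\wt Q$ on the central fibre (the integral refinement of $\wt{\mtc L}\sim_{\bQ,T}\frac12(-K_{\wt{\mts X}}+\wt{\mts Q})$, coming from $-K\sim 4H-E$, $L\sim 3H-E$, $Q\sim 2H-E$).

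First I would push forward. By Lemma \ref{lem:contract-Q} the morphism $\phi_0\colon\wt X\to V$ contracts $\wt Q$ to the point $P$ and is an isomorphism elsewhere, so $(\phi_0)_*\wt Q=0$, $(\phi_0)_*\wt L=L_V$ and $(\phi_0)_*(-K_{\wt X})=-K_V$. Applying $(\phi_0)_*$ to $-K_{\wt X}\sim 2\wt L-\wt Q$ therefore yields $-K_V\sim 2L_V$ as Weil divisor classes.

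The heart of the argument is to show that the line bundle $\mtc O_{\wt X}(\wt L)$ descends along $\phi_0$ to an honest line bundle on $V$. Since $\wt L$ is nef (Proposition \ref{nefness}) and every curve of $\wt Q$ is contracted by $\phi_0$, the restriction $\wt L|_{\wt Q}$ is numerically trivial. The surface $\wt Q$ is a degeneration of the smooth quadrics $\wt{\mts Q}_t\cong\bP^1\times\bP^1$, so $H^1(\wt Q,\mtc O_{\wt Q})=0$ and $\Pic(\wt Q)$ is torsion-free; hence numerical triviality upgrades to $\mtc O_{\wt X}(\wt L)|_{\wt Q}\cong\mtc O_{\wt Q}$. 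As $\wt Q$ is the only positive-dimensional fibre of $\phi_0$ and $(\phi_0)_*\mtc O_{\wt X}=\mtc O_V$ ($\phi_0$ being birational onto the normal $V$), the rigidity/descent criterion for contractions gives $\mtc O_{\wt X}(\wt L)\cong\phi_0^*\big((\phi_0)_*\mtc O_{\wt X}(\wt L)\big)$ with $(\phi_0)_*\mtc O_{\wt X}(\wt L)=\mtc O_V(L_V)$ invertible. Thus $L_V$ is Cartier and $\wt L=\phi_0^*L_V$, and combined with the previous paragraph $-K_V=2L_V$ is Cartier as well, so $V$ is Gorenstein. I expect this descent to be the main obstacle: it forces one to control the geometry of the contracted surface $\wt Q$, namely to exclude torsion in $\Pic(\wt Q)$ and to verify $H^1(\wt Q,\mtc O_{\wt Q})=0$, i.e.\ that the quadric does not acquire a bad (e.g.\ non-normal, or $H^1\neq 0$) degeneration. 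Equivalently, one may run the descent over the whole base $T$ for $\wt{\mtc L}$ along $\phi$ and restrict, which simultaneously recovers $-K_V\sim 2L_V$ from $-K_{\wt{\mts X}}\sim_{\bQ,T}2\wt{\mtc L}-\wt{\mts Q}$.

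Finally, with $K_V$ Cartier I would determine the singularities and ampleness. Writing $K_{\wt X}=\phi_0^*K_V+a\,\wt Q$ and substituting $\phi_0^*K_V=\phi_0^*(-2L_V)=-2\wt L$ together with $K_{\wt X}=-2\wt L+\wt Q$ gives the discrepancy $a=1>0$. Since $\wt X$ is Gorenstein canonical (it is the weak Fano central fibre produced in the construction, cf.\ Theorem \ref{nonvanishing}) and $\phi_0$ extracts only the divisor $\wt Q$, with positive discrepancy, every divisor over $V$ has nonnegative discrepancy and hence $V$ is canonical. Because $V\cong\Proj\big(\bigoplus_m H^0(\wt X,\wt L^{\otimes m})\big)$ is the ample model of $\wt L$ (Lemma \ref{lem:contract-Q}), the descended line bundle $\mtc O_V(L_V)$ is ample, so $-K_V=2L_V$ is ample and $V$ is a Gorenstein canonical Fano variety with $L_V$ Cartier and $-K_V\sim 2L_V$, as claimed.
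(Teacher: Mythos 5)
Your overall architecture (push forward to get $-K_V\sim 2L_V$, then upgrade $L_V$ to Cartier, then read off canonicity from the discrepancy of $\wt Q$) matches the paper's in outline, and your first and last paragraphs are fine. But the central step --- descending $\mtc O_{\wt X}(\wt L)$ along $\phi_0$ --- has a genuine gap, and it is exactly the step you flag as the main obstacle. You assert that because $\wt Q$ is a degeneration of $\bP^1\times\bP^1$ one has $H^1(\wt Q,\mtc O_{\wt Q})=0$ and $\Pic(\wt Q)$ torsion-free, so that numerical triviality of $\wt L|_{\wt Q}$ upgrades to $\mtc O_{\wt X}(\wt L)|_{\wt Q}\cong\mtc O_{\wt Q}$. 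Neither property is established: $h^1(\mtc O)$ is only \emph{upper} semicontinuous in flat families, so it can jump up on the special fiber, and $\wt Q$ is a priori just the closure of $\wt{\mts Q}^\circ$, which could be a non-normal or otherwise badly singular degeneration of the quadric (the paper deliberately never analyzes $\wt Q$ as a surface; cf.\ the extra normality hypothesis in Remark \ref{rem:cubic-blowup-fano}). Moreover, even granting $\wt L|_{\wt Q}\cong\mtc O_{\wt Q}$, the ``rigidity/descent criterion'' you invoke needs triviality on the \emph{scheme-theoretic} fiber $\phi_0^{-1}(P)$ (which may be non-reduced) together with cohomology-and-base-change or a formal-functions argument to conclude that $(\phi_0)_*\mtc O_{\wt X}(\wt L)$ is invertible and that $\phi_0^*(\phi_0)_*\mtc O(\wt L)\to\mtc O(\wt L)$ is an isomorphism; none of this is supplied.

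The conclusion you want is nonetheless reachable without touching $\wt Q$: since $2\wt L-(1-\epsilon)\wt Q$ is ample and $\wt L\cdot C=0$ for every $\phi_0$-contracted curve $C$, one gets $(\wt Q\cdot C)<0$ and hence $(-K_{\wt X}\cdot C)=-(\wt Q\cdot C)>0$, so $\phi_0$ is the contraction of a $K_{\wt X}$-negative extremal face and the contraction theorem (e.g.\ \cite[Theorem 3.7(4)]{KM98}) descends $\wt L$ to a Cartier divisor on $V$ directly. That would be a legitimately shorter argument than the paper's, but you did not make it. The paper instead restricts to a general elephant: it shows $(V,S_V)$ is plt with $S_V$ a degree $6$ polarized K3, deduces that $|2L_V|$ is base-point-free from Lemma \ref{isomorphismonsection2} and Theorem \ref{Mayer} (so $2L_V$ is Cartier), and then proves $L_V$ is Cartier at $P$ by showing the Cohen--Macaulay divisorial sheaf $\mtc O_V(L_V)$ restricts on $S_V$ to the invertible sheaf $\mtc O_{S_V}(L_V|_{S_V})$ and applying Nakayama. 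As written, your proof does not close the key step.
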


\begin{proof}
    From the linear equivalence $-K_{\wt{X}}\sim 2\wt{L}-\wt{Q}$ and Lemma \ref{lem:contract-Q}, we know that $-K_{V}=(\phi_0)_*(-K_{\wt{X}})\sim 2L_V$ is an ample $\mb{Q}$-Cartier divisor. Thus  $V$ is a $\bQ$-Fano variety as $\phi_0^* K_V = K_{\wt{X}} - \wt{Q}\leq K_{\wt{X}}$ and $X$ is klt. 
    
    Next, we show that $2L_V$ is Cartier which implies that $V$ is a Gorenstein canonical Fano variety.
    Let $\wt{S}\in |-K_{\wt{X}}|$ be a general member, and $S_V:=(\phi_0)_* \wt{S}$ be its pushforward as a Weil divisor on $V$. By Theorem \ref{generalelephant} we know that $(\wt{X}, \wt{S})$ is a plt log Calabi--Yau pair. Thus $(V, S_V)$ is also a plt log Calabi--Yau pair, which implies that $S_V$ is normal and $\phi_0|_{\wt{S}}:\wt{S}\to S_V$ is birational. As a result,  $(S_V,L_V|_{S_V})$ is a degree $6$ polarized K3 surface as it is the ample model of the quasi-polarized K3 surface  $(\wt{S},\wt{L}|_{\wt{S}})$  of degree 6. In particular, $L_V|_{S_V}$ is an ample Cartier divisor on $S_V$, and $2L_V|_{S_V}$ is base-point-free by Theorem \ref{Mayer}. By Lemma \ref{isomorphismonsection2}, we have that  $$H^0(V,\mtc{O}_{V}(2L_V))\ \simeq\ H^0(\wt{X},\mtc{O}_{\wt{X}}(2\wt{L}))\ \longrightarrow \ H^0(\wt{S},\mtc{O}_{\wt{S}}(2\wt{L}|_{\wt{S}}))\ \simeq \ H^0(S_V,\mtc{O}_{S_V}(2L_V|_{S_V}))$$ is surjective, and hence $|2L_V|$ is base-point-free. 
    Thus $2L_V$ is Cartier.

    Finally we show that $L_V$ is Cartier.
    By Lemma \ref{lem:contract-Q} we know that  $\phi_0$ induces an isomorphism between $\wt{X}\setminus \wt{Q}$ and $V\setminus\{P\}$. Since $\wt{L}$ is Cartier on $\wt{X}$, it suffices to show that $L_V$ is Cartier at $P$. As $L_V$ is a $\mb{Q}$-Cartier Weil divisor and $V$ is klt, we know that $\mtc{O}_{V}(L_V)$ is a Cohen-Macaulay divisorial sheaf by \cite[Corollary 5.25]{KM98}. Since $S_V\sim 2L_V$ is Cartier, the restriction $\mtc{O}_{V}(L_V)\otimes_{\mtc{O}_V}\mtc{O}_{S_V}$ is also Cohen-Macaulay. Thus one has a natural isomorphism $$\mtc{O}_{V}(L_V)\otimes_{\mtc{O}_V}\mtc{O}_{S_V}\ \simeq\ (\mtc{O}_{V}(L_V)\otimes_{\mtc{O}_V}\mtc{O}_{S_V})^{**} \ \simeq \ \mtc{O}_{S_V}(L_V|_{S_V}).$$ As a consequence, one has $$\dim_k(\mtc{O}_{V}(L_V)\otimes k(P))=\dim_k(\mtc{O}_{S_V}(L_V|_{S_V})\otimes k(P))=1$$ because $L_V|_{S_V}$ is a Cartier divisor on $S_V$. Therefore, the sheaf $\mtc{O}_{V}(L_V)$ is invertible, and hence $L_V$ is Cartier.

\end{proof}

\begin{corollary} \label{cor:V-cubic}
    The linear system $|L_V|$ on $V$ is very ample and embeds $V$ into $\mb{P}^4$ as a Gorenstein canonical cubic threefold. 
\end{corollary}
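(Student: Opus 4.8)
The plan is to recognize $(V,L_V)$ as a Gorenstein del Pezzo threefold of degree $3$ and then apply Fujita's classification. By Proposition \ref{Gorensteincan}, $V$ is already a Gorenstein canonical Fano threefold with $-K_V\sim 2L_V$ and $L_V$ an ample Cartier divisor, so $(V,L_V)$ is a Gorenstein del Pezzo threefold in the sense of \cite{Fuj90}; what remains is to compute its degree and to obtain very ampleness. Because $\phi_0$ is the ample model morphism of $\wt{L}$ (Proposition \ref{nefness} and Lemma \ref{lem:contract-Q}), we have $\wt{L}=\phi_0^{*}L_V$, and hence $(L_V)^3=(\wt{L})^3=(\mtc{L}|_{\mts{X}_t})^3$ for a general $t\in T$ by flatness of $\wt{\pi}$. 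On $\mts{X}_t\cong\Bl_C\mb{P}^3$, writing $H$ and $E$ for the pullback hyperplane and the exceptional classes, a direct computation gives $(3H-E)^3=27H^3-27H^2E+9HE^2-E^3=3$, using $H^3=1$, $H^2E=0$, $HE^2=-\deg C=-6$, and $E^3=-\deg N_{C/\mb{P}^3}=-30$ (the last value being equivalently read off from $(-K_{\mts{X}_t})^3=(4H-E)^3=22$). In addition, Lemma \ref{isomorphismonsection} together with cohomology and base change yields $h^0(V,L_V)=h^0(\wt{X},\wt{L})=5$.

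With $(L_V)^3=3$ and $h^0(V,L_V)=5$ in hand, Fujita's classification of Gorenstein del Pezzo threefolds \cite{Fuj90} applies directly: for degree $d\geq 3$ the fundamental divisor is very ample and embeds the variety as a projectively normal, degree-$d$ subvariety of $\mb{P}^{d+1}$, so for $d=3$ this realizes $V$ as a cubic hypersurface in $\mb{P}^4$, necessarily Gorenstein canonical by Proposition \ref{Gorensteincan}. To make the very ampleness and the cubic equation transparent, I would also run the argument through a general anticanonical K3 section. For general $S_V\in|-K_V|=|2L_V|$, Proposition \ref{Gorensteincan} shows $(S_V,L_V|_{S_V})$ is a degree-$6$ polarized K3 surface, and the Kawamata--Viehweg vanishing $H^1(V,-L_V)=H^1(V,K_V+L_V)=0$ gives an isomorphism $H^0(V,L_V)\stackrel{\simeq}{\to}H^0(S_V,L_V|_{S_V})$. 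The Picard lattice of a general such section equals $\Lambda_0=\langle L,Q\rangle$ with $L^2=6$, $L\cdot Q=0$, $Q^2=-2$; since $6a^2=2b^2$ has no nonzero integral solution, there is no class $E$ with $E^2=0$ and $L_V|_{S_V}\cdot E\in\{1,2\}$, so by Theorem \ref{Mayer} we are in the generic case, $S_V$ is neither unigonal (cf.\ also Lemma \ref{notunigonal}) nor hyperelliptic, and $|L_V|_{S_V}|$ embeds $S_V$ as a $(2,3)$-complete intersection in $\mb{P}^4$.

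The main obstacle is propagating very ampleness from the K3 section to the threefold $V$. The vanishing $H^1(V,-L_V)=0$ is unconditional, so the isomorphism $H^0(V,L_V)\simeq H^0(S_V,L_V|_{S_V})$ holds for every K3 member $S_V\in|2L_V|$, and $|L_V|$ therefore separates any points and tangent directions supported on a single such section; combined with the base-point-freeness of $|2L_V|=|-K_V|$ and the fact that its general member is a K3 through any prescribed point, one deduces base-point-freeness of $|L_V|$ and then separation of arbitrary points and tangents. The delicate point is that a K3 section through two prescribed points need not have Picard lattice exactly $\Lambda_0$, so one must check that the generic Mayer case still persists; this is precisely where I would either verify that the relevant isotropic classes are excluded on the sections that arise, or, more cleanly, simply invoke \cite{Fuj90} for the very ampleness. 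Finally, the numerics $(L_V)^3=3$ and $h^0(L_V)=5$ force the image $\phi_{|L_V|}(V)\subseteq\mb{P}^4$ to be cut out by a single cubic, so $V$ is a Gorenstein canonical cubic threefold.
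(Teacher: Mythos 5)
Your first paragraph is exactly the paper's argument: Proposition \ref{Gorensteincan} gives that $V$ is Gorenstein canonical Fano with $-K_V\sim 2L_V$ and $L_V$ Cartier, the degree $(L_V^3)=(\wt{L}^3)=(\wt{\mtc{L}}_t^3)=3$ is computed on a general fiber (your explicit intersection computation with $E^3=-30$ is correct and consistent with $(-K_{\mts{X}_t})^3=22$), and the conclusion is immediate from \cite{Fuj90}. The additional K3-section discussion is not needed, and you correctly identify that the clean route is to invoke Fujita, which is precisely what the paper does.
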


\begin{proof}
    We proved that $V$ is a Gorenstein canonical Fano variety with $-K_V\sim 2L_V$, where $L_V$ is a Cartier divisor (cf. Proposition \ref{Gorensteincan}). Moreover, we have $(L_V^3) = (\wt{L}^3) = (\wt{\mtc{L}}_t^3) = 3$ for a general $t\in T$.  Hence the statement follows immediately from \cite{Fuj90}.
    
\end{proof}


\begin{lemma}\label{lem:doublept}
    The point $P$ is a double point of $V$.
\end{lemma}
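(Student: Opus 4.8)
The plan is to compare the multiplicity of $V$ at $P$ with that of a du Val anticanonical K3 surface passing through $P$, and thereby to rule out the only alternative, namely $\mult_P V = 3$ (a cone with vertex $P$). First I would record what is already available: by Corollary \ref{cor:V-cubic}, $V\subseteq \bP^4$ is a cubic threefold, and $P$ is a singular point by Lemma \ref{lem:contract-Q}, so $2\le \mult_P V\le 3$, the upper bound holding because a point on a cubic hypersurface has multiplicity at most $3$. Hence the entire content of the lemma is to exclude $\mult_P V=3$, i.e.\ that $V$ is a cone over a cubic surface with vertex $P$.

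Next I would produce a du Val K3 surface inside $|-K_V|$ that is singular at $P$. Take a general $\wt S\in|-K_{\wt X}|$; by Theorem \ref{generalelephant} it is a du Val K3 surface, and set $S_V:=\phi_0(\wt S)$. By Lemma \ref{lem:contract-Q} the morphism $\phi_0$ contracts $\wt Q$ to $P$, so $\phi_0|_{\wt S}$ contracts $\Gamma:=\wt S\cap \wt Q$ to $P$. Since $\wt Q|_{\wt S}$ is the generator $Q$ of $\Lambda_0$ with $(Q^2)=-2$, the curve $\Gamma$ is a nonempty, $\wt L$-trivial configuration of $(-2)$-curves, and—exactly as in the proof of Proposition \ref{Gorensteincan}—$(S_V,L_V|_{S_V})$ is the ample model of the quasi-polarized K3 surface $(\wt S,\wt L|_{\wt S})$. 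Consequently $S_V$ is again a du Val K3 surface, singular at $P$, so $\mult_P S_V=2$, because every du Val (rational double point) singularity has multiplicity $2$.

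Finally I would compare multiplicities and conclude. Under the embedding of Corollary \ref{cor:V-cubic} we have $-K_V\sim 2L_V\sim \mathcal{O}_V(2)$, so $S_V=V\cap G$ for some quadric $G\subseteq \bP^4$ passing through $P$. Since passing to a hypersurface section through a point never decreases the multiplicity, I obtain $\mult_P V\le \mult_P(V\cap G)=\mult_P S_V=2$; combined with $\mult_P V\ge 2$ this forces $\mult_P V=2$, as desired.

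The hard part is making sure that the \emph{special} section $S_V$ through $P$ is still du Val, since an arbitrary member of $|-K_V|$ meeting $P$ could a priori acquire a worse singularity. This is precisely secured by the ample-model description established in Proposition \ref{Gorensteincan}: $S_V$ is the image of the du Val K3 surface $\wt S$ under the birational contraction of the ADE configuration of $\wt L$-trivial $(-2)$-curves $\Gamma$, and contracting such a configuration on a K3 surface produces only du Val singularities. The other ingredient, monotonicity of multiplicity under hypersurface sections (with equality for a general section and possible strict increase for special ones), is standard, and it is the favorable direction of this inequality that makes the cone case $\mult_P V=3$ impossible.
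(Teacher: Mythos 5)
Your proposal is correct and follows essentially the same route as the paper: both arguments reduce to showing $P\in S_V=\phi_0(\wt S)$ for a general anticanonical K3 surface $\wt S$, note that $S_V$ has at worst du Val (hence multiplicity $2$) singularities because it is the ample model of the quasi-polarized K3 $(\wt S,\wt L|_{\wt S})$, and then rule out $\mult_P V=3$ by monotonicity of multiplicity under hypersurface sections. The only cosmetic difference is that the paper verifies $\wt S\cap\wt Q\neq\emptyset$ via the deformation-invariant intersection number $(\wt S\cdot\wt Q^2)=-2$, whereas you read the same fact off the lattice polarization $(Q^2)=-2$ from Lemma \ref{quasipolarized}.
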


\begin{proof}
    We first show that $P$ is a singular point of the cubic threefold. Indeed, the morphism $\phi$ contracts $\wt{Q}$, and the image of $\wt{Q}_t$ for a general $t\in T$ is an ordinary double point $\mts{P}_t$ of $\mts{V}_t$. Thus $P$ is also a singular point of $V=\mts{V}_0$ by closedness of singular locus. 
    
    Now it suffices to prove that the multiplicity $\mult_PV$ cannot be $3$. For a general element $\wt{S}\in|-K_{\wt{X}}|$, the image ${S}_V:=\phi_0(\wt{S})$ is normal, and hence it is the ample model of $\wt{L}|_{\wt{S}}$. In particular, $({S}_V,L_V|_{{S}_V})$ is a degree $6$ polarized K3 surface and hence a $(2,3)$-complete intersection in $\mb{P}^4$. Moreover, since $K_{\wt{\mts{X}}}$ and $\wt{\mts{Q}}$ are both Cartier, the intersection number $(K_{\mts{X}_t}\cdot \wt{\mts{Q}}_t^2)$ is constant in $t\in T$, which implies that $(\wt{S} \cdot \wt{Q}^2) = -2$. Thus $\wt{S}\cap \wt{Q}$ is non-empty which implies that $P\in S_V$ by Lemma \ref{lem:contract-Q}.  If $P$ is a triple point of $V$, then $\mult_P{S}_V\geq3$, which contradicts the fact that $P$ is at worst a double point of ${S}_V$.
    
\end{proof}

\begin{prop}\label{prop:blow-up-iso}
    There exist natural isomorphisms $$\Bl_{\mts{P}}\mts{V}\ \simeq\  \wt{\mts{X}}\ \simeq\ \mts{X}$$ over $T$, where $\mts{P}=\phi(\wt{\mts{Q}})$. In particular, we have $\Bl_P V \simeq \wt{X}\simeq X$.
\end{prop}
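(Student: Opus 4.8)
The plan is to establish the two isomorphisms in turn: first $\wt{\mts X}\simeq\Bl_{\mts P}\mts V$ via the universal property of blowing up, and then $\wt{\mts X}\simeq\mts X$ by showing that the small contraction $f$ is in fact an isomorphism, which amounts to proving that $-K_{\wt X}$ is ample. For the first isomorphism, note that $\wt{\mtc L}$ is $\phi$-numerically trivial (being the pullback of a $\pi_{\mts V}$-ample divisor from the ample model $\mts V$), while $2\wt{\mtc L}-(1-\epsilon)\wt{\mts Q}$ is $\wt\pi$-ample; testing against a $\phi$-contracted curve gives $(\wt{\mts Q}\cdot C)<0$, so $-\wt{\mts Q}$ is $\phi$-ample. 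Together with Lemma~\ref{lem:contract-Q} (applied over $T$ and on the central fibre) this shows that $\phi$ contracts exactly the Cartier divisor $\wt{\mts Q}$ onto the reduced section $\mts P$ and is an isomorphism over $\mts V\setminus\mts P$. I would then check that the inverse image ideal $\mtc I_{\mts P}\cdot\mtc O_{\wt{\mts X}}$ is invertible — necessarily of the form $\mtc O_{\wt{\mts X}}(-\wt{\mts Q})$ up to a positive multiple, since $\wt{\mts Q}$ is the reduced Cartier preimage of $\mts P$ — so that $\phi$ factors as $\wt{\mts X}\xrightarrow{\beta}\Bl_{\mts P}\mts V\to\mts V$. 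Finally $\beta$ contracts no curve (a $\beta$-contracted curve would lie in $\wt{\mts Q}$, where $-\wt{\mts Q}$ is $\phi$-ample and hence of negative degree), so $\beta$ is a finite birational morphism onto a normal variety and therefore an isomorphism by Zariski's main theorem; equivalently, both sides equal the relative ample model $\Proj_{\mts V}\bigoplus_m\phi_*\mtc O_{\wt{\mts X}}(-m\wt{\mts Q})$.

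For the second isomorphism, recall that $f$ is small and an isomorphism over $T^{\circ}$, so its exceptional locus is a union of curves inside the central fibre $\wt X$, each of them $(-K_{\wt X})$-trivial because $-K_{\wt{\mts X}}=f^{*}(-K_{\mts X})$ with $-K_{\mts X}$ being $\pi$-ample. Hence it suffices to prove that $-K_{\wt X}$ is ample on $\wt X\simeq\Bl_P V$: once this is known, $g:=f|_{\wt X}$ is an isomorphism, so $\mathrm{Exc}(f)\cap\wt X=\emptyset$, and therefore $\mathrm{Exc}(f)=\emptyset$, i.e. $f$ is an isomorphism.

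To prove ampleness I would test $-K_{\wt X}$ against every integral curve. A curve $C\subseteq\wt Q$ satisfies $(-K_{\wt X}\cdot C)=-(\wt Q\cdot C)>0$, since $-\wt Q$ is $\phi_0$-ample. A curve $C\not\subseteq\wt Q$ is the strict transform of $D:=\phi_0(C)\subseteq V$; using $\wt L=\phi_0^{*}L_V$, the relation $-K_V\sim 2L_V$ (Proposition~\ref{Gorensteincan}) together with $-K_{\wt X}=\phi_0^{*}(-K_V)-\wt Q$, and the blow-up estimate $(\wt Q\cdot C)=\mult_P D\le (L_V\cdot D)$, one gets
\[
(-K_{\wt X}\cdot C)=(-K_V\cdot D)-\mult_P D=2(L_V\cdot D)-\mult_P D\ \ge\ (L_V\cdot D)\ >\ 0 .
\]
Thus $-K_{\wt X}$ is ample. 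The decisive input is that $V$ is a \emph{cubic} threefold (Corollary~\ref{cor:V-cubic}), so that $-K_V$ is twice the hyperplane class $L_V$ and the factor $2$ dominates the multiplicity bound $\mult_P D\le (L_V\cdot D)$.

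Combining the two steps gives $\Bl_{\mts P}\mts V\simeq\wt{\mts X}\simeq\mts X$ over $T$; restricting over $0\in T$, and using that $\mts P\cong T$ is a section so that the blow-up commutes with passage to the central fibre, yields $\Bl_P V\simeq\wt X\simeq X$. I expect the main obstacle to be the second step: a priori $f$ is only a small contraction, so $\wt{\mts X}\to\mts X$ could be a nontrivial flopping contraction and the Sarkisov-link structure could fail to descend to the central fibre. What rules this out is precisely the ampleness of $-K_{\wt X}$, and the point is that it follows cleanly only because $V$ is a cubic threefold — the relation $-K_V\sim 2L_V$ together with $\mult_P D\le (L_V\cdot D)$ forces every curve to be $(-K_{\wt X})$-positive. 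The remaining technical points requiring care are the intersection-theoretic input $(\wt Q\cdot C)=\mult_P D$ on the blow-up of the (possibly singular) double point, and the invertibility of $\mtc I_{\mts P}\cdot\mtc O_{\wt{\mts X}}$ along the section $\mts P$.
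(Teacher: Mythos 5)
Your two-step skeleton (identify $\wt{\mts X}\to\mts V$ with the blow-up of $\mts P$, then upgrade the small contraction $f$ to an isomorphism by proving $-K_{\wt X}$ ample) matches the paper's, and your curve-by-curve verification of ampleness via $-K_{\wt X}=\phi_0^*(-K_V)-\wt Q$, $-K_V\sim 2L_V$ and $\mult_P D\le (L_V\cdot D)$ is a correct and rather nice alternative to the paper's route (the paper gets ampleness by restricting $2L-E$ from $\Bl_P\mb P^4$, cf.\ Remark \ref{rem:cubic-blowup-fano}). However, the two points you flag at the end as ``technical points requiring care'' are genuine gaps, and the justifications you sketch for them do not work. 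First, the invertibility of $\mtc I_{\mts P}\cdot\mtc O_{\wt{\mts X}}$ does \emph{not} follow from $\wt{\mts Q}$ being the reduced Cartier divisor supported on $\phi^{-1}(\mts P)$: the inverse image ideal of a point under a divisorial contraction need not be invertible even when the set-theoretic fibre is a Cartier divisor, and even when it is invertible it could be $\mtc O(-m\wt{\mts Q})$ for $m>1$. For the same reason, your alternative identification of $\Proj_{\mts V}\bigoplus_m\phi_*\mtc O(-m\wt{\mts Q})$ with $\Proj_{\mts V}\bigoplus_m\mtc I_{\mts P}^m$ requires comparing $\phi_*\mtc O(-m\wt{\mts Q})$ with the ordinary powers $\mtc I_{\mts P}^m$, which is exactly the content to be proved. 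The paper circumvents this by constructing $\Bl_{\mts P}\mts V$ independently, checking it is normal with integral central fibre, observing that it and $\wt{\mts X}$ are isomorphic in codimension one (they agree over $T^{\circ}$ and both central fibres are irreducible divisors birational to $V$), and then matching the ample models of the corresponding divisors $\mtc L'-\epsilon\mts Q'$ and $\wt{\mtc L}-\epsilon\wt{\mts Q}$.

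Second, your parenthetical claim that the blow-up commutes with passage to the central fibre ``since $\mts P\cong T$ is a section'' is false as a general principle: blowing up does not commute with base change, even along a section. This compatibility is where the paper does most of its work, via an explicit local computation showing that
$$\frac{(x_1,\dots,x_4)^k}{t(x_1,\dots,x_4)^k+(f)}\ \longrightarrow\ \Big(\frac{(x_1,\dots,x_4,t)}{(t,f)}\Big)^k$$
is an isomorphism for $k\gg 0$, and this computation crucially uses Lemma \ref{lem:doublept}: the quadratic part $f_2(\underline x,t)$ of the local equation of $\mts V$ is nonzero for \emph{every} $t$, including $t=0$. If the multiplicity of $\mts P_t$ jumped at $t=0$, the fibre of $\Bl_{\mts P}\mts V$ would differ from $\Bl_PV$ and the statement would fail. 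Your step 2 also quietly relies on this (you need the central fibre of whatever you constructed in step 1 to literally be $\Bl_PV$ in order to use $(\wt Q\cdot C)=\mult_PD$), so the gap propagates. I would note that your ampleness argument can actually be made independent of the blow-up structure — the inequality $(\wt Q\cdot C)\le (L_V\cdot D)$ follows from the inclusion $\mtf m_P\mtc O_{\wt X}\subseteq\mtc O_{\wt X}(-\wt Q)$ and the very ampleness of $L_V$ alone — but even so, the identification of $X$ with the honest blow-up $\Bl_PV$, which is what Theorem \ref{23complete} consumes, still requires the two missing arguments above.
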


\begin{proof}
    We first show that the blow-up is compatible with taking fibers. As both $\wt{\mtc{L}}$ and $-K_{\wt{\mts{X}}}$ are nef and big over $T$ and $\wt{\mts{X}}$ is klt, then the Kawamata--Viehweg vanishing theorem implies that $$R^i \wt{\pi}_* \wt{\mtc{L}} \ =\  0$$ for any $i> 0$. Thus by cohomology and base change, the sheaf $\wt{\pi}_* \wt{\mtc{L}}$ is locally free of rank $4$. By shrinking the base $T$, we may assume that $T=\Spec R$, where $R$ is a DVR with parameter $t$, and $\mts{V}\subseteq \mb{P}^4\times T$. For simplicity, we assume that $\mts{P}\subseteq \mb{P}^4\times T$ is defined by the ideal $(x_1,...,x_4)$, and $\mts{V}$ is defined locally near $\mts{P}$ by the polynomial $f(\underline{x},t):=f_2(\underline{x},t)+f_3(\underline{x},t)$, where $\underline{x}=(x_1,...,x_4)$, $f_i(\underline{x},t)$ is a homogeneous polynomial in variables $\underline{x}$ with coefficients in $R$ such that $\deg_{\underline{x}}f_i(\underline{x},t)=i$. Moreover, by Lemma \ref{lem:doublept} we have that $f_2(\underline{x},t)$ is non-zero for any $t$. In order to prove that $\Bl_{\mts{P}_0}\mts{V}_0\simeq (\Bl_\mts{P}\mts{V})|_{t=0}$, it suffices to show that the natural morphism $$\frac{(x_1,x_2,x_3,x_4)^k}{t(x_1,...,x_4)^k+(f)} \ \twoheadrightarrow\ \left(\frac{(x_1,x_2,x_3,x_4,t)}{(t,f)}\right)^k$$ is an isomorphism for any $k\gg0$. Indeed, for any polynomial $g(\underline{x},t)$ such that the degree of each monomial with respect to $\underline{x}$ is at least $k$, if $g(\underline{x},0)$ is divisible by $f(\underline{x},0)$, then $g(\underline{x},0)=f(\underline{x},0)\cdot h(\underline{x})$ for some polynomial $h$ of degree at least $k-2$. Consequently, one has $$g(\underline{x},t)-f(\underline{x},t)\cdot h(\underline{x})=t\cdot l(\underline{x},t)$$ such that the $\underline{x}$-degree of each monomial in $l(\underline{x},t)$ is at least $k$ as desired.

    As an immediate result, we see that $\Bl_{\mts{P}}\mts{V}$ is normal. In fact, since $\Bl_PV=\Bl_{\mts{P}_0}\mts{V}_0$ is an integral scheme, then it satisfies (R$_0$) and (S$_1$) conditions. Thus $\Bl_{\mts{P}}\mts{V}$ satisfies (R$_1$) and (S$_2$) conditions, which is equivalent to being normal.

    Notice that we have the desired isomorphism over $T^{\circ}$. Thus $\Bl_{\mts{P}}\mts{V}$ and $\wt{\mts{X}}$ are birational and isomorphic in codimension one. Let $\mts{Q}'$ be the exceptional divisor of the blow-up $\psi:\Bl_{\mts{P}}\mts{V}\rightarrow \mts{V}$, and $\mtc{L}'=\psi^{*}\phi_* \wt{\mtc{L}}\sim_T \psi^* \mtc{O}_{\mts{V}}(1)$. From the blow-up construction we know that the $\mb{Q}$-Cartier $\bQ$-divisor $\mtc{L}'-\epsilon\mts{Q}'$ is ample over $T$ for any rational number $0<\epsilon\ll1$. On the other hand, since $\wt{\mtc{L}}$ is nef over $T$ (cf. Proposition \ref{nefness}), and $2\wt{\mtc{L}}-(1-\epsilon)\wt{\mts{Q}}$ is ample over $T$ for any rational number $0<\epsilon\ll 1$ (from the construction of $\wt{\mts{X}}$), we know that  $\wt{\mtc{L}}-\epsilon\wt{\mts{Q}}$ is also ample over $T$. 
    Since $\Bl_{\mts{P}}\mts{V}$ and $\wt{\mts{X}}$  are isomorphic in codimension $1$, and $\wt{\mtc{L}}-\epsilon\wt{\mts{Q}}$ is the birational transform of $\mtc{L}'-\epsilon \mts{Q}'$, we conclude that $\Bl_{\mts{P}}\mts{V}\simeq  \wt{\mts{X}}$. As a result, we see that $-K_{\wt{\mts{X}}}$ is ample over $T$, and hence $\wt{\mts{X}}\simeq\mts{X}$.
    
\end{proof}

\begin{remark}\label{rem:cubic-blowup-fano}
    \textup{Let $P$ be a point in $\mb{P}^4$, and $\pi:\Bl_P\mb{P}^4\rightarrow \mb{P}^4$ be the blow-up. Set $L:=\mtc{O}_{\mb{P}^4}(1)$ to be the pull-back of the class of hyperplane section and $E$ to be the class of exceptional divisor. If $V$ is a singular cubic threefold in $\mb{P}^4$ with a double point $P$, then the blow-up $\wt{V}$ of $V$ at $P$ admits a natural ample line bundle $(2L-E)|_{\wt{V}}$, which coincides with the anti-canonical bundle of $\wt{V}$ by adjunction. If moreover $V$ has Gorenstein canonical singularities and the projective tangent cone of $V$ at $P$ is a normal quadric surface, then by inversion of adjunction, the polarized variety $(\wt{V},(2L-E)|_{\wt{V}})$ is an anti-canonically polarized Gorenstein canonical Fano variety.}
\end{remark}

Before proving the main theorem in this section, we show that the set of blow-ups of $\mb{P}^3$ along $(2,3)$-complete intersection curves has a one-to-one correspondence to the set of blow-ups of integral singular cubic threefolds at  double points. This is a generalization of the smooth case in Section \ref{sec:2.15}.

\begin{lemma}\label{lem:Sarkisov-singular}
    There is a one-to-one correspondence between the set of $(2,3)$-complete intersection curves up to projective equivalence in $\mb{P}^3$, and the set of integral singular cubic threefold with a double point up to projective equivalence in $\mb{P}^4$. Moreover, this one-to-one correspondence is given by a Sarkisov link structure: the blow-up of $\mb{P}^3$ along a $(2,3)$-complete intersection curve is identified with the blow-up of an integral singular cubic threefold at a double point.
\end{lemma}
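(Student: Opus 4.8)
The plan is to realize the correspondence by two explicit, mutually inverse constructions and then to identify the two blow-ups through the universal property of blowing up, reducing everything to a single computation on the lines through the double point.

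\textbf{From curves to cubics.} Given a $(2,3)$-complete intersection curve $C=\mb{V}(A_2,A_3)\subseteq \mb{P}^3$ with $\deg A_i=i$, I would first record that the quadric $Q=\mb{V}(A_2)$ through $C$ is canonical: the Koszul resolution of $\mtc{I}_C$ gives $h^0(\mb{P}^3,\mtc{I}_C(2))=1$, so $A_2$ is determined up to scalar, and $A_3$ is determined modulo $(A_2)$, i.e. up to $A_3\mapsto \lambda A_3+\ell A_2$ with $\ell$ linear. I then attach to $C$ the hypersurface
\[
V \ :=\ \mb{V}\big(x_4 A_2(x_0,\dots,x_3)-A_3(x_0,\dots,x_3)\big)\ \subseteq\ \mb{P}^4,
\]
which is exactly the image of $\mb{P}^3$ under the map defined by $|\mtc{I}_C(3)|=\langle x_0A_2,\dots,x_3A_2,A_3\rangle$ (of projective dimension $4$ by Koszul). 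A direct check shows that $V$ has multiplicity exactly $2$ at $P:=[0{:}\cdots{:}0{:}1]$ with projective tangent cone $Q$, and that $V$ is integral precisely when $A_2,A_3$ share no common factor, equivalently when $C$ is a genuine one-dimensional complete intersection; indeed any nontrivial factorization of $x_4A_2-A_3$ would force a common divisor of $A_2$ and $A_3$. The indeterminacy in $(A_2,A_3)$ and the $\PGL(4)$ acting on $C$ are all absorbed into $\PGL(5)$ — the substitution $A_3\mapsto A_3+\ell A_2$ corresponds to $x_4\mapsto x_4+\ell$ — so the pair $(V,P)$ is well defined up to projective equivalence.

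\textbf{From cubics to curves and mutual inverseness.} Conversely, given an integral cubic $V\subseteq\mb{P}^4$ with a double point, I place the point at $P=[0{:}\cdots{:}0{:}1]$; multiplicity $2$ forces the equation into the shape $x_4A_2-A_3$ with $A_2\neq 0$, and projection from $P$, $[x]\mapsto[x_0{:}\cdots{:}x_3]$, returns the curve $\mb{V}(A_2,A_3)\subseteq\mb{P}^3$, which is a $(2,3)$-complete intersection exactly because $V$ is integral. The two recipes are visibly inverse on equivalence classes. Since a cubic may carry several double points, the clean statement is a bijection between $\PGL(4)$-orbits of $(2,3)$-complete intersection curves and $\PGL(5)$-orbits of \emph{pairs} $(V,P)$ of an integral cubic with a marked double point; this is also the natural bookkeeping for the link, whose cubic end is the blow-up at a chosen point.

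\textbf{The Sarkisov link.} To identify $\Bl_C\mb{P}^3$ with $\Bl_P V$ I would argue through the universal property of blowing up. Restricting the equation $x_4A_2-A_3$ to the line through $P$ in the direction $[x]$ factors it as (the exceptional factor)$\,\times\,(\mu A_2(x)-\lambda A_3(x))$, so on $\Bl_PV$ the residual intersection point is $[\lambda{:}\mu]=[A_2(x){:}A_3(x)]$; this shows that $\mtc{I}_C\cdot\mtc{O}_{\Bl_PV}=(A_2,A_3)\mtc{O}_{\Bl_PV}$ is invertible. By the universal property, the projection $\Bl_PV\to\mb{P}^3$ factors through a projective birational morphism $g\colon\Bl_PV\to\Bl_C\mb{P}^3$. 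Since $C$ is a local complete intersection, $\Bl_C\mb{P}^3$ is Cohen--Macaulay and regular in codimension one (its singular locus lies over $\Sing(C)$, of codimension $\ge 2$), hence normal; and $g$ restricts to the tautological isomorphism over $\mb{P}^3\setminus C$. The line computation shows $g$ is bijective on the exceptional loci over $C$ as well, so Zariski's Main Theorem forces $g$ to be an isomorphism. It intertwines the two contractions $\pi$ and $\phi$, producing exactly the asserted link; the resulting $\Bl_PV$ carries $-K\sim(2L-E)|_{\Bl_PV}$ as in Remark \ref{rem:cubic-blowup-fano}, recovering the smooth picture of Section \ref{sec:2.15}.

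\textbf{Main obstacle.} The delicate point is the last step in the presence of singular $C$. When $C$ is singular the strict transforms $\wt Q$ and $\wt W$ of the quadric and the cubic may meet, so one cannot simply invoke base-point-freeness of $|3H-E|$ and build $\phi\colon\Bl_C\mb{P}^3\to V$ directly as in the smooth case. The robust substitute is to run the identification in the safe direction — invertibility of $\mtc{I}_C\cdot\mtc{O}_{\Bl_PV}$ read off from the explicit factorization on lines — and to upgrade the factorization $g$ to an isomorphism using normality of $\Bl_C\mb{P}^3$ together with Zariski's Main Theorem. Checking that $g$ is bijective over the singular points of $C$ (including reducible and non-reduced $C$), via the behavior of the normal cone there, is the only genuinely nontrivial verification, and it is what makes the correspondence uniform across all complete-intersection curves.
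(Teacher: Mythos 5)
Your two constructions --- curve to cubic via $x_4A_2-A_3$, cubic to curve by normalizing the double point, integrality of $V$ equivalent to coprimality of $A_2,A_3$, and absorbing $A_3\mapsto A_3+\ell A_2$ into $x_4\mapsto x_4+\ell$ --- are exactly the paper's. You diverge at the key identification $\Bl_C\mb{P}^3\simeq \Bl_PV$. The paper uses the Koszul resolution of $\mtc{I}_C$ to realize $\Bl_C\mb{P}^3$ as the hypersurface $uf_3-vf_2=0$ inside $\Bl_P\mb{P}^4\simeq \mb{P}_{\mb{P}^3}(\mtc{O}(-2)\oplus\mtc{O}(-3))$ (the Rees algebra of a regular sequence is the symmetric algebra modulo the single Koszul syzygy) and observes that this is literally the equation of the strict transform of $V$; the two blow-ups are the same subscheme, and no normality, quasi-finiteness, or Zariski's Main Theorem is needed. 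Your route through the universal property and ZMT imports exactly the hypotheses that fail in the degenerate cases.

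The concrete gap is your normality claim for $\Bl_C\mb{P}^3$. The stated justification, ``its singular locus lies over $\Sing(C)$, of codimension $\ge 2$,'' already breaks for non-reduced $C$, where $\Sing(C)=C_{\mathrm{red}}$ is a curve and its preimage is a divisor; worse, the conclusion itself is false in general. Take $C=\mb{V}(x_2x_3,\ x_0x_2^2+x_1x_3^2)$: this is a genuine $(2,3)$-complete intersection, both surfaces are singular along the line $\{x_2=x_3=0\}\subseteq C$, and the local equation $t\,x_2x_3-(x_0x_2^2+x_1x_3^2)$ of the blow-up has identically vanishing gradient along the entire two-dimensional exceptional locus over that line. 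So $\Bl_C\mb{P}^3$ fails $R_1$ and is not normal, and ``finite $+$ birational $+$ normal target $\Rightarrow$ isomorphism'' is unavailable precisely on some of the degenerate curves the lemma must cover. (Your route can be salvaged: both blow-ups are hypersurfaces in smooth fourfolds, hence $S_2$, your chart computation $[u:v]=[A_2:A_3]=[y:1]$ on the strict transforms of lines through $P$ shows $g$ is quasi-finite and an isomorphism in codimension one, and a finite birational morphism of $S_2$ integral schemes that is an isomorphism in codimension one is an isomorphism. But that is a different argument from the one you wrote, and the fiberwise bijectivity over $C$, which you defer as ``the only nontrivial verification,'' is part of what must actually be carried out.)
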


\begin{proof}
    Let $V$ be the singular cubic threefold in $\mb{P}^4_{x_0,...,x_4}$ defined by the polynomial $$f(x_0,x_1,...,x_4)\ = \ x_0f_2(x_1,...,x_4)-f_3(x_1,...,x_4),$$ which acquires a double point at $P=[1,0,...,0]$. Let $C$ be a $(2,3)$-complete intersection curve in $\mb{P}^3_{x_1,...,x_4}$ defined by the polynomials $f_2(x_1,...,x_4)$ and $f_3(x_1,...,x_4)$. The ideal of $C$ has a natural resolution $$0\ \longrightarrow \mtc{O}_{\mb{P}^3}(-5)\ \stackrel{\binom{f_3}{-f_2}}{\longrightarrow} \ \mtc{O}_{\mb{P}^3}(-2)\oplus \mtc{O}_{\mb{P}^3}(-3) \  \stackrel{(f_2,f_3)}{\longrightarrow} \ I_C \ \longrightarrow \ 0,$$ which yields a natural embedding of $\Bl_C\mb{P}^3$ into $$ \Bl_P\mb{P}^4\ \simeq\  \mb{P}_{\mb{P}^3}(\mtc{O}\oplus\mtc{O}(-1))\ \simeq\  \mb{P}_{\mb{P}^3}(\mtc{O}(-2)\oplus\mtc{O}(-3))\ =:\ \bf{P}.$$ The ideal of $\Bl_C\mb{P}^3$ in the projective bundle ${\bf{P}}\stackrel{\pi}{\longrightarrow}\mb{P}^3$ is generated by the composition $$\pi^{*}\mtc{O}_{\mb{P}^3}(-5)\ \stackrel{\binom{f_3}{-f_2}}{\longrightarrow} \ \pi^{*}\mtc{O}_{\mb{P}^3}(-2)\oplus \pi^{*}\mtc{O}_{\mb{P}^3}(-3) \ \stackrel{(u,v)}{\longrightarrow} \ \mtc{O}_{\bf{P}}(1),$$ where $(u,v)$ is the evaluation map coming from the Euler sequence of projective bundles. In particular, on the patch $(u\neq0)$, the variety $\Bl_C\mb{P}^3$ is defined by the polynomial $tf_2(z_1,...,z_4)-f_3(z_1,...,z_4)$, where $t=\frac{v}{u}$. This coincides with the generator (up to a constant) of the ideal $I_{\Bl_PV/\Bl_P\mb{P}^4}$. 

    If $C$ and $C'$ are two projectively isomorphic $(2,3)$-complete intersection curves, then it is clear that their corresponding cubic threefolds are also projectively equivalent in $\mb{P}^4$. Conversely, suppose that $V$ and $V'$ are two projectively equivalent singular cubic threefolds with double points $P$ and $P'$ respectively. After projective transformations we may assume that $P=P'=[1,0,...,0]$. Then their defining equations are given by 
    \begin{align*}
    f(x_0,x_1,...,x_4)&\ = \ x_0f_2(x_1,...,x_4)-f_3(x_1,...,x_4),\ \textup{and}\\ f'(x_0,x_1,...,x_4)&\ = \ x_0f'_2(x_1,...,x_4)-f'_3(x_1,...,x_4),
    \end{align*}
    respectively. The projectively equivalence implies that by applying a $\PGL(4)$-action on the coordinate $[x_1,...,x_4]$ followed by a translation $x_0 \mapsto x_0+l(x_1,...,x_4)$, where $l(x_1,...,x_4)$ is a suitable linear function in $x_1,...,x_4$, one can recover $f$ from $f'$. By our construction of the correspondence, this comes down to saying that the ideals of $C$ and $C'$ are projectively equivalent.
    
    Under this correspondence, the curve $C$ is a complete intersection if and only if $f_2(x_1,...,x_4)$ and $f_3(x_1,...,x_4)$ have no common factor, which is equivalent to saying that the cubic threefold $V=\mb{V}(f(x_0,...,x_4))$ is integral. 
    
\end{proof}


\begin{proof}[Proof of Theorem \ref{23complete}]
    By Theorem \ref{nonvanishing}(1) we know that $X$ is Gorenstein canonical.
    By Corollary \ref{cor:V-cubic}, Lemma \ref{lem:doublept} and Proposition \ref{prop:blow-up-iso}, we have $X\simeq \Bl_p V$ where $V$ is a Gorenstein canonical cubic threefold and $P\in V$ is a double point. Then the statement follows directly from Lemma \ref{lem:Sarkisov-singular}.
    
    
\end{proof}

\section{K-moduli, VGIT, and the Hassett–Keel program}\label{VGITHK}

In this section, we will finish the proof of our main theorem, that is, the K-moduli space $\ove{M}^K_{\textup{№2.15}}$ coincides with a Hassett--Keel model of the moduli space $\ove{M}_4$ of genus four curves. The key observation comes from Theorem \ref{23complete}: every Fano threefold $X$ parametrized by $\ove{M}^K_{\textup{№2.15}}$ is the blow-up of $\mb{P}^3$ along a $(2,3)$-complete intersection curve.

\subsection{Computation of CM line bundles}\label{computationcm}

We follow the same notations in Section \ref{VGIT}

\begin{lemma}\label{lem:nonflat-blowup}
    Let $\mts{Z}\hookrightarrow \mb{P}^3\times \mb{P}^9\times\mb{P}^{19}$ be the scheme-theoretic intersection of pull-backs of universal families of  quadric surfaces and  cubic surfaces. In other words, $\mts{Z}$ is the total space of the (non-flat) family of $(2,3)$-intersection subschemes of $\bP^3$ over $\mb{P}^9\times\mb{P}^{19}$. Then $\mts{Z}$ 
    is smooth. 
    
    Moreover, the restricted family $\mts{Z}|_V$ over the open subset $$V\ :=\ \big\{([q],[g])\in \mb{P}^9\times\mb{P}^{19}\ :\  g \textup{ is not divisible by }q\big\} $$ descends to a (non-flat) family $\mts{Y}$ over $\mb{P}\mtc{E}$ via the $\mb{A}^4$-bundle $$V\ \longrightarrow \ \mb{P}\mtc{E}.$$ In particular, the total space $\mts{X}:=\Bl_{\mts{Y}}(\mb{P}^3\times\mb{P}\mtc{E})$ over $\mb{P}\mtc{E}$ is smooth.
    
\end{lemma}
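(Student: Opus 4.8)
The plan is to reduce smoothness of $\mts{X}$ to smoothness of $\mts{Z}$ by faithfully flat descent along the affine bundle $\beta\colon V\to \mb{P}\mtc{E}$ of the statement. Write $p:=\id_{\mb{P}^3}\times \beta\colon \mb{P}^3\times V\to \mb{P}^3\times \mb{P}\mtc{E}$ for the induced morphism; since $\beta$ is an $\mb{A}^4$-bundle, both $\beta$ and $p$ are smooth and surjective. By the descent assertion defining $\mts{Y}$, the subscheme $\mts{Z}|_V\subseteq \mb{P}^3\times V$ is the scheme-theoretic preimage $p^{-1}(\mts{Y})$, i.e. its ideal sheaf is the pullback of $I_{\mts{Y}}$. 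As blow-ups commute with flat base change, there is a canonical isomorphism
\[
\Bl_{\mts{Z}|_V}(\mb{P}^3\times V)\ \simeq\ \mts{X}\times_{\mb{P}^3\times \mb{P}\mtc{E}}(\mb{P}^3\times V),
\]
under which the projection $\Bl_{\mts{Z}|_V}(\mb{P}^3\times V)\to \mts{X}$ is the base change of $p$, hence itself smooth and surjective.

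Next I would check that the left-hand side is smooth. The ambient variety $\mb{P}^3\times V$ is smooth, being open in $\mb{P}^3\times \mb{P}^9\times \mb{P}^{19}$, and $\mts{Z}|_V$ is an open subscheme of $\mts{Z}$, which is smooth by the first assertion of the lemma. (Concretely, the projection to $\mb{P}^3$ realizes $\mts{Z}=\cQ\times_{\mb{P}^3}\mts{W}$ as the fibre product of the universal quadric $\cQ$ and the universal cubic $\mts{W}\subseteq \mb{P}^3\times\mb{P}^{19}$, which is a $\mb{P}^8\times\mb{P}^{18}$-bundle over $\mb{P}^3$ since for fixed $x$ the conditions $q(x)=0$ and $g(x)=0$ each cut out a hyperplane; hence $\mts{Z}$ is smooth.) Thus $\mts{Z}|_V$ is a smooth closed subvariety of the smooth variety $\mb{P}^3\times V$, so its embedding is regular, and the blow-up of a smooth variety along a smooth centre is smooth. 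Therefore $\Bl_{\mts{Z}|_V}(\mb{P}^3\times V)$ is smooth.

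Finally I would conclude by descent of regularity. Over the perfect field $\mb{C}$, smoothness and regularity coincide for finite-type schemes, and regularity descends along faithfully flat local homomorphisms: if $A\to B$ is a flat local homomorphism of Noetherian local rings with $B$ regular, then $A$ is regular. Applying this to the stalks of the smooth surjective morphism $\Bl_{\mts{Z}|_V}(\mb{P}^3\times V)\to \mts{X}$ shows that $\mts{X}$ is regular at every point in the (surjective) image, and hence smooth.

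The two structural inputs invoked above---that blow-up commutes with the flat base change $p$, and that blowing up a smooth centre preserves smoothness---are routine. The point that deserves care, and which I regard as the main obstacle, is the scheme-theoretic identity $\mts{Z}|_V=p^{-1}(\mts{Y})$: one must verify that $\mts{Y}$ is genuinely the fppf descent of the ideal sheaf of $\mts{Z}|_V$, so that the pulled-back ideal is \emph{exactly} that of $\mts{Z}|_V$, with no torsion or embedded discrepancy. This rests on the fact that the subscheme cut out by a quadric and a cubic depends only on the class of $g$ modulo $q\cdot H^0(\mb{P}^3,\mtc{O}_{\mb{P}^3}(1))$ and scaling, which is precisely what makes the fibres of $\beta\colon V\to\mb{P}\mtc{E}$ equal to $\mb{A}^4$. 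Note that we never base change along the non-flat projection to $\mb{P}\mtc{E}$ itself, so the non-flatness of $\mts{Y}$ over $\mb{P}\mtc{E}$ plays no role in the argument.
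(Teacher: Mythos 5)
Your proposal is correct and follows essentially the same route as the paper: both arguments rest on the smoothness of $\mts{Z}$ (you via the $\mb{P}^8\times\mb{P}^{18}$-bundle structure over $\mb{P}^3$, the paper via the everywhere-rank-$2$ Jacobian of the two defining equations — equivalent computations) together with descent along the smooth surjective $\mb{A}^4$-bundle $V\to\mb{P}\mtc{E}$. The only cosmetic difference is the order of operations — you blow up over $V$ and descend smoothness of the blow-up using that blow-ups commute with flat base change, whereas the paper first descends smoothness to $\mts{Y}$ and then blows up the smooth centre — and you correctly isolate the same substantive point as the paper, namely that $\mts{Z}|_V$ is the scheme-theoretic preimage of $\mts{Y}$ because $\mb{V}(q,g)$ depends only on $[q]$ and on $g$ modulo $q\cdot H^0(\mb{P}^3,\mtc{O}_{\mb{P}^3}(1))$.
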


\begin{proof}
   First notice that $\mts{Z}$ is a complete intersection in $\mb{P}_{\underline{x}}^3\times \mb{P}_{\underline{y}}^9\times\mb{P}_{\underline{z}}^{19}$ of codimension two, given by the two equations $$\big(y_0x_0^2+\cdots+y_9x_3^2=0,\ z_0x_0^3+\cdots+z_{19}x_3^3=0\big),$$ whose Jacobian is always of rank $2$. Therefore, the total space $\mts{Z}$ is a smooth.

   Observe that $\mb{P}\mtc{E}$ is a quotient projective bundle of the trivial projective bundle $\mb{P}^9\times\mb{P}^{19}$ (cf. exact sequence \ref{*}). Indeed, let $W$ be the complement of $V$ in $\mb{P}^9\times\mb{P}^{19}$, and the linear projection $p:V\rightarrow \mb{P}\mtc{E}$ from the center $W$ is given by $$([q],[g])\ \mapsto \ ([q],[\ove{g}]),$$ which is an $\mb{A}^4$-bundle. Fiberwise, over each point $[Q]\in\bP^9$, the projection is exactly the restriction map $$p_{[Q]}\ :\ H^0(\bP^3,\mtc{O}_{\bP^3}(3))\ \longrightarrow\ H^0(Q,\mtc{O}_{Q}(3))$$ sending every section $g$ to its restriction $\ove{g}:=g|_{Q}.$
   
   For any point $\omega=([q],[\ove{g}])\in\mb{P}\mtc{E}$, the fiber $p^{-1}(\omega)$ is the projective space parametrizing polynomials $g\in \mb{P}H^0(\mb{P}^3,\mtc{O}_{\mb{P}^3}(3))$ whose restrictions on the quadric $\mb{V}(q)$ is $[\ove{g}]$. The pull-back of the (non-flat) universal family $\mts{Y}$ over $\mb{P}\mtc{E}$ coincides with the restriction $\mts{Z}|_V$. It follows that $\mts{Y}$ is smooth and hence so is $\mts{X}$.

\end{proof}

Let $U\subseteq \mb{P}\mtc{E}$ be the big open subset parameterizing $(2,3)$-complete intersection curves. Then Lemma \ref{lem:nonflat-blowup} shows that there is a non-flat family $i:\mts{Y}\hookrightarrow\mb{P}^3 \times \mb{P}\mtc{E}$ of $(2,3)$-intersection subschemes of $\bP^3$ over $\mb{P}\mtc{E}$, whose restriction over $U$ is the universal family of $(2,3)$-complete intersection curves $i_U:\mathscr{C}\hookrightarrow \mb{P}^3\times U$.

Denote by $p_1,p_2$ the projection maps from $\mb{P}^3\times \bP\mtc{E}$ to the two factors respectively, and $f:=p_2\circ \rho:\mts{X}\rightarrow \bP\mtc{E}$ the composition.

\begin{lemma}\label{lem:non-ci-codim}
    With the above notation, we have $\codim_{\mts{Y}} (\mts{Y}\setminus \mts{C}) \geq 3$.
\end{lemma}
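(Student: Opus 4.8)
The plan is to note that $\mts{C}=\mts{Y}|_U$ is open in $\mts{Y}$, so $\mts{Y}\setminus\mts{C}=p_2^{-1}(B)$, where $B:=\mb{P}\mtc{E}\setminus U$ and $p_2\colon\mts{Y}\to\mb{P}\mtc{E}$ is the projection. The fibre of $\mts{Y}$ over a point $\omega=([q],[\bar g])$ is the scheme $\mb{V}(q)\cap\mb{V}(g)$ for any lift $g$ of $\bar g$ — this is independent of the lift since $g'=g+qm$ defines the same subscheme — and because $\bar g\neq 0$ on $\mb{P}\mtc{E}$, this fibre is a $(2,3)$-complete intersection curve precisely when $q$ and $g$ have no common factor. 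Hence $B$ is exactly the locus where $q$ and $g$ share a common linear factor $\ell$; as $\ell\mid q$, this forces $q$ to be non-integral, i.e.\ $\rank q\leq 2$. Since $\mts{Y}$ is irreducible of dimension $25$ (it is the descent of the irreducible open subset $\mts{Z}|_V$ of the smooth complete intersection $\mts{Z}$ from Lemma \ref{lem:nonflat-blowup} along the $\mb{A}^4$-bundle $V\to\mb{P}\mtc{E}$, and $\dim\mts{C}=25$), it suffices to show $\dim p_2^{-1}(B)\leq 22$.

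First I would bound the fibre dimension of $p_2$ over $B$: every fibre is contained in the quadric surface $\mb{V}(q)$, so has dimension $\leq 2$, and for $\omega\in B$ it contains the plane $\mb{V}(\ell)$, so its dimension is exactly $2$. Thus $\dim p_2^{-1}(B)\leq\dim B+2$, and it remains to bound $\dim B$. I would do this by stratifying $\pi\colon\mb{P}\mtc{E}\to\mb{P}^9$ according to $\rank q\in\{1,2\}$; the strata with $\rank q\geq 3$ contribute nothing, since over integral quadrics every nonzero $\bar g$ cuts out a complete intersection curve and hence lies in $U$.

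The heart of the matter, and what I expect to be the main obstacle, is the fibrewise computation of the ``bad'' sublocus inside each fibre $\mb{P}^{15}=\mb{P}H^0(Q,\mtc{O}_Q(3))$ of $\pi$. When $\rank q=2$, I would write $Q=P_1\cup P_2$ with $L=P_1\cap P_2$ and use the Mayer--Vietoris sequence $0\to\mtc{O}_Q(3)\to\mtc{O}_{P_1}(3)\oplus\mtc{O}_{P_2}(3)\to\mtc{O}_L(3)\to 0$ to identify a section with a compatible pair $(s_1,s_2)$; the divisibility $\ell_i\mid g$ becomes $s_i=0$, and compatibility then forces the remaining component to vanish along $L$, so each of $\{s_1=0\}$ and $\{s_2=0\}$ is a $\mb{P}^5$. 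As the $\rank q=2$ stratum of $\mb{P}^9$ has dimension $6$, this part of $B$ has dimension $\leq 6+5=11$. When $\rank q=1$, I would write $Q=2P$ with $P=\mb{V}(\ell)$ and use $0\to\mtc{O}_P(2)\to\mtc{O}_Q(3)\to\mtc{O}_P(3)\to 0$ to see that $\ell\mid g$ is equivalent to $\bar g$ lying in the $6$-dimensional subspace $H^0(P,\mtc{O}_P(2))$, again a $\mb{P}^5$; since this stratum of $\mb{P}^9$ has dimension $3$, this part of $B$ has dimension $\leq 3+5=8$. Combining, $\dim B\leq 11$, whence $\dim p_2^{-1}(B)\leq 13$ and $\codim_{\mts{Y}}(\mts{Y}\setminus\mts{C})\geq 25-13=12\geq 3$, a fortiori the claimed bound.
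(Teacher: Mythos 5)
Your proof is correct and follows essentially the same route as the paper's: both establish $\dim\mts{Y}=25$ and then bound $\dim(\mts{Y}\setminus\mts{C})$ by $\dim(\bP\mtc{E}\setminus U)+2$ using that the bad fibers sit inside the quadric surface. Your stratification by $\rank q$ yields the sharper estimate $\dim(\bP\mtc{E}\setminus U)\leq 11$, whereas the paper uses the cruder bound $6+8=14$; either suffices for codimension at least $3$.
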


\begin{proof}
Consider the family $p_2\circ i: \mts{Y}\to \bP\mtc{E}$. Since $\mts{Y}$ is smooth and irreducible and a general fiber of $p_2\circ i$ is a $(2,3)$-complete intersection curve, we have 
\[
\dim \mts{Y} = \dim \bP\mtc{E} + 1 = 25.
\]
Next we consider the restriction $\mts{Y}\setminus \mts{C}\to \bP\mtc{E}\setminus U$ of $p_2\circ i$. By definition we know that $\bP\mtc{E}\setminus U$ parameterizes $([q], [\ove{g}])$ such that $q$ and $g$ share a common linear factor $l$. It is clear that the parameter space of non-integral $[q]$ has dimension $6$, and the fiber of $\bP\mtc{E}\setminus U$  over every non-integral $[q]$ has dimension $h^0(Q, \mtc{O}_Q(2)) -1=8$ with $Q = \mathbb{V}(q)$. Thus we have
\[
\dim \bP\mtc{E}\setminus U  = 6 + 8 = 14.
\]
Since every fiber of $\mts{Y}\setminus \mts{C}\to \bP\mtc{E}\setminus U$ is contained in the quadric surface, we have 
\[
\dim \mts{Y}\setminus \mts{C} \leq \dim \bP\mtc{E}\setminus U +2 = 16. 
\]
The proof is finished as $25-16\geq 3$.
\end{proof}

Let $j:\mts{E}\hookrightarrow \mts{X}$ be the exceptional divisor of the blow-up, and the induced map $\rho_{\mts{E}}:\mts{E}\rightarrow \mts{Y}$ is a $\mb{P}^1$-bundle. We set $\zeta:=\mtc{O}_{\mts{E}}(1) = \mtc{O}_{\mts{X}}(-\mts{E})|_{\mts{E}}$ and $\gamma:=c_1(N_{\mts{Y}/\mb{P}^3\times \bP\mtc{E}})$.

Let $\mts{Q}\subseteq \mb{P}^3\times U$ be the pull-back of the universal quadric surface $\cQ$ over $\mb{P}^9$ along the natural projection $U\rightarrow \mb{P}^9$. Then $\mts{C}\subseteq \mts{Q}$ and we have an exact sequence of normal bundles $$0\longrightarrow N_{\mts{C}/\mts{Q}}\longrightarrow N_{\mts{C}/\mb{P}^3\times U}\longrightarrow N_{\mts{Q}/\mb{P}^3\times U}|_{\mts{C}}\longrightarrow0.$$ Since $\mts{Q}$ comes from the universal quadric, then one has $$N_{\mts{Q}/\mb{P}^3\times U}|_{\mts{C}}\ \simeq\ \mtc{O}_{\mb{P}^3\times U}(\mts{Q})|_{\mts{C}}\ \simeq\  (2h+\eta)|_{\mts{C}}.$$ 
Now we compute the normal bundle $N_{\mts{C}/\mts{Q}}\simeq \mtc{O}_{\mts{Q}}(\mts{C})|_{\mts{C}}$. By restricting to the fiber over a general point $[Q]\in \mb{P}^9$, we see that $$\mtc{O}_{Q\times \mb{P}H^0(Q,\mtc{O}_{Q}(3))}(\mts{C}_Q)\ =\ (3h+\xi)|_{Q\times \mb{P}H^0(Q,\mtc{O}_{Q}(3))}.$$ 
To compute the contribution of $\eta$, we restrict to the fiber over a general point $p\in\mb{P}^3$. The inclusions $\mts{C}\subseteq \mts{Q}\subseteq \mb{P}^3\times U$ come down to saying that $$\{\ (Q,[S])\ :\ p\in Q\cap S\ \}\ \subseteq\  \{\ (Q,[S])\ |\  p\in Q\}\ \subseteq\  U,$$ which implies that $\eta$ does not contribute to the class of $\mts{C}$ in $\mts{Q}$. Thus we deduce that the normal bundle $N_{\mts{C}/\mts{Q}}=(3h+\xi)|_{\mts{C}}$. It follows that $$c_1(N_{\mts{C}/\mb{P}^3\times U})=(5h+\xi+\eta)|_{\mts{C}},\quad \textup{and}\quad c_2(N_{\mts{C}/\mb{P}^3\times U})=(3h+\xi)\cdot(2h+\eta)|_{\mts{C}}.$$
Moreover, the codimension $2$ cycle $[\mts{C}]$  of $\bP^3\times U$ satisfies
$[\mts{C}] = (3h+\xi)\cdot (2h+\eta)|_{\bP^3\times U}$.
By Lemma \ref{lem:non-ci-codim}, we have 
\[
[\mts{Y}] = (3h+\xi)\cdot (2h+\eta),
\]
\[
\gamma = c_1(N_{\mts{Y}/\mb{P}^3\times \bP\mtc{E}})=(5h+\xi+\eta)|_{\mts{Y}}, \quad \textup{and}\quad c_2(N_{\mts{Y}/\mb{P}^3\times \bP\mtc{E}})=(3h+\xi)\cdot(2h+\eta)|_{\mts{Y}}.
\]
As a result, we have that 
\begin{equation}\nonumber
\begin{split}
f_{*}(\rho^{*}p_1^{*}\mtc{O}_{\mb{P}^3}(1)^2.\mts{E}^2)&\ =\ -p_{2*}\rho_{*}(\rho^{*}p_1^{*}\mtc{O}_{\mb{P}^3}(1)^2.j_{*}\zeta)\ = \ -p_{2*}\rho_{*}j_{*}(\rho_{\mts{E}}^{*}i^{*}p_1^{*}\mtc{O}_{\mb{P}^3}(1)^2.\zeta)\\
&\ =\ -p_{2*}i_{*}\rho_{\mts{E}*}(\rho_{\mts{E}}^{*}i^{*}p_1^{*}\mtc{O}_{\mb{P}^3}(1)^2.\zeta)\ =\ -p_{2*}i_{*}(i^{*}p_1^{*}\mtc{O}_{\mb{P}^3}(1)^2.\rho_{\mts{E}*}\zeta)\\
&\ =\ -p_{2*}i_{*}(i^{*}p_1^{*}\mtc{O}_{\mb{P}^3}(1)^2)\ = -p_{2*}(p_1^{*}\mtc{O}_{\mb{P}^3}(1)^2.[\mts{Y}])\\
&\ =\ -p_{2*} (h^2 \cdot (3h+\xi) (2h+\eta))\\ & =\ -(2\xi+3\eta),
\end{split}
\end{equation}

\begin{equation}\nonumber
\begin{split}
f_{*}(\rho^{*}p_1^{*}\mtc{O}_{\mb{P}^3}(1).\mts{E}^3)&\ =\ p_{2*}\rho_{*}(\rho^{*}p_1^{*}\mtc{O}_{\mb{P}^3}(1).j_{*}(\zeta^2))\ =\ p_{2*}\rho_{*}j_{*}(\rho_{\mts{E}}^{*}i^{*}p_1^{*}\mtc{O}_{\mb{P}^3}(1).\zeta^2)\\
&\ =\ p_{2*}i_{*}\rho_{\mts{E}*}(\rho_{\mts{E}}^{*}i^{*}p_1^{*}\mtc{O}_{\mb{P}^3}(1).\zeta^2)\ =\ p_{2*}i_{*}(i^{*}p_1^{*}\mtc{O}_{\mb{P}^3}(1).\rho_{\mts{E}*}(\zeta^2))\\
&\ =\ -p_{2*}i_{*}(i^{*}p_1^{*}\mtc{O}_{\mb{P}^3}(1).\gamma)\ =\ -p_{2*}(p_1^{*}\mtc{O}_{\mb{P}^3}(1).i_{*}\gamma)\\
&\ =\ -p_{2*}(h \cdot (5h+\xi+\eta) \cdot (3h+\xi) (2h+\eta)) \\
&\ =\ -(16\xi+21\eta),
\end{split}
\end{equation}

\begin{equation}\nonumber
\begin{split}
f_{*}(\mts{E}^4)&\ =\ -p_{2*}\rho_{*}j_{*}(\zeta^3)\ =\ -p_{2*}i_{*}\rho_{\mts{E}*}(\zeta^3)\\
&\ =\ -p_{2*}i_{*}((c_1(N_{\mts{Y}/\mb{P}^3\times \bP\mtc{E}})^2-c_2(N_{\mts{Y}/\mb{P}^3\times \bP\mtc{E}})))\\
&\ =\ -p_{2*}(((5h+\xi+\eta)^2-(3h+\xi)(2h+\eta)).[\mts{Y}])\\
&\ =\ -p_{2*}(((5h+\xi+\eta)^2-(3h+\xi)(2h+\eta))\cdot (3h+\xi)(2h+\eta))\\
&\ =\ -(86\xi+99\eta).
\end{split}
\end{equation}

\begin{prop}
    With the same notation as above, one has $$-f_{*}\big((-K_{\mts{X}/\mb{P}\mtc{E}})^4\big)= 22\xi+51\eta.$$
\end{prop}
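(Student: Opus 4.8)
The plan is to reduce the statement to the three intersection numbers computed just above, by first pinning down the relative anticanonical class $-K_{\mts{X}/\mb{P}\mtc{E}}$ in terms of $H:=\rho^*p_1^*\mtc{O}_{\mb{P}^3}(1)$ and the exceptional divisor $\mts{E}$, and then expanding its fourth power and pushing forward monomial by monomial.

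First I would identify the relative canonical class. By Lemma \ref{lem:nonflat-blowup} the blow-up center $\mts{Y}$ is smooth, and since it is irreducible of dimension $\dim\mb{P}\mtc{E}+1=25$ sitting inside the smooth $27$-dimensional variety $\mb{P}^3\times\mb{P}\mtc{E}$, it has pure codimension $2$. The blow-up formula then gives $K_{\mts{X}}=\rho^*K_{\mb{P}^3\times\mb{P}\mtc{E}}+\mts{E}$. Decomposing $K_{\mb{P}^3\times\mb{P}\mtc{E}}=p_1^*K_{\mb{P}^3}+p_2^*K_{\mb{P}\mtc{E}}$ and subtracting $f^*K_{\mb{P}\mtc{E}}=\rho^*p_2^*K_{\mb{P}\mtc{E}}$, the terms pulled back from $\mb{P}\mtc{E}$ cancel and I am left with
$$-K_{\mts{X}/\mb{P}\mtc{E}}\ =\ -\rho^*p_1^*K_{\mb{P}^3}-\mts{E}\ =\ 4H-\mts{E}.$$

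Next I would expand
$$(4H-\mts{E})^4\ =\ 256H^4-256H^3\mts{E}+96H^2\mts{E}^2-16H\mts{E}^3+\mts{E}^4$$
and apply $f_*$ term by term. The first two monomials die: $H^4=\rho^*p_1^*(h^4)=0$ since $h^4=0$ on $\mb{P}^3$, and $f_*(H^3\mts{E})=p_{2*}i_*\big((i^*p_1^*\mtc{O}_{\mb{P}^3}(1))^3\cdot\rho_{\mts{E}*}(1)\big)=0$ because $\rho_{\mts{E}*}(1)=0$ for dimension reasons (pushing the fundamental class of the $\mb{P}^1$-bundle $\rho_{\mts{E}}\colon\mts{E}\to\mts{Y}$ down to $\mts{Y}$). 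The three surviving pushforwards are precisely those established in the displays above, namely $f_*(H^2\mts{E}^2)=-(2\xi+3\eta)$, $f_*(H\mts{E}^3)=-(16\xi+21\eta)$, and $f_*(\mts{E}^4)=-(86\xi+99\eta)$.

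Finally I would substitute these and collect coefficients:
$$-f_*\big((4H-\mts{E})^4\big)\ =\ -\big[96(-(2\xi+3\eta))-16(-(16\xi+21\eta))+(-(86\xi+99\eta))\big]\ =\ 22\xi+51\eta,$$
using $-192+256-86=-22$ and $-288+336-99=-51$. Since the three Segre-class pushforwards are already in hand, the only genuinely delicate input is the identification $-K_{\mts{X}/\mb{P}\mtc{E}}=4H-\mts{E}$, which hinges on $\mts{Y}$ being smooth of pure codimension $2$ so that the blow-up formula for the canonical class applies verbatim; the remainder is sign bookkeeping and should present no obstacle.
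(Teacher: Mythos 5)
Your proposal is correct and follows essentially the same route as the paper: write $-K_{\mts{X}/\mb{P}\mtc{E}}=4\rho^{*}p_1^{*}\mtc{O}_{\mb{P}^3}(1)-\mts{E}$, expand the fourth power, note that the $H^4$ and $H^3\mts{E}$ terms push forward to zero, and substitute the three pushforwards $-(2\xi+3\eta)$, $-(16\xi+21\eta)$, $-(86\xi+99\eta)$ computed just above; your coefficient bookkeeping $(-192+256-86,\,-288+336-99)=(-22,-51)$ agrees with the paper's display (\ref{CMLB}). The only difference is that you make explicit the justification of the blow-up formula for $K_{\mts{X}}$ via the smoothness and pure codimension $2$ of $\mts{Y}$ (Lemma \ref{lem:nonflat-blowup}), which the paper leaves implicit.
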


\begin{proof}
    Summing up the computations above, one has that 
\begin{equation}\label{CMLB}
\begin{split}
 -f_{*}((-K_{\mts{X}/\mb{P}\mtc{E}})^4) &\ =\ -f_{*}((-\rho^{*}p_1^{*}K_{\mb{P}^3}-\mts{E})^4)\\
&\ =\ -f_{*}((4\rho^{*}p_1^{*}\mtc{O}_{\mb{P}^3}(1)-\mts{E})^4)\\
&\ =\ -\big(-(86\xi+99\eta)+16(16\xi+21\eta)-96(2\xi+3\eta)\big)\\
&\ =\ 22\xi+51\eta.
\end{split}
\end{equation}
\end{proof}

\begin{prop}\label{prop:CMslope}
    Let $f_U:\mts{X}_U\rightarrow U$ be the restriction of $\mts{X}\rightarrow \mb{P}\mtc{E}$ over $U$.     Denote by $\eta_U$ and $\xi_U$ the restriction of $\eta$ and $\xi$ to $U$ respectively. 
    Then the followings hold.
    \begin{enumerate}
        \item Every fiber $\mts{X}_t$ of $f_U$ for $t\in U$ is isomorphic to $\Bl_{\mts{C}_t}\mb{P}^3$, in particular, it is integral and Gorenstein with ample anti-canonical divisor.
        \item The morphism $f_U:\mts{X}_U\rightarrow U$ is flat and projective, and $-K_{\mts{X}/U}$ is an $f_U$-ample Cartier divisor that commutes with base change.
        \item For the anti-canonically polarized family $f_U: (\mts{X}_U, -K_{\mts{X}_U/U})\to U$, we have $$\lambda_{\CM,f_U}\ =\ 22\xi_U+51\eta_U.$$ In particular, the CM $\mb{Q}$-line bundle $\lambda_{\CM,f_U}$ is ample on $U$.
    \end{enumerate}
\end{prop}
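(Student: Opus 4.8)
The plan is to take the three statements in turn, leaning on the smoothness of $\mts{X}$ (Lemma \ref{lem:nonflat-blowup}) and the Sarkisov picture developed in Section \ref{KSSLIMITS}.

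\emph{Part (1).} First I would observe that over $U$ the blow-up center $\mts{Y}$ restricts to the universal $(2,3)$-complete intersection curve $\mts{C}\to U$, a flat family of local complete intersection curves; since blowing up a smooth scheme along a flat family of lci subschemes commutes with base change to fibers, one gets $\mts{X}_t\simeq\Bl_{\mts{C}_t}\mb{P}^3$ for every $t\in U$. Integrality is immediate because $\mts{X}_t$ is birational to $\mb{P}^3$. As $\mts{C}_t$ is a complete intersection it is regularly embedded in $\mb{P}^3$, so $\mts{X}_t$ is itself a local complete intersection, hence Gorenstein, and the blow-up formula for a codimension-two regular embedding gives $-K_{\mts{X}_t}\sim 4H-E$, a Cartier divisor. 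For ampleness I would pass through the Sarkisov correspondence: by Lemma \ref{lem:Sarkisov-singular} one has $\mts{X}_t\simeq\Bl_{P_t}V_t$ for an integral cubic threefold $V_t$ with a double point $P_t$, and by Remark \ref{rem:cubic-blowup-fano} the anti-canonical class, identified there with the ample line bundle $(2L-E)|_{\Bl_{P_t}V_t}$, is ample.

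\emph{Part (2).} Flatness of $f_U$ would follow from miracle flatness: $\mts{X}_U$ is smooth (open in $\mts{X}$) hence Cohen--Macaulay, $U$ is smooth, and all fibers have pure dimension $3$. Projectivity is clear since $\mts{X}_U$ is a blow-up of $\mb{P}^3\times U$. Because the family is flat and Gorenstein, the relative dualizing sheaf is a line bundle whose formation commutes with base change, so $-K_{\mts{X}_U/U}$ is Cartier and restricts to $-K_{\mts{X}_t}$ on each fiber; by Part (1) it is ample on every fiber of the proper flat morphism $f_U$, hence $f_U$-ample.

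\emph{Part (3).} The strategy is to reduce the CM class to the push-forward already computed. For the anti-canonical polarization $\mtc{L}=-K_{\mts{X}_U/U}$ (so $l=1$, $n=3$) one has $b_1/b_0=n/2$, so the defining exponents become $n(n+1)+2b_1/b_0=n(n+2)$ and $-2(n+1)$. Using the Knudsen--Mumford expansion together with Grothendieck--Riemann--Roch and the identity $c_1(T_{\mts{X}_U/U})=-K_{\mts{X}_U/U}$, I obtain $c_1(\lambda_{n+1})=f_{U*}\big((-K_{\mts{X}_U/U})^{n+1}\big)$ and the relation $c_1(\lambda_n)=\tfrac{n+1}{2}\,c_1(\lambda_{n+1})$; substituting and using the cancellation $n(n+2)-(n+1)^2=-1$ yields $\lambda_{\CM,f_U}=-f_{U*}\big((-K_{\mts{X}_U/U})^{4}\big)$. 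Combined with the computation $-f_*\big((-K_{\mts{X}/\mb{P}\mtc{E}})^4\big)=22\xi+51\eta$ restricted to $U$, this gives $\lambda_{\CM,f_U}=22\xi_U+51\eta_U$. For ampleness I would note that $22\xi+51\eta=51\big(\eta+\tfrac{22}{51}\xi\big)$ extends $\lambda_{\CM,f_U}$ to $\mb{P}\mtc{E}$, and since $\tfrac{22}{51}<\tfrac12$ this class is ample by the criterion that $\eta+t\xi$ is ample $\iff t<\tfrac12$.

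\emph{Main obstacle.} The delicate point is Part (1): showing that $\mts{X}_t$ stays integral, Gorenstein and anti-canonically \emph{ample} for \emph{every} $t\in U$, including the badly degenerate complete intersections supported on non-normal or non-reduced quadrics. This is exactly where the translation to blow-ups of singular cubic threefolds (Lemma \ref{lem:Sarkisov-singular}, Remark \ref{rem:cubic-blowup-fano}) carries the weight, since a direct nef-cone argument on $\Bl_{\mts{C}_t}\mb{P}^3$ would be awkward when a singular fiber has Picard rank larger than two. By contrast, Part (3) is essentially formal once the push-forward of the preceding proposition is in hand.
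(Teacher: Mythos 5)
Your proposal is correct and follows essentially the same route as the paper's proof: blow-up commuting with passage to fibers for (1), the identification with blow-ups of singular cubic threefolds (Lemma \ref{lem:Sarkisov-singular}, Remark \ref{rem:cubic-blowup-fano}) for integrality, Gorenstein-ness and anti-canonical ampleness, miracle flatness for (2), and the reduction of $\lambda_{\CM,f_U}$ to $-f_{U*}\big((-K_{\mts{X}_U/U})^4\big)=22\xi_U+51\eta_U$ together with the slope bound $\tfrac{22}{51}<\tfrac12$ for (3).
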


\begin{proof}
(1) Since every fiber of $\mts{C}$ over $U$ by definition is a $(2,3)$-complete intersection curve in $\bP^3$, taking Rees algebra associated to the ideal sheaf $I_{\mts{C}}$ commutes with taking fibers. Then the statement follows from the fact that taking $\Proj$ is commutative with taking fibers. By Remark \ref{rem:cubic-blowup-fano} we know that  $-K_{\mts{X}_t}$ is an ample line bundle, and $\mts{X}_t$ is Gorenstein for each $t\in U$.

(2) The flatness of $f$ follows from part (1) and miracle flatness (ref. \cite[00R4]{Sta18}) as both $\mts{X}$ and $U$ are smooth and $f_U$ has constant fiber dimensions.  The second statement follows from (1).

(3)  Since both $\mts{X}$ and $\mb{P}\mtc{E}$ are smooth projective varieties, by Grothendieck--Riemann--Roch theorem, for $q\gg1$ one has that $$
c_1(f_{*}(-K_{\mts{X}/\mb{P}\mtc{E}}^{\otimes q}))=\frac{q^4}{24}f_{*}(-K_{\mts{X}/\mb{P}\mtc{E}})^4 +\frac{q^3}{12}f_{*}(-K_{\mts{X}/\mb{P}\mtc{E}})^4 + O(q^2).$$ Since CM line bundles are functorial, by similar
arguments to \cite[Proposition 2.23]{ADL19}, one has that $$c_1\big(\lambda_{\CM,f_U}\big)\ =\  -f_{U*}(-K_{\mts{X}_U/U})^4 \ =\ 22\xi_U+51\eta_U.$$
Since $U$ is a big open subset of $\mb{P}\mtc{E}$ (i.e. the complement of $U$ in $\mb{P}\mtc{E}$ has codimension at least $2$),  we have a natural isomorphism $\Pic(\mb{P}\mtc{E})\simeq \Pic(U)$, and we will not distinguish ($\mb{Q}$-)line bundles on $U$ and those on $\mb{P}\mtc{E}$. We define the \emph{slope} of a $\mb{Q}$-line bundle $a\eta+b\xi$ with $a\neq 0$ to be $t=\frac{b}{a}$. By  \cite[Theorem 2.7]{Ben14}, the nef cone of $\mb{P}\mtc{E}$ has extremal rays of slope $0$ and $\frac{1}{2}$. Therefore, the CM $\mb{Q}$-line bundle $\lambda_{\CM,f_U}$ is ample on $U$.

\end{proof}

\begin{lemma}\label{lem:quadric-normal}
    Let $C$ be a $(2,3)$-complete intersection in $\mb{P}^3$. Suppose $X:=\Bl_C\mb{P}^3$ is a K-semistable $\bQ$-Fano variety. Then the unique quadric surface $Q$ containing $C$ is normal.
\end{lemma}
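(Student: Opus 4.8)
The plan is to exploit the bridge between K-stability and GIT supplied by Theorem \ref{KimpliesGIT} together with the CM computation of Proposition \ref{prop:CMslope}, and then to destabilize a non-normal quadric by an explicit one-parameter degeneration. First I would observe that the unique quadric $Q=\mb{V}(f_2)$ fails to be normal exactly when the quadratic form $f_2$ has rank at most $2$, giving two cases: $Q=\Pi_1\cup\Pi_2$ a union of two distinct planes (rank $2$), and $Q=2\Pi$ a double plane (rank $1$). By Lemma \ref{lem:Sarkisov-singular} the quadric $Q$ is the projective tangent cone of the associated cubic threefold $V$ at its double point, so non-normality of $Q$ reflects a worse-than-$cA$ double point of $V$; nonetheless the destabilization is cleanest on the parameter space. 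Since $C$ is a genuine complete intersection the point $[C]=([f_2],[\bar g])$ lies in the big open locus $U\subseteq\mb{P}\mtc{E}$, and Proposition \ref{prop:CMslope} shows $\lambda_{\CM,f_U}=22\xi_U+51\eta_U$ is ample with slope $\tfrac{22}{51}\in(\tfrac{2}{5},\tfrac{1}{2})$. Thus, by Theorem \ref{KimpliesGIT} (applied to a projective model of the family as in the sketch of proof), K-semistability of $X=\Bl_C\mb{P}^3$ forces the Hilbert--Mumford index $\mu^{\lambda_{\CM}}([C],\lambda)\ge 0$ for every one-parameter subgroup $\lambda$ of $\PGL(4)$; equivalently, every $\lambda$-degeneration of $X$ has non-negative CM-weight.

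The heart of the argument is to contradict this by choosing a destabilizing $\lambda$ adapted to the singular locus of $Q$. I would normalize coordinates so that $\mathrm{Sing}(Q)$ is a coordinate flat (say $f_2=x_0x_1$ in the rank $2$ case, $f_2=x_0^2$ in the rank $1$ case) and take a diagonal $\lambda=\diag(t^{a_0},\dots,t^{a_3})$ with $\sum a_i=0$ weighted so as to favour the coordinates cutting out the (reduced) plane. The index with respect to $\lambda_{\CM}=22\xi+51\eta$ then splits as
\[
\mu^{\lambda_{\CM}}([C],\lambda)=22\,\mu_\xi+51\,\mu_\eta,
\]
where $\mu_\eta$ is the $\lambda$-weight of the quadric $f_2\in H^0(\mb{P}^3,\mtc{O}(2))$ and $\mu_\xi$ is the $\lambda$-weight of the restricted cubic $\bar g\in H^0(Q,\mtc{O}_Q(3))$, read off from the rank-$16$ bundle $\mtc{E}$. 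The point is that a rank-$\le 2$ form concentrates all of its weight on the favoured coordinates, making $\mu_\eta$ strongly positive, while the weight $\mu_\xi$ of any admissible cubic section is bounded in the opposite direction; at the specific slope $\tfrac{22}{51}$ the combination $22\mu_\xi+51\mu_\eta$ becomes negative. This exhibits $[C]$ as GIT-unstable (the degeneration has negative CM-weight), contradicting the previous paragraph and forcing $\mathrm{rank}(f_2)\ge 3$, i.e.\ $Q$ normal.

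The hardest part will be the precise evaluation of $\mu_\xi$, the optimal weight of a section of $H^0(Q,\mtc{O}_Q(3))$ under $\lambda$: this must be computed on $\mtc{E}$ rather than with full cubics on $\mb{P}^3$, and in the rank $1$ case the restriction $H^0(\mb{P}^3,\mtc{O}(3))\to H^0(Q,\mtc{O}_Q(3))$ degenerates because $Q$ is non-reduced, so the weight bookkeeping must be redone on the double plane. I also expect to verify that the chosen $\lambda$ destabilizes for \emph{every} admissible cubic $\bar g$, including special positions of $C$ relative to the singular line of $Q$, and to control the limit, which may leave the complete-intersection locus $U$; this is exactly the boundary/stratification issue flagged in the sketch of proof, handled by passing to the appropriate stratum of $\mb{P}\mtc{E}$. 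As a sanity check that motivates this global approach, I note that the most naive valuative test, namely computing $\beta_X$ of the strict transform of a plane component of the contracted divisor $\wt{Q}$, is inconclusive: a direct intersection-theoretic computation on a small resolution of the resulting ordinary double points shows its $\beta$-invariant is non-negative. Hence no single divisor over $X$ suffices, and the full Hilbert--Mumford criterion for $\lambda_{\CM}$ (equivalently, an optimal destabilizing test configuration) is genuinely required.
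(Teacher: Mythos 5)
Your overall strategy (destabilize via an explicit one-parameter subgroup and the CM class $22\xi+51\eta$) is the same as the paper's, but you are missing the one reduction that makes the argument closable, and your proposal stalls exactly where you yourself locate the hard part. The paper first observes that the locus $Z\subseteq U$ of complete-intersection curves lying on a non-normal quadric is \emph{irreducible}; since K-semistability is an open condition, it therefore suffices to destabilize a \emph{general} member of $Z$. For such a member $Q=\bV(x_1)\cup\bV(x_2)$ is a union of two planes and $C=C_1\cup C_2$ is a union of two plane cubics meeting in three points on the common line, and the single $1$-PS of weight $(-1,1,1,-1)$ induces a normal test configuration whose central fiber is still of the form $\Bl_{C_0}\bP^3$ and whose Futaki invariant is proportional to $3t_0-2<0$. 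Without this reduction, your plan requires exhibiting a destabilizing $\lambda$ for \emph{every} admissible $\bar g$ on \emph{every} non-normal $Q$ (including the double plane): a fixed diagonal $\lambda$ adapted only to $\Sing(Q)$ will not do this, since $\mu_\xi$ depends on which monomials occur in $\bar g$ and for cubics concentrated on the favoured coordinates the combination $22\mu_\xi+51\mu_\eta$ need not be negative. You explicitly defer this verification (``I also expect to verify\dots''), so the crux of the proof is left open.

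Two further points. First, you cannot invoke Theorem \ref{KimpliesGIT} over $\bP\mtc{E}$ or $U$ as stated: the base $U$ is not projective, the fibers over $\bP\mtc{E}\setminus U$ are not $\bQ$-Fano varieties, and the $\lambda$-limit may leave $U$, in which case the induced degeneration is not a test configuration of $X$ whose Futaki invariant is computed by the CM weight. The paper avoids all of this by constructing the test configuration directly and noting that, for the general member and the chosen weight, the limit curve remains a complete intersection. Second, your closing ``sanity check'' that the $\beta$-invariant of a plane component of $\wt{Q}$ is non-negative is an unverified side computation that plays no role in either argument and should be dropped or proved.
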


\begin{proof}
    Notice that the sub-locus $Z$ of complete intersection curves which is contained in a non-normal quadric $Q$ is irreducible. Thus it suffices to show that for a general member $[C]=(Q,[S])$ in $Z$, the blow-up $X:=\Bl_C\mb{P}^3$ is K-unstable. Observe such a curve $C$ a union of two elliptic curves $C_1\cup C_2$ such that $C_1$ (resp. $C_2$) is contained in a plane $P_1$ (resp. $P_2$), and that $C_1\cap C_2=\{p_1,...,p_3\}\subseteq P_1\cap P_2$. We may assume that $P_1=\bV(x_1)$ and $P_2=\bV(x_2)$, where $x_0,...,x_3$ is a homogeneous coordinate of $\mb{P}^3$. Consider the $\mb{G}_m$-action $\sigma$ of weight $(-1,1,1,-1)$, which induces a normal test configuration $(\mtc{X}, -K_{\mtc{X}/\bA^1})$ of $X$, whose central fiber is $X_0=\Bl_{C_0}\mb{P}^3$. It then follows from Equation (\ref{CMLB}) that $\Fut(\mtc{X}, -K_{\mtc{X}/\bA^1})$ is proportional up to a positive constant to the weight $$\mu^{\lambda_{\CM}}([C_0];\sigma)\ =\ \mu^{t_0}(Q,[S_0];\sigma)\  =\  3t-2,$$ which is negative since $t_0<\frac{2}{3}$ (cf. \cite[Proposition 4.6]{CMJL14}), and hence $X$ is K-unstable.
    
\end{proof}

\subsection{Deformation theory of $\Bl_C\mb{P}^3$}

\begin{prop}\label{deformation}
    Let $X$ be the blow-up of $\bP^3$ along a $(2,3)$-complete intersection curve $C$. Assume in addition that $C$ is contained in a normal quadric surface, and that $X$ has Gorenstein canonical singularities. 
    Then $\Ext^2(\Omega^1_{X},\mtc{O}_X)=0$. In particular, there are no obstructions to deformation of $X$.
\end{prop}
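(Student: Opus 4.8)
The plan is to exploit the Sarkisov link of Lemma \ref{lem:Sarkisov-singular} to realise $X=\Bl_C\mb{P}^3$ as an \emph{integral hypersurface} in the smooth fourfold $W:=\Bl_P\mb{P}^4\simeq \mb{P}_{\mb{P}^3}(\mtc{O}\oplus\mtc{O}(-1))$, namely the strict transform of the cubic $V$. With $H=\pi_W^*\mtc{O}_{\mb{P}^4}(1)$ and $E_W$ the exceptional divisor of $\pi_W\colon W\to\mb{P}^4$ one has $X\in|3H-2E_W|$ and $K_W=-5H+3E_W$. Since $X$ is a Cartier divisor in the smooth $W$ it is lci, and since $X$ is integral the differential in the conormal sequence is injective, so
\[
0\longrightarrow N^{\vee}_{X/W}\longrightarrow \Omega^1_W|_X\longrightarrow \Omega^1_X\longrightarrow 0 \qquad (\star)
\]
is short exact. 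This same injectivity shows $L_X\simeq \Omega^1_X$ in the derived category, so the obstruction space $T^2_X=\Ext^2(L_X,\mtc{O}_X)$ coincides with $\Ext^2(\Omega^1_X,\mtc{O}_X)$; here the hypotheses that $Q$ is normal and $X$ is Gorenstein canonical serve precisely to guarantee that the Sarkisov picture applies and that $X$ has mild (canonical, hence rational) singularities. Applying $\Hom(-,\mtc{O}_X)$ to $(\star)$, and using that $N^{\vee}_{X/W}$ and $\Omega^1_W|_X$ are locally free on $X$ (so their $\Ext$'s are ordinary cohomology), I obtain the piece
\[
H^1(X,N_{X/W})\longrightarrow \Ext^2(\Omega^1_X,\mtc{O}_X)\longrightarrow H^2(X,\Theta_W|_X),
\]
which reduces the statement to the two vanishings $H^1(X,N_{X/W})=0$ and $H^2(X,\Theta_W|_X)=0$.

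For the first, I twist the structure sequence of $X\subset W$ to $0\to\mtc{O}_W\to\mtc{O}_W(X)\to N_{X/W}\to 0$. Since $W$ is rational, $H^i(W,\mtc{O}_W)=0$ for $i>0$, so $H^1(X,N_{X/W})$ is a quotient of $H^1(W,\mtc{O}_W(3H-2E_W))$. Pushing forward by $\pi_W$, with $R^{>0}\pi_{W*}\mtc{O}_W(-2E_W)=0$ and $\pi_{W*}\mtc{O}_W(-2E_W)=\mtc{I}_P^2$, reduces this to $H^1(\mb{P}^4,\mtc{I}_P^2(3))$, which vanishes because cubics surject onto the $1$-jets at $P$. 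Hence $H^1(X,N_{X/W})=0$.

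For the second, I restrict the tangent bundle via $0\to\Theta_W(-X)\to\Theta_W\to\Theta_W|_X\to 0$, giving
\[
H^2(W,\Theta_W)\longrightarrow H^2(X,\Theta_W|_X)\longrightarrow H^3(W,\Theta_W(-X)).
\]
The term $H^3(W,\Theta_W(-X))$ is dual, by Serre duality on the fourfold $W$, to $H^1(W,\Omega^1_W\otimes\mtc{O}_W(X+K_W))$; as $X+K_W=-2H+E_W$, this is $H^1(W,\Omega^1_W\otimes A^{-1})$ with $A=2H-E_W=H+(H-E_W)$ ample, so Akizuki--Nakano vanishing ($p+q=1+1<4=\dim W$) kills it. The term $H^2(W,\Theta_W)$ vanishes by the rigidity of $W=\Bl_P\mb{P}^4$: using the $\mb{P}^1$-bundle structure $W\to\mb{P}^3$, the relative Euler sequence and the relative tangent sequence reduce it to $H^{\geq 1}(\mb{P}^3,\Theta_{\mb{P}^3})=0$ together with $H^{\geq 2}$ of the relevant line bundles on $\mb{P}^3$ (all zero on $\mb{P}^3$). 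This yields $H^2(X,\Theta_W|_X)=0$, hence $\Ext^2(\Omega^1_X,\mtc{O}_X)=0$, and the lci identification gives $T^2_X=0$, i.e. $X$ deforms without obstruction.

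The main obstacle is the vanishing $H^2(X,\Theta_W|_X)=0$, the only step where the ambient geometry genuinely enters. The Serre-duality/Nakano half is automatic once $A=2H-E_W$ is confirmed ample, but the rigidity input $H^2(W,\Theta_W)=0$ must be extracted from the blow-up geometry; I would carry this out explicitly through the presentation $W\simeq\mb{P}_{\mb{P}^3}(\mtc{O}\oplus\mtc{O}(-1))$ rather than invoke a black-box rigidity statement. I should also verify carefully that $(\star)$ is left-exact — which uses only that $X$ is integral — so that the identification of $\Ext^2(\Omega^1_X,\mtc{O}_X)$ with the obstruction space loses no contribution concentrated at the singular points of $X$.
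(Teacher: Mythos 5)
Your proof is correct, and it shares the paper's skeleton — both realize $X$ via the Sarkisov link as an integral Cartier divisor in the smooth fourfold $\Bl_P\mb{P}^4$ and run the conormal sequence to reduce $\Ext^2(\Omega^1_X,\mtc{O}_X)=0$ to two vanishings — but the two vanishings are handled by genuinely different means. For $H^1(X,N_{X/W})=0$ the paper simply notes that $N_{X/W}\simeq\mtc{O}_X(3L-2F)$ is ample and invokes Kawamata--Viehweg on $X$, whereas you push the computation to the ambient space and reduce to $H^1(\mb{P}^4,\mtc{I}_P^2(3))=0$; both work. The more substantive divergence is in the second vanishing: the paper computes $\Ext^2(\Omega^1_{W}|_X,\mtc{O}_X)$ by splitting $\Omega^1_W|_X$ into $\psi^*\Omega^1_{\mb{P}^4}|_X$ and $\Omega^1_F|_X$ and then chasing Euler sequences, Kawamata--Viehweg and Serre duality \emph{on the singular threefold $X$ and on the quadric $Q=F|_X$} (this is where the normality of $Q$ is actually used, to kill $H^1(Q,2(F-L)|_Q)$), while you restrict $\Theta_W$ from $W$ via $0\to\Theta_W(-X)\to\Theta_W\to\Theta_W|_X\to 0$ and conclude from rigidity of $W$ together with Akizuki--Nakano applied to the ample class $2H-E_W$. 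Your route keeps all cohomological input on the smooth fourfold, which is cleaner and in fact never uses the hypothesis that $Q$ is normal — only that $X$ is an integral member of $|3H-2E_W|$, which holds for any $(2,3)$-complete intersection $C$ by the paper's Lemma \ref{lem:Sarkisov-singular} — so your argument proves a slightly more general statement. You are also more careful than the paper on one point the paper leaves implicit: the left-exactness of the conormal sequence (from integrality of $X$) and the resulting identification $L_X\simeq\Omega^1_X$, which is what licenses reading $\Ext^2(\Omega^1_X,\mtc{O}_X)$ as the full lci obstruction space $T^2_X$. The remaining items you flag as "to be carried out" — ampleness of $2H-E_W$ and $H^2(W,\Theta_W)=0$ for $W\simeq\mb{P}_{\mb{P}^3}(\mtc{O}\oplus\mtc{O}(-1))$ — are routine and check out.
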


\begin{proof}
    By Lemma \ref{lem:Sarkisov-singular}, we have a  birational morphism $\phi:X\rightarrow V\subseteq \mb{P}^4$ that contracts the strict transform of the quadric surface containing $C$ to a double point $P\in V$, where $V$ is a singular cubic threefold. Then it sits into a commutative diagram $$\xymatrix{
 & X \ar[rr]^{\phi} \ar@{^(->}[d]  &  &  V \ar@{^(->}[d]\\
 & \wt{\mb{P}}:=\Bl_p\mb{P}^4  \ar[rr]^{\quad \psi}  &   & \mb{P}^4. \\
 }$$ By taking $R\Hom(\cdot, \mtc{O}_X)$ of the short exact sequence $$0 \longrightarrow (N_{X/\wt{\mb{P}}})^*\longrightarrow\Omega^1_{\wt{\mb{P}}}|_X \longrightarrow  \Omega^1_X \longrightarrow  0,$$ it suffices to show that $H^1(X,N_{X/\wt{\mb{P}}})=0$ and $\Ext^2(\Omega^1_{\wt{\mb{P}}}|_X,\mtc{O}_X)=0$. 
 

 Let $L:=\psi^{*}\mtc{O}_{\mb{P}^4}(1)$, $F\simeq \mb{P}^3$ be the $\psi$-exceptional divisor, and $Q:=F|_X$ be the $\phi$-exceptional locus, which is a normal quadric surface. Then $N_{X/\wt{\mb{P}}}\simeq \mtc{O}_X(3L-2F)$, which is ample, and hence $H^1(X,N_{X/\wt{\mb{P}}})=0$ by Kawamata-Viehweg vanishing.
 
 To show the second vanishing, let us consider the short exact sequence $$0 \longrightarrow \psi^*\Omega^1_{\mb{P}^4}|_X \longrightarrow\Omega^1_{\wt{\mb{P}}}|_X \longrightarrow  \Omega^1_F|_X \longrightarrow  0,$$ where we use the fact that the local generator of $\mtc{I}_{X/\wt{\mb{P}}}$ is not a zero divisor of $\mtc{O}_F$. It suffices to show that $\Ext^2(\psi^*\Omega^1_{\mb{P}^4}|_X,\mtc{O}_X)=0$ and $\Ext^2(\Omega^1_F|_X,\mtc{O}_X)=0$. Consider the pull-back of the Euler sequence on $\mb{P}^4$ $$0 \longrightarrow \mtc{O}_X \longrightarrow (L|_X)^{\oplus 5 } \longrightarrow  \psi^{*}T_{\mb{P}^4}|_X \longrightarrow  0.$$ One has that $H^3(X,\mtc{O}_X)=0$ and $H^2(X,L|_X)=0$ by Kawamata-Viehweg vanishing, and hence $$\Ext^2(\psi^*\Omega^1_{\mb{P}^4}|_X,\mtc{O}_X)\simeq H^2(X,\psi^{*}T_{\mb{P}^4}|_X)=0.$$ Now we consider the Euler sequence on $F$, viewed as torsion sheaves on $\wt{\mb{P}^4}$ $$0 \longrightarrow \Omega^1_F\longrightarrow \mtc{O}_F(-1)^{\oplus 4 } \longrightarrow   \mtc{O}_F \longrightarrow  0.$$ Twisting by $\mtc{O}_X$, one obtains $$0 \longrightarrow \Omega^1_F|_X\longrightarrow \mtc{O}_F(-1)^{\oplus 4 }|_X \longrightarrow   \mtc{O}_F|_X \longrightarrow  0.$$ We now need to prove that $\Ext^2(\mtc{O}_F(-1)|_X,\mtc{O}_X)=0$ and $\Ext^3(\mtc{O}_F|_X,\mtc{O}_X)=0$. Since $\mtc{O}_F|_X=\mtc{O}_Q$, then by Serre duality one has $$\Ext^3(\mtc{O}_F|_X,\mtc{O}_X)\simeq H^0(X,\omega_X\otimes \mtc{O}_F|_X)^{*} \simeq H^0(Q,\omega_X|_Q)^{*}=0$$ as $\omega_X$ is anti-ample. Similarly, we have that $$\Ext^2(\mtc{O}_F(-1)|_X,\mtc{O}_X)\simeq H^1(X,\omega_X\otimes \mtc{O}_F(-1)|_X)^{*} \simeq H^1(Q,2(F-L)|_Q)^{*}=0$$ since $(F-L)|_Q$ is an anti-ample line bundle.
 
\end{proof}

\begin{corollary}\label{cor:stack-smooth}
    The K-moduli stack $\mts{M}^K_{\textup{№2.15}}$ is smooth, and its good moduli space $\ove{M}^K_{\textup{№2.15}}$ is normal. Moreover,  $\mts{M}^K_{\textup{№2.15}}$ is a smooth connected component of $\mts{M}^K_{3, 22}$.
\end{corollary}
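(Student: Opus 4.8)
The plan is to prove smoothness by checking that the deformation theory is unobstructed at every $\mathbb{C}$-point of $\mts{M}^K_{\textup{№2.15}}$, and then to promote ``smooth irreducible component'' to ``smooth connected component'' by a local regularity argument. First I would note that every $\mathbb{C}$-point of $\mts{M}^K_{\textup{№2.15}}$ parametrizes a K-semistable $\bQ$-Fano variety $X$ that is a $\bQ$-Gorenstein smoothable limit of smooth members of family №2.15: since $\mts{M}^K_{\textup{№2.15}}$ is irreducible with general point a smooth №2.15, restricting the universal family to a general curve through $[X]$ inside the component produces such a smoothing, exactly as in the setup of Subsection \ref{limitkss}. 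Thus Theorem \ref{23complete} applies and gives $X \cong \Bl_C\mb{P}^3$ with $X$ Gorenstein canonical, while Lemma \ref{lem:quadric-normal} shows the quadric containing $C$ is normal. Hence the hypotheses of Proposition \ref{deformation} hold at every point, so $\Ext^2(\Omega^1_X,\mtc{O}_X)=0$ and there are no obstructions to deforming $X$.

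Next I would translate this into smoothness of the ambient stack. Because $X$ is Gorenstein (so that $K_X$ is Cartier and Koll\'ar's condition is automatic), the $\bQ$-Gorenstein deformation functor that governs $\mts{M}^K_{3,22}$ at $[X]$ coincides with the ordinary deformation functor of $X$; its obstruction space is $\Ext^2(\Omega^1_X,\mtc{O}_X)$, using that $X$ is reduced and a local complete intersection (its singularities are the hypersurface singularities $\frac{1}{2}(1,1,0)$ and $\frac{1}{3}(1,2,0)$ identified in the proof of Theorem \ref{nonvanishing}). The vanishing from the previous step therefore shows that $\mts{M}^K_{3,22}$ is smooth at $[X]$. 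As $[X]$ was arbitrary, $\mts{M}^K_{\textup{№2.15}}$ lies in the smooth locus of $\mts{M}^K_{3,22}$, and since it is reduced and an irreducible component it is itself smooth.

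To conclude that $\mts{M}^K_{\textup{№2.15}}$ is a connected component, I would argue locally. At each point $[X]$ the stack $\mts{M}^K_{3,22}$ is smooth, hence regular, so its local ring is a domain with a unique minimal prime; therefore $\mts{M}^K_{\textup{№2.15}}$ is the only irreducible component through $[X]$ and $\mts{M}^K_{3,22}$ is reduced there. This exhibits $\mts{M}^K_{\textup{№2.15}}$ as open. Being also closed (an irreducible component) and connected (being irreducible), it is a connected component, since any open, closed, connected subset equals the connected component of each of its points. Finally, the good moduli space $\ove{M}^K_{\textup{№2.15}}$ is normal by the standard fact that the good moduli space of a normal (here smooth) Artin stack is normal.

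The step I expect to be the main obstacle is the second one: the careful identification of the stack's deformation–obstruction theory at $[X]$ with the abstract deformation theory of $X$ measured by $\Ext^\bullet(\Omega^1_X,\mtc{O}_X)$. This identification rests on two inputs supplied earlier, namely that $X$ is Gorenstein (Theorem \ref{23complete}), so $\bQ$-Gorenstein deformations are genuine deformations, and that $X$ is lci (the singularity analysis in Theorem \ref{nonvanishing}), so the obstruction module is $\Ext^2(\Omega^1_X,\mtc{O}_X)$ rather than an $\Ext$ of the full cotangent complex; once these are in place, Proposition \ref{deformation} closes the argument.
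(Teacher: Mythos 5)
Your proposal is correct and follows essentially the same route as the paper: Theorem \ref{23complete} and Lemma \ref{lem:quadric-normal} put every point in the hypotheses of Proposition \ref{deformation}, unobstructedness gives smoothness of $\mts{M}^K_{3,22}$ along the component, and smoothness upgrades ``irreducible component'' to ``connected component'' and gives normality of the good moduli space via Alper. The only cosmetic imprecision is your attribution of the lci property to the singularity analysis in Theorem \ref{nonvanishing} (which only constrains hypothetical non-Gorenstein points); the cleaner source is that $\Bl_C\mb{P}^3$ is locally a hypersurface in a smooth fourfold, as in Lemmas \ref{lem:Sarkisov-singular} and \ref{destabilization}.
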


\begin{proof}
For any point $[X]\in \mts{M}^K_{\textup{№2.15}}$, Theorem \ref{23complete} implies that $X$ is Gorenstein canonical and isomorphic to the blow-up of $\bP^3$ along a $(2,3)$-complete intersection curve $C$. By Lemma \ref{lem:quadric-normal}, we know that $C$ is contained in a normal quadric surface as $X$ is K-semistable. Thus the ($\bQ$-Gorenstein) deformation of $X$ is unobstructed by Proposition \ref{deformation}. Therefore, the K-moduli stack $\mts{M}^K_{3,22}$ is smooth at $[X]$, and hence $\mts{M}^K_{\textup{№2.15}}$ is a smooth connected component of $\mts{M}^K_{3,22}$. The normality of $\ove{M}^K_{\textup{№2.15}}$ follows from smoothness of $\mts{M}^K_{\textup{№2.15}}$ by \cite[Theorem 4.16(viii)]{Alp13}. 

\end{proof}

\subsection{VGIT and Hassett--Keel program} Based on the computation in Section \ref{computationcm}, one has the following result.

\begin{lemma}\label{representable}
    Let $C$ be a $(2,3)$-complete intersection curve in $\mb{P}^3$ contained in a normal quadric surface such that $X:=\Bl_C\mb{P}^3$ is a $\mb{Q}$-Fano variety. Then one has a natural isomorphism $\Aut(\mb{P}^3,C)\simeq \Aut(X)$.
\end{lemma}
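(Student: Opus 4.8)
The plan is to produce a natural injective homomorphism $\Aut(\mb{P}^3,C)\to \Aut(X)$ and then establish surjectivity from the Mori-theoretic structure of $X$. For the first map, any $\sigma\in\Aut(\mb{P}^3,C)$ preserves the ideal sheaf $\mtc{I}_C$ and hence lifts canonically to an automorphism of $X=\Bl_C\mb{P}^3$ by the universal property of blow-ups, compatibly with $\pi:X\to\mb{P}^3$; this assignment is a group homomorphism, and it is injective because $\sigma$ is recovered from its lift simply by passing to the base $\mb{P}^3$, so a lift equal to $\id_X$ forces $\sigma=\id_{\mb{P}^3}$.

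The content of the lemma is surjectivity. First I would record that $\rho(X)=2$: by Lemma \ref{lem:Sarkisov-singular} (together with Theorem \ref{23complete}) the variety $X$ is simultaneously the blow-up $\Bl_C\mb{P}^3$ and the blow-up $\Bl_P V$ of an integral singular cubic threefold $V\subseteq\mb{P}^4$ at a double point, and each realization exhibits $X$ as a divisorial modification, with one exceptional prime divisor, of a variety of Picard number one. Consequently $X$ carries exactly the two divisorial $K_X$-negative Mori contractions $\pi:X\to\mb{P}^3$, which contracts the exceptional divisor $E$ onto the curve $C$, and $\phi:X\to V$, which contracts the strict transform $\wt{Q}$ of the quadric onto the point $P$. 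Since $X$ is $\mb{Q}$-Fano, $\overline{\NE}(X)$ is a two-dimensional cone whose two extremal rays $R_\pi,R_\phi$ are spanned by curves contracted by $\pi$ and $\phi$; as every $g\in\Aut(X)$ satisfies $g^*K_X=K_X$, the induced action of $g_*$ on $\overline{\NE}(X)$ preserves $-K_X$ and therefore permutes $\{R_\pi,R_\phi\}$.

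The crux, and the step I expect to be the main obstacle, is to rule out that some $g$ swaps the two rays. I would argue this is impossible because the two contractions are of genuinely different type: $\pi$ contracts its exceptional divisor $E$ onto the one-dimensional locus $C$, whereas $\phi$ contracts $\wt{Q}$ onto the zero-dimensional locus $\{P\}$. Indeed, if $g_*R_\pi=R_\phi$, then $g$ would carry the $R_\pi$-covered locus $E$ to the $R_\phi$-covered locus $\wt{Q}$ while intertwining the two contractions, which would force $\dim C=\dim\{P\}$, a contradiction. Hence $g_*R_\pi=R_\pi$, equivalently $g(E)=E$ (and $g(\wt{Q})=\wt{Q}$). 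It is here that the hypotheses enter: the normality of the quadric and the $\mb{Q}$-Fano condition are what guarantee, via Lemma \ref{lem:Sarkisov-singular}, that the second contraction genuinely takes a divisor to a \emph{point}, thereby distinguishing it from $\pi$.

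Finally, once $g(E)=E$, I would conclude by descent. The composite $\pi\circ g$ contracts exactly the curves in $g_*^{-1}R_\pi=R_\pi$, so it is the contraction of the same ray as $\pi$; since $\mb{P}^3$ is the common ample model of this ray, there is a unique $\bar g\in\Aut(\mb{P}^3)$ with $\pi\circ g=\bar g\circ\pi$. Then $\bar g(C)=\bar g(\pi(E))=\pi(g(E))=\pi(E)=C$, so $\bar g\in\Aut(\mb{P}^3,C)$, and by construction $\bar g$ maps to $g$ under the natural homomorphism. This yields surjectivity and hence the isomorphism $\Aut(\mb{P}^3,C)\simeq\Aut(X)$.
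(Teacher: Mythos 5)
Your proof is correct in outline but takes a genuinely different route from the paper. The paper's argument is more geometric and ad hoc: it studies the Fano scheme $F_1(X)$ of $(-K_X)$-lines, observes from $-K_X\sim H+L$ that every such line is either a ruling of $E\to C$ or a line in $\wt{Q}$, and distinguishes the two divisors by the incidence pattern of these lines (rulings of $E$ are pairwise disjoint, whereas $\wt{Q}$ contains intersecting lines); this forces $\sigma(\wt{Q})=\wt{Q}$, hence $\sigma^*(2H)\sim 2H$, and $\sigma$ descends to the ample model of $2H$. Your argument instead reads the same conclusion off the Mori cone: $\rho(X)=2$, the two extremal rays are $R_\pi=H^\perp\cap\overline{\NE}(X)$ and $R_\phi=L^\perp\cap\overline{\NE}(X)$, any automorphism permutes them, and the swap is excluded because one contraction is divisor-to-curve and the other divisor-to-point (equivalently, it would force $\mb{P}^3\simeq V$). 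This is arguably more robust --- it avoids any analysis of line configurations and works verbatim even when $\wt{Q}$ is a non-normal quadric --- while the paper's argument avoids invoking the cone/contraction machinery on a possibly non-$\bQ$-factorial $X$.

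One step you should shore up: the justification of $\rho(X)=2$. You say each realization exhibits $X$ as a divisorial modification ``with one exceptional prime divisor,'' but when $C$ is reducible (e.g.\ the union of three conics relevant to the polystable boundary) the exceptional divisor $E$ of $\pi$ is \emph{not} prime, so this phrasing does not by itself bound the Picard rank. The conclusion is nevertheless true: since $C$ is an l.c.i.\ curve, $E=\mb{P}(\mtc{I}_C/\mtc{I}_C^2)\to C$ is a $\mb{P}^1$-bundle over a \emph{connected} curve, so all $\pi$-contracted curves are algebraically, hence numerically, equivalent and $\rho(X/\mb{P}^3)=1$; alternatively, Lemma \ref{lem:Sarkisov-singular} embeds $X$ as an ample divisor in the smooth fourfold $\mb{P}_{\mb{P}^3}(\mtc{O}(-2)\oplus\mtc{O}(-3))$, so $\Pic(X)\simeq\bZ^2$ by Grothendieck--Lefschetz. (Note also that for reducible $C$ the variety $X$ can fail to be $\bQ$-factorial, so you should be using the version of the cone theorem for klt, not necessarily $\bQ$-factorial, varieties; with $\rho(X)=2$ in hand, everything you need --- two extremal rays, identified as $H^\perp$ and $L^\perp$, and rigidity for the two contractions --- is elementary and does not actually require $\bQ$-factoriality.) With that point repaired, your descent step and the injectivity of the lift $\Aut(\mb{P}^3,C)\hookrightarrow\Aut(X)$ are exactly as in the paper, and the proof goes through.
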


\begin{proof}
    By the universal property of blow-ups, we can lift an automorphism of $\mb{P}^3$ preserving $C$ to an automorphism of $X$, and hence we have a natural injection $\Aut(\mb{P}^3,C)\hookrightarrow \Aut(X)$. Conversely, let $E$ be the exceptional divisor of the blow-up $\pi:X\rightarrow \mb{P}^3$, and $\wt{Q}$ be the strict transform of the unique normal quadric surface $Q$ containing $C$. Consider the Fano scheme $F_1(X)$ of $(-K_X)$-lines, i.e.  rational curves $l\subseteq X$ satisfying $(-K_X.l)=1$. Let $H:=\pi^{*}\mtc{O}_{\mb{P}^3}(1)$ and $L:=3H-E$ be two Cartier divisors, which are both big and semiample. From Lemma \ref{lem:Sarkisov-singular} we know that the linear system $|L|$ is base-point-free and induces a birational morphism $\psi: X \to V\subset \bP^4$ where $V$ is an integral singular cubic threefold, $\psi$ contracts $\wt{Q}$ to a double point of $V$ and is isomorphic elsewhere. Since we have $-K_X\sim H+ L$,  every line $l$ is either contracted by $\pi$ or contracted by $\psi$. In the former case, $l$ is  a ruling of $\pi|_E:E \rightarrow C$. In the latter case,   $l$ is contained in $\wt{Q}$. Thus the lines in $F_1(X)$ swept out the divisor $E+\wt{Q}$. Moreover, every two distinct rulings in $E$ do not intersect, while there are lines in $\wt{Q}$ intersecting each other. Therefore, any automorphism $\sigma$ of $X$ has to send $\wt{Q}$ to $\wt{Q}$. Since $2H \sim -K_X - \wt{Q}$, we know that $\sigma^*(2H) \sim 2H$.  It follows that $\sigma$ descends to an automorphism $\ove{\sigma}$ of $\mb{P}^3$ which is the ample model of $2H$. Thus $\ove{\sigma}$ preserves $C$ as it is the exceptional locus of $\pi$.
    
\end{proof}

\begin{prop}\label{stackiso}
    There is an isomorphism of stacks $$\mts{M}^K_{\textup{№2.15}}\ \simeq\ [U^K/\PGL(4)],$$ where $U^K$ is the $\PGL(4)$-equivariant open subset of $U$ parametrizing $(2,3)$-complete intersection curves $C$ such that $\Bl_C\mb{P}^3$ is a K-semistable $\bQ$-Fano variety.
\end{prop}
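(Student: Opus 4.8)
The plan is to use the universal family over $U^K$ to build a morphism $\Phi\colon [U^K/\PGL(4)]\to \mts{M}^K_{\textup{№2.15}}$ and then verify it is representable, bijective on $\bC$-points, and an isomorphism on automorphism groups; the smoothness established in Corollary \ref{cor:stack-smooth} will let me upgrade these to an isomorphism of stacks.

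First I would construct $\Phi$. By Proposition \ref{prop:CMslope} the restriction $f_{U^K}\colon \mts{X}_{U^K}\to U^K$ is a flat projective family whose geometric fibers are the K-semistable $\bQ$-Fano threefolds $\Bl_C\mb{P}^3$, with $-K_{\mts{X}/U}$ an ample Cartier divisor commuting with base change; in particular the family satisfies Koll\'ar's condition. By the universal property of the K-moduli stack (Theorem \ref{kmoduli}) this induces a morphism $U^K\to \mts{M}^K_{3,22}$ landing in the component $\mts{M}^K_{\textup{№2.15}}$, since the general fiber is a smooth K-stable member of family \textnumero 2.15. The blow-up construction is $\PGL(4)$-equivariant for the action on $\mb{P}^3$, and fibers over points of a common orbit are isomorphic as $\bQ$-Fano varieties; hence the morphism descends to $\Phi\colon [U^K/\PGL(4)]\to \mts{M}^K_{\textup{№2.15}}$.

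Next I would check the three key properties of $\Phi$. For representability it suffices that $\Phi$ be injective on automorphism groups. For $[C]\in U^K$ the quadric through $C$ is normal by Lemma \ref{lem:quadric-normal}, so Lemma \ref{representable} gives $\Aut(\mb{P}^3,C)\simeq \Aut(\Bl_C\mb{P}^3)$; since the stabilizer of $[C]$ in $\PGL(4)$ is exactly $\Aut(\mb{P}^3,C)$, the map on automorphism groups is an isomorphism, so $\Phi$ is representable. For surjectivity on $\bC$-points I would invoke Theorem \ref{23complete}: every object of $\mts{M}^K_{\textup{№2.15}}$ is $\Bl_C\mb{P}^3$ for some $(2,3)$-complete intersection $C$, and such a $C$ defines a point of $U^K$. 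For injectivity I would use the reconstruction in the proof of Lemma \ref{representable}: an isomorphism $\Bl_C\mb{P}^3\simeq \Bl_{C'}\mb{P}^3$ carries the exceptional divisor and the strict transform of the quadric to their counterparts (identified intrinsically through the Fano scheme of $(-K)$-lines), hence descends to a projective equivalence $(\mb{P}^3,C)\simeq(\mb{P}^3,C')$; thus $[C]$ and $[C']$ lie in one $\PGL(4)$-orbit.

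Finally I would conclude that $\Phi$ is an isomorphism. Both stacks are smooth (Corollary \ref{cor:stack-smooth}; $U^K$ is open in the smooth $\mb{P}\mtc{E}$), and $\Phi$ is representable, surjective, and injective on $\bC$-points with trivial relative inertia. Surjectivity forces $\dim \mts{M}^K_{\textup{№2.15}}\le \dim[U^K/\PGL(4)]=\dim U-15=9$, while injectivity together with the isomorphism on automorphism groups forces the geometric fibers of $\Phi$ to be zero-dimensional, giving the reverse inequality; hence the two smooth irreducible stacks have equal dimension and $\Phi$ is quasi-finite, dominant, and generically \'etale. As $\Phi$ is bijective it is birational of degree one, and Zariski's main theorem applied to the (representable, separated, quasi-finite) morphism over the normal target realizes $\Phi$ as an open immersion, which is surjective and therefore an isomorphism. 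The step I expect to require the most care is this concluding passage --- verifying quasi-finiteness and separatedness at the level of stacks so that Zariski's main theorem applies --- together with the intrinsic reconstruction underlying injectivity. An alternative, more hands-on route would be to compare deformation spaces directly, identifying first-order deformations of $\Bl_C\mb{P}^3$ (unobstructed by Proposition \ref{deformation}) with deformations of the complete intersection $C$ modulo the $\PGL(4)$-action, thereby showing $\Phi$ is \'etale.
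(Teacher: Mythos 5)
Your construction of the forward morphism $\Phi$ is exactly the paper's first step, and the geometric inputs you lean on (Theorem \ref{23complete} for surjectivity, Lemma \ref{lem:quadric-normal} and Lemma \ref{representable} for injectivity and the identification of stabilizers, Corollary \ref{cor:stack-smooth} for smoothness) are the same ones the paper uses. Where you genuinely diverge is in how the isomorphism is concluded. The paper does not argue via ``representable $+$ bijective $+$ ZMT''; instead it \emph{constructs the inverse explicitly}: writing $\mts{M}^K_{\textup{№2.15}}\simeq [T/\PGL(N+1)]$ for a smooth locally closed $T$ in a Hilbert scheme with universal family $\mts{Z}\to T$, it recovers the relative quadric divisor $\mts{Q}\subseteq\mts{Z}$ fiberwise (via the Fano scheme of $(-K)$-lines, i.e.\ the proof of Lemma \ref{representable}), takes the relative ample model of $\mts{R}=-\omega_{\mts{Z}/T}-\mts{Q}$ to produce a $\mb{P}^3$-bundle $\mts{Y}\to T$, and then descends a $\PGL(4)$-torsor from an fppf trivialization (the cocycle obstruction being only a sign) to obtain $\varphi^{-1}:\mts{M}^K_{\textup{№2.15}}\to [U^K/\PGL(4)]$. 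The payoff of the paper's route is that one never has to run Zariski's main theorem at the level of Artin stacks; the payoff of yours is that you avoid the torsor/descent bookkeeping entirely.

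The one place your argument needs shoring up is the concluding passage, as you suspected. Note that $\mts{M}^K_{\textup{№2.15}}$ is an Artin stack, not Deligne--Mumford (strictly K-semistable fibers have positive-dimensional automorphism groups), so ``ZMT for DM stacks'' is not directly available; you must base change the representable morphism $\Phi$ along a smooth atlas $T\to\mts{M}^K_{\textup{№2.15}}$ and argue with the algebraic space $W:=[U^K/\PGL(4)]\times_{\mts{M}}T\to T$. Bijectivity plus quasi-finiteness plus separatedness plus smoothness of source and target is \emph{not} by itself enough (compare $\mb{A}^1\sqcup\{0\}\to\mb{A}^1$, which is bijective, quasi-finite, separated, with smooth source and target, and is not an isomorphism): you additionally need $W$ to be irreducible, so that ``birational of degree one'' plus normality of $T$ forces the finite part of the ZMT factorization to be an isomorphism. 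Here irreducibility does hold --- $W\to [U^K/\PGL(4)]$ is a $\PGL(N+1)$-torsor over an irreducible smooth stack --- but this, together with the separatedness of $\Phi$ (a valuative-criterion statement about uniqueness of the pair $(\mb{P}^3,C)$ in families of blow-ups), should be stated and checked rather than absorbed into ``bijective implies open immersion.''
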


\begin{proof}
    Since $U^K$ parametrizes K-semistable $\mb{Q}$-Fano varieties, then by universality of K-moduli stacks, there exists a morphism $$\varphi:\ [U^K/\PGL(4)]\ \longrightarrow\ \mts{M}^K_{\textup{№2.15}}.$$ To show $\varphi$ is an isomorphism, we will construct the its inverse morphism. By the construction of K-moduli stacks, we know that $$\mts{M}^K_{\textup{№2.15}}\ \simeq\ [T/\PGL(N+1)],$$ where $T\subseteq \Hilb_{\chi}(\mb{P}^{N})$ is a locally closed subscheme of the Hilbert scheme with respect to the Hilbert polynomial $\chi(m):=\chi(X,-mrK_X)$ for $r\in \bZ_{>0}$ sufficiently divisible. By Corollary \ref{cor:stack-smooth}, we know that $T$ is irreducible and smooth. Let $\pi:\mts{Z}\rightarrow T$ be the universal family. By Lemma \ref{lem:quadric-normal} and the proof of Lemma \ref{representable}, there is a unique divisor $\mts{Q}\subseteq \mts{Z}$ relative Cartier over $T$ such that each fiber $\mts{Q}_t$ is the strict transform of the normal quadric surface containing the curve we blow-up along. 
    
    Consider the $\pi$-big and $\pi$-nef line bundle $\mts{R}:=-\omega_{\mts{Z}/T}-\mts{Q}$ on $\mts{Z}$, whose restriction on each fiber $\mts{Z}_t$ is isomorphic to the pull-back of the line bundle $\mtc{O}_{\mb{P}^3}(2)$. Taking the $\pi$-ample model with respect to $\mts{R}$, one obtains a family $\mts{Y}\rightarrow T$ such that each fiber $\mts{Y}_t$ is isomorphic to $\mb{P}^3$. 
    
    Take a fppf cover $\sqcup_i T_i\rightarrow T$ of $T$ such that the pull-back family $p_i:\mts{Y}_i\rightarrow T_i$ is a trivial $\mb{P}^3$-bundle. Let $\mtc{H}=\sqcup_i \mtc{H_i}$ be the line bundle on $\sqcup_i \mts{Y}_i$ which is isomorphic to the pull-back of $\mtc{O}_{\mb{P}^3}(1)$ over each $T_i$. By Kawamata-Viehweg vanishing, we know that $(p_i)_*\mtc{H}_i$ is a rank $4$ vector bundle over $T_i$. Let $\mtc{P}_i/T_i$ be the $\PGL(4)$-torsor induced by projectivized basis of $(p_i)_*\mtc{H}_i$. As the cocycle condition of $\{(p_i)_*\mtc{H}_i/T_i\}_i$ is off by $\pm1$, then we deduce that $\{\mtc{P}_i/T_i\}_i$ is a fppf descent datum descending to a $\PGL(4)$-torsor $\mtc{P}/T$, which is $\PGL(N+1)$-equivariant. This induces a morphism $$\varphi^{-1}:\mts{M}^K_{\textup{№2.15}}\longrightarrow [U^K/\PGL(4)].$$ Since both stacks are smooth, then $\varphi^{-1}$ is indeed the inverse of $\varphi$.
    
\end{proof}

The following is the first main theorem in this section.

\begin{theorem}\label{mainiso}
    Let $C$ be a $(2,3)$-complete intersection curve in $\mb{P}^3$, and $X$ be the blow-up of $\mb{P}^3$ along $C$. Then $X$ is K-(semi/poly)stable if and only if $[C]$ is a GIT-(semi/poly)stable point in $\mb{P}\mtc{E}$ with respect to the linearized ample $\bQ$-line bundle $\eta+t_0\cdot\xi$, where $t_0=\frac{22}{51}$. Moreover, there is a natural isomorphism $$\mts{M}^K_{\textup{№2.15}}\ \simeq\  \mts{M}^{\GIT}(t_0) = \left[\mb{P}\mtc{E}^{\sst}(t_0)/\PGL(4) \right],$$ which descends to an isomorphism between their good moduli spaces $$\ove{M}^K_{\textup{№2.15}}\ \simeq\  \ove{M}^{\GIT}(t_0) = \mb{P}\mtc{E}\sslash_{t_0}\PGL(4).$$
\end{theorem}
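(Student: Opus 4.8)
The plan is to reduce the entire statement to one equality of $\PGL(4)$-invariant open subsets of $\mb{P}\mtc{E}$, namely $U^K=\mb{P}\mtc{E}^{\sst}(t_0)$ for the polarization $\eta+t_0\xi$ with $t_0=\frac{22}{51}$. Granting this equality, the isomorphism of quotient stacks $\mts{M}^K_{\textup{№2.15}}\simeq[U^K/\PGL(4)]=\mts{M}^{\GIT}(t_0)$ follows from Proposition \ref{stackiso} and Definition \ref{gitdefn}, it descends to the good moduli spaces, and the claimed equivalence of (semi/poly)stability is read off directly. The first input is that the two natural polarizations agree: by Proposition \ref{prop:CMslope} the CM $\bQ$-line bundle on $U$ equals $22\xi_U+51\eta_U$, which is $\bQ$-proportional to the restriction of the ample class $\eta+t_0\xi$ defining $\mts{M}^{\GIT}(t_0)$; since the complement of $U$ in $\mb{P}\mtc{E}$ has codimension at least two, this identification is unambiguous on Picard groups.

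For the inclusion $U^K\subseteq\mb{P}\mtc{E}^{\sst}(t_0)$ I would invoke Theorem \ref{KimpliesGIT}. Condition (iii) there is exactly the ampleness just recorded, while conditions (i) and (ii) follow from Lemma \ref{representable}: the $\PGL(4)$-stabilizer of $[C]$ is $\Aut(\mb{P}^3,C)\simeq\Aut(X)$, so finiteness of the automorphism group is equivalent to finiteness of the stabilizer, and two blow-ups are isomorphic precisely when the curves are projectively equivalent, i.e.\ lie in one orbit. Crucially, Lemma \ref{lem:quadric-normal} guarantees that $C$ lies on a normal quadric as soon as $X$ is K-semistable, which is the hypothesis under which Lemma \ref{representable} is available. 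This yields the open immersion of stacks $\mts{M}^K_{\textup{№2.15}}\hookrightarrow\mts{M}^{\GIT}(t_0)$, hence a morphism of good moduli spaces $\Phi\colon\ove{M}^K_{\textup{№2.15}}\to\ove{M}^{\GIT}(t_0)$ between normal projective varieties (normality of the source is Corollary \ref{cor:stack-smooth}; the target is normal since $\mb{P}\mtc{E}$ is smooth). I would then show $\Phi$ is an isomorphism by the usual finishing step of the moduli continuity method: it is birational because a general $[C]$ is a smooth complete intersection, so $X$ is K-stable by Theorem \ref{thm:general-Kstable} with finite automorphisms and corresponds to a properly GIT-stable point; and it is injective on closed points because non-isomorphic K-polystable Fanos give projectively inequivalent curves, hence distinct GIT-polystable orbits. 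A proper injective morphism is finite, and a finite birational morphism onto a normal variety is an isomorphism.

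The hard part is the reverse inclusion $\mb{P}\mtc{E}^{\sst}(t_0)\subseteq U^K$, and this is where our situation genuinely differs from earlier applications of the method: because $U$ is only a big open in $\mb{P}\mtc{E}$, a destabilizing one-parameter subgroup may carry a complete intersection to a limit in the boundary $\mb{P}\mtc{E}\setminus U$, which parametrizes non-complete-intersection subschemes and whose blow-up is no longer a Fano degeneration, so the Hilbert--Mumford weight cannot be matched to a DF/CM invariant on the nose. To deal with this I would stratify $\mb{P}\mtc{E}$ according to the geometry of the pair consisting of the quadric $Q$ and the residual cubic section, and analyze GIT stability for $\eta+t_0\xi$ stratum by stratum via the Hilbert--Mumford criterion, the point being to show that every boundary stratum is GIT-unstable and that each GIT-semistable complete intersection produces a K-semistable blow-up. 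Combined with the isomorphism $\Phi$ --- which already identifies the closed (polystable) points on both sides through Lemma \ref{representable} --- and with the separatedness and properness of the K-moduli space, this forces $\mb{P}\mtc{E}^{\sst}(t_0)=U^K$. I expect this boundary analysis, rather than the formal descent to good moduli spaces, to be the principal obstacle.
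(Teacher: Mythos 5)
There is a genuine gap, and it sits in the direction you treat as routine. Theorem \ref{KimpliesGIT} requires the family of $\bQ$-Fano varieties to live over a \emph{normal projective} base, and neither available base works: $U$ is only a big open subset of $\mb{P}\mtc{E}$, and over the boundary $\mb{P}\mtc{E}\setminus U$ the fibers of the blow-up family are not Fano (nor is the family flat), so the GIT weight of a one-parameter subgroup whose limit escapes $U$ cannot be matched with a CM weight or Donaldson--Futaki invariant. Consequently ``K-semistable $\Rightarrow$ $t_0$-GIT-semistable'' does not follow by simply invoking Theorem \ref{KimpliesGIT}; this is exactly the obstacle the paper flags, and you have attached it to the wrong inclusion. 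The paper's resolution is a VGIT wall-crossing argument: for $C$ on a smooth quadric $Q$ one applies Theorem \ref{KimpliesGIT} to the restricted family over the \emph{projective} sub-base $\mb{P}H^0(Q,\mtc{O}_Q(3))$ (Lemma \ref{destabilization}), and for $Q$ a cone one uses an explicit destabilizing $1$-PS (Lemma \ref{unstable2}); either way $C$ is $t$-GIT-semistable for some $t<t_0$, and at the largest wall $T<t_0$ where semistability persists the destabilizing $1$-PS has its limit inside $U$ (because $T$-semistable points lie in $U$ by Lemma \ref{unstablelocus}), so the CM/DF comparison applies to that $1$-PS and shows $X$ is K-unstable.

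For the reverse inclusion your stratification plan does not actually produce K-semistability: a Hilbert--Mumford analysis of boundary strata only recovers Lemma \ref{unstablelocus}, and ``each GIT-semistable complete intersection produces a K-semistable blow-up'' is the entire content of the claim rather than a consequence of that analysis. The paper's argument is: degenerate $C$ from smooth complete intersections, use properness of the K-moduli space to extract a K-polystable limit, invoke Theorem \ref{23complete} to see that this limit is again $\Bl_{C_0'}\mb{P}^3$ for a complete intersection $C_0'$, apply the already-proved forward direction to conclude $[C_0']$ is $t_0$-GIT-polystable, and finish with separatedness of GIT quotients plus openness of K-semistability; the essential ingredient Theorem \ref{23complete} is absent from your sketch. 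Finally, your finiteness-plus-birationality argument for $\Phi$ quietly assumes that K-polystable points map to distinct \emph{closed} (GIT-polystable) orbits, which is itself one of the statements to be proved --- the paper establishes it by showing the Futaki invariant vanishes along the $1$-PS degenerating $[C]$ to its polystable representative and appealing to \cite[Lemma 3.1]{LWX21}.
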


To prove Theorem \ref{mainiso}, we want to apply the same argument in \cite{PT06,PT09} using CM line bundles. To be more concrete, to show that a GIT-unstable object is also K-unstable, we can construct a test configuration induced by a destabilizing 1-PS of the object, and use the proportionality of the CM line bundle and the polarization of the GIT. However, since the space $U$ parametrizing $(2,3)$-complete intersections is not compact, the limit of a destabilizing 1-PS could be outside $U$, i.e. not a complete intersection anymore. Therefore, we need a more intricate argument involving wall-crossing structure of VGIT-moduli spaces. Similar arguments appeared in \cite{ADL21}.

\begin{lemma}\label{destabilization}
    Let $C\in|\mtc{O}_{\mb{P}^1\times\mb{P}^1}(3,3)|$ be a curve. We embed $C$ into $\bP^3$ as a $(2,3)$-complete intersection curve contained in a smooth quadric surface $Q$. If $\Bl_C\mb{P}^3$ is a K-semistable $\bQ$-Fano variety, then $[C]$ is a GIT-semistable point in $|\mtc{O}_{\mb{P}^1\times\mb{P}^1}(3,3)|$ under the natural $\Aut(\mb{P}^1\times\mb{P}^1)$-action.
\end{lemma}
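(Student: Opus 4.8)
The plan is to verify the $\Aut(\mb{P}^1\times\mb{P}^1)$-semistability of $[C]$ directly through the Hilbert--Mumford numerical criterion, exploiting the fact that every one-parameter subgroup we must test preserves the smooth quadric $Q$ and therefore never leaves the fiber of $\mb{P}\mtc{E}\to\mb{P}^9$ over $[Q]$. First I would identify $|\mtc{O}_{\mb{P}^1\times\mb{P}^1}(3,3)|$ with the fiber $\mb{P}\mtc{E}_{[Q]}\simeq\mb{P}^{15}$ over the point $[Q]\in\mb{P}^9$ corresponding to the smooth quadric, so that the stabilizer of $[Q]$ in $\PGL(4)$ is exactly $\Aut(Q)=\Aut(\mb{P}^1\times\mb{P}^1)$, a reductive group whose identity component is $\SO(4)$. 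Since $\eta=\pi^{*}\mtc{O}_{\mb{P}^9}(1)$ restricts trivially to this fiber while $\xi$ restricts to $\mtc{O}_{\mb{P}^{15}}(1)$, the polarization $\eta+t\xi$ restricts to $\mtc{O}_{\mb{P}^{15}}(t)$; hence $\Aut(Q)$-semistability of $[C]$ in the fiber is independent of $t>0$ and coincides with $\Aut(\mb{P}^1\times\mb{P}^1)$-semistability of $[C]\in|\mtc{O}_{\mb{P}^1\times\mb{P}^1}(3,3)|$, the finite factor being invisible to the numerical criterion since all one-parameter subgroups lie in $\SO(4)$.

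The key computation I would carry out is that every one-parameter subgroup $\lambda$ of $\Aut(Q)$ acts with weight zero on the quadratic form defining $Q$. After conjugating $\lambda$ into the maximal torus and writing $Q=\{x_0x_3-x_1x_2=0\}$ in Segre coordinates, $\lambda$ has weights $(a_0,a_1,a_2,a_3)$ with $a_0+a_3=a_1+a_2=0$, so it fixes the form. Consequently $\mu^{\eta}([C],\lambda)=0$, and therefore the Hilbert--Mumford weight collapses to
$$\mu^{\eta+t\xi}([C],\lambda)\ =\ t\,\mu^{\xi}([C],\lambda)\qquad\text{for all }t>0,$$
so its sign does not depend on the chamber.

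Next I would connect this to K-stability. Because $\lambda$ preserves $Q$, the limit $C_0=\lim_{s\to0}\lambda(s)\cdot C$ is again a $(3,3)$-curve on the smooth quadric $Q$, hence a genuine $(2,3)$-complete intersection; in particular $[C_0]$ remains inside $U$, and $\lambda$ induces an honest test configuration of $X=\Bl_C\mb{P}^3$ inside the universal family $f_U:\mts{X}_U\to U$, with central fiber $\Bl_{C_0}\mb{P}^3$. This is precisely the mechanism set up in the proof of Lemma \ref{lem:quadric-normal}: by the CM line bundle computation (\ref{CMLB}) the Futaki invariant $\Fut(\mts{X},-K_{\mts{X}/\bA^1})$ is a positive multiple of
$$\mu^{\lambda_{\CM}}([C],\lambda)\ =\ 22\,\mu^{\xi}([C],\lambda)+51\,\mu^{\eta}([C],\lambda)\ =\ 22\,\mu^{\xi}([C],\lambda).$$
Since $X$ is K-semistable we have $\Fut\geq0$, whence $\mu^{\xi}([C],\lambda)\geq0$ for every one-parameter subgroup $\lambda$ of $\Aut(Q)$. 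By the Hilbert--Mumford criterion this is exactly the assertion that $[C]$ is GIT-semistable in $|\mtc{O}_{\mb{P}^1\times\mb{P}^1}(3,3)|$ under $\Aut(\mb{P}^1\times\mb{P}^1)$.

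The main point, and the reason the argument is clean here, is that the compactness difficulty flagged before Lemma \ref{destabilization} simply does not arise: one might worry that a destabilizing one-parameter subgroup pushes $[C]$ outside the non-compact locus $U$, but every $\lambda\in\Aut(Q)$ fixes the smooth quadric and the limiting curve stays a nonzero $(3,3)$-form on $Q$, hence a complete intersection in $U$. The only step requiring care is confirming that the resulting test configuration is normal with the expected central fiber so that formula (\ref{CMLB}) applies verbatim; this follows from Remark \ref{rem:cubic-blowup-fano} together with the smoothness (in particular normality) of $Q$, which guarantees that $\Bl_{C_0}\mb{P}^3$ is the anti-canonically polarized blow-up occurring in the universal family.
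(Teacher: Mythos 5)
Your overall strategy is sound and is essentially the paper's: both arguments come down to restricting the CM line bundle $\lambda_{\CM,f_U}=22\xi_U+51\eta_U$ to the fiber $\bP H^0(Q,\mtc{O}_Q(3))\subset U$ over the smooth quadric $[Q]$, observing that $\eta$ contributes nothing there, and invoking the Hilbert--Mumford criterion together with the fact that $\Aut^0(\bP^1\times\bP^1)$ has no nontrivial characters and $|\mtc{O}_{\bP^1\times\bP^1}(3,3)|$ has Picard rank one. The only real difference in route is that you unpack the comparison ``K-semistable $\Rightarrow$ CM-GIT-semistable'' into an explicit one-parameter-subgroup/Futaki computation, whereas the paper applies Theorem \ref{KimpliesGIT} as a black box to the base-changed family $f_Q:\mts{X}_Q\to\bP H^0(Q,\mtc{O}_Q(3))$. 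Your observation that every destabilizing $\lambda$ fixes $[Q]$, so the limit stays a nonzero $(3,3)$-form and hence inside $U$, is exactly the reason the non-compactness issue does not arise here, and it is implicit in the paper's choice to work over the compact fiber.

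There is, however, a genuine gap in the step you yourself flag as ``the only step requiring care.'' For either route one must know that the central fiber $\Bl_{C_0}\bP^3$ is an honest member of the relevant class for \emph{every} $(3,3)$-divisor $C_0$ on $Q$, including badly degenerate ones such as a triple $(1,1)$-curve: the paper needs all fibers of $f_Q$ to be klt ($\bQ$-Fano) to apply Theorem \ref{KimpliesGIT}, and your version needs the induced degeneration to be a legitimate normal test configuration whose Donaldson--Futaki invariant is computed by the CM weight. You justify this by appealing to Remark \ref{rem:cubic-blowup-fano}, but that remark only yields a Gorenstein canonical Fano under the hypotheses that the corresponding cubic threefold is Gorenstein canonical and has normal projective tangent cone at the double point --- hypotheses that are not verified for degenerate $C_0$ and are essentially equivalent to what needs proving. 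The paper closes this gap with a direct local computation: writing $C=\mathbb{V}(x,g(y,z))\subset Q=\mathbb{V}(x)$ analytically, the blow-up is covered by the smooth chart $\mathbb{V}(x-sg)$ and the chart $\mathbb{V}(tx-g(y,z))\subset\bC^4$, which has $cA$-singularities and is therefore normal and Gorenstein canonical by \cite[Theorem 5.34]{KM98}. Without this (or an equivalent verification that the fibers over the whole of $\bP H^0(Q,\mtc{O}_Q(3))$ are normal klt Fano varieties), the identification of the sign of $\mu^{\xi}([C],\lambda)$ with K-(in)stability is not established. Adding that local computation would complete your argument.
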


\begin{proof}
    We first show that the blow-up $X:=\Bl_C\mb{P}^3$ is a Gorenstein canonical Fano variety for every curve $C$ contained in a smooth quadric surface $Q$. Since this question is local analytically, we may work in local analytic coordinates near $C$ and assume that 
    \[
    C = \mathbb{V}(x, g(y,z))\subset  Q = \mathbb{V}(x) \subset  \bC^3_{x,y,z}.
    \]
    Here $g$ is a non-zero holomorphic function near the origin. Then we may cover $\Bl_C \bC^3$ using two charts $X_1$ and $X_2$, where 
    \[
    X_1 = \mathbb{V}(tx - g(y,z)) \subseteq \bC^4_{x,y,z,t}, \quad \textrm{ and }\quad X_2=\mathbb{V}(x - sg(y,z)) \subseteq \bC^4_{x,y,z,s}.
    \]
    It is clear that $X_1$ has $cA$-singularities hence is  Gorenstein canonical by \cite[Theorem 5.34]{KM98}, and $X_2$ is smooth. Thus $X$ is a Gorenstein canonical Fano variety.

    Next, we apply Theorem \ref{KimpliesGIT} to the family $f_Q: \mts{X}_Q \to \bP H^0(Q, \mtc{O}_Q(3))$ obtained by base change of $f_U:\mts{X}_U\to U$ to  $\bP H^0(Q, \mtc{O}_Q(3)) = \pi^{-1}([Q])\hookrightarrow U$. Conditions (i) and (ii) are satisfied by  Proposition \ref{stackiso}. Condition (iii) on the ampleness of $\lambda_{\CM, f_Q}$ follows from the ampleness of $\lambda_{\CM, f_U}$ (cf. Proposition \ref{prop:CMslope}). 
    Therefore, the result follows from Theorem \ref{KimpliesGIT} and facts that $|\mtc{O}_{\mb{P}^1\times\mb{P}^1}(3,3)|$ has Picard rank one and that $\Aut^0(\bP^1\times\bP^1)$ has no non-trivial character.
    
\end{proof}


    

\begin{lemma}\label{unstable2}
     Let $C\in|\mtc{O}_{\mb{P}(1,1,2)}(6)|$ be a curve. We embed $C$ in $\bP^3$ as a $(2,3)$-complete intersection curve contained in a quadric cone. Then $X=\Bl_C\mb{P}^3$ is K-unstable if  $C$ has a singularity at the cone point of $\mb{P}(1,1,2)$, which is worse than an $A_1$-singularity.
\end{lemma}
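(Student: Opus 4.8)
The plan is to mirror the strategy of Lemma \ref{lem:quadric-normal}: produce a one-parameter subgroup $\sigma$ of $\PGL(4)$ adapted to the cone point, let it act on $X$, and read off K-instability from the sign of the resulting Futaki invariant via the CM line bundle. First I would normalize coordinates so that the quadric cone is $Q=\mb{V}(x_0x_2-x_1^2)$ with vertex $p=[0:0:0:1]$, and write the defining cubic as $g=x_3^2 g_1+x_3 g_2+g_3$ with $g_d=g_d(x_0,x_1,x_2)$ homogeneous of degree $d$; the hypothesis $p\in C$ forces the $x_3^3$-coefficient to vanish. Using the local description of $X$ from Lemma \ref{lem:Sarkisov-singular} (in the affine chart, $X$ is $\mb{V}(x_0x_2-x_1^2-ag)$ with $a$ the blow-up parameter), the quadratic part of the equation of $X$ at the point over $p$ is $x_0x_2-x_1^2-ag_1$. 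A short diagonalization shows that this quadratic form is nondegenerate exactly when the line $\{g_1=0\}$ is transverse to the conic $\{x_0x_2-x_1^2=0\}$; hence ``$C$ has worse than an $A_1$-singularity at the cone point'' translates into the tangency (equivalently, corank) condition on $g_1$, and in particular it includes the case $g_1=0$.

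Next I would take $\sigma(s)=\diag(s,s,s,1)$, which preserves $Q$ and contracts $\mb{P}^3$ to the vertex $p$ as $s\to 0$. Computing the flat limit $C_0=\lim_{s\to 0}\sigma(s)\cdot C$, one finds $\sigma^{*}g=s\,x_3^2 g_1+s^2 x_3 g_2+s^3 g_3$; when $g_1=0$ the rescaled limit is $x_3 g_2$, so $C_0=Q\cap\mb{V}(x_3 g_2)$ is again a genuine $(2,3)$-complete intersection, i.e.\ $[C_0]\in U$ and the central fiber of the induced test configuration is the honest member $\Bl_{C_0}\mb{P}^3$. This $\sigma$ therefore induces a normal test configuration $(\mts{X},-K_{\mts{X}/\bA^1})$ of $X$, and by \eqref{CMLB} together with Proposition \ref{prop:CMslope} its Futaki invariant is proportional, up to a positive constant, to the Hilbert--Mumford weight $\mu^{\eta+t_0\xi}([C];\sigma)$ with $t_0=\frac{22}{51}$. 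I would then compute this weight from the $\sigma$-action on $\eta$ and on $\xi=\mtc{O}_{\mb{P}\mtc{E}}(1)$ --- concretely, by locating the minimal $\sigma$-weight monomial of $\bar g=g|_Q$ in a weight basis of $H^0(Q,\mtc{O}_Q(3))$ --- and show that the extra vanishing of $g$ at $p$ forced by the degeneracy of $g_1$ lowers this weight past the threshold set by $t_0$, making it strictly negative (this is the cone-point analogue of \cite[Proposition 4.6]{CMJL14}). A negative Futaki invariant then yields K-instability.

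The main obstacle is twofold. First, the weight computation must be sharp enough to see that the threshold sits exactly at the $A_1$-singularity: for $t_0=\frac{22}{51}\in(\frac{2}{5},\frac{1}{2})$ the weight should be nonnegative for a genuine node at the cone point but strictly negative once the singularity degenerates, so the bookkeeping of monomial weights against the linearization $\eta+t_0\xi$ has to be carried out with care. Second, for the general degenerate configuration in which $g_1\neq 0$ but $\{g_1=0\}$ is merely tangent to the conic, the contracting $\sigma$ above sends $C$ to a non-reduced limit lying outside $U$, so the naive central fiber is not a complete intersection; here I would either choose a finer one-parameter subgroup (after using the stabilizer of $(Q,p)$ to put $g_1$ in the standard tangent form $x_0$) whose limit remains a complete intersection, or compute the weight directly on the proper parameter space $\mb{P}\mtc{E}$ and invoke the stratification argument so that the CM-weight proportionality still gives $\mathrm{Fut}<0$. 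Controlling this limit is the delicate point, and is exactly the non-compactness issue flagged before Theorem \ref{mainiso}.
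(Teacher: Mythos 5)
Your overall strategy---a destabilizing one-parameter subgroup adapted to the cone point, with K-instability read off from the sign of the CM/Hilbert--Mumford weight at $t_0=\frac{22}{51}$---is the same as the paper's, but there is a genuine gap in the execution, and it sits exactly at the hard part of the lemma. Writing $g=x_3^2g_1+x_3g_2+g_3$ as you do, the hypothesis ``worse than $A_1$ at the vertex'' splits into (a) $g_1=0$ and (b) $g_1\neq0$ with $\mb{V}(g_1)$ tangent to the cone. Your $1$-PS $\sigma(s)=\diag(s,s,s,1)$ (normalized to $\SL(4)$-weights $(1,1,1,-3)$) only works in case (a): calibrating signs against the computation in Lemma \ref{lem:quadric-normal}, the quadric $x_0x_2-x_1^2$ contributes $-2$ and the cubic contributes $-(\text{minimal monomial weight})$, so in case (b) the surviving term $x_3^2g_1$ of weight $-5$ gives $\mu^{t}=-2+5t$, which is \emph{positive} at $t_0=\frac{22}{51}$ (indeed this is forced, since the same $1$-PS must not destabilize an honest $A_1$ at the vertex, and it cannot see the difference between transversality and tangency of $g_1$). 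You flag this yourself, but the two escape routes you propose---``a finer one-parameter subgroup'' or ``compute the weight directly on $\mb{P}\mtc{E}$ with a stratification argument''---are precisely the content of the lemma and are left unexecuted. The paper avoids the case split entirely: it uses the single $1$-PS of weight $(9,1,-3,-7)$ (in coordinates where the cone is $x_1x_3-x_2^2$ with vertex $[1:0:0:0]$), whose weights on the base conic are adapted to the tangent ruling, and imports from \cite[Proposition 4.12]{CMJL14} the uniform bound $\mu^{t}\leq 11t-6<0$ for all $t<\frac{6}{11}$, covering (a) and (b) simultaneously. Even in case (a), where your $\sigma$ does give $\mu^{t_0}=-2+t_0<0$, you assert rather than compute the sign.

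Two smaller points. First, the test-configuration/Futaki argument presupposes that $X$ is a klt ($\bQ$-)Fano variety, which can fail for a sufficiently bad singularity at the vertex; the paper first disposes of the non-klt case via \cite{Oda13} before running the weight computation, and your write-up omits this reduction. Second, your worry that the estimate must be ``sharp at the $A_1$-threshold'' is not actually needed here: the lemma only asserts instability for worse-than-$A_1$ singularities, and the semistability of the $A_1$ case is established elsewhere (Theorem \ref{mainiso} and the classification), so you need only a one-sided inequality---but you do need it in case (b), which is where your argument currently stops.
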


\begin{proof}
    If $X$ is not klt then it is automatically K-unstable by \cite{Oda13}. Thus we may assume that $X$ is klt. 
    Let $\mb{P}(1,1,2)$ be defined by the equation $(x_1x_3-x_2^2=0)$. We can choose a test configuration $\mtc{X}$ induced by the 1-PS $\lambda$ of $\PGL(4)$ of weight $(9,1,-3,-7)$. Then one has that $\Fut(\mtc{X}, -K_{\mtc{X}/\bA^1})$ is proportional up to a positive constant to the weight $$\mu^{\lambda_{\CM}}([C_0];\lambda)\ =\ \mu^{t_0}(Q,[S_0];\lambda)\ \leq \ 11t-6,$$ which is negative as $t_0<\frac{6}{11}$ (cf. \cite[Proposition 4.12]{CMJL14})
    
\end{proof}

\begin{proof}[Proof of Theorem \ref{mainiso}]
    We first show that the K-semistability of $X=\Bl_{C}\mb{P}^3$ implies the $t_0$-GIT-semistability of $[C]$. Assume to the contrary that $C$ is a $t_0$-GIT-unstable curve such that $X=\Bl_{C}\mb{P}^3$ is K-semistable. By Lemma \ref{lem:quadric-normal}, one sees that the unique quadric $Q$ containing $C$ is normal. If $Q$ is smooth, then by Lemma \ref{destabilization}, $[C]$ is GIT-semistable under the action of $\Aut(\mb{P}^1\times\mb{P}^1)$ as an element in $|\mtc{O}_{\mb{P}^1\times \mb{P}^1}(3,3)|$. Thus $[C]$ is $t$-GIT-semistable for $t\in (0, \frac{2}{9})$ by Remark \ref{vgitremark}(3). If $Q$ is singular, then by Lemma \ref{unstable2}, either $C$ does not passes through the cone point, or $C$ has an $A_1$-singularity at the cone point. Thus  $C$ admits an isotrivial degeneration to a $T_i$-GIT-polystable curve (either a triple conic or a double conic and two different rulings), where $i=1$ or $2$. In particular, $C$ is $t$-semistable for some $0<t<t_0$. Consider the maximum $T\in (0, \frac{2}{3}]$ such that $C$ is $T$-GIT-semistable. Then one has that $T$ is a VGIT wall such that $0<T<t_0$. Therefore, by the description of VGIT wall crossing (cf. \cite{CMJL14}), there exists a 1-PS $\lambda$ of $\PGL(4)$ such that $\mu^{t_0}([C];\lambda)<0$, $\mu^T([C];\lambda) = 0$ and that $\lim_{s\to 0}\lambda(s)\cdot [C]$ lies in $U$ as it is $T$-GIT-semistable. It follows that up to scaling by a positive constant, one has $$\Fut(\mtc{X},-K_{\mtc{X}/\mb{A}^1})= \mu^{\lambda_{\CM}}([C];\lambda)\ =\ \mu^{t_0}([C];\lambda)\  < \ 0,$$ and hence $X$ is K-unstable, where $\mtc{X}$ is the test configuration of $X$ induced by $\lambda$. This is a contradiction. 

    Next, we show that K-polystability of $X$ implies the $t_0$-GIT-polystability of $[C]$. From the above argument we know that $[C]$ is $t_0$-GIT-semistable. Let $\lambda$ be the $1$-PS of $\PGL(4)$ such that $[C_0]:=\lim_{s\to 0} \lambda(s)\cdot [C]$ is $t_0$-GIT-polystable. Then we know that $[C_0]\in U$ which implies that up to scaling a positive constant, we have
    \[
    \Fut(\mtc{X},-K_{\mtc{X}/\mb{A}^1})= \mu^{\lambda_{\CM}}([C];\lambda)\ =\ \mu^{t_0}([C];\lambda)\  = \ 0.
    \]
    Here $\mtc{X}$ is the test configuration of $X$ induced by $\lambda$ as before. Thus by \cite[Lemma 3.1]{LWX21} we know that $X_0=\Bl_{C_0}\bP^3$ is K-semistable. This implies that $X\cong X_0$ and hence $C\cong C_0$ is $t_0$-GIT-polystable by Proposition \ref{stackiso}.

    For the converse, suppose $(Q,[S])\in\mb{P}\mtc{E}$ is a $t_0$-GIT-semistable point. Then by Lemma \ref{unstablelocus}, we have that $(Q,[S])\in U$ and $C:=Q\cap S$ is a $(2,3)$-complete intersection curve in $\mb{P}^3$. Take $\{C_b\}_{b\in B}$ a family of $(2,3)$-complete intersection curves over a smooth pointed curve $(0\in B)$ such that $C_0=C$, $C_b$ is smooth and $\Bl_{C_b}\mb{P}^3$ is K-stable for any $b \in B \setminus \{0\}$. Then by the properness of K-moduli spaces, we have a K-polystable limit $X'_0$ of $\Bl_{C_b}\mb{P}^3$ as $b \to 0$, after possibly a finite base change of $B$. By Theorem \ref{23complete}, we know that $X'_0\simeq \Bl_{C_0'}\mb{P}^3$ for some $(2,3)$-complete intersection curve $C_0'$. By Proposition \ref{stackiso}, we know that $[C_b]$ admits a limit $[C_0']$ as $b\to 0$ in the stack $[U^K/\PGL(4)]$.  Since $[C_0']$ is $t_0$-GIT-polystable from earlier arguments, by the separatedness of GIT quotients we know that $C$ specially degenerates to $g\cdot C'_0$ for some $g \in\PGL(4)$. Thus $X$ is K-semistable by openness of K-semistability. 

    From the equivalence of K-semistability and $t_0$-GIT-semistability, we obtain the identification $U^K=U^{\sst}(t_0)$. Then the isomorphism of moduli stacks (resp. spaces) follows from Proposition \ref{stackiso}.
    
\end{proof}

\begin{lemma}\label{unstablelocus}\textup{(cf. \cite[Proposition 4.6]{CMJL14})} If $(Q,[S])\in\mb{P}\mtc{E}$ is a point which is not contained in $U$, then $(Q,[S])$ is GIT-unstable with respect to any slope $t\in[0,\frac{1}{2}]$.
\end{lemma}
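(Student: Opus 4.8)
The plan is to verify the Hilbert--Mumford numerical criterion directly: for each point outside $U$ I will exhibit a single one-parameter subgroup whose weight is negative for every $t\in[0,\tfrac12]$, thereby certifying instability. Recall (from the proof of Lemma \ref{lem:non-ci-codim}) that a point $([q],[\ove g])\in \mb{P}\mtc{E}\setminus U$ is exactly one for which $q$ and $g$ share a common linear factor $l$; in particular the quadric $Q=\mb{V}(q)$ is reducible (of rank $\le 2$), it contains a plane $H=\mb{V}(l)$, and $\ove g=g|_Q$ vanishes identically on $H$. Since GIT (in)stability is $\PGL(4)$-invariant, after a coordinate change I may assume $l=x_3$, so that both $q$ and $g$ are divisible by $x_3$.

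First I would take the one-parameter subgroup $\lambda=\diag(s^{-1},s^{-1},s^{-1},s^{3})$ of $\PGL(4)$, assigning weight $-1$ to $x_0,x_1,x_2$ and weight $+3$ to the common factor $x_3$. Writing the weight with respect to the polarization $\eta+t\xi$ as $\mu^t([q],[\ove g];\lambda)=\mu_\eta+t\,\mu_\xi$, where $\mu_\eta$ (resp.\ $\mu_\xi$) is the Hilbert--Mumford weight in the quadric (resp.\ fibre) direction, I would estimate both contributions. Since every monomial of $q$ is divisible by $x_3$, each carries $\lambda$-weight $\ge(-1)+3=2>0$; the minimal appearing weight is strictly positive and hence $\mu_\eta<0$. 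This already settles the endpoint $t=0$, where the problem is pure quadric GIT and a rank-$\le 2$ quadric is unstable.

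Next I would treat the fibre direction. The limit quadric $Q_0=\lim_{s\to0}\lambda(s)\cdot Q$ is again divisible by $x_3$, and on it $\ove g$ is represented, in a weight basis of $H^0(Q_0,\mtc{O}_{Q_0}(3))$, by monomials each divisible by $x_3$: the factor $x_3$ contributes $+3$ while the remaining quadratic factor contributes at least $2\cdot(-1)=-2$, so every such monomial has weight $\ge 1>0$, whence $\mu_\xi<0$ as well. Because both $\mu_\eta<0$ and $\mu_\xi<0$, the affine-linear function $\mu^t=\mu_\eta+t\,\mu_\xi$ is negative for all $t\ge 0$, in particular on all of $[0,\tfrac12]$; thus $([q],[\ove g])$ is GIT-unstable uniformly in $t$, in agreement with \cite[Proposition 4.6]{CMJL14}. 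Note that only the \emph{signs} of $\mu_\eta,\mu_\xi$ enter here, so the conclusion is insensitive to the precise normalization of $\mu^t$.

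The step I expect to be the main obstacle is the fibre computation when $Q$ degenerates to a non-reduced quadric, e.g.\ the double plane $q=x_3^2$. There one must identify $H^0(Q,\mtc{O}_Q(3))$ with cubics modulo $x_3^2\cdot H^0(\mtc{O}_{\mb{P}^3}(1))$ and verify that the class of $\ove g$ is nonzero and supported on monomials of the form $x_3\cdot(\text{quadric in }x_0,x_1,x_2)$, so that the minimal weight is still $+1$ (nonvanishing of this class follows since $[\ove g]$ is a genuine point of the fibre, i.e.\ $g$ is not divisible by $x_3^2$). Once the weight bookkeeping on these singular and non-reduced limit quadrics is pinned down, together with the precise form of $\mu^t$ attached to the linearization $\eta+t\xi$, the argument closes, the remaining verifications being routine monomial estimates.
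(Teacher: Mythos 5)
Your argument is correct. The paper does not actually prove this lemma — it is quoted from \cite[Proposition 4.6]{CMJL14} — and your direct Hilbert--Mumford computation is essentially the standard argument behind that citation: a point of $\mb{P}\mtc{E}\setminus U$ has $q$ and every representative of $\ove{g}$ divisible by a common linear form $x_3$, and the one-parameter subgroup of weights $(-1,-1,-1,3)$ then gives all monomial weights $\geq 2$ on $q$ and $\geq 1$ on $\ove{g}$ (the latter being well defined on the quotient $H^0(Q,\mtc{O}_Q(3))$ since adding $q\cdot(\text{linear})$ preserves divisibility by $x_3$), so both components of $\mu^t=\mu_\eta+t\,\mu_\xi$ have the same sign and the point is destabilized uniformly for all $t\geq 0$. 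Your closing observation that only the common sign of $\mu_\eta,\mu_\xi$ matters is exactly what makes the conclusion independent of the normalization conventions, and your treatment of the double-plane case $q=x_3^2$ is the only genuinely delicate point and is handled correctly.
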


\begin{proof}[Proof of Theorem \ref{Main1}]
    This follows from immediately from Theorem \ref{23complete}, Corollary \ref{cor:stack-smooth}, Theorem \ref{mainiso} and \cite[Main Theorem]{CMJL14}.
\end{proof}

As an immediate consequence of Theorem \ref{mainiso} and \cite[Theorem 7.1]{CMJL14}, one obtains the following result.

\begin{theorem}[= Theorem \ref{hkp}]
    The K-moduli space $\ove{M}^K_{\textup{№2.15}}$ is isomorphic to the model $\ove{M}_4(\frac{1}{2},\frac{23}{44})$ for the Hassett--Keel program of genus four curves.
\end{theorem}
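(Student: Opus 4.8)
The plan is to deduce this statement as a formal consequence of two identifications already in hand: the isomorphism $\ove{M}^K_{\textup{№2.15}} \simeq \ove{M}^{\GIT}(t_0)$ with $t_0 = \frac{22}{51}$ established in Theorem \ref{mainiso}, together with the Casalaina-Martin--Jensen--Laza comparison $\ove{M}_4(\alpha) \simeq \ove{M}^{\GIT}(t)$ under the relation $t = \frac{34\alpha-16}{33\alpha-14}$ from \cite[Theorem 7.1]{CMJL14}. All that remains is a numerical check: one must verify that $t_0$ and its corresponding log-canonical parameter $\alpha_0$ land in the correct VGIT and Hassett--Keel chambers, respectively, after which the statement is immediate.

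First I would place $t_0$ in the VGIT chamber structure of Remark \ref{vgitremark}(2). Writing $\frac{2}{5} = \frac{22}{55}$ and $\frac{1}{2} = \frac{22}{44}$, we see $\frac{22}{55} < \frac{22}{51} < \frac{22}{44}$, so $t_0$ lies strictly inside $(T_2, T_3) = (\frac{2}{5}, \frac{1}{2})$; in particular $t_0 < \frac{1}{2}$ lies in the ample range where the comparison theorem applies directly. Inverting the defining relation to $\alpha = \frac{14t - 16}{33t - 34}$ and substituting $t_0 = \frac{22}{51}$ gives $\alpha_0 = \frac{127}{252}$. One checks that $\frac{8}{17} < \frac{127}{252} < \frac{5}{9}$, so that \cite[Theorem 7.1]{CMJL14} yields $\ove{M}^{\GIT}(t_0) \simeq \ove{M}_4(\alpha_0)$, and that $\frac{1}{2} < \frac{127}{252} < \frac{23}{44}$, placing $\alpha_0$ strictly inside the open interval $(\frac{1}{2}, \frac{23}{44})$.

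Finally, by Theorem \ref{HassettK} the log minimal model $\ove{M}_4(\alpha)$ is constant on the chamber $(\frac{1}{2}, \frac{23}{44})$ and there equals $\ove{M}_4(\frac{1}{2}, \frac{23}{44})$; since $\alpha_0$ lies in this chamber we conclude $\ove{M}_4(\alpha_0) = \ove{M}_4(\frac{1}{2}, \frac{23}{44})$. Chaining the isomorphisms gives $\ove{M}^K_{\textup{№2.15}} \simeq \ove{M}^{\GIT}(t_0) \simeq \ove{M}_4(\alpha_0) = \ove{M}_4(\frac{1}{2}, \frac{23}{44})$, which is the assertion. There is no substantive obstacle here, the entire content residing in the two established isomorphisms; the only point requiring mild attention is that one should work at an interior value rather than at a wall, since the VGIT walls $T_i$ and the Hassett--Keel walls need not correspond to one another under the relation $t = \frac{34\alpha-16}{33\alpha-14}$, and it is precisely the fact that the single slope $t_0$ is carried into the interior of the target chamber that makes the identification clean.
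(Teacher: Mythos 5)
Your proposal is correct and follows exactly the paper's route: the paper derives Theorem \ref{hkp} as an immediate consequence of Theorem \ref{mainiso} and \cite[Theorem 7.1]{CMJL14}, which is precisely the chain of identifications you spell out, and your arithmetic ($\alpha_0=\tfrac{127}{252}$, lying strictly in $(\tfrac{1}{2},\tfrac{23}{44})$ and in the range $(\tfrac{8}{17},\tfrac{5}{9})$ where the CMJL comparison applies) checks out. The only difference is that you make explicit the chamber-placement verification that the paper leaves implicit, including the correct observation that the VGIT walls and Hassett--Keel walls do not match bijectively under $t=\frac{34\alpha-16}{33\alpha-14}$, so one must check the interior value rather than the endpoints.
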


\subsection{Classification of K-(semi/poly)stable objects}

For the reader's convenience, in this section we give a complete classification of K-(semi/poly)stable members in the family. This is a combination of Theorem \ref{mainiso} together with Theorem \ref{HassettK} and the classification of GIT-(semi/poly)stable curves in the linear series $|\mtc{O}_{\mb{P}^1\times\mb{P}^1}(3,3)|$ (cf. \cite[Section 2]{Fed12} and \cite[Lemma 5.17]{OSS16}).

\begin{theorem}[= Theorem \ref{Main2}]\label{23333}
    Let $X$ be the blow-up of $\mb{P}^3$ along a $(2,3)$-complete intersection curves $C$. Let $Q$ be the unique quadric surface in $\mb{P}^3$ containing $C$. Suppose that $X$ is K-semistable, then $Q$ is normal and $C$ is reduced. Moreover, the threefold $X$ is 
    \begin{enumerate}
        \item K-semistable if and only if one of the followings holds:
        \begin{itemize}
            \item $Q$ is a smooth quadric, and $C$ does not contain a line component $L$ meeting the residual curve $C'$ at a unique point, which is a singularity of $C'$. Equivalently, the curve $C$ has at worst $A_n$- ($n\leq 9$) or $D_4$-singularities;
            \item $Q$ is a quadric cone, and $C$ has at worst $A_n$- or $D_4$-singularities in the smooth locus of $Q$, and has at worst an $A_1$-singularity at the cone point. 
        \end{itemize}
        \item K-stable if and only if one of the followings holds:
         \begin{itemize}
            \item $Q$ is a smooth quadric, $C$ has at worst $A_n$-singularities, and $C$ contains no line component $L$ meeting the residual curve $C'$ in exactly one point;
            \item $Q$ is a quadric cone, $C$ has at worst $A_n$-singularities in the smooth locus of $Q$, and has at worst an $A_1$-singularity at the cone point. 
        \end{itemize}
        \item K-polystable but not K-stable if and only if one of the followings hold:
            \begin{itemize}
            \item $Q$ is normal and the curve $C$ is a union of three conics meeting in two $D_4$-singularities. These curves occupy a $2$-dimensional locus in the K-moduli;
             \item $Q\simeq\mb{V}(x_0x_3-x_1x_2)$ is a smooth quadric, and $C\simeq C_{2A_5}=\mb{V}(x_0x_3-x_1x_2,x_0x_2^2+x^2_1x_3)$ is the (unique) maximally degenerate curve with two $A_5$-singularities. 
        \end{itemize}
    \end{enumerate}
    In particular, every smooth member in this deformation family of Fano threefolds is K-stable.
\end{theorem}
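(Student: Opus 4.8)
The plan is to reduce the entire classification to the variation-of-GIT analysis of $(2,3)$-complete intersection curves at a single slope, and then translate GIT stability into the singularity conditions recorded in the statement. The starting point is Theorem \ref{mainiso}: for any $(2,3)$-complete intersection $C$, the blow-up $X=\Bl_C\mb{P}^3$ is K-(semi/poly)stable if and only if $[C]\in\mb{P}\mtc{E}$ is GIT-(semi/poly)stable for the polarization $\eta+t_0\xi$ with $t_0=\frac{22}{51}$. Since $t_0$ lies in the open chamber $(T_2,T_3)=(\frac{2}{5},\frac{1}{2})$ (cf. Remark \ref{vgitremark}(2)), this GIT problem is precisely $\ove{M}^{\GIT}(\frac{2}{5},\frac{1}{2})$, whose stable/semistable/polystable loci were determined by Casalaina-Martin--Jensen--Laza \cite{CMJL14}. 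Thus after this reduction the theorem becomes a matter of matching their Hilbert--Mumford description of $t_0$-(semi)stable orbits with the singularity types of the curves.

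Before the case analysis I would dispose of the two structural constraints. Normality of $Q$ for a K-semistable $X$ is Lemma \ref{lem:quadric-normal}. For reducedness, note that a non-reduced $(2,3)$-curve (a conic together with a double conic) is the object produced on the lower-$t$ side of the flip $\psi$ across the wall $T_2=\frac{2}{5}$, and by the wall-crossing picture of Theorem \ref{HassettK}(iv) together with the diagram in Theorem \ref{Main1}(3) it is $t$-semistable only in the chamber $(\frac{2}{9},\frac{2}{5})$; since $t_0>\frac{2}{5}$, such a curve is $t_0$-unstable, hence $X$ is K-unstable. Therefore a K-semistable $X$ forces $Q$ normal and $C$ reduced, and $Q$ is either smooth or a quadric cone.

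The core is the two geometric cases. When $Q$ is smooth, $Q\simeq\mb{P}^1\times\mb{P}^1$ and $C\in|\mtc{O}(3,3)|$; here I would invoke the $t_0$-Hilbert--Mumford computation of \cite{CMJL14}, combined with the classical classification of $(3,3)$-curve singularities in \cite{Fed12,OSS16}, to identify the $t_0$-semistable curves with those having at worst $A_n$- or $D_4$-singularities, and to identify the equivalent intrinsic condition that $C$ carry no line component meeting its residual curve in a single point; deleting the $D_4$ locus (which lies on a closed degeneration orbit) then yields the $t_0$-stable curves. When $Q$ is a quadric cone, Lemma \ref{unstable2} already rules out any singularity worse than $A_1$ at the vertex, while along the smooth locus of $Q$ the same $A_n$/$D_4$ analysis applies, producing the stated semistable and stable conditions.

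Finally I would isolate the strictly polystable objects as the closed orbits among $t_0$-semistable points. Using the destabilizing one-parameter subgroups of \cite{CMJL14}, the only semistable orbits that are closed but not stable should be the union of three conics meeting in two $D_4$-points and the maximally degenerate curve $C_{2A_5}=\mb{V}(x_0x_3-x_1x_2,\,x_0x_2^2+x_1^2x_3)$; each is verified to be polystable by exhibiting its positive-dimensional reductive stabilizer and checking that every one-parameter degeneration either fixes it or exits the semistable locus. The closing assertion is then immediate: a smooth $(2,3)$-curve has no singular points, so it vacuously satisfies the $A_n$-only and line-component conditions and is $t_0$-stable, whence every smooth member of family \textnumero 2.15 is K-stable. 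I expect the principal difficulty to lie in the precise translation at the non-generic slope $t_0$---pinning down the $A_n$ bound and the $D_4$ threshold, and especially controlling the one-parameter limits that leave the open locus $U$ of genuine complete intersections (cf. Lemma \ref{unstablelocus}) during wall-crossing---so that the chamber $(\frac{2}{5},\frac{1}{2})$ is matched exactly with the listed singularity types and polystable representatives.
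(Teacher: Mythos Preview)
Your proposal is correct and follows essentially the same approach as the paper: the paper's proof is simply a one-sentence reduction, stating that the classification is obtained by combining Theorem \ref{mainiso} (the identification of K-stability with VGIT at slope $t_0=\frac{22}{51}$) with Theorem \ref{HassettK} and the explicit GIT classification of $(3,3)$-curves in \cite[Section 2]{Fed12} and \cite[Lemma 5.17]{OSS16}. Your outline correctly identifies these ingredients and fleshes out how they fit together, including the use of Lemma \ref{lem:quadric-normal} and Lemma \ref{unstable2} to handle the quadric and the cone-point constraints.
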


\begin{remark}
    \textup{
    \begin{enumerate}
        \item The last statement of Theorem \ref{23333} was proved independently in \cite[Main Theorem]{GDGV23} using a completely different method. As a side remark, the approach in \cite{GDGV23} is developed in \cite{AZ22}, which is of computational flavor.
        \item It follows immediately from Theorem \ref{23333} that each K-semistable Fano variety in $\mts{M}^K_{\textup{№2.15}}$ has at worst $A_n$- or $D_4$-singularities. In particular, they are all Gorenstein terminal.
    \end{enumerate}}
\end{remark}

\bibliographystyle{alpha}
\bibliography{citation}

\end{document}